\def\imod#1{\allowbreak\mkern5mu{\operator@font mod}\,\,#1}
\def\makeop#1{\expandafter\def\csname#1\endcsname
  {\mathop{\rm #1}\nolimits}\ignorespaces}
\newcommand{\bP}{\mathbb P}
\newcommand{\bC}{\mathbb C}
\newcommand{\bZ}{\mathbb Z}
\newcommand{\bQ}{\mathbb Q}
\newcommand{\bF}{\mathbb F}
\newcommand{\cF}{\mathcal F}
\newcommand{\cE}{\mathcal E}
\begin{document}

\title{Five-dimensional compatible systems and the Tate conjecture for elliptic surfaces}

\author{Lian Duan}
\author{Xiyuan Wang}
\author{Ariel Weiss}

\address{Lian Duan, Institute of Mathematical Sciences, ShanghaiTech University, China.\vspace*{-5pt}}
\email{duanlian@shanghaitech.edu.cn\vspace*{-3pt}}
\address{Xiyuan Wang, Department of Mathematics, The Ohio State University, USA.\vspace*{-5pt}}
\email{wang.15476@osu.edu\vspace*{-3pt}}
\address{Ariel Weiss, Department of Mathematics, The Ohio State University, USA.\vspace*{-15pt}}
\address{Department of Mathematics, Trinity College, CT, USA}
\email{ariel.weiss@trincoll.edu\vspace*{-3pt}}
\subjclass[2020]{11F80, 11F70, 14C25, 14D05, 14J27}
\keywords{Compatible systems of Galois representations, irreducibility of Galois representations, Tate conjecture, elliptic surfaces}

\date{}

\begin{abstract}
    Let $(\rho_\lambda\:G_\Q\to \GL_5(\elb))_\lambda$ be a strictly compatible system of Galois representations such that no Hodge--Tate weight has multiplicity $5$. Under mild assumptions, we show that if $\rho_{\lambda_0}$ is irreducible for some prime $\lambda_0$, then $\rho_\lambda$ is irreducible for all but finitely many primes $\lambda$. More generally, if $(\rho_\lambda)_\lambda$ is essentially self-dual, we show that either $\rho_\lambda$ is irreducible for all but finitely many $\lambda$, or the compatible system $(\rho_\lambda)_\lambda$ decomposes as a direct sum of lower-dimensional compatible systems.

    We apply our results to study the Tate conjecture for elliptic surfaces. For example, if $X_0\: y^2 + (t+3)xy + y= x^3$, we prove the codimension-one $\l$-adic Tate conjecture for all but finitely many $\l$, for all but finitely many general, degree $3$, genus $2$ branched multiplicative covers of $X_0$. 

    To prove this result, we classify the elliptic surfaces into six families, and prove, using perverse sheaf theory and a result of Cadoret--Tamagawa \cites{Cadoret-Tamagawa-open-img-I, Cadoret-Tamagawa-open-img-II}, that if one surface in a family satisfies the Tate conjecture, then all but finitely many do. We then verify the Tate conjecture for one representative of each family by making our irreducibility result explicit: for the compatible system arising from the transcendental part of $H^2_{\et}(X_{\Qb}, \Ql(1))$ for a representative $X$, we formulate an algorithm that takes as input the characteristic polynomials of Frobenius, and terminates if and only if the compatible system is irreducible.
\end{abstract}
\maketitle
\setcounter{tocdepth}{1}
\tableofcontents

\section{Introduction}

Let $E$ be a number field, let $G_\Q = \Ga \Q$ be the absolute Galois group of $\Q$, and let
\[(\rho_\lambda\:G_\Q\to \GL_n(\elb))_\lambda\]
be a compatible system of semisimple $\lambda$-adic Galois representations. By the Brauer--Nesbitt and the Chebotarev density theorems, each $\rho_\lambda$ is completely determined by the characteristic polynomials of $\rho_\lambda(\Frob_p)$ for all but finitely many primes $p$, and by the definition of compatibility, these characteristic polynomials are independent of $\lambda$. Therefore, it is natural to ask whether the representation-theoretic properties of $\rho_\lambda$ are independent of $\lambda$. For example:

\begin{question}\label{quest:irred}
    Suppose that for some prime $\lambda_0$, $\rho_{\lambda_0}$ is irreducible. Is $\rho_\lambda$  irreducible for all primes $\lambda$?
\end{question}

While it is believed that the answer to \Cref{quest:irred} is yes, it is wide open in general. Nevertheless, there has been substantial recent progress towards answering it in low dimensions. In particular, a recent result of Hui \cite{hui-BLMS} answers this question when $n \le 4$, under the assumption that the compatible system is Hodge--Tate regular.

Our first result answers \Cref{quest:irred} when $n = 5$, without the assumption of Hodge--Tate regularity.

\begin{theorem}\label{thm:irreducible}
    Let $(\rho_\lambda\:G_{\Q}\to \GL_5(\elb))_\lambda$ be a strictly compatible system of Galois representations. Suppose that one of the following conditions holds:
    \begin{enumerate}
        \item Each Hodge--Tate weight has multiplicity $1$.
        \item Each Hodge--Tate weight has multiplicity at most $4$ and each $\rho_\lambda$ is isomorphic to a representation valued in $\GO_5(\elb)$.
        \item Each Hodge--Tate weight has multiplicity at most $4$ and $\rho_{\lambda_0}$ is not isomorphic to a representation valued in $\GO_5(\elb)$.
    \end{enumerate}
    Suppose that for some prime $\lambda_0$, $\rho_{\lambda_0}$ is irreducible. Then $\rho_\lambda$ is irreducible for all but finitely many primes $\lambda$.
\end{theorem}

More generally, suppose that $\rho_{\lambda_0}\simeq\rho_1\+\cdots\+\rho_k$ decomposes as a direct sum of irreducible subrepresentations. Then, analogously to \Cref{quest:irred}, one expects that there should be a corresponding decomposition of $(\rho_\lambda)_\lambda$ as a sum of compatible systems. Our second result proves that this is indeed the case when each $\rho_\lambda$ is essentially self-dual.

\begin{theorem}\label{thm:decomp}
    Let $(\rho_\lambda\:G_{\Q}\to \GL_5(\elb))_\lambda$ be a strictly compatible system of Galois representations. Suppose that each $\rho_\lambda$ is isomorphic to a representation valued in $\GO_5(\elb)$ and that one of the following conditions holds:
    \begin{enumerate}
        \item Each Hodge--Tate weight has multiplicity $1$.
        \item Each Hodge--Tate weight has multiplicity at most $2$, and either $(\rho_\lambda)_\lambda$ is pure or $\Tr(\rho_\lambda(c)) = \pm 1$ for some complex conjugation $c\in G_\Q$.
        \item Each Hodge--Tate weight has multiplicity at most $4$, $(\rho_\lambda)_\lambda$ is pure, and $\Tr(\rho_\lambda(c)) = \pm 1$ for some complex conjugation $c\in G_\Q$.
    \end{enumerate}
    Then either $\rho_\lambda$ is irreducible for all but finitely many primes $\lambda$, or for all primes $\lambda$, we have a decomposition
    \[\rho_\lambda = \rho_{\lambda, 1}\+ \rho_{\lambda, 2}\+\cdots\+\rho_{\lambda, k}\]
    where each $(\rho_{\lambda, i})_\lambda$ is a strictly compatible system of Galois representations that is irreducible for all but finitely many primes $\lambda$.
\end{theorem}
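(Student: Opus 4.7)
The plan is to first apply Theorem~\ref{thm:irreducible} to reduce to the case where $\rho_\lambda$ is reducible for every $\lambda$: all three bullet hypotheses imply that no Hodge--Tate weight has multiplicity $5$, so if $\rho_{\lambda_0}$ were irreducible for a single $\lambda_0$, Theorem~\ref{thm:irreducible} would already give the first alternative. I may therefore assume $\rho_\lambda$ is reducible for every $\lambda$ and work to produce a decomposition into compatible sub-systems. Because each $\rho_\lambda$ is valued in $\GO_5$, the multiset of Jordan--H\"older constituents of $\rho_\lambda^{ss}$ is stable under the involution $\sigma\mapsto\sigma^\vee\otimes\chi$, where $\chi$ is the similitude character. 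As $5$ is odd, at least one constituent is self-dual up to twist and of odd dimension, and the possible partition-with-duality patterns are few in number ($5=1+4$, $2+3$, $1+1+3$, $1+2+2$, $1+1+1+2$, $1+1+1+1+1$, each with specified pairing data). A pigeonhole argument on $\lambda$ yields a fixed pattern occurring for infinitely many $\lambda$.

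Second, I would upgrade each constituent $\tau_\lambda^{(i)}$ to a weakly compatible system by showing that its trace on $\Frob_p$ is $\lambda$-independent and algebraic. For a self-dual constituent whose dimension differs from that of all other constituents (e.g.\ the $3$-dimensional piece in $5=2+3$), the factorization of the $\lambda$-independent characteristic polynomial of $\rho_\lambda(\Frob_p)$ can be pinned down by combining Hodge--Tate weight data with the duality constraint. For a pair $(\tau^{(i)},\tau^{(j)})$ with $\tau^{(j)}\cong(\tau^{(i)})^\vee\otimes\chi$ and $\tau^{(j)}\not\cong\tau^{(i)}$, only the symmetric combination $\mathrm{tr}\,\tau^{(i)}+\mathrm{tr}\,\tau^{(j)}$ is directly readable from $\rho_\lambda$, and separating the two traces requires more input. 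Under Hodge--Tate regularity (first bullet), Sen's theorem attaches to each constituent a $\lambda$-independent multiset of Hodge--Tate weights, and regularity forces these multisets to distinguish the two factors uniquely. Under the multiplicity-$\le 2$ and multiplicity-$\le 4$ bullets, I would instead combine purity of Frobenius eigenvalues --- which aligns them to definite weights --- with the hypothesis $\mathrm{tr}\,\rho_\lambda(c)=\pm 1$, which forces the $\pm 1$-eigenspace dimensions of complex conjugation to be $(3,2)$ or $(2,3)$ and thereby restricts the possible Hodge filtrations of each constituent.

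Third, once each $(\tau_\lambda^{(i)})_\lambda$ is exhibited as a weakly compatible system of dimension $d_i\le 4$, I would invoke Hui's theorem \cite{hui-BLMS} to conclude that $\tau_\lambda^{(i)}$ is irreducible for all but finitely many $\lambda$, completing the dichotomy. The main obstacle I anticipate is the second step in the non-regular bullets: the argument must simultaneously control Hodge--Tate weight multisets, purity, and the sign of complex conjugation, and must do so uniformly across all six partition patterns. The trickiest cases are those involving two non-isomorphic dual pieces of the same dimension --- for example $5=1+1+3$ with a non-self-dual pair of characters, or $5=1+2+2$ with a non-self-dual pair of two-dimensional pieces --- where one cannot separate the traces by dimension alone. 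In such cases, I expect the complex-conjugation hypothesis to serve as the key tie-breaker, since distinct pieces will carry distinct contributions to $\mathrm{tr}\,\rho_\lambda(c)$ once purity is imposed.
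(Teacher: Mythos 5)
Your high-level skeleton---use \Cref{thm:irreducible} to reduce to the everywhere-reducible case, pigeonhole on a fixed decomposition pattern, extend each piece to a compatible system, then quote Hui's $\GL_4$ irreducibility result---does match the outermost structure of the paper's argument, and your identification of the non-self-dual dual pair case as the trickiest is on the right track. But the central step, producing compatible systems for the constituents, is proposed by a method that does not work, and it is precisely here that the paper expends almost all of its effort.

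You propose to show $\lambda$-independence of the constituent traces by ``pinning down the factorization of the $\lambda$-independent characteristic polynomial $Q_p(X)$'' using Hodge--Tate data, duality, purity, and the sign of $c$. This cannot succeed as stated. Knowing $Q_p(X)$ and the shape of the decomposition of $\rho_\lambda$ does not determine a canonical factorization of $Q_p(X)$ matched to that decomposition, because the pieces are only constrained locally at $\l$ (via Hodge--Tate weights) and at infinity (via $c$), not at $p$. Consider, as in Case $5(i)$ of \Cref{table-cases}, $\rho_\lambda\cong \sigma_{1,\lambda}\oplus\sigma_{2,\lambda}\oplus\chi_\triv$, with $\sigma_1,\sigma_2$ self-dual two-dimensional of Hodge--Tate weights $\{-1,1\}$ and $\{0,0\}$. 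The Frobenius eigenvalue multiset is $\{\alpha_p,\alpha_p^{-1},\beta_p,\beta_p^{-1},1\}$, and a priori there is nothing stopping $\sigma_{1,\lambda}(\Frob_p)$ from having eigenvalues $\{\alpha_p,\alpha_p^{-1}\}$ while $\sigma_{1,\lambda'}(\Frob_p)$ has $\{\beta_p,\beta_p^{-1}\}$ for a different $\lambda'$. Purity of weight $0$ says all eigenvalues are unit complex numbers, so it does not separate them, and the complex conjugation condition is a global constraint at infinity, not at $p$. The same problem persists in your ``unique dimension'' case $5=2+3$: the self-dual $3$-dimensional piece always contributes $\{\gamma_p,1,\gamma_p^{-1}\}$ and the $2$-dimensional one $\{\delta_p,\delta_p^{-1}\}$, but nothing intrinsic to $Q_p$ tells you whether $\gamma_p=\alpha_p$ or $\gamma_p=\beta_p$. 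In short, your argument presupposes exactly what needs to be proven.

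The paper's actual mechanism is quite different and is the real content of the theorem: one twists $\rho_\lambda$ into $\SO_5(\elb)$, lifts through $\PGSp_4\cong\SO_5$ to a crystalline $r_\lambda\:G_\Q\to\GSp_4(\elb)$ via Patrikis' lifting theorem (\Cref{thm:lifting}), classifies the possibilities in \Cref{prop:rl-red,prop:rl-irred} and \Cref{table-cases}, and then---this is the hard step you omit entirely---establishes for each case that the constituents of $\rho_{\lambda_0}$ for a single, sufficiently large $\lambda_0$ are \emph{automorphic} (or at least attached to algebraic Hecke characters/Artin representations), hence lie in compatible systems, after which \Cref{prop:brauer-nesbitt} propagates the decomposition to all $\lambda$. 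The irregular constituents with Hodge--Tate weights $\{0,0\}$ or $\{0,0,0\}$ require genuine automorphy theorems: oddness via \Cref{prop:odd}, modularity in weight one via Pilloni--Stroh (\Cref{lem:2d-irregular,lem:3d-irregular}), BLGGT for the regular pieces (\Cref{lem:2d-3d-compatible}), residual irreducibility via Hui's \Cref{thm:hui} as a prerequisite, and in case $5(ii)$ a bespoke argument showing $r_\lambda$ is induced from a quartic field (\Cref{lem:5ii}). None of this can be replaced by Hodge--Tate/purity/sign bookkeeping: an even two-dimensional representation with Hodge--Tate weights $\{0,0\}$ is exactly the kind of object for which no automorphy result exists, and the paper's careful use of the $c$-trace hypothesis is precisely to \emph{exclude} this situation, not to read off traces from it. Your proposal as written has a genuine gap at its core step and would need to be replaced by the lifting-plus-automorphy machinery.
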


\Cref{quest:irred} was first considered by Serre\footnote{See, for example, \cite{serre-abelian}*{I-12}, where he poses the stronger question of whether there is a Lie algebra $\g$ over $E$ such that $\Lie(\im(\rho_\lambda)) = \g\tensor_EE_\lambda$ for all $\lambda$.} in his study of the images of Galois representations attached to elliptic curves. The case that $n=2$ was essentially settled by Ribet \cites{Ribet75, Ribet77} in his proof that the Galois representations attached to modular forms are irreducible, using the following argument: 
\begin{enumerate}
    \item If some $\rho_{\lambda_0}$ is reducible, then we can write $\rho_{\lambda_0}\simeq\chi_1\+\chi_2$, where $\chi_1$ and $\chi_2$ are one-dimensional representations.
    \item By class field theory, there are compatible systems of Galois representations $(\chi_{1, \lambda})_\lambda$ and $(\chi_{2, \lambda})_\lambda$, such that $\chi_1 = \chi_{1, \lambda_0}$ and $\chi_2 = \chi_{2, \lambda_0}$.
    \item By the Chebotarev density theorem and the Brauer--Nesbitt theorem (see \Cref{prop:brauer-nesbitt}), it follows that $\rho_{\lambda}\simeq \chi_{1, \lambda}\+\chi_{2, \lambda}$ for all primes $\lambda$.
\end{enumerate}
This strategy of using automorphy theorems (e.g.\ class field theory) to prove that every proper subrepresentation of $\rho_{\lambda_0}$ extends to a compatible system has been used in all previous results answering cases of \Cref{quest:irred} \cites{dieulefait-compatible,Dieulefait_Vila-2,Dieulefait_Vila, Duan-Wang-Tate-conj-2020, hui-BLMS, dai1}, as well as the closely related problem of proving that the Galois representations attached to cuspidal automorphic representations of $\GL_n$ are irreducible \cites{blasius-rogawski, taylorimaginary, Dieulefait2002maximalimages, dimitrov, Dieulefait-endoscopy,Ramakrishnan, BLGGT,patrikis-taylor, Xia, weissthesis,DZ,weiss2018image, Hui, bockle2024irreducibility,fengwhitmore,huilee,dai2}. We refer to \cite{hui-BLMS}*{Sec.~1.2} for a summary of these results.

Applying this strategy to prove \Cref{thm:irreducible,thm:decomp} is complicated by the fact that we do not assume that our Galois representations are Hodge--Tate regular. For example, in the setting of \Cref{thm:decomp}, consider the following two cases:
\begin{enumerate}
    \item that for some prime $\lambda_0$, $\rho_{\lambda_0}\simeq \sigma_2 \oplus\sigma_3$, where $\sigma_2$ is an irreducible two-dimensional representation with Hodge--Tate weights $\{-a, a\}$ for some non-zero $a\iZ$, and $\sigma_3$ is an irreducible three-dimensional representation with Hodge--Tate weights $\{0,0,0\}$.
    \item that for some prime $\lambda_0$,  $\rho_{\lambda_0}\simeq\sigma_2\+\sigma_2'\+\chi$, where $\chi$ is a quadratic character, $\sigma_2$ is an irreducible, two-dimensional self-dual representation with Hodge--Tate weights $\{-a,a\}$ and $\sigma'_2$ is an irreducible, two-dimensional self-dual \emph{even} Galois representation, with Hodge--Tate weights $\{0,0\}$.
\end{enumerate}

In case $(i)$, no known automorphy theorems apply directly to $\sigma_3$. However, by hypothesis, $\sigma_3$ becomes self-dual after twisting by a finite order character. Combining the exceptional isomorphism $\PGL_2 \cong \SO_3$ with a lifting result for crystalline representations valued in $\PGL_2$ to those valued in $\GL_2$ (\Cref{thm:lifting}), we deduce that $\sigma_3$ is the symmetric square of an irreducible two-dimensional representation $r'$ with Hodge--Tate weights $\{0,0\}$. Using our assumption that $\Tr(\rho_\lambda(c)) = \pm 1$, we show that $r'$ is odd. Let $\l_0$ be the rational prime dividing $\lambda_0$. If we also knew that the residual representation $\overline{r'}|_{\Q(\zeta_{\l_0})}$ were irreducible, we could invoke a theorem of Pilloni--Stroh \cite{Pilloni-Stroh} to conclude that $r'$ is modular, and hence that $\sigma_3$ is an Artin representation, and therefore lies in a compatible system. In \Cref{lem:3d-irregular}, we establish this residual irreducibility using a recent result of Hui \cite{Hui} (\Cref{thm:hui}), which shows that if case $(i)$ occurs infinitely often and $\l_0$ is sufficiently large, then $\overline{r'}|_{\Q(\zeta_{\l_0})}$ is indeed irreducible. This argument is carried out in \Cref{lem:3+2}.

In case $(ii)$, there are no automorphy theorems that apply to the representation $\sigma_2'$ at all: the even representation $\sigma_2'$ is expected to correspond to a Maass form, but proving this is completely open. 
Instead, we study the representation $\rho_{\lambda_0}$ via the corresponding representation $r_{\lambda_0}\:G_\Q\to \Gf(\elb)$ obtained by lifting $\rho_{\lambda_0}$ through the exceptional isomorphism $\SO_5\cong\PGSp_4$. First, we classify the possible decompositions of a five-dimensional self-dual representation $\rho\:G_\Q\to \SO_5(\elb)$ in terms of the possible decompositions of its corresponding representation $r\:G_{\Q}\to \Gf(\elb)$
(\Cref{prop:rl-red,prop:rl-irred} and \Cref{table-cases}). In this case, the representation $r_{\lambda_0}$ is induced from a two-dimensional representation of $G_K$, where $K$ is the field cut out by $\chi$. Using a generalisation of a result of Calegari \cite{Calegari-even1}, we show that if $\lambda_0$ is large enough, then $\sigma_2$ must be odd (\Cref{prop:odd}). Since $\sigma_2$ is also self-dual, we have $\sigma_2\simeq\sigma_2\dual\simeq\sigma_2\tensor\det\sigma_2$, so it must be an induced representation. Combining these results with a classification of the possible algebraic monodromy groups of $\rho_\lambda$, we show that $\sigma_2'$ must be an Artin representation. As a result, we show that $\sigma_2'$ does indeed extend to a compatible system of Galois representations, even though we have no way to show that this compatible system is automorphic. This argument is carried out in \Cref{lem:5ii}.

To prove \Cref{thm:irreducible,thm:decomp}, we combine Ribet's strategy (\Cref{prop:brauer-nesbitt}), Hui's recent work \cite{Hui} (\Cref{thm:hui}), and automorphy theorems \cites{BLGGT, Pilloni-Stroh} to produce a series of results that guarantee that certain subrepresentations of $\rho_\lambda$ are contained in compatible systems (see especially \Cref{prop:odd,lem:2d-3d-compatible,lem:2d-irregular,lem:3d-irregular}). We then combine these results with ``independence of $\l$'' theorems \cites{larsen-pink, hui-mrl} (\Cref{thm:semisimple-rank}),  Galois lifting theorems \cite{patrikis-variations} (\Cref{thm:lifting}), a careful classification of the possible decompositions of an arbitrary representation $\rho\:G_{\Q}\to\GO_5(\elb)$ (\Cref{prop:rl-red,prop:rl-irred} and \Cref{table-cases}), and a delicate analysis of each of the possible decompositions of $\rho_\lambda$ (\Cref{sec:decomp-analysis}).

If we assume the stronger hypothesis that $(\rho_\lambda)_\lambda$ is Hodge--Tate regular, then our results hold for compatible systems of $G_K$, for any totally real field $K$, under a mild symmetry hypothesis.

\begin{theorem}
    Let $K$ be a totally real field and let $(\rho_\lambda\:G_{K}\to \GL_5(\elb))_\lambda$ be a strictly compatible system of Galois representations that is Hodge--Tate regular. Assume that for each $\lambda$, $\rho_\lambda$ satisfies the symmetry hypothesis of \cite{patrikis-sign}*{Prop.~5.5}.
    Suppose that for some prime $\lambda_0$, $\rho_{\lambda_0}$ is irreducible. Then $\rho_\lambda$ is irreducible for all but finitely many primes $\lambda$.

     Suppose moreover that each $\rho_\lambda$ is isomorphic to a representation valued in $\GO_5(\elb)$.
    Then either $\rho_\lambda$ is irreducible for all but finitely many primes $\lambda$, or for all primes $\lambda$, we have a decomposition
    \[\rho_\lambda = \rho_{\lambda, 1}\+ \rho_{\lambda, 2}\+\cdots\+\rho_{\lambda, k}\]
    where each $(\rho_{\lambda, i})_\lambda$ is a strictly compatible system of Galois representations that is irreducible for all but finitely many primes $\lambda$.
\end{theorem}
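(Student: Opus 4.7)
The plan is to run the proofs of \Cref{thm:irreducible} and \Cref{thm:decomp} essentially verbatim, with $\Q$ replaced by the totally real field $K$, observing that Hodge--Tate regularity eliminates precisely the irregular subcases that required the most delicate arguments over $\Q$, and that the Patrikis symmetry hypothesis is designed exactly to supply, over a general totally real field, the odd complex conjugation signs that over $\Q$ were extracted from $\Tr(\rho_\lambda(c))=\pm 1$.

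First I would observe that the purely representation-theoretic classification of decompositions of a five-dimensional self-dual representation via the exceptional isomorphisms $\PGL_2\cong\SO_3$ and $\SO_5\cong\PGSp_4$ (\Cref{prop:rl-red}, \Cref{prop:rl-irred}, \Cref{table-cases}), the Brauer--Nesbitt/Chebotarev input (\Cref{prop:brauer-nesbitt}), and the independence of $\ell$ statements of Larsen--Pink and Hui (\Cref{thm:semisimple-rank}) are all formulated over arbitrary number fields, so only the automorphy-lifting inputs used to place a given irreducible constituent in a compatible system need to be revisited. Since every proper subrepresentation of a Hodge--Tate regular representation is itself Hodge--Tate regular, the irregular subcases treated by \Cref{lem:2d-irregular}, \Cref{lem:3d-irregular}, and the generalisation of Calegari's theorem (\Cref{prop:odd}) do not arise, and we may assume throughout that every constituent has distinct Hodge--Tate weights. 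I would then verify constituent by constituent: one-dimensional pieces extend by class field theory over $K$; a self-dual two-dimensional constituent $\sigma_2$ is placed in the compatible system of a Hilbert modular form via the BLGGT automorphy lifting theorems \cite{BLGGT}, whose totally-odd hypothesis is provided by the Patrikis symmetry assumption \cite{patrikis-sign}*{Prop.~5.5} applied to $\rho_\lambda$; self-dual three-dimensional constituents reduce to the two-dimensional case via \Cref{thm:lifting} and $\PGL_2\cong\SO_3$; and self-dual four-dimensional constituents are handled by the analysis of \Cref{sec:decomp-analysis}, with BLGGT again supplying automorphy, possibly after first inducing from a quadratic CM extension of $K$.

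The main obstacle is the sign bookkeeping needed to propagate the Patrikis symmetry hypothesis on the ambient representation $\rho_\lambda$ to each of the irreducible constituents appearing in \Cref{table-cases}. Whenever $\rho_\lambda$ splits off, say, a self-dual two-dimensional constituent $\sigma_2$, one must check that the symmetry condition on the whole restricts, possibly after a suitable character twist, to the correct \emph{totally odd} sign on $\sigma_2$ at each infinite place of $K$, and not the totally even one. This is especially delicate when the $\PGSp_4$-lift of $\rho_\lambda$ is induced from a two-dimensional representation of $G_F$ for a quadratic CM extension $F/K$, where the interaction between the inducing character, complex conjugation at each infinite place, and the symmetry hypothesis must be traced carefully; by contrast, verifying that the residual irreducibility inputs underlying \Cref{thm:hui} continue to hold over $G_K$ should amount to a direct transcription of the original arguments in \cite{Hui}. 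Once these sign compatibilities are in place, the Brauer--Nesbitt/Chebotarev deduction used in the proofs of \Cref{thm:irreducible} and \Cref{thm:decomp} completes the argument.
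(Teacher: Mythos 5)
Your overall strategy matches the paper's, which disposes of this theorem in a single remark saying the proofs of \Cref{thm:irreducible} and \Cref{thm:decomp} go through verbatim, using Hodge--Tate regularity to eliminate the irregular cases and the fact that the BLGGT automorphy theorems hold over totally real fields. However, there is a genuine gap in your treatment of oddness of two-dimensional constituents, which is the one place where the argument is actually delicate.

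You group \Cref{prop:odd} together with the irregular-case lemmas \Cref{lem:2d-irregular} and \Cref{lem:3d-irregular} and assert that all three ``do not arise'' in the Hodge--Tate regular case. This is wrong: \Cref{prop:odd} is the result about Hodge--Tate \emph{regular} two-dimensional subrepresentations, and it is precisely the mechanism the paper uses (inside \Cref{lem:2d-3d-compatible} and \Cref{lem:4+1-trivial}) to establish the totally-odd hypothesis of \cite{BLGGT}*{Thm.~C} for the regular $2$-dimensional constituents that appear in cases $3$, $5(i)$, and $2(i)$. It works by a Calegari--Geraghty-style argument: either $\sigma$ is induced from an imaginary quadratic field (hence odd), or $\overline\sigma|_{\Q(\zeta_\ell)}$ is irreducible by \Cref{thm:hui} and one invokes \cite{CG}*{Prop.~2.5}. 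The paper's \Cref{rem:totally real} records that this argument runs unchanged over a totally real $K$, which is exactly what you need. You instead propose to deduce oddness of $\sigma_2$ from the Patrikis symmetry hypothesis on the ambient $\rho_\lambda$, and flag the ``sign bookkeeping'' of this propagation as the main obstacle. But this route does not close: the symmetry hypothesis for the $\Gf\to\SO_5$ lift constrains each $\rho_\lambda(c_v)$ to the conjugacy class with eigenvalues $\{1,1,1,-1,-1\}$, and a decomposition $\rho_\lambda\simeq\sigma_3\oplus\sigma_2$ is compatible with $\sigma_2(c_v)$ having eigenvalues $\{1,1\}$, $\{-1,-1\}$, \emph{or} $\{1,-1\}$ (with $\sigma_3(c_v)$ absorbing the complementary signs), so the ambient sign condition alone does not force $\sigma_2$ to be odd. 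The role of the Patrikis hypothesis in this setting is only to make \Cref{thm:lifting} available over $K$, i.e.\ to produce the crystalline $\Gf$-valued lift $r_\lambda$ in the first place; it is not where oddness of constituents comes from. You should replace the ``sign bookkeeping'' step with the direct invocation of \Cref{prop:odd} over $K$.
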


The proofs go through verbatim with this change. In the Hodge--Tate irregular case, the assumption that $K=\Q$ is required to rule out situations such as $\rho_\lambda$ having a subrepresentation $\sigma$ with Hodge--Tate weights $\{0,0\}$ at one embedding and $\{-a,a\}$ at another with $a\iZ\setminus\{0\}$: in this case, there are no known automorphy theorems that apply to $\sigma$. However, these pathologies do not occur if $\rho_\lambda$ is Hodge--Tate regular, and the automorphy theorems of \cite{BLGGT} hold over any totally real field. See \Cref{rem:totally real} for further details. Since this paper focuses mainly on the Hodge--Tate irregular case, for notational simplicity, we assume throughout that $K=\Q$.

We also note that our results hold with the weaker condition that $(\rho_\lambda)_\lambda$ is weakly compatible, but with a weaker conclusion that the system is irreducible for all primes $\lambda\mid \l$ for all $\l$ in a set of rational primes of Dirichlet density $1$. See \Cref{rem:psw} for further discussion.

\subsection{The Tate conjecture}

Let $X$ be a smooth projective variety over a number field $K$. Then for each integer $d\ge 0$ and for all primes $\l$, the twisted \'etale cohomology $H_{\mathrm{\acute{e}t}}^{2d}(X_{\overline{K}}, \bQ_\ell(d))$ admits a natural action of the absolute Galois group $G_K$. The invariant subspace $H_{\mathrm{\acute{e}t}}^{2d}(X_{\overline{K}}, \bQ_\ell(d))^{G_K}$ contains the subspace spanned by the image of the Chern map from the $d$-cycles of $X$ defined over $K$. Tate conjectured that the span of the image of the Chern map is exactly $H_{\mathrm{\acute{e}t}}^{2d}(X_{\overline{K}}, \bQ_\ell(d))^{G_K}$.

\begin{conjecture}[The $\l$-adic Tate conjecture]
    The $\bQ_\ell$-linear subspace $H_{\mathrm{\acute{e}t}}^{2d}(X_{\overline{K}}, \bQ_\ell(d))^{G_K}$ is spanned by the class of codimension-$d$ subvarieties of $X$. 
\end{conjecture}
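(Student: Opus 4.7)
The $\ell$-adic Tate conjecture is one of the central open problems in arithmetic geometry, so no honest plan can claim to attack it in full generality; instead, matching the toolkit built up in the introduction, I would aim at the codimension-one case ($d=1$) for elliptic surfaces $X\to C$ over $\Q$. The first move is to split $H^2_{\et}(X_{\overline\Q},\Ql(1))$, via the Shioda--Tate formula, as the direct sum of an ``algebraic'' piece (spanned by the zero section, fibre components, and the pullback of a class from $C$) and the transcendental summand $T_\ell(X)$. The algebraic piece is already spanned by classes of codimension-one subvarieties, so the entire conjecture reduces, for all $\ell$, to showing that $T_\ell(X)^{G_\Q}=0$. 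In turn, this would be automatic from the fact that $T_\ell(X)$ is irreducible of dimension greater than one, since an irreducible representation of dimension $\ge 2$ has no nonzero Galois invariants.

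The plan then is to arrange things so that $\dim T_\ell(X)=5$, which is exactly where \Cref{thm:irreducible,thm:decomp} apply. One first verifies that $(T_\ell(X))_\ell$ is a strictly compatible system of $G_\Q$-representations, that it is essentially self-dual (valued in $\GO_5$, via Poincar\'e duality and the polarization), and that no Hodge--Tate weight has multiplicity $5$ (using the Hodge decomposition $h^{2,0}+h^{1,1}+h^{0,2}$ of the transcendental $H^2$). With these hypotheses in hand, if one can find a single prime $\lambda_0$ at which $T_{\lambda_0}(X)$ is irreducible, then \Cref{thm:irreducible} upgrades this to irreducibility for all but finitely many $\lambda$, and the codimension-one Tate conjecture follows for those $\lambda$. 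To produce such a $\lambda_0$ for a given surface $X$, I would run the algorithm advertised in the abstract, whose input is the sequence of characteristic polynomials of $\Frob_p$ and whose termination is equivalent to irreducibility. Finally, to pass from a single $X$ to ``all but finitely many'' members of a family of multiplicative covers, I would use the classification into six families together with the Cadoret--Tamagawa open-image-in-families result already cited in the abstract: if the Tate conjecture holds for one member of a family, perverse sheaf methods plus Cadoret--Tamagawa force it to hold on a thin complement in each family.

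The hard parts are twofold. First, the input step: one must exhibit a single representative in each of the six families for which the Frobenius-driven irreducibility algorithm actually halts. There is no a priori bound on how many Frobenii are needed, so picking well-behaved representatives (such as the curve $X_0\colon y^2+(t+3)xy+y=x^3$ from the abstract) and keeping the arithmetic tractable is genuinely delicate. Second, the hypothesis-checking step: because the transcendental cohomology of an elliptic surface is typically Hodge--Tate irregular, one is forced to work in exactly the irregular regime where \Cref{thm:irreducible,thm:decomp} are new; verifying purity, the $\GO_5$ structure, and the Hodge--Tate weight multiplicity constraint for the specific $T_\ell(X)$ at hand (including at the bad primes of $X$, where strict compatibility must be established by hand) is where the bulk of the geometric work will sit. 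Everything else — the Shioda--Tate reduction, the specialization step, and the final deduction of $T_\ell(X)^{G_\Q}=0$ from irreducibility — is essentially formal once these two obstacles are cleared.
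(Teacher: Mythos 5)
Your outline matches the paper's actual route: reduce via the Shioda--Tate / perverse sheaf decomposition to $\Tran_\ell(X)^{G_\Q}=0$, realise $\Tran_\ell(X)$ as a $5$-dimensional self-dual compatible system with Hodge--Tate weights $\{-1,-1,0,1,1\}$, invoke \Cref{thm:irreducible} (really \Cref{thm:decomp} with the trace-of-complex-conjugation hypothesis) to upgrade one prime to almost all primes, verify the seed prime by the Frobenius-driven algorithm of \Cref{Sect: alg-for-irreducibility}, and spread along each of the six families using the lisse sheaf from \Cref{Sect: spread_out_irr} and Cadoret--Tamagawa. One small over-worry: you needn't establish strict compatibility at bad primes by hand — the paper's \Cref{prop:etale-compatibility} and \Cref{rem: theorem conditions} show that the geometric origin of $\Tran_\ell(X)$ already guarantees the hypotheses of \Cref{thm:hui}, which is all the proofs of \Cref{thm:irreducible,thm:decomp} actually use.
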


Specialising to the case that $d = 1$, let $\NS(X)$ denote the N\'eron--Severi group of $X$, i.e.\ the free abelian group of $K$-divisors modulo algebraic equivalence. Let $\NS(X_{\overline K})$ denote the N\'eron--Severi group of $X_{\overline K}$. Then the first Chern map
\[
C^1\: \NS(X_{\overline{K}})\otimes \Q_\ell \to H^2_{\rm{\acute{e}t}}(X_{\overline{K}}, \Q_{\l}(1))
\]
induces a map
\[
C^1\: \NS(X)\otimes \Q_\ell \to H^2_{\rm{\acute{e}t}}(X_{\overline{K}}, \Q_{\l}(1))^{G_K}.
\]

\begin{conjecture}[The codimension-one $\l$-adic Tate conjecture]\label{conj:codim one}
The map
\begin{equation}\label{eqn-cycle-map}
    C^1\:\NS(X)\otimes\Q_{\l} \longrightarrow H^2_{\rm{\acute{e}t}}(X_{\overline{K}}, \Q_{\l}(1))^{G_K}.
\end{equation}
is an isomorphism.
\end{conjecture}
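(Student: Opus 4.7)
\Cref{conj:codim one} is of course open in general; the plan, following the strategy outlined in the abstract, is to establish it for the elliptic surfaces studied in this paper, illustrating how the irreducibility results \Cref{thm:irreducible} and \Cref{thm:decomp} can be brought to bear on an arithmetic-geometric problem.

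First, I would reduce \Cref{conj:codim one} to a statement about the transcendental part of the cohomology. For a smooth projective surface $X$ over a number field $K$, the Galois representation on $H^2_{\et}(X_{\overline K}, \bQ_\ell(1))$ is semisimple; with respect to the cup-product pairing one obtains a Galois-equivariant orthogonal decomposition
\[H^2_{\et}(X_{\overline K}, \bQ_\ell(1)) = \bigl(\NS(X_{\overline K})\otimes \bQ_\ell\bigr)\oplus T_\ell(X),\]
where $T_\ell(X)$ is the transcendental part. The conjecture then becomes equivalent to the assertion that $T_\ell(X)^{G_K} = 0$ for all but finitely many $\ell$.

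Second, $(T_\ell(X))_\ell$ forms a strictly compatible system that is self-dual up to Tate twist by Poincar\'e duality, and hence, up to a twist, is valued in $\GO_n(\bQ_\ell)$. For the elliptic surfaces singled out in this paper, $\dim T_\ell(X) = 5$ and each Hodge--Tate weight has multiplicity at most $4$, so \Cref{thm:decomp} applies: either $T_\ell(X)$ is irreducible for all but finitely many $\ell$, or $(T_\ell(X))_\ell$ decomposes as a direct sum of lower-dimensional compatible systems. In the irreducible case, $T_\ell(X)^{G_K} = 0$ since a non-trivial irreducible representation has no invariants. In the reducible case, I would analyse each possible summand separately, arguing via purity, the list of Hodge--Tate weights of $T_\ell$, and the description of $T_\ell$ as a subquotient of $H^2$, that no summand can be the trivial representation.

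Third, to produce the irreducibility concretely for a representative $X$ in each of the six families of elliptic surfaces, I would run the explicit algorithm flagged in the abstract: feed in the characteristic polynomials of $\rho_\lambda(\Frob_p)$ for small primes $p$, and certify via \Cref{thm:irreducible} that the compatible system is irreducible for all but finitely many $\lambda$ once the algorithm halts. To upgrade from a single representative to all but finitely many members of its family, I would invoke perverse sheaf theory together with the Cadoret--Tamagawa open image theorem to show that the locus on which the Tate conjecture fails is contained in a finite set.

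The main obstacle will be step three: proving that the algorithm actually terminates, which requires enough control over Frobenius traces at sufficiently many primes to rule out every possible proper invariant subspace. A close second is the case-by-case analysis in step two, where one must show that certain low-dimensional summands of $T_\ell$ cannot carry non-trivial $G_K$-invariants; this is where the finer information about Hodge--Tate weights and purity of the compatible system becomes indispensable.
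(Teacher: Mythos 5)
You correctly recognise that \Cref{conj:codim one} is open in general and that the paper establishes it only for the six families of covers of $X_0$, so what you are really proposing is a proof of \Cref{thm:Tate-intro}. Your architecture---reduce to the transcendental quotient, form a compatible system, check irreducibility for one representative via the algorithm, spread out with Cadoret--Tamagawa---does match the paper's. The real gap is in your second step, where you claim that if $(T_\ell(X))_\ell$ decomposes you can ``argue via purity, the list of Hodge--Tate weights of $T_\ell$, and the description of $T_\ell$ as a subquotient of $H^2$, that no summand can be the trivial representation.'' This cannot work: the trivial character is pure of weight $0$ and has Hodge--Tate weight $0$, which lies in the multiset $\{-1,-1,0,1,1\}$, so neither constraint excludes it, and a trivial summand of $T_\ell$ is by definition an extra Tate class not accounted for by $\NS(X)$---exactly a counterexample to the conjecture. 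Ruling it out by those soft means would be proving Tate for free. The paper does not attempt to handle a reducible branch: it runs \Cref{algorithm-2i5i,algorithm-2ii5ii,algorithm-3a} on one representative per family, observes that they terminate, and by \Cref{cor:validity} concludes that the compatible system is Lie irreducible for all but finitely many $\ell$; reducibility simply does not arise for the chosen representatives, and \Cref{Prop: spread_out} pushes this to all but finitely many members of each family.

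A second, smaller imprecision: $\Tran_\ell(X)$ is not always five-dimensional. The object of fixed rank five is the ``non-trivial part'' $\rho_\ell(X)=H^{1}_{\mathrm{\acute{e}t}}(\P^{1}_{\overline{\Q}}, R^{1}\pi_{\ast}\overline{\Q}_{\ell})(1)$ of \Cref{Defn: non-trivial_part}, which satisfies $\rho_\ell(X)\cong\Tran_\ell(X)\oplus(\MW(X)\otimes\overline{\Q}_{\ell})$. When the Mordell--Weil rank is positive, $\Tran_\ell(X)$ has rank four and $\rho_\ell(X)$ carries an additional finite character (see the proof of \Cref{Prop: dim_5_eg}). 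The paper runs its algorithm on $\rho_\ell(X)$, and irreducibility of $\rho_\ell(X)$ then \emph{forces} the Mordell--Weil rank to vanish, so that $\Tran_\ell(X)$ really is the full five-dimensional representation. Identifying the two spaces at the outset silently assumes a rank-zero conclusion that still needs to be earned.
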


The first case of the codimension-one Tate conjecture to be proven was by Tate himself, for abelian varieties over finite fields \cite{Tate-Tate-Conj-AV}. Subsequently, the same conjecture has been proven for abelian varieties over number fields \cite{Falting-FS}, for most K3 surfaces over number fields \cites{Nygaard-Ogus-Tate-Conj-K3, Maulik-Supersingular-K3, Moonen-Tate-Conj}, and, with the help of the Kuga--Satake construction, for all varieties with $h^{2, 0} = 1$, under a mild moduli condition \cite{Moonen-Tate-Conj} (see also \cite{Andre-Tate-Conj-K3}). Over finite fields, the Tate conjecture for elliptic surfaces is equivalent to the Birch--Swinnerton-Dyer conjecture for the generic fibre. In this direction,  Hamacher--Yang--Zhao recently proved the Tate conjecture for a class of genus one elliptic surfaces over finite fields \cite{Hamacher-Yang-Zhao-finiteness-Sha}. A generalisation of the connection between the Tate conjecture and BSD can be found in Qin's article \cite{Qin-comparison-Tate-conj}. 
We refer the reader to \cite{Totaro-Tate-conj-survey} and \cite{Li-Zhang-note-Tate-conj} for more results, as well as more information about the Tate conjecture and its variants.


\subsubsection{The Tate conjecture for elliptic surfaces}

Let $f\: X\to X'$ be a finite morphism of smooth projective varieties over $\Q$. If the Tate conjecture is true for $X$, then the Tate conjecture is true for $X'$,\footnote{In fact, the composition $f_* \circ f^*$ acts as multiplication by $\deg(f)$ on $H^{2}_{et}(X'_{\overline{\mathbb{Q}}}, \mathbb{Q}_l(1))$ and the pullback map $f^*$ embeds the cohomology of $X'$ into that of $X$ as a Galois module.} however, the converse statement is highly non-trivial.

Suppose that $X'$ is an elliptic surface, and that $X$ is a general, degree $3$, genus $2$ branched multiplicative cover of $X'$. We focus on the example of $X' = X_0$, where $X_0$ is the No.~63 elliptic surface of \cite{Shioda-Schutt-MW-Lattice}*{Table~8.3} induced by the Weierstrass equation
\begin{equation}
    X_0\: y^2 + (t+3)xy + y = x^3,
\end{equation}
however, our methods can be applied to covers of many other elliptic surfaces.    

The surface $X_0$ is parametrised by $\P^1_t$ over $\bQ$. Let $\SS$ be the set of genus $2$ elliptic surfaces $X$ such that
\begin{enumerate}[leftmargin=*]
    \item $X$ is equipped with a Cartesian diagram 
\begin{equation}\label{Eqn: Cartesian_diagram}
    \begin{tikzcd}
	X= \bP^1_s\times_{\bP_{t}^{1}} X_0 \arrow[r] \arrow[d] & X_0 \arrow[d] \\
	\bP^1_s  \arrow[r, "\varphi"] & \bP^1_t,
\end{tikzcd}
\end{equation}
with $\varphi\: \bP^1_s\to \bP^1_t$ a degree $3$ morphism defined over $\bQ$. 
\item All the singular fibres of $X$ are of multiplicative type.
\end{enumerate}

Our third result is the following theorem:

\begin{theorem}\label{thm:Tate-intro}
    For all but finitely many primes $\l$, the codimension-one $\l$-adic Tate conjecture is true for all $X\in \SS\setminus \SS'_\l$ for some finite subset $\SS'_\l\sub \SS$.
\end{theorem}



To prove \Cref{thm:Tate-intro}, we first observe that the Tate conjecture is closely related to the irreducibility of Galois representations.
For a smooth projective variety $X/\Q$, and for each prime $\l$, we define the transcendental part $\Tran_\l(X)$ of $H^2_\et(X_{\Qb}, \Ql(1))$ to be the semisimplification of the quotient $H^2_{\rm{\acute{e}t}}(X_{\overline{\Q}}, \Q_{\l}(1))/(\NS(X_{\Qb})\otimes\Q_{\l})$. Thus, up to semisimplification, we have an isomorphism of Galois representations
\[H^2_\et(X_{\overline \Q}, \Ql(1))^{ss}\cong \Tran_\l(X)^{ss}\+(\NS(X_{\overline \Q})\tensor\Ql),\]
and the natural map $\NS(X)\otimes\Q_{\l} \rightarrow (\NS(X_{\overline{\Q}})\otimes \Q_{\l})^{G_\Q}$ is an isomorphism (\cite{Fulton-int-theory}*{19.3.1}, \cite{Totaro-Tate-conj-survey}*{p.~580}). Hence, \Cref{conj:codim one} is equivalent to saying that $\Tran_\l(X)$ does not have a subrepresentation isomorphic to the trivial representation, or equivalently, that $(\Tran_\l(X)^{ss})^{G_\Q}$ is trivial.\footnote{Assuming the Tate conjecture, the Galois action is semisimple \cite{Moonen-remark-Tate-conj}, meaning $\mathrm{Tran}_\ell(X) \cong \mathrm{Tran}_\ell(X)^{ss}$ as Galois modules, and hence their $G_{\mathbb{Q}}$-invariant subspaces coincide.} Notice that as complex vector spaces, $H^2(X_{\overline{\bQ}}, \C)/(\NS(X_{\overline{\bQ}})\otimes \bC)$ contains $H^{2,0}\oplus H^{0,2}$. Hence by the classical comparison theorem, when the genus of the surface $X$ is nonzero, the transcendental part $\Tran_\ell(X)$ has dimension $\geq 2$. In this case, if $\Tran_\l(X)$ is \emph{irreducible}, then the $\l$-adic Tate conjecture is true for $X$. This observation was used by the first two authors to prove the Tate conjecture for certain families of genus three elliptic surfaces, whose transcendental Galois representations decompose as a direct sum of three $3$-dimensional representations, each with regular Hodge--Tate weights $\{-1,0,1\}$ \cite{Duan-Wang-Tate-conj-2020}. 

Now assume that $X$ is an elliptic surface contained in the set $\SS$. Our proof of \Cref{thm:Tate-intro} requires multiple steps. 

First, in \Cref{Sect: elliptic_surfaces,Sect: concrete_eg_5dim}, we prove that for each $X\in\SS$, the Galois representations $(\Tran_\l(X))_\l$ form a $5$-dimensional compatible system, with irregular Hodge--Tate weights $\{-1,-1,0,1,1\}$. In particular, by \Cref{thm:irreducible}, if we can prove that $\Tran_\l(X)$ is irreducible for a single prime $\l$, then it is irreducible for all but finitely many primes $\l$. Consequently, if the codimension-one $\l$-adic Tate conjecture is true for $X$ for some $\l$, then it is true for almost all $\l$.

Second, we classify all the elliptic surfaces $X\in \SS$. In \Cref{Sect: concrete_eg_5dim}, we prove that over $\Qb$, the elliptic surfaces $X\in\SS$ are classified according to their ramification types, which we detail in \Cref{Tab: ramification_type}. Consequently, up to $\Q$-isomorphism, each surface $X\in\SS$ belongs to one of six families $\pi\:\X\to B$, parametrised by a curve $B$.

Third, in \Cref{Sect: spread_out_irr}, given a family of elliptic surfaces $\pi\:\X\to B$, we construct a lisse sheaf whose stalk at each point $b\in B(\Q)$ is the transcendental part $\Tran_\l(\X_b)$ of the specialisation $\X_b$. Our construction requires a careful study of the second cohomology group of an elliptic surface using perverse sheaf theory. Upon constructing this lisse sheaf, using the work of Cadoret and Tamagawa \cites{Cadoret-Tamagawa-open-img-I, Cadoret-Tamagawa-open-img-II}, we prove in \Cref{Prop: spread_out} that if the transcendental $\ell$-adic representation $\Tran_\ell(\mathcal{X}_{b_0})$ is irreducible for some specific specialisation $b_0 \in B(\mathbb{Q})$, then $\Tran_\ell(\mathcal{X}_b)$ is irreducible for all but a finite number of specialisations $b \in B(\mathbb{Q})$. 

Hence, to prove \Cref{thm:Tate-intro}, it suffices to demonstrate that for each of the six families $\pi\: \mathcal{X} \to B$ discussed in  \Cref{Sect: concrete_eg_5dim}, there exists at least one specialisation $b \in B(\Q)$ for which the transcendental $\ell$-adic representation $\Tran_\ell(\mathcal{X}_b)$ is irreducible. We prove this assertion by making \Cref{thm:decomp} explicit. Let $(\rho_\l)_\l$ be a five-dimensional $\Q$-rational weakly compatible system of Galois representations, with Hodge--Tate weights $\{-1,-1,0,1,1\}$. In \Cref{Sect: alg-for-irreducibility}, we develop an algorithm, which takes as input the characteristic polynomials of $\rho_\l(\Frob_p)$ as well as the set of ramified primes, and which terminates if and only if $\rho_\l$ is irreducible for some $\l$, and hence for all but finitely many $\l$.

Finally, in \Cref{Sect: verify_Tate_conj_concrete_family} and Appendix~\ref{Appendix: other_cases}, we implement our algorithm on well-chosen specialisations $\Tran_\l(\X_b)$ for each of the families $\pi\:\X\to B$. In practice, our algorithm terminates extremely quickly: in our examples, it is enough to determine the set of ramified primes and calculate the characteristic polynomials of $\rho_\l(\Frob_p)$ for $p < 20$, after which, we can execute our algorithm by hand!

Since, in each of the six families, the algorithm terminates, the above steps combine to prove \Cref{thm:Tate-intro}.

\section{Compatible systems of Galois representations}\label{sec:compatible-systems}

\subsection{Preliminaries}

Let $K$ and $E$ be number fields and let
\[(\rho_\lambda\:G_{K}\to\GL_n(\elb))_\lambda\]
be a family of continuous semisimple $\lambda$-adic Galois representations, where $\lambda$ runs over the primes of $E$. We note that in this paper, whenever we talk about compatible systems of Galois representations, we always assume that the representations are semisimple. We fix once and for all embeddings $\overline E\hookrightarrow\elb$ for every prime $\lambda$.

\subsubsection{Compatible systems}

\begin{definition}
    We say that $(\rho_\lambda)_\lambda$ is a \emph{Serre compatible system of Galois representations} of $G_K$ with coefficients in $E$ if there exists:
    \begin{itemize}
    \item a finite set of primes $S$ of $K$;
    \item a degree $n$ monic polynomial $Q_v(X)\in E[X]$, for every prime $v\notin S$;
    \end{itemize}
    such that for each prime $\lambda$ of $E$, with residue characteristic $\l$, if $v\notin S$ and $v\nmid \l$, then $\rho_\lambda$ is unramified at $v$ and $\rho_\lambda(\Frob_v)$ has characteristic polynomial $Q_v(X)$.

\end{definition}

\begin{definition}\label{Defn: weakly_compatible_sys}
    We say that a Serre compatible system $(\rho_\lambda)_\lambda$ is \emph{weakly compatible} if for every embedding $\tau\:K\hookrightarrow\overline E$, there exists a size $n$ multiset of integers $H_\tau$, such that for each prime $\lambda$ of $E$, with residue characteristic $\l$ and for each prime $v\mid \l$:
    \begin{itemize}
        \item $\rho_\lambda|_{G_{K_v}}$ is de Rham;
        \item if $v\notin S$, then $\rho_\lambda|_{G_{K_v}}$ is crystalline;
        \item if $v$ corresponds to the embedding $\tau\:K\hookrightarrow\overline E$ via the identification $\Hom_{\Q}(K,\overline E) = \Hom_{\Q}(K,\elb) = \coprod_{v\mid \l}\Hom_{\Ql}(K_v, \elb)$, then $\rho_\lambda|_{G_{K_v}}$ has Hodge--Tate weights $H_\tau$.
    \end{itemize}
\end{definition}

\begin{definition}
    We say that a weakly compatible system $(\rho_\lambda)_\lambda$ is \emph{strictly compatible} if for every prime $v\nmid\l$,
    the Weil--Deligne representation $\mathrm{WD}(\rho_\lambda|_{G_{K_v}})^{F\text{-}ss}$ is independent of $\lambda$.
\end{definition}

\begin{definition}\label{def:ht-regular}
    We say that a weakly compatible system $(\rho_\lambda)_\lambda$ is \emph{Hodge--Tate regular} if for every embedding $\tau\:K\hookrightarrow\overline E$, the multiset $H_\tau$ consists of $n$ \emph{distinct} integers.
\end{definition}

\begin{definition}
    We say that a Serre compatible system $(\rho_\lambda)_\lambda$ is \emph{$E$-rational} if for every prime $\lambda$, $\rho_\lambda$ is isomorphic to a representation
    $G_K\to \GL_n(E_\lambda)$.
\end{definition}

 By \cite{hui-coefficients}*{Thm.~1.3}, there always exists a finite extension $E'/E$ such that $(\rho_\lambda)_\lambda$ is $E'$-rational. Thus, we may always assume, with no loss in generality, that our compatible systems are $E$-rational.

\subsubsection{Purity}

We now specialise to the case that $K=\Q$. In particular, there is just one embedding $\Q\hookrightarrow\overline E$, so we can refer to the Hodge--Tate weights of $(\rho_\lambda)_\lambda$ without ambiguity.

\begin{definition}
    We say that a Serre compatible system $(\rho_\lambda)_\lambda$ is \emph{pure} if there exists an integer $w$ such that for each prime $p\notin S$, each root $\alpha$ of $Q_p(X)$ in $\overline E$, and each embedding $\iota\:\overline E\hookrightarrow{\C}$, we have 
    \[|\iota(\alpha)|^2 = p^w.\]
    In this case, we say that $(\rho_\lambda)_\lambda$ is pure of weight $w$.
\end{definition}

\begin{lemma}\label{lem:pure-ht-weight}
    Assume that the Serre compatible system $(\rho_{\lambda})_\lambda$ is pure of weight $w$, and suppose that $\rho'$ is a $k$-dimensional subrepresentation of $\rho_\lambda$, for some $\lambda$. Then $\det\rho'$ has Hodge--Tate weight $\frac{kw}2$.
\end{lemma}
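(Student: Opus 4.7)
The plan is to compute $|\iota(\det\rho'(\Frob_p))|$ in two different ways at a typical unramified prime $p$, and equate the two answers.

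First, since the representations in the compatible system are semisimple (as is standing throughout the paper), the subrepresentation $\rho'\subseteq\rho_\lambda$ is a direct summand. Hence, for any prime $p\notin S$ with $p\ne\l$, the $k$ Frobenius eigenvalues of $\rho'(\Frob_p)$ form a size-$k$ submultiset of the roots of $Q_p(X)$, and purity of weight $w$ implies that each such eigenvalue $\alpha$ satisfies $|\iota(\alpha)|^2=p^w$. Taking the product over the $k$ eigenvalues gives
\[|\iota(\det\rho'(\Frob_p))|=p^{kw/2}.\]

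Second, $\rho'$ is a subrepresentation of the de Rham representation $\rho_\lambda|_{G_{\Q_\l}}$, so it is itself de Rham, and therefore $\det\rho'$ is a one-dimensional de Rham character of $G_\Q$. A standard consequence of class field theory together with Hodge--Tate--Sen theory is that any such character factors as $\det\rho'=\psi\cdot\chi_{\mathrm{cyc}}^{-n}$, where $\psi$ is of finite order, $\chi_{\mathrm{cyc}}$ denotes the $\l$-adic cyclotomic character, and $n$ is the Hodge--Tate weight of $\det\rho'$. Here I adopt the paper's sign convention, under which $\chi_{\mathrm{cyc}}$ has Hodge--Tate weight $-1$, consistent with the stated Hodge--Tate weights $\{-1,-1,0,1,1\}$ of $H^2_{\et}(X_{\overline\Q},\bQ_\l(1))$. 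Evaluating absolute values at any sufficiently large prime $p$ unramified for $\psi$ yields $|\iota(\det\rho'(\Frob_p))|=p^n$. Comparing with the previous display gives $n=kw/2$, as required.

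The argument is essentially a bookkeeping exercise with no serious obstacle; the main point requiring care is consistency of sign conventions for Hodge--Tate weights of the cyclotomic character. A minor input is the classical classification of one-dimensional de Rham characters of $G_\Q$ as finite-order twists of integer powers of $\chi_{\mathrm{cyc}}$, which is standard.
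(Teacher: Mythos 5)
Your proof is correct and takes essentially the same approach as the paper: compare $|\iota(\det\rho'(\Frob_p))|$ as computed from purity against its value after writing $\det\rho'$ as a finite-order twist of an integer power of the cyclotomic character (via the de Rham property and class field theory). One small slip worth noting: you attribute to the paper the convention that $\chi_{\mathrm{cyc}}$ has Hodge--Tate weight $-1$, but the paper explicitly takes $\epsilon_\ell$ to have Hodge--Tate weight $+1$; since your computation of $|\iota(\det\rho'(\Frob_p))|=p^n$ implicitly uses geometric Frobenius (so $\chi_{\mathrm{cyc}}(\Frob_p)=p^{-1}$) while the paper's stated convention forces arithmetic Frobenius, both versions are internally consistent and give the same conclusion, but you should not label yours as the paper's.
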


\begin{proof}
    We thank Chun Yin Hui for informing us that this lemma holds without the hypothesis of weak compatibility, and for providing this argument.

    Since $\det\rho'(\Frob_p)$ is an algebraic number for all but finitely many primes $p$, it follows from \cite{serre-abelian}*{Prop.~2 on p.~III-25} that $\det\rho'$ is almost locally algebraic, i.e.\ that there is an integer $N$ such that $(\det\rho')^N$ is locally algebraic. Hence, by class field theory, $(\det\rho')^N =  \chi\epsilon_\l^m$ for some integer $m$ and some finite order character $\chi$, where $\epsilon_\l$ denotes the $\l$-adic cyclotomic character.

    Fix an embedding $\iota\:\overline E\hookrightarrow{\C}$ and a prime $p\notin S$. Let $\alpha_1, \ldots, \alpha_k\in\overline E$ be the eigenvalues of $\rho'(\Frob_p)$. Then, since $\rho_\lambda$ is pure of weight $w$, we have $|\iota(\alpha_i)|^2 = p^w$ for each $i$. Hence,
    \begin{equation}\label{eq:ht-comparison-pure}
        p^{2m} = |\iota(\det\rho'(\Frob_p))|^{2N} = \prod_{i=1}^k|\iota(\alpha_i)|^{2N} =p^{Nkw}.
    \end{equation}

    It follows that $m = \frac{Nkw}2$. We see that $(\det\rho'\tensor\epsilon_\l^{-kw/2})^N = \chi$ is a finite order character, and hence $\det\rho'\tensor\epsilon_\l^{-kw/2}$ is too. Taking the convention that $\epsilon_\l$ has Hodge--Tate weight $1$, it follows that $\det\rho'$ has Hodge--Tate weight $\frac{kw}2$.
\end{proof}

\subsubsection{Lie irreducibility and primitivity}

\begin{definition}\label{def: Lie_irred}
    Let $G$ be a group and let $\rho\:G\to\GL(V)$ be a representation of $G$. 
    \begin{enumerate}
        \item We say that $\rho$ is \emph{imprimitive} if $\rho$ is an induced representation from a proper subgroup of $G$. Otherwise, we say $\rho$ is \emph{primitive}.
        \item We say that $\rho$ is \emph{Lie irreducible} if $\rho|_H$ is irreducible for every finite index subgroup $H$ of $G$.
    \end{enumerate}
\end{definition}
Note that by Frobenius reciprocity, Lie irreducible representations are necessarily primitive.

\begin{definition}
    We say that a compatible system $(\rho_\lambda)_\lambda$ is irreducible if $\rho_\lambda$ is irreducible for all primes $\lambda$. We say that $(\rho_\lambda)_\lambda$ is Lie irreducible if $\rho_\lambda$ is Lie irreducible for all primes $\lambda$.
\end{definition}

\subsection{Independence of $\lambda$ results}

Let $(\rho_\lambda)_\lambda$ be a compatible system and suppose that for a fixed prime $\lambda$, $\rho_\lambda$ decomposes as 
	\[\rho_\lambda = \rho_{1}\+ \rho_{2}\+\cdots\+\rho_{k}.\]
We will employ several strategies to prove that this decomposition is independent of $\lambda$.
Our first strategy depends on the following proposition, which is an easy consequence of the Chebotarev density theorem and the Brauer--Nesbitt theorem. (Recall that our convention is that each $\rho_\lambda$ is semisimple.)

\begin{proposition}\label{prop:brauer-nesbitt}
    Fix a prime $\lambda_0$ and suppose that:
    \begin{enumerate}
        \item There exist subrepresentations $\rho_1, \ldots, \rho_k$ such that
        \[\rho_{\lambda_0} = \rho_{1}\+ \rho_{2}\+\cdots\+\rho_{k}.\]
        \item For each $i$, there is a compatible system of Galois representations $(\rho_{i, \lambda})_\lambda$ such that $\rho_i\simeq\rho_{i, \lambda_0}$.
    \end{enumerate}
    Then 
    \[\rho_\lambda = \rho_{1, \lambda}\+ \rho_{2, \lambda}\+\cdots\+\rho_{k, \lambda}\]
    for all primes $\lambda$.
\end{proposition}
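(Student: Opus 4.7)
The plan is to deduce the decomposition at every $\lambda$ from its existence at a single prime by tracking characteristic polynomials of Frobenius and invoking Brauer--Nesbitt together with Chebotarev density. Since every representation in sight is semisimple by our standing convention, it suffices to prove that $\rho_\lambda$ and $\rho_{1,\lambda}\oplus\cdots\oplus\rho_{k,\lambda}$ have the same trace on $G_K$ for each $\lambda$; in fact, by the Chebotarev density theorem, it is enough to match the characteristic polynomials of Frobenius at a density one set of primes.

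The key observation is that the characteristic polynomials are built into the compatible system data. Let $S$ be the (finite) union of the exceptional sets attached to $(\rho_\lambda)_\lambda$ and to each $(\rho_{i,\lambda})_\lambda$, and let $Q_v(X)$, $Q_{i,v}(X) \in E[X]$ denote the characteristic polynomials of $\Frob_v$ attached to these weakly compatible systems, for $v\notin S$. Fix the distinguished prime $\lambda_0$ at which the decomposition $\rho_{\lambda_0} = \rho_1\oplus\cdots\oplus\rho_k$ holds, and choose $v\notin S$ with $v\nmid\l_0$. Evaluating characteristic polynomials on both sides of this decomposition yields
\[
Q_v(X) \;=\; \prod_{i=1}^k Q_{i,v}(X).
\]
Both sides are polynomials in $E[X]$ that, by the definition of weak compatibility, do \emph{not} depend on the choice of $\lambda$. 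Hence the identity holds as an identity in $E[X]$ for every $v\notin S$.

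Now fix an arbitrary prime $\lambda$ of $E$ and a prime $v\notin S$ with $v\nmid\l$. By weak compatibility, $\rho_\lambda(\Frob_v)$ has characteristic polynomial $Q_v(X)$ and $\bigoplus_i\rho_{i,\lambda}(\Frob_v)$ has characteristic polynomial $\prod_i Q_{i,v}(X)$, and these coincide by the previous display. Thus the two semisimple representations $\rho_\lambda$ and $\bigoplus_i \rho_{i,\lambda}$ agree on a set of Frobenius elements whose images are dense in $G_K$ by the Chebotarev density theorem. Since traces are continuous, they agree on all of $G_K$, and the Brauer--Nesbitt theorem then gives the required isomorphism
\[
\rho_\lambda \;\simeq\; \rho_{1,\lambda}\oplus\cdots\oplus\rho_{k,\lambda}.
\]

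There is essentially no obstacle here beyond bookkeeping: the only subtlety is to ensure one compares characteristic polynomials at primes that are simultaneously unramified for $(\rho_\lambda)_\lambda$ and for each $(\rho_{i,\lambda})_\lambda$ and that avoid the residue characteristic of the $\lambda$ under consideration, which is handled by enlarging the finite bad set $S$. Everything else is a direct application of the standard Chebotarev/Brauer--Nesbitt package.
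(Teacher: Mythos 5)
Your proof is correct and is exactly the standard Chebotarev/Brauer--Nesbitt argument that the paper alludes to (the paper states the proposition without proof, simply noting it is ``an easy consequence of the Chebotarev density theorem and the Brauer--Nesbitt theorem''). One minor overstatement: you establish $Q_v = \prod_i Q_{i,v}$ only for $v\notin S$ with $v\nmid\l_0$, then assert it for all $v\notin S$; since the excluded $v\mid\l_0$ form a finite set this does not affect the density-one conclusion, but the claim should be phrased for the cofinite set you actually treat.
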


Our second strategy uses independence of $\lambda$ results for compatible systems in the style of Larsen––Pink \cite{larsen-pink}. For each prime $\lambda$, let $\G_\lambda$ be the \emph{algebraic monodromy group} of $\rho_\lambda$, i.e.\ the Zariski closure of $\rho_\lambda(G_\Q)$ in $\GL_{n,\elb}$. Let $\G_\lambda^\circ$ be its identity connected component, and let $(\G_\lambda^\circ)'$ be the derived subgroup of $\G_\lambda^\circ$.

\begin{definition}
    The \emph{semisimple rank} of $\rho_\lambda$ is the rank of the Lie group $(\G_\lambda^\circ)'$.
\end{definition}

We will make frequent use of the following result of Hui, which built on a previous result of Serre and Larsen--Pink \cite{larsen-pink} that the rank of $\G_\lambda$ is independent of $\lambda$:

\begin{theorem}[\cite{hui-mrl}*{Thm.~3.19, Rem.~3.22}, \cite{hui-coefficients}*{Thm.~1.3}]\label{thm:semisimple-rank}
    Let $(\rho_\lambda\:G_\Q\to\GL_n(\elb))_\lambda$ be a Serre compatible system of Galois representations. Then the semisimple rank of $\rho_\lambda$ is independent of $\lambda$.
\end{theorem}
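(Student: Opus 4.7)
The plan is to follow the Serre--Larsen--Pink theory of Frobenius tori, in the form refined by Hui. Fix the compatible system $(\rho_\lambda)_\lambda$ as in the statement. For each prime $p$ unramified in $\rho_\lambda$, let $T_{p, \lambda}\subset \G_\lambda$ denote the identity component of the Zariski closure of the subgroup generated by $\rho_\lambda(\Frob_p)^{ss}$. After replacing $\Frob_p$ by a suitable power, $T_{p, \lambda}$ is an algebraic torus inside $\G_\lambda^\circ$, and its dimension equals the rank of the multiplicative subgroup of $\overline E^\times$ generated by the eigenvalues $\alpha_1, \dots, \alpha_n$ of $\rho_\lambda(\Frob_p)$. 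Since $\rho_\lambda$ is $E$-rational and $(\rho_\lambda)_\lambda$ is weakly compatible, these eigenvalues lie in $\overline E$ and are independent of $\lambda$, so $\dim T_{p, \lambda}$ is already independent of $\lambda$.

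The next step is to transfer this independence to $\G_\lambda^\circ$ itself. Bogomolov's theorem asserts that $\rho_\lambda(G_\Q)$ is open in $\G_\lambda(E_\lambda)$; combining this with the Chebotarev density theorem, Serre showed that the set of primes $p$ for which $T_{p, \lambda}$ is a \emph{maximal} torus of $\G_\lambda^\circ$ has positive density. Consequently,
\[\rank(\G_\lambda^\circ) = \max_p \dim T_{p, \lambda},\]
and since the right-hand side is $\lambda$-independent, so is the absolute rank of $\G_\lambda^\circ$.

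To pass from absolute rank to semisimple rank, it suffices to show that $\dim Z(\G_\lambda^\circ)^\circ$ is independent of $\lambda$, since the semisimple rank equals $\rank(\G_\lambda^\circ) - \dim Z(\G_\lambda^\circ)^\circ$. The key observation is that the characters of $T_{p, \lambda}$ appearing in the standard representation are precisely the maps $t\mapsto \alpha_i(t)$, so the weight lattice of $T_{p, \lambda}$ acting on $\rho_\lambda$ is determined by the multiplicative relations among the $\alpha_i$. In particular, the sublattice corresponding to characters that factor through the adjoint quotient, equivalently the cokernel measuring the central contribution, can be recovered from eigenvalue data alone and is therefore $\lambda$-independent. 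Applying the positive-density argument to the image of $T_{p, \lambda}$ in the adjoint group $\G_\lambda^\circ / Z(\G_\lambda^\circ)^\circ$ then expresses the semisimple rank as a supremum of $\lambda$-independent quantities.

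The main obstacle is the third paragraph: formally extracting the dimension of the central torus $Z(\G_\lambda^\circ)^\circ$ from the characteristic polynomials of Frobenius, without any a priori control on $\G_\lambda$ itself. Hui handles this via \emph{formal bi-characters}, invariants that simultaneously track the rank of a Frobenius torus and its intersection with the center using only polynomial data, and that therefore depend only on the compatible-system datum. An alternative, closer to the Larsen--Pink original, is to first establish a uniform bound on the component group $\G_\lambda / \G_\lambda^\circ$ (so that, after replacing $G_\Q$ by a single open subgroup independent of $\lambda$, one may assume $\G_\lambda$ is connected) and then isolate the semisimple part of the identity component via a Goursat-style reduction before reapplying the Frobenius torus argument.
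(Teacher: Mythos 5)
This theorem is imported verbatim from Hui \cite{hui-mrl} (Thm.~3.19, Rem.~3.22); the paper supplies no internal proof, so there is no in-paper argument to compare against. Your first two paragraphs are a faithful sketch of the Serre--Larsen--Pink Frobenius-torus machinery that underlies Hui's result: the dimension of a Frobenius torus is the rank of the multiplicative group generated by the eigenvalues, which is $\lambda$-independent by compatibility and $E$-rationality, and a positive-density set of maximal Frobenius tori gives $\lambda$-independence of the absolute rank of $\G_\lambda^\circ$.

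The genuine gap is in your third paragraph, and you flag it yourself. The sentence claiming that ``the sublattice corresponding to characters that factor through the adjoint quotient\dots can be recovered from eigenvalue data alone'' is an assertion, not an argument. The eigenvalue data of Frobenius elements recovers the formal character, i.e.\ the multiset of weights of the standard representation on a maximal torus $T$, but this alone does not identify the subtorus $Z(\G_\lambda^\circ)^\circ\subset T$: knowing which characters of $T$ kill the center is essentially knowing the group, which is circular. What Hui actually proves is the $\lambda$-independence of the \emph{formal bi-character}, i.e.\ the $\GL_n$-conjugacy class of the nested pair of tori $T_{\mathrm{ss}}\subset T$ where $T_{\mathrm{ss}}=T\cap(\G_\lambda^\circ)'$, and this requires an additional input beyond the Frobenius-torus count --- roughly, $\lambda$-independence of the rank of the abelianization $\G_\lambda^\circ/(\G_\lambda^\circ)'$, obtained by recognizing the induced character-valued representations as locally algebraic Hecke characters --- combined with the formal-character independence. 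Your final paragraph correctly names formal bi-characters as the right tool, but naming the tool is not supplying the argument; and the proposed alternative (component-group bound plus a ``Goursat-style reduction'') is left too vague to see how it would actually isolate $\dim Z(\G_\lambda^\circ)^\circ$. As written, this is a correct outline of the route rather than a complete proof.
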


\subsection{Lifting Galois representations}\label{sec:lifting}

A key component of our proofs of \Cref{thm:irreducible,thm:decomp} is the following result, which will allow us to lift crystalline representations $G_\Q\to\SO_3$ and $G_\Q\to\SO_5$ to crystalline representations $G_\Q\to \GL_2$ and $G_\Q\to \Gf$. We are grateful to Stefan Patrikis for explaining how this result follows from the results of \cites{patrikis-sign,patrikis-kuga,patrikis-variations}.

\begin{theorem}\label{thm:lifting}
    Let $\pi\:\widetilde H\twoheadrightarrow H$ be a surjection of linear algebraic groups over $\Qlb$ with kernel equal to a central torus $T$ in $\widetilde H$. Let
    \[\rho\:G_\Q\to H(\Qlb)\]
    be a continuous Galois representation that is unramified almost everywhere and such that $\rho|_{G_{\Ql}}$ is crystalline.
    Then there exists a Galois representation
    \[\widetilde\rho\:G_\Q\to \widetilde H(\Qlb)\]
    that is unramified almost everywhere, with $\widetilde\rho|_{G_{\Ql}}$  crystalline and such that the diagram
        \[\begin{tikzcd}
                                                               & \widetilde H(\Qlb) \arrow[d, two heads,"\pi"] \\
G_{\Q} \arrow[r, "\rho"'] \arrow[ru, "\widetilde\rho", dotted] & H(\Qlb)                                
\end{tikzcd}\]
commutes.
\end{theorem}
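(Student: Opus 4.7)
The plan is to follow the strategy of Patrikis in \cites{patrikis-sign,patrikis-kuga,patrikis-variations}, which treats exactly this kind of lifting problem. Writing $Z = \ker\pi$, we have a short exact sequence
\[1 \to Z \to \widetilde H \xrightarrow{\pi} H \to 1\]
of linear algebraic groups over $\Qlb$, with $Z$ a central torus. Since $\Qlb$ is algebraically closed, the induced sequence on $\Qlb$-points remains exact, and because $Z$ is central in $\widetilde H$, the conjugation action of $G_\Q$ on $Z(\Qlb)$ (via $\rho$) is trivial.

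First, I would construct \emph{some} continuous lift $\widetilde\rho_0 : G_\Q \to \widetilde H(\Qlb)$, ignoring the local conditions. After reducing to a finite extension $L/\Ql$ through which $\rho$ factors, the obstruction to doing so is a class in $H^2_{\mathrm{cts}}(G_\Q, Z(L))$. Choosing a basis of the character lattice of $Z$ reduces this to a question about one-dimensional tori, and Tate's duality theorems for number fields (in the form exploited in \cite{patrikis-variations}) kill the obstruction, so that a continuous set-theoretic lift can be promoted to a genuine homomorphism.

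Second, I would modify $\widetilde\rho_0$ by a twist to arrange the geometric conditions. Any two continuous lifts differ by a continuous character $\chi: G_\Q \to Z(\Qlb)$, which, after choosing a basis of the cocharacter lattice of $Z$, corresponds to a tuple of Galois characters $G_\Q \to \Qlb^\times$. By global class field theory, characters of $G_\Q$ that are unramified almost everywhere and crystalline at $\l$ are precisely products of a finite-order character unramified at $\l$ with an integer power of $\epsilon_\l$. This family is rich enough to cancel, at each bad prime $p\neq \l$, the ramification introduced by $\widetilde\rho_0$, and simultaneously to shift the Sen operator at $\l$ so that $\widetilde\rho_0\otimes\chi$ becomes crystalline.

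The main obstacle will be arranging these local conditions globally compatibly: the twisting character $\chi$ must be realized as a genuine global Galois character of $\Q$, so one must verify that the product of the local requirements satisfies the reciprocity constraint imposed by global class field theory. This is precisely the content of Patrikis' analysis in \cites{patrikis-sign,patrikis-kuga}, where the key point is that, because $Z$ is a torus, the relevant global obstruction itself lies in a Galois cohomology group of a torus over a number field, to which Tate's duality theorems can be applied directly.
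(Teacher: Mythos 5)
Your proposal follows the same broad strategy as the paper (existence of a lift, followed by a character twist to arrange the local conditions), but the mechanics of the twisting step are not right, and this is a genuine gap.

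In your step 1 you only produce a \emph{continuous} lift $\widetilde\rho_0$, with no control on its behaviour at $\ell$. In step 2 you then propose to twist by a global character that is unramified almost everywhere and crystalline at $\ell$, and you characterize these (correctly) as finite-order characters unramified at $\ell$ times integer powers of $\epsilon_\l$. But twisting by such a character only changes $\widetilde\rho_0|_{G_{\Ql}}$ by an unramified twist and an integer shift of the Hodge--Tate weights --- it cannot repair the failure of $\widetilde\rho_0|_{G_{\Ql}}$ to be de Rham. An arbitrary continuous lift differs from a crystalline local lift by an arbitrary continuous character of $G_{\Ql}$, which generically has non-integral Sen weights and is nowhere near de Rham; the phrase \emph{shift the Sen operator so that $\widetilde\rho_0 \otimes \chi$ becomes crystalline} therefore conflates the Hodge--Tate condition with the much stronger de Rham/crystalline condition.

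The paper sidesteps this by invoking two separate results of Patrikis: \cite{patrikis-sign}*{Prop.~5.5} to get a \emph{geometric} global lift $\widetilde\rho_0$ (almost everywhere unramified and de Rham at $\ell$), and \cite{patrikis-variations}*{Cor.~3.2.13} to get a \emph{crystalline local lift} $r$ of $\rho|_{G_{\Ql}}$. Because both of these are de Rham at $\ell$, the mismatch character $\chi$ with $r \simeq \widetilde\rho_0|_{G_{\Ql}} \otimes \chi$ is de Rham, and after replacing $r$ with $r\otimes \epsilon_\l^a$ it has Hodge--Tate weight $0$, hence \emph{finite} image on $I_{\Ql}$. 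The global twist $\widetilde\chi$ then only needs to match this finite-order character on inertia at $\ell$, which is elementary. Your proposal omits both the geometric input for $\widetilde\rho_0$ and the existence of the local crystalline target $r$, and consequently misses the observation that the only mismatch at $\ell$ to be corrected is a finite-order inertial twist --- which is what makes the global class field theory step easy rather than a reciprocity problem to be agonized over.
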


\begin{proof}
    By \cite{patrikis-sign}*{Prop.~5.5}, there exists a Galois representation $\widetilde\rho_0\:G_\Q\to \widetilde H(\Qlb)$ lifting $\rho$ that is \emph{geometric}, i.e.\ $\widetilde\rho_0$ is almost everywhere unramified and $\widetilde\rho_0|_{G_{\Ql}}$ is de Rham.
    Moreover, by \cite{patrikis-variations}*{Cor.~3.2.13}, there is a local Galois representation $r\:G_{\Ql}\to \widetilde H(\Qlb)$ lifting $\rho|_{G_{\Ql}}$ that is crystalline.

    Consider the representations $r$ and $\widetilde\rho_0|_{G_{\Ql}}$. Since both representations lift $\rho|_{G_{\Ql}}$, there is a character $\chi\:G_{\Ql}\to T(\Qlb)$ such that $r \simeq \widetilde\rho_0|_{G_{\Ql}}\tensor\chi$. Since both $r$ and $\widetilde\rho_0|_{G_{\Ql}}$ are de Rham, so is $\chi$. Fixing an isomorphism $T(\Qlb)\simeq (\Qlb\t)^k$ for some integer $k$, we can view $\chi$ as a product of $k$ de Rham characters $\chi_i\:G_{\Ql}\to\Qlb\t$. Each $\chi_i$ can be written as $\epsilon_\l^{a_i}\chi_i'$, where $\epsilon_\l^{a_i}$ is some power of the cyclotomic character and $\chi_i'$ is de Rham with Hodge--Tate weight $0$. Let $\epsilon = \prod\epsilon_\l^{-a_i}\:G_\Q\to T(\Qlb)\simeq(\Qlb\t)^k$. Then replacing $r$ with $r\tensor\epsilon$, we may assume that each $\chi_i$ is de Rham with Hodge--Tate weight $0$. It follows that the restriction ${\chi_i}|_{I_{\Ql}}$ has finite image. Thus we can choose global characters $\widetilde\chi_i\:G_\Q\to \Qlb\t$ lifting $\chi_i$. Let $\widetilde\chi = \prod\widetilde\chi_i\:G_{\Q}\to T(\Qlb)\simeq(\Qlb\t)^k$. The result follows by setting $\widetilde\rho = \widetilde\rho_0\tensor\widetilde\chi$.
\end{proof}

\begin{remark}
    This result holds for representations of $G_K$, $K$ totally real, if $\rho$ satisfies the symmetry hypothesis of \cite{patrikis-sign}*{Prop.~5.5}.
\end{remark}

\section{Subrepresentations of compatible systems}\label{sec:compatible}

\subsection{Residual irreducibility}

In order to apply automorphy results to an irreducible subrepresentation $\sigma$ of $\rho_\lambda$, one typically needs to know that the residual representation $\overline\sigma|_{\Q(\zeta_\l)}$ is irreducible. The following result, due to Hui \cite{Hui}, shows that we can always assume this irreducibility once $\l$ is large enough.

\begin{theorem}\label{thm:hui}
Let $(\rho_\lambda\:G_\Q\to\GL_n(\elb) )_\lambda$ be a Serre compatible system of Galois representations. Suppose further that $(\rho_\lambda)_\lambda$ has the following two properties:
    \begin{enumerate}[label=$(\alph*)$, leftmargin=*]
        \item \emph{Bounded tame inertia weights:} there exist integers $N_1, N_2\ge 0$ such that for all but finitely many $\lambda$, the tame inertia weights of the local representation $(\orho_\lambda^{ss}\,\otimes\,\overline{\epsilon}_\l^{N_1})|_{\Ql}$ belong to $[0, N_2]$. Here $\overline{\epsilon}_\l$ is the mod $\l$ cyclotomic character.
        \item \emph{Potential semistability:} there is a finite extension $K/\Q$ such that for all but finitely many $\lambda$ and for every place $v$ of $K$ not above $\l$, the representation $\orho^{ss}_\lambda|_{K_v}$ is unramified.
    \end{enumerate}
    Then, for all but finitely many $\lambda$:
    \begin{enumerate}[leftmargin=*]
        \item If $\sigma$ is a two or three-dimensional irreducible subrepresentation of $\rho_\lambda$, then $\overline\sigma$ is irreducible.
        \item If $\sigma$ is a two or three-dimensional irreducible subrepresentation of $\rho_{\lambda}$ such that $\sigma|_{K}$ is irreducible for every finite abelian extension $K/\Q$, then $\overline\sigma|_{K}$ is irreducible for every finite abelian extension $K/\Q$.
        \item If $\sigma$ is a four-dimensional irreducible subrepresentation of $\rho_{\lambda}$, with algebraic monodromy group contained in $\SO_4$, such that $\sigma|_{K}$ is irreducible for every finite abelian extension $K/\Q$, then $\overline\sigma|_{K}$ is irreducible for every finite abelian extension $K/\Q$.
    \end{enumerate}
\end{theorem}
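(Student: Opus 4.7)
The plan is to argue by contradiction via a limit argument over $\lambda$. Suppose the conclusion fails for infinitely many primes $\lambda$, so that for each such $\lambda$ there is an irreducible subrepresentation $\sigma_\lambda \subset \rho_\lambda$ (of the relevant small dimension) whose reduction $\bar\sigma_\lambda$ is reducible (or, for parts (2) and (3), becomes reducible after some finite abelian base change). The goal is to extract from these residual decompositions a genuine subrepresentation of $\sigma_\lambda$ in characteristic zero, contradicting its irreducibility.

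The first step is to use the two hypotheses to control the residual representations uniformly in $\lambda$. The potential semistability assumption implies that, after a fixed finite base change $K/\Q$, the semisimplified reductions $\bar\rho_\lambda^{ss}$ are unramified away from $\ell$. The bounded tame inertia weights hypothesis, after a uniform twist by a bounded power of $\overline\epsilon_\l$, forces the tame inertia weights at $\ell$ into the fixed interval $[0, N_2]$, which gives Fontaine--Laffaille--style control over the local-at-$\ell$ behaviour. Together these bounds yield a uniform bound on the Artin conductor of any small-dimensional irreducible constituent of $\bar\rho_\lambda^{ss}$, and by a Hermite--Minkowski type finiteness principle the possible such constituents, across varying $\lambda$, lie in a controlled family.

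The second step combines this residual uniformity with independence-of-$\lambda$ information. By Theorem \ref{thm:semisimple-rank} and finer Larsen--Pink style statements, the algebraic monodromy groups $\G_\lambda$ and their reductions modulo $\ell$ are essentially independent of $\lambda$ for $\ell$ large. If $\bar\sigma_\lambda$ is reducible for infinitely many $\lambda$, pigeonholing on the finite family of possible small-dimensional residual constituents produces a single recurring pattern --- a character or low-dimensional irreducible factor that appears in $\bar\rho_\lambda^{ss}$ for an infinite set of $\lambda$. Using the fixed Frobenius traces of $(\rho_\lambda)_\lambda$ and a Brauer--Nesbitt and Chebotarev argument in the style of Proposition \ref{prop:brauer-nesbitt}, one promotes this recurring residual constituent to a characteristic zero compatible subsystem which must embed in $\rho_\lambda$ for all $\lambda$. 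For the offending $\lambda$, this subsystem meets $\sigma_\lambda$ nontrivially, contradicting irreducibility. Equivalently, one can argue that for $\ell$ large the standard representation of $\G_\lambda$ restricted to $\sigma_\lambda$ does not break up modulo $\ell$ in the small-dimensional range.

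For parts (2) and (3) the same bounds force any finite abelian extension over which $\bar\sigma_\lambda$ becomes reducible to have uniformly bounded degree, allowing the pigeonhole and lifting step to detect induced structure on $\sigma_\lambda$ itself. Part (3) further exploits the exceptional isogeny $\SL_2 \times \SL_2 \to \SO_4$ to reduce a four-dimensional $\SO_4$-valued problem to the two-dimensional situation handled in (1) and (2). The main obstacle throughout is the uniformity step: translating the tame inertia and potential semistability hypotheses into hard, $\lambda$-independent bounds on the possible residual subquotients, and combining them with strong enough independence-of-$\ell$ information to prevent sporadic small-dimensional reductions from appearing without a characteristic zero explanation.
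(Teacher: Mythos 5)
The paper does not prove this theorem from scratch: the proof is a two-sentence citation. Part~(i) is \cite{Hui}*{Cor.~1.3} verbatim, and parts~(ii)--(iii) are deduced from \cite{Hui}*{Thm.~3.12(v)} plus a short remark that the algebraic monodromy groups occurring are of type~A. Hui's actual argument proceeds through the theory of algebraic envelopes of residual images and a comparison of formal characters between the characteristic-zero and residual monodromy groups; the bounded tame inertia weights and potential semistability hypotheses are precisely what make that comparison go through for all but finitely many $\lambda$. Your proposal instead sketches a pigeonhole-and-lift argument, which is a genuinely different route, and in fact the paper itself (Remark~\ref{rem:psw}) points out that this second route, following \cite{patrikis-snowden-wiles}, is known to work but only yields the weaker conclusion ``for all $\lambda$ above a density $1$ set of rational primes'' rather than ``all but finitely many.'' Your sketch asserts the stronger conclusion without explaining why your uniformity bounds deliver more than density~$1$.

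Beyond the mismatch in approach, there are genuine gaps. The central one is the lifting step. You invoke ``a Brauer--Nesbitt and Chebotarev argument in the style of Proposition~\ref{prop:brauer-nesbitt}'' to promote a recurring residual constituent to a characteristic-zero compatible subsystem, but Proposition~\ref{prop:brauer-nesbitt} runs in the opposite direction: it takes as input a family of characteristic-zero subrepresentations already known to form compatible systems and concludes the decomposition is $\lambda$-independent. It provides no mechanism for producing a characteristic-zero subrepresentation from mod-$\ell$ data. Constructing the lift requires first identifying the recurring constituent as, say, a fixed finite-order character twisted by a bounded power of cyclotomic (using the Hodge--Tate and conductor bounds), then showing its characteristic-zero lift $\chi$ has $\chi(\Frob_p)$ a root of $Q_p(X)$ for all $p$ by varying $\ell$; none of this is spelled out. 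Even granting the lift, you conclude only that $\chi$ is a constituent of $\rho_\lambda$, not of $\sigma_\lambda$: $\bar\chi$ could occur in $\bar\sigma_\lambda$ while the characteristic-zero $\chi$ sits in a complementary summand of $\rho_\lambda$, so the claimed contradiction with irreducibility of $\sigma_\lambda$ does not follow without a multiplicity argument you have not given. Finally, in parts~(2) and~(3) the claim that the abelian extensions over which $\bar\sigma_\lambda$ decomposes have bounded degree, and the reduction of part~(3) to the two-dimensional case via $\SL_2\times\SL_2\to\SO_4$, are both stated but not argued, whereas the paper handles both parts at once by observing the monodromy is of type~A and citing \cite{Hui}*{Thm.~3.12(v)} directly.
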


\begin{proof}
    By \cite{hui-coefficients}*{Thm.~1.3}, enlarging $E$ if necessary, we may assume that the compatible system is $E$-rational.
    
    Part $(i)$ is exactly \cite{Hui}*{Cor.~1.3}.
   
   Parts $(ii)$ and $(iii)$ follow from \cite{Hui}*{Thm.~3.12(v)}, using the fact that in each of the cases, the algebraic monodromy group of $\sigma$ is necessarily of type A (in the sense that the root system of its Lie algebra is a product of type A root systems).
\end{proof}

\begin{proposition}[\cite{Hui}*{Thm.~4.1}, \cite{hui-coefficients}*{Thm.~1.3}]\label{strict-compatibility}
     Let $(\rho_\lambda\:G_\Q\to\GL_n(E_\lambda))_\lambda$ be a strictly compatible system of Galois representations. Then $(\rho_\lambda)_\lambda$ satisfies the hypotheses of \Cref{thm:hui}.
\end{proposition}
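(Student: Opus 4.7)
The plan is to verify the two hypotheses of \Cref{thm:hui} directly, using $p$-adic Hodge-theoretic properties of the compatible system and the fact that strict compatibility forces the local Weil--Deligne representations to be independent of $\lambda$.

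For the bounded tame inertia weights condition, I would first discard the finitely many $\lambda$ lying above primes in $S$. For all remaining $\lambda$, the residue characteristic $\l$ satisfies $\l \notin S$, so the definition of a weakly compatible system guarantees that $\rho_\lambda|_{G_{\Q_\l}}$ is crystalline with Hodge--Tate weights contained in the fixed multiset $H$. Standard bounds on the tame inertia weights of crystalline representations---originally due to Fontaine--Laffaille for small Hodge--Tate weights, and extended to arbitrary Hodge--Tate weights by Caruso, Liu, and others---then imply that the tame inertia weights of $(\overline\rho_\lambda^{ss} \otimes \overline\epsilon_\l^{N_1})|_{G_{\Q_\l}}$ lie in a fixed interval $[0, N_2]$, where $N_1$ and $N_2$ depend only on the spread $\max(H) - \min(H)$ and are hence independent of $\lambda$.

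For potential semistability, I would use strict compatibility to extract uniform-in-$\lambda$ control of the ramification at the primes $p \in S$; for $p \notin S$ and $\lambda \nmid p$, the representation $\rho_\lambda$ is already unramified at $p$, so only $p \in S$ requires attention. For each such $p$ and each $\lambda \nmid p$, the Frobenius-semisimple Weil--Deligne representation $\mathrm{WD}(\rho_\lambda|_{G_{\Q_p}})^{F\text{-}ss} = (r_\lambda, N_\lambda)$ is independent of $\lambda$. Since Frobenius semisimplification does not alter the restriction of $r_\lambda$ to inertia, the kernel $U_p := \ker(r_\lambda|_{I_p})$ is an open subgroup of $I_p$ independent of $\lambda$, and by Grothendieck's monodromy theorem $\rho_\lambda|_{U_p}$ is unipotent. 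Choosing, for each $p \in S$, a finite extension $K_p / \Q_p$ with $I_{K_p} \subseteq U_p$, and taking $K/\Q$ to be a finite Galois extension realising all the $K_p$ simultaneously, one finds that $\rho_\lambda|_{I_{K_v}}$ is unipotent for every place $v$ of $K$ not above $\l$, whence $\overline\rho_\lambda^{ss}|_{K_v}$ is unramified.

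The main technical point is ensuring that $U_p$ can be chosen independently of $\lambda$. The tame character $t_\l$ and the monodromy operator $N_\lambda$ both depend on $\l$, but the image of inertia in the Frobenius-semisimplified Weil--Deligne representation does not; this is precisely the content of strict compatibility, and is what allows the argument to be run uniformly in $\lambda$.
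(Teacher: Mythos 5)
Your argument is correct, and it supplies a direct verification where the paper simply cites Hui's Theorem~4.1 rather than reproving it. For the bounded tame inertia weights condition, your route is essentially the intended one: after discarding the finitely many $\lambda$ above $S$, the local representation at $\l$ is crystalline with Hodge--Tate weights in the fixed multiset $H$, and once $\l$ is large enough that the spread of $H$ is less than $\l-1$, Fontaine--Laffaille theory gives the bound; your mention of Caruso, Liu, et al.\ is superfluous here since the HT weights do not grow with $\l$. For potential semistability, your key observation --- that strict compatibility pins down $r_\lambda|_{I_p}$ independently of $\lambda$ (Frobenius-semisimplification leaves the inertia action untouched), so the open kernel $U_p\subseteq I_p$ can be chosen once and for all --- is exactly what makes the argument uniform in $\lambda$; this replaces the alteration-theoretic input (de Jong) that the paper invokes for the purely geometric case in \Cref{prop:etale-compatibility}. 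One small caveat worth flagging: from $\rho_\lambda|_{I_{K_v}}$ unipotent you can conclude directly that every Jordan--H\"older constituent of $\overline\rho_\lambda$ is trivial on $I_{K_v}$, but passing from there to $\overline\rho_\lambda^{\mathrm{ss}}(I_{K_v})=1$ uses that $I_{K_v}$ is normal in $G_{K_v}$ and each irreducible constituent of $\overline\rho_\lambda^{\mathrm{ss}}|_{G_{K_v}}$ has nonzero $I_{K_v}$-invariants; this extra step (implicit in the paper as well) should be spelled out.
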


\begin{proposition}\label{prop:etale-compatibility}
    Let $X$ be a smooth projective variety over $\Q$, and let $(\rho_\lambda\:G_\Q\to \GL_n(\elb))_\lambda$ be a compatible system of Galois representations that occurs as a subquotient of the $i$-th \'etale cohomology group of $X$. Then $(\rho_\lambda)_\lambda$ satisfies the hypotheses of \Cref{thm:hui}.
\end{proposition}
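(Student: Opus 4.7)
The plan is to verify both hypotheses of \Cref{thm:hui} directly from standard properties of $\ell$-adic \'etale cohomology of smooth projective varieties.

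To verify condition $(i)$, I would observe that for all but finitely many $\ell$, the variety $X$ has good reduction at $\ell$, so by Faltings' comparison theorem, $H^i_\et(X_{\overline\Q}, \Ql)$ is a crystalline $G_{\Ql}$-representation with Hodge--Tate weights in $[0,i]$. As a subquotient, $\rho_\lambda|_{G_{\Ql}}$ is also crystalline with Hodge--Tate weights in $[0,i]$. For $\ell$ sufficiently large (in particular, $\ell - 1 > i$), Fontaine--Laffaille theory then implies that the tame inertia weights of $\overline\rho_\lambda^{ss}|_{G_{\Ql}}$ lie in $[0, i]$, verifying condition $(i)$ with $N_1 = 0$ and $N_2 = i$.

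To verify condition $(ii)$, I would invoke de Jong's theorem on alterations to produce a finite extension $K/\Q$ and a smooth projective $K$-variety $Y$ with everywhere semistable reduction, together with a proper generically finite $K$-morphism $f\: Y \to X_K$. Since the composition $f_*\circ f^*$ is multiplication by $\deg f$, the representation $H^i_\et(X_{\overline\Q}, \Ql)|_{G_K}$ is a $G_K$-stable direct summand of $H^i_\et(Y_{\overline\Q}, \Ql)|_{G_K}$. By Grothendieck's local monodromy theorem applied to $Y$, for every place $v$ of $K$ not above $\ell$, the inertia $I_v$ acts unipotently on $H^i_\et(Y_{\overline\Q}, \Ql)|_{G_{K_v}}$, hence on $\rho_\lambda|_{G_{K_v}}$. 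Reducing modulo $\ell$ via any $G_{K_v}$-stable lattice, $\overline\rho_\lambda|_{I_v}$ remains unipotent. Every Jordan--H\"older factor $W$ of $\overline\rho_\lambda|_{G_{K_v}}$ is $G_{K_v}$-irreducible with $W|_{I_v}$ unipotent; by Kolchin's theorem $W^{I_v}\ne 0$, and because $I_v$ is normal in $G_{K_v}$, $W^{I_v}$ is $G_{K_v}$-stable, forcing $W^{I_v} = W$. Thus $I_v$ acts trivially on $\overline\rho_\lambda^{ss}|_{G_{K_v}}$, verifying condition $(ii)$.

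The main obstacle is ensuring the extension $K/\Q$ can be chosen independently of $\ell$; this uniformity is exactly what de Jong's alterations theorem provides, since a single $K$ works for all $\ell$ simultaneously.
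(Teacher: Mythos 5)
Your proof is correct and follows essentially the same route as the paper. For hypothesis $(i)$, both you and the paper reduce to Fontaine--Laffaille theory for $\ell$ large; you invoke Faltings' crystalline comparison to establish crystallinity at good primes, but this is already baked into the definition of a weakly compatible system, so the paper's version is slightly more streamlined (and applies to any weakly compatible system, not just geometric ones). For hypothesis $(ii)$, both approaches are built on de Jong's alterations: the paper cites a pre-packaged uniformity statement (that there is a finite-index subgroup $I'_p\subset I_p$ on which $\rho_\lambda^{ss}$ is trivial for all $\lambda\nmid p$), while you reconstruct this from first principles via the $f_*\circ f^*$ direct-summand trick, the semistable reduction of the alteration, Grothendieck's local monodromy theorem, and Kolchin's theorem to pass from unipotence to triviality on the semisimplification. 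The key uniformity in $\lambda$ — that the finite extension $K/\Q$ is chosen geometrically and hence $\ell$-independently — is correctly identified in both. Your write-up also correctly handles the subquotient reduction implicitly (unipotence of inertia passes to subquotients), where the paper makes this reduction explicit.
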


\begin{proof}
    Hypothesis $(a)$ of \Cref{thm:hui} holds for any weakly compatible system of Galois representations. Indeed, the proof of \cite{Hui}*{Thm.~4.1} only uses the facts that the Hodge--Tate weights of $\rho_\lambda$ are independent of $\lambda$ and that $\rho_{\lambda}|_{\Ql}$ is crystalline for $\lambda$ with sufficiently large residue characteristic. The result is then a simple consequence of Fontaine--Laffaille theory.
    
    For hypothesis $(b)$, it is sufficient to prove that the hypothesis holds when $(\rho_\lambda)_\lambda$ is the Galois representation arising from the $i$-th \'etale cohomology group of $X$. In this case, the result follows from \cite{de-Jong} (see for example \cite{Geisser-de-Jong-alternation-application}*{Thm.~4.3}), which shows that for each prime $p\in S$, there is a finite index subgroup $I'_p$ of the inertia group $I_p$, such that $\rho_\lambda|_{I'_p}^{ss}$ is trivial for all primes $\lambda$ not dividing $p$. We can then choose our number field $K$ such that for all $p$ in $S$ and for all $v\mid p$, the absolute inertia group $I_{K_v}$ of $K_v$ is contained in $I'_p$.
\end{proof}

\begin{remark}\label{rem: theorem conditions}
    While we've stated \Cref{thm:irreducible,thm:decomp} for strictly compatible systems, hypothesis $(b)$ of \Cref{thm:hui} is the only component of their proof that requires any hypotheses that are stronger than $(\rho_{\lambda})_\lambda$ being a weakly compatible system. In particular, by \Cref{prop:etale-compatibility}, the theorems hold for any weakly compatible system that comes from geometry, except with the weaker conclusion that the subrepresentations $\rho_{\lambda, i}$ are weakly compatible.
\end{remark}

\subsubsection{Residual irreducibility in general}

\Cref{thm:hui} requires that the subrepresentation $\sigma$ has algebraic monodromy group of type A. In the general case, we have the following weaker result, which is a generalisation of \cite[Cor.~3.10]{patrikis-snowden-wiles} and \cite{hui-coefficients}*{Thm.~1.4}. We are grateful to Stefan Patrikis for his assistance with the proof.

\begin{theorem}\label{psw-generalisation}
    Let $K$ be a number field and let $(\rho_\lambda)_\lambda$ be a Serre compatible system of $n$-dimensional Galois representations of $G_K$, defined over a number field $E$. For each extension $L/K$, write
    $$\rho_\lambda|_{G_L}\otimes \overline{E}_\lambda = \bigoplus_{i=1}^r \rho_{\lambda, i}^{\oplus m_i}$$
    where the $\rho_{\lambda, i}$ are irreducible and pairwise non-isomorphic.
    Then, for each integer $d$ there exists a density $1$ set of rational primes $\Sigma$ with the following property: if $L/K$ is a field extension of degree at most $d$ and if $\lambda$ lies above a prime $\l$ of $\Sigma$, then 
    \begin{enumerate}
        \item if $\rho_{\lambda_i}|_{G_L}$ is irreducible, then  $\orho_{\lambda, i}|_{G_L}$ is irreducible.
        \item the representations $\orho_{\lambda, i}|_{G_L(\zeta_\l)}$ are irreducible.
    \end{enumerate}
\end{theorem}

We first record the following proposition, which is implicit in \cite{hui-coefficients}. This result is \cite{patrikis-snowden-wiles}*{Thm.~1.7} with the weaker hypothesis that $(\rho_\lambda)_\lambda$ is a Serre compatible system.

\begin{proposition}\label{prop:bh-modified}
    Let $K$ be a number field and let $(\rho_\lambda)_\lambda\:G_K\to \GL_n(E_\lambda)$ be an $E$-rational semisimple Serre compatible system. Let $\rho_\l = \bigoplus_{\lambda\mid\l}\rho_\lambda$, and let $\G_\l$ be its algebraic monodromy group. Then there exist a density $1$ set of rational primes $\Sigma$ and a positive integer $N$ such that for each $\l\in\Sigma$, there is a reductive group scheme $\mathcal{G}_\l/\Z_\l$, with generic fibre $\G_\l$, such that $\mathcal{G}_\l(\Zl)$ contains $\rho_\l(G_K)$ as an open subgroup of index at most $N$. 
\end{proposition}

\begin{proof}
    As explained in \cite{hui-coefficients}*{Sec.~7}, by \cite{hui-coefficients}*{Thm.~1.1(ii)}, there is an abelian $\Q$-rational compatible system $\phi_\l^{ab}$ whose algebraic monodromy group $\G_\l^{ab}$ is exactly $\G_\l^{tor}:=\G_\l^\circ/[\G_\l^{\circ},\G_\l^{\circ}]$. Being abelian, this compatible system is automatically Hodge--Tate, and therefore satisfies the hypotheses of \cite{patrikis-snowden-wiles}*{Thm.~1.6}. The result then follows as in the proof of \cite{patrikis-snowden-wiles}*{Thm.~1.7}, as explained in \cite{patrikis-snowden-wiles}*{Sec.~3.5} and \cite{hui-coefficients}*{Sec.~7}.
    
\end{proof}

\begin{proof}[Proof of \Cref{psw-generalisation}]
 We follow the strategy of the proofs of \cite[Corollaries~3.9, 3.10]{patrikis-snowden-wiles} but with \Cref{prop:bh-modified} in place of \cite{patrikis-snowden-wiles}*{Thm.~1.7}.
    
    By \cite{hui-coefficients}*{Thm.~1.3}, extending $E$ if necessary, we may assume that $(\rho_\lambda)_\lambda$ is $E$-rational and that each irreducible factor of each $\rho_\lambda$ is absolutely irreducible.   
    Let $(\rho_\l)_\l = (\bigoplus_{\lambda\mid\l}\rho_\lambda)_\l$. We consider each $(\rho_\l)_\l$ as a $\Q$-rational $n[E:\Q]$-dimensional compatible system.

    Let $\G_\l$ denote the Zariski closure of the image of $\rho_\l$ in $\GL_{[E:\Q]n}$. By \Cref{prop:bh-modified}, there exists a positive integer $N$ and a density one set of rational primes $\Sigma$ such that for each $\l\in \Sigma$, there is a reductive group scheme $\mathcal{G}_\l/\Z_\l$ with generic fibre $\G_\l$ such that $\mathcal{G}_\l(\Zl)$ contains $\rho_\l(G_K)$ as an open subgroup of index at most $N$. 
    
    Let $\mathcal{G}_\l^0$ be the identity connected component of $\mathcal{G}_\l$, let $\pi_0(\mathcal{G}_\l)$ be its component group, and let $\pi_0\: \mathcal{G}_\l\to \pi_0(\mathcal{G}_\l)$ be the projection map. Fix an extension $L/F$ of degree at most $d$. Then $\mathcal{G}_\l(\Zl)$ contains $\rho_\l(G_L)$ with index at most $Nd$. 
    
    Let $\mathcal{H}_\l = \pi_0^{-1}(\pi_0(\rho_\l(G_L)))$, the preimage in $\mathcal{G}_\l$ of the subgroup of $\pi_0(\mathcal{G}_\l)$ that $\rho_\l(G_L)$ intersects.
 Then $\rho_\l(G_L)$ meets every geometric connected component of $\mathcal{H}_\l$, and $\rho_\l(G_L)$ has index at most $Nd$ in $\mathcal{H}_\l(\Z_\l)$.

We first prove (i).     Consider the representation $\rho_\lambda$ over $E_\lambda$ of the group $\G_\l$, let $\mathcal{V}$ be a $\mathcal{G}_\l$-stable lattice, and let $\mathcal{E} = \End(\mathcal{V})$. Then if $\l\in\Sigma$ is sufficiently large, we have
    $$(\mathcal{E}^{\rho_\l(G_L)})_{\overline\F_\lambda} = (\mathcal{E}^{\mathcal{H}_\l})_{\overline\F_\lambda} = (\mathcal{E}_{\overline\F_\lambda})^{\mathcal{H}_{\l, \overline\F_\lambda}}=(\mathcal{E}_{\overline\F_\lambda})^{\orho_\l(G_L)}.$$
    The first equality holds since $\rho_\l(G_L)$ and $\mathcal{H}_\l$ have the same Zariski closure. The second equality follows from \cite[Proposition 3.5]{patrikis-snowden-wiles} when $\l$ is sufficiently large.
    
    The third equality follows from \cite[Proposition 3.6]{patrikis-snowden-wiles}. To apply this proposition, we need to use the fact that the index of $\orho_\l(G_L)$ in $\mathcal{H}_\l(\overline{\F}_\lambda)$ is bounded independently of $L$ and $\l$ and the fact that for $\l$ large enough, $\orho_\l(G_L)$ meets every geometric component of $\mathcal{H}_{\l,\overline{\F}_\lambda}$. This latter fact is true because, by definition, $\rho_\l(G_L)$ meets every geometric component of $\mathcal{H}_{\l,\Z_\l}$ and, when $\l$ is large enough, the reduction of $\rho_\l(G_L)$ mod $\l$ has kernel coprime to $\#\pi_0(\mathcal{G}_\l)$. This proves $(i)$.

We now prove $(ii)$. Consider the Serre compatible system
\[
(\rho'_\lambda)_\lambda := (\epsilon_\lambda\oplus \rho_\lambda)_\lambda,
\]
where $\epsilon_\lambda$ denotes the $\lambda$-adic cyclotomic character. Enlarging $E$, we can again assume that this is an $E$-rational semisimple compatible system, so after replacing $\Sigma$ by a density $1$ subset, \Cref{prop:bh-modified} applies to $(\rho'_\lambda)_\lambda$. Thus there is a positive integer $N$ such that for each $\l\in\Sigma$, if
\[
\rho'_\l := \bigoplus_{\lambda\mid \l}\rho'_\lambda,
\]
and if $\mathcal G'_\l/\Z_\l$ is the corresponding reductive group scheme, then $\mathcal G'_\l(\Z_\l)$ contains $\rho'_\l(G_K)$ as an open subgroup of index at most $N$. Arguing exactly as above, for each extension $L/F$ of degree at most $d$, if
\[
\mathcal H'_\l := (\pi_0')^{-1}\bigl(\pi_0'(\rho'_\l(G_L))\bigr),
\]
then $\rho'_\l(G_L)$ has index at most $Nd$ in $\mathcal H'_\l(\Z_\l)$, and for $\l$ sufficiently large the reduction $\orho'_\l(G_L)$ meets every geometric connected component of $\mathcal H'_{\l,\overline{\F}_\l}$.

Fix $i$. By part $(i)$, after shrinking $\Sigma$ if necessary, the representation $
\orho_{\lambda,i}|_{G_L}$
is irreducible. To prove that $\orho_{\lambda,i}|_{G_{L(\zeta_\l)}}$ is irreducible, by Frobenius reciprocity, it is enough to show that
\[
\Hom_{G_L}\bigl(\orho_{\lambda,i},\orho_{\lambda,i}\tensor\chi\bigr)=0
\]
for every non-trivial character $\chi$ of $\Gal(L(\zeta_\l)/L)$. Since there are only finitely many roots of unity contained in extensions $L/F$ of degree at most $d$, after removing finitely many primes from $\Sigma$, we may assume that any such character is of the form $\overline\epsilon_\lambda^a$ with $
1\le a\le \l-2$, where $\overline\epsilon_\lambda$ denotes the mod $\lambda$ cyclotomic character.

Let $\mathcal V_i$ be an $\mathcal H'_\l$-stable lattice in $\rho_{\lambda,i}$, and for $1\le a\le \l-2$ set
\[
\mathcal E_a:=\Hom(\mathcal V_i,\mathcal V_i\tensor\epsilon_\lambda^a).
\]
Then, exactly as above, for $\l$ sufficiently large we have
\[
(\mathcal E_a^{\rho'_\l(G_L)})_{\overline{\F}_\lambda}
=
(\mathcal E_a^{\mathcal H'_\l})_{\overline{\F}_\lambda}
=
(\mathcal E_{a,\overline{\F}_\lambda})^{\mathcal H'_{\l,\overline{\F}_\lambda}}
=
(\mathcal E_{a,\overline{\F}_\lambda})^{\orho'_\l(G_L)}.
\]
Equivalently,
\[
\Hom_{G_L}(\rho_{\lambda,i},\rho_{\lambda,i}\tensor\epsilon_\lambda^a)\otimes \overline{\F}_\lambda
\cong
\Hom_{G_L}(\orho_{\lambda,i},\orho_{\lambda,i}\tensor\overline\epsilon_\lambda^a).
\]

The result now follows from the fact that for all but finitely many $\lambda$, $
\Hom_{G_L}(\rho_{\lambda,i},\rho_{\lambda,i}\tensor\epsilon_\lambda^a)=0$ for all $a = 1, \ldots, \l-2$.
\end{proof}

\begin{remark}\label{rem:psw}
    Using \Cref{psw-generalisation}, one can prove versions of \Cref{thm:irreducible,thm:decomp} with the weaker hypothesis that $(\rho_\lambda)_\lambda$ is weakly compatible, but with the weaker conclusion that the system is irreducible for all primes in a set of Dirichlet density $1$.
\end{remark}

\subsection{Compatibility of subrepresentations of compatible systems}

Recall that $(\rho_\lambda)_\lambda$ is a strictly compatible system of representations of $G_\Q$. We deduce the following proposition, which is a generalisation of \cite{CG}*{Prop.~2.7} (c.f.\ \cite{Calegari-even1}):

\begin{proposition}\label{prop:odd}
    Let $(\rho_\lambda)_\lambda$ be a  strictly compatible system. For all but finitely many $\lambda$, if $\sigma$ is an irreducible two-dimensional Hodge--Tate regular subrepresentation of $\rho_\lambda$, then $\sigma$ is odd.
\end{proposition}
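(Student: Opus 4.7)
The plan is to assume, for $\lambda$ of sufficiently large residue characteristic $\ell$, that $\sigma \subset \rho_\lambda$ is a two-dimensional irreducible Hodge--Tate regular subrepresentation that is \emph{even}, and derive a contradiction. Since $\rho_\lambda$ is semisimple and crystalline at $\ell$ outside the bad set of the compatible system, $\sigma$ is a crystalline direct summand. Because $\sigma$ is two-dimensional and irreducible, exactly one of the following holds: either $\sigma$ is induced from a character of a quadratic extension $K/\Q$, or $\sigma|_F$ is irreducible for every finite extension $F/\Q$ (the Lie-irreducible case). I will handle these two cases separately.

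In the induced case, write $\sigma \simeq \Ind_{G_K}^{G_\Q}\chi$. Since $\sigma$ is crystalline, $\chi$ is de Rham at places above $\ell$, so by class field theory $\chi$ corresponds to an algebraic Hecke character of $K$. A standard rigidity argument---that the totally positive units of a degree-$n$ totally real field span the trace-zero hyperplane of $\R^n$ under the logarithmic embedding---forces every algebraic Hecke character of a totally real field to have \emph{parallel} infinity type. Hence $K$ cannot be real quadratic, else $\sigma$ would have Hodge--Tate weights $\{k,k\}$, violating regularity. So $K$ is imaginary quadratic; then $c \notin G_K$ for any complex conjugation $c$, and the induced representation formula gives $\Tr\sigma(c) = 0$ and $\sigma(c)^2 = \id$, so the eigenvalues of $\sigma(c)$ are $\pm 1$ and $\det\sigma(c) = -1$. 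Thus $\sigma$ is odd---a contradiction.

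In the Lie-irreducible case, $\sigma|_F$ is irreducible for every finite extension $F/\Q$, in particular for every finite abelian $F/\Q$. By \Cref{strict-compatibility}, the hypotheses of \Cref{thm:hui} are satisfied, and part (ii) of that theorem yields that $\overline\sigma|_{G_{\Q(\zeta_\ell)}}$ is absolutely irreducible for all but finitely many $\lambda$. After twisting by an appropriate power of the cyclotomic character---which preserves parity, irreducibility, and absolute residual irreducibility on $G_{\Q(\zeta_\ell)}$---we may assume the Hodge--Tate weights of $\sigma$ are $\{0, k-1\}$ with $k \ge 2$. Because the Hodge--Tate weights of $\rho_\lambda$ are independent of $\lambda$, so are those available to $\sigma$, hence $k$ is uniformly bounded; for $\ell$ large relative to this bound, Calegari's theorem on even Galois representations \cite{Calegari-even1} applies and forces $\sigma$ to be odd---another contradiction. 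The main obstacle is that part (ii) of \Cref{thm:hui} requires irreducibility after restriction to every finite abelian extension, a condition that fails precisely in the induced case, so the induced case must be dispatched by the separate rigidity argument above rather than by a uniform application of Hui; the ``all but finitely many $\lambda$'' conclusion hinges on the uniform bound on Hodge--Tate weights, which is what renders both Calegari's and Hui's exclusions of $\lambda$ simultaneously finite.
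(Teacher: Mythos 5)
Your argument is correct and mirrors the paper's own proof: you split into the induced and Lie-irreducible cases, dispatch the former by showing the inducing quadratic field must be imaginary (which forces $\Tr\sigma(c)=0$ and hence $\det\sigma(c)=-1$), and dispatch the latter via Hui's residual-irreducibility theorem combined with a theorem of Calegari on even two-dimensional representations (the paper cites a later packaging, \cite{CG}*{Prop.~2.5}, which absorbs the twist-and-bound bookkeeping that you spell out explicitly; the uniform bound on Hodge--Tate weights is indeed what makes the excluded set of $\lambda$ finite).

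The one place where your reasoning needs repair is the opening dichotomy. It is \emph{not} true that a two-dimensional irreducible representation is either induced from a quadratic extension or Lie irreducible merely ``because $\sigma$ is two-dimensional and irreducible'': a twist of an Artin representation with projective image $A_4$, $S_4$, or $A_5$ is neither (it has a finite-index subgroup over which it becomes reducible, yet it is not induced, since these groups have no index-two subgroup). The Hodge--Tate regularity hypothesis is exactly what eliminates this third possibility, and the paper invokes it explicitly at this step: if $\sigma|_K$ is reducible for some finite $K$, regularity forces $\sigma|_K$ to split into two \emph{distinct} characters, so Clifford theory produces an index-two stabilizer and hence an induction from a quadratic subfield. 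Your proof goes through once you either insert this Clifford argument or appeal to the trichotomy of \cite{patrikis-variations}*{Prop.~3.4.1} (twist-of-Artin / induced / Lie-irreducible) and rule out the twist-of-Artin branch by regularity.
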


\begin{proof}
    We first observe that if $\sigma|_K$ is reducible for some finite extension $K/\Q$, then we can assume that $K$ is a quadratic extension. Indeed, if $\sigma|_K$ is reducible, then since $\sigma$ is Hodge--Tate regular, $\sigma|_K$ is a sum of distinct characters, and it follows from Clifford theory that $K$ contains a quadratic subextension $K'$ such that $\sigma|_{K'}$ splits as a sum of two characters, and $\sigma$ is induced from either of these characters. 
    
    Moreover, $K$ must be imaginary. Indeed, if $K$ is real, then, by class field theory, every Hodge--Tate character of $G_K$ is of the form $\chi\epsilon_\l^i$, where $\chi$ is a finite order character and $\epsilon_\l$ is the cyclotomic character. Thus $\Ind_K^\Q\chi\epsilon_\l^i$ is not Hodge--Tate regular.

    Hence, if $\sigma|_K$ is reducible for some finite extension $K/\Q$, then $\sigma$ is induced from an imaginary quadratic extension, so $\Tr\sigma(c) = 0$. It follows that $\det\sigma(c) = -1$.
    
    Thus, we may assume that $\sigma|_{K}$ is irreducible for every finite abelian extension $K/\Q$. For all but finitely many primes $\lambda$, by \Cref{thm:hui}, we deduce that $\overline{\sigma}|_K$ is irreducible for all finite abelian $K/\Q$. In particular, $\overline\sigma|_{\Q(\zeta_\l)}$ is irreducible.
    
    Throwing away finitely many $\lambda$, the result follows from \cite{CG}*{Prop.~2.5}.
\end{proof}

    \begin{proposition}\label{lem:2d-3d-compatible}
        Let $(\rho_\lambda)_\lambda$ be a strictly compatible system. For all but finitely many primes $\lambda$, if $\rho_\lambda$ contains an irreducible, Hodge--Tate regular two- or three-dimensional subrepresentation $\sigma$, and if $\sigma\simeq\sigma\dual\tensor\chi$ for some character $\chi$, then $\sigma$ is contained in an absolutely irreducible strictly compatible system.
    \end{proposition}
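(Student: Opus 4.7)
The plan is to treat the two- and three-dimensional cases separately, reducing in each to a modularity lifting theorem (à la Pilloni--Stroh) to show $\sigma$ is modular, and hence lies in a compatible system.

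\textbf{Two-dimensional case.} First, \Cref{prop:odd} shows that, for all but finitely many $\lambda$, $\sigma$ is odd. If $\sigma|_K$ is reducible for some finite abelian $K/\Q$, then Clifford theory gives $\sigma\cong\Ind_{G_{K_0}}^{G_\Q}\psi$ for some quadratic subextension $K_0/\Q$, which must be imaginary (else $\sigma$ would fail to be Hodge--Tate regular). Class field theory extends $\psi$ to a compatible system $(\psi_\lambda)_\lambda$ of characters, and $(\Ind_{G_{K_0}}^{G_\Q}\psi_\lambda)_\lambda$ is a compatible system containing $\sigma$. Otherwise $\sigma|_K$ is irreducible for every finite abelian $K/\Q$, so \Cref{thm:hui}$(ii)$ yields irreducibility of $\overline\sigma|_{\Q(\zeta_\l)}$ for all but finitely many $\lambda$. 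The Pilloni--Stroh modularity lifting theorem then shows $\sigma$ is modular, and the Galois representations attached to the associated modular form furnish the desired compatible system.

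\textbf{Three-dimensional case.} The plan is to reduce to the two-dimensional case via the exceptional isomorphism $\SO_3\cong\PGL_2$. After twisting by a square root of $\chi^{-1}$, $\sigma$ is self-dual orthogonal, and, by irreducibility (with an additional twist by a quadratic character if necessary) we may assume $\sigma$ factors through $\SO_3$. Applying \Cref{thm:lifting} to the central isogeny $\GL_2\twoheadrightarrow\PGL_2$ produces a crystalline lift $r\:G_\Q\to\GL_2(\Qlb)$ with $\Sym^2 r\cong\sigma$ up to a character twist. Since $\sigma$ is irreducible and Hodge--Tate regular, so is $r$. Running the two-dimensional argument on $r$ then produces a compatible system $(r_\lambda)_\lambda$ containing $r$, whence $(\Sym^2 r_\lambda)_\lambda$, twisted appropriately, contains $\sigma$.

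\textbf{Main obstacle.} The key subtlety is that in the three-dimensional case the lift $r$ is not itself a subrepresentation of $\rho_\lambda$, so \Cref{prop:odd} cannot be applied directly to obtain oddness of $r$. Since $r(c)^2 = I$, the eigenvalues of $r(c)$ are $\pm 1$, and oddness is equivalent to those eigenvalues being distinct, i.e.\ to $\sigma(c)$ being non-trivial in $\SO_3$. The lift $r$ is determined only up to twist by a character of $G_\Q$ that is crystalline at $\l$, and the delicate case is when $\sigma(c)$ is trivial in $\SO_3$, so that $r(c)$ is scalar for every choice of twist. Ruling out (or otherwise handling) this totally even case -- either by exploiting Hodge--Tate regularity of $\sigma$ to forbid it, or by substituting a direct automorphy input for three-dimensional orthogonal representations in place of the symmetric-square reduction -- is the most delicate step of the proof.
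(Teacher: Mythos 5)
Your two-dimensional argument follows the paper's skeleton but invokes the wrong modularity theorem. Pilloni--Stroh is tailored to the Hodge--Tate \emph{irregular} (weight one) situation, i.e.\ to $\sigma$ with Hodge--Tate weights $\{0,0\}$; here $\sigma$ is assumed Hodge--Tate \emph{regular}, so the correct input is \cite{BLGGT}*{Thm.~C}, together with potential diagonalisability \cite{gao-liu} and residual irreducibility from \Cref{thm:hui}$(ii)$, exactly as the paper does. Pilloni--Stroh is the right tool in the companion results \Cref{lem:2d-irregular} and \Cref{lem:3d-irregular}, but not here. Aside from that mis-citation, your two-dimensional branch (splitting into the induced case and the Lie-irreducible case) matches the paper.

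Your three-dimensional argument takes a genuinely different route --- lifting through $\SO_3\cong\PGL_2$ to a two-dimensional representation $r$ and running symmetric square --- and, as you correctly flag under ``main obstacle,'' it founders exactly there: $r$ is not a subrepresentation of $\rho_\lambda$, so neither \Cref{prop:odd} nor \Cref{thm:hui} applies to $r$ directly, and Hodge--Tate regularity of $\sigma$ places no constraint on $\sigma(c)$ (it governs the $p$-adic Hodge filtration, not the archimedean one); the totally even case $\sigma(c)=1$ in $\SO_3$ cannot be excluded from first principles. The gap is real, and the paper sidesteps it entirely: a self-dual representation of \emph{odd} dimension is automatically orthogonal, and a short computation with $\det\sigma$ shows the multiplier $\mu$ satisfies $\mu(c)=1$, so $\sigma$ is automatically odd essentially self-dual in the sense of \cite{BLGGT}. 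One can therefore apply \cite{BLGGT}*{Thm.~C} directly to the three-dimensional $\sigma$ with no $\GL_2$-lift at all. The $\SO_3\cong\PGL_2$ reduction you sketch is exactly the paper's strategy for the irregular \Cref{lem:3d-irregular}, where BLGGT fails and one must pass to a weight-one lift and Pilloni--Stroh --- but there the oddness of the lift is supplied by the explicit hypothesis $\Tr\sigma(c)=\pm 1$, which the present proposition neither has nor needs.

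Finally, the statement promises an \emph{absolutely irreducible} compatible system; you should close the loop by citing irreducibility of the Galois representations attached to the relevant automorphic representations (\cite{Ribet77}, \cite{blasius-rogawski}), as the paper does.
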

    
    \begin{proof}

     Since $\sigma$ is Hodge--Tate regular, if $\sigma|_K$ is reducible for some finite extension $K/\Q$, then by Clifford theory, $\sigma$ is induced from a one-dimensional representation, in which case the result follows from class field theory.
    	
    	If $\sigma|_K$ is irreducible for all finite extensions $K/\Q$ and $\lambda$ is large enough, then $\sigma$ satisfies all the conditions of \cite{BLGGT}*{Thm.~C}. Indeed, odd essential self-duality follows from \Cref{prop:odd} if $\sigma$ is two-dimensional, and is automatic if $\sigma$ has odd dimension. Residual irreducibility follows from \Cref{thm:hui}$(ii)$ and potential diagonalisability follows from \cite{gao-liu}. Hence, by \cite{BLGGT}*{Thm.~C}, $\sigma$ is contained in a strictly compatible system of $G_\Q$. The absolute irreducibility of this compatible system follows from the irreducibility of Galois representations associated to the corresponding automorphic representations \cites{Ribet77, blasius-rogawski} (see also \cite{hui-BLMS}*{Thm.~1.1}).
    \end{proof}

            \begin{proposition}\label{lem:2d-irregular}
        Let $(\rho_\lambda)_\lambda$ be a  strictly compatible system. For all but finitely many primes $\lambda$, if $\rho_\lambda$ contains an irreducible two-dimensional subrepresentation $\sigma$, such that:
        \begin{itemize}
            \item $\sigma$ is odd: for any complex conjugation $c$, $\Tr\sigma(c) = 0$
            \item $\sigma$ has Hodge--Tate weights $\{a,a\}$ for some $a\iZ$
        \end{itemize}
        then $\sigma$ is contained in an absolutely irreducible strictly compatible system.
    \end{proposition}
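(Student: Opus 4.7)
My plan is to show that, after twisting, $\sigma$ is an Artin representation; the desired absolutely irreducible compatible system is then obtained by varying the embedding $\overline{\Q}\hookrightarrow\overline{\Q}_\ell$ and untwisting. First, by twisting with $\epsilon_\ell^{-a}$, I reduce to the case where $\sigma$ has Hodge--Tate weights $\{0,0\}$. This twist preserves oddness (since $\epsilon_\ell(c) = -1$ for complex conjugation $c$) and irreducibility, and the compatible system containing the original $\sigma$ can be recovered by twisting back with the cyclotomic compatible system.

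I then split into two cases based on whether $\sigma$ remains irreducible after restriction to finite abelian extensions. If $\sigma|_K$ is reducible for some finite abelian $K/\Q$, Clifford theory says $\sigma|_K$ decomposes either as a sum of two distinct characters, in which case $\sigma \simeq \Ind_{K'}^{\Q}\chi$ for some quadratic subextension $K'\subset K$, or as a scalar representation, in which case the image of $\sigma$ in $\PGL_2(\overline{\Q}_\ell)$ factors through $\Gal(K/\Q)$ and is therefore finite abelian. In the latter subcase, since $\sigma$ is irreducible and two-dimensional, this projective image cannot be cyclic (else the preimage in $\GL_2$ would be abelian and $\sigma$ would decompose as a sum of characters), so it must be the Klein four group, making $\sigma$ dihedral and again induced from a character of a quadratic extension. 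Thus in either subcase $\sigma = \Ind_{K'}^{\Q}\chi$ for some quadratic $K'/\Q$; the Hodge--Tate weight $\{0,0\}$ condition forces $\chi$ to have trivial infinity type at each archimedean place of $K'$, and by class field theory $\chi$ must be finite-order, so $\sigma$ is Artin.

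Otherwise, $\sigma|_K$ is irreducible for every finite abelian extension $K/\Q$. For all but finitely many primes $\lambda$, \Cref{thm:hui}$(ii)$ then implies $\overline{\sigma}|_{\Q(\zeta_\ell)}$ is absolutely irreducible, and combined with oddness, two-dimensionality, and Hodge--Tate weights $\{0,0\}$, this matches exactly the hypotheses of the theorem of Pilloni--Stroh \cite{Pilloni-Stroh}, which identifies $\sigma$ with the Galois representation attached to a weight-$1$ modular form; by Deligne--Serre, such a representation is an Artin representation. In both cases $\sigma$ is Artin, and varying the embedding $\overline{\Q}\hookrightarrow\overline{\Q}_\ell$ produces the desired absolutely irreducible compatible system (after untwisting). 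I expect the main subtlety to be the ``potentially scalar'' subcase of the reducible case, which requires the projective-image analysis sketched above to reduce back to the dihedral (induced) setting.
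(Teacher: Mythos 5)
Your proof is correct, and it takes a slightly different route from the paper's. The paper first splits on whether $\sigma$ is Artin: the Artin case is dispatched by Pilloni--Stroh Thm~0.3, and the non-Artin case uses Patrikis \cite{patrikis-variations}*{Prop.~3.4.1} (Lie irreducible / induced / Artin trichotomy) to establish Lie irreducibility, then applies Hui's \Cref{thm:hui}$(ii)$ and Pilloni--Stroh Thm~0.2. You instead split directly on whether $\sigma|_K$ is reducible over some finite abelian $K/\Q$ --- which is precisely the dichotomy adapted to the hypothesis of \Cref{thm:hui}$(ii)$. In your reducible branch, the Clifford-theoretic analysis (including the ``potentially scalar'' subcase, where the projective image is forced to be the Klein four group since finite abelian subgroups of $\PGL_2$ are cyclic or $V_4$, and cyclic is ruled out by irreducibility) correctly shows $\sigma$ is dihedral; the weight-$\{0,0\}$ condition then forces the inducing Hecke character to have trivial infinity type and hence be finite-order, so $\sigma$ is Artin without invoking any modularity theorem. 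In your irreducible branch, you apply Hui and Pilloni--Stroh Thm~0.2, as the paper does, except that your case may include Artin representations with exceptional ($A_4$, $S_4$, $A_5$) projective image --- this is fine, since such representations satisfy the hypotheses of Thm~0.2 (odd, HT weights $\{0,0\}$, residually irreducible over $\Q(\zeta_\ell)$ for $\ell$ large). Net effect: your argument avoids the reference to Patrikis's primitivity dichotomy and to Pilloni--Stroh Thm~0.3, at the cost of a more hands-on Clifford analysis in the dihedral case; both routes are valid.
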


    \begin{proof}
        Replacing $\sigma$ with $\sigma\tensor\epsilon_\l^{-a}$, we may assume, with no loss in generality, that $a = 0$.
    
        We will show that if $\lambda$ is large enough, then either $\sigma$ is an Artin representation, or $\sigma$ is modular, in the sense that it is the Galois representation associated to a weight $1$ modular form (and hence is in any case Artin). In particular, by \cite{DeligneSerre}, $\sigma$ is contained in a strictly absolutely irreducible compatible system.

        If $\sigma$ is Artin, then $\sigma$ is automatically contained in a strictly compatible system.     
        So suppose that $\sigma$ is not an Artin representation. Then, by \cite{patrikis-variations}*{Prop.~3.4.1}, $\sigma$ is either Lie irreducible or an induced representation. But $\sigma$ cannot be induced: if it were an induced representation, it would have to be induced from a character with parallel Hodge--Tate weights $0$, i.e.\ an Artin character, thus making $\sigma$ Artin as well.
        
        Thus, $\sigma$ is Lie irreducible and by \Cref{thm:hui}, taking $\lambda$ large enough, it follows that $\overline\sigma|_{\Q(\zeta_\l)}$ is irreducible.  Thus, $\sigma$ satisfies the hypotheses of \cite{Pilloni-Stroh}*{Thm.~0.2}, so it is modular, and the result follows.        
    \end{proof}

        \begin{proposition}\label{lem:3d-irregular}
        Let $(\rho_\lambda)_\lambda$ be a strictly compatible system. For all but finitely many primes $\lambda$, if $\rho_\lambda$ contains an irreducible three-dimensional subrepresentation $\sigma$, such that:
        \begin{itemize}
            \item $\sigma\simeq\sigma\dual\tensor\chi$ for some character $\chi$;
            \item for any complex conjugation $c$, $\Tr\sigma(c) = \pm1$;
            \item $\sigma$ has Hodge--Tate weights $\{0,0,0\}$
        \end{itemize}
        then $\sigma$ is contained in an absolutely irreducible strictly compatible system.
    \end{proposition}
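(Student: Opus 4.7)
The plan is to reduce the statement to the two-dimensional case via the exceptional isomorphism $\SO_3 \cong \PGL_2$, ultimately applying Pilloni--Stroh's weight-one modularity theorem to a lift of $\sigma$ to $\GL_2$.

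First, I would use the essential self-duality $\sigma \simeq \sigma\dual\tensor\chi$ together with the fact that $\sigma$ has parallel Hodge--Tate weights $\{0,0,0\}$ to arrange, after twisting $\sigma$ by a character of Hodge--Tate weight $0$, that $\sigma$ factors through $\SO_3(\Qlb) \cong \PGL_2(\Qlb)$. Applying \Cref{thm:lifting} to the central extension $\GL_2 \twoheadrightarrow \PGL_2$ then produces a continuous lift $r'\:G_\Q \to \GL_2(\Qlb)$, unramified almost everywhere and crystalline at $\l$, such that $\sigma \simeq \Sym^2 r' \tensor (\det r')^{-1}$. A short Hodge--Tate weight computation shows that $r'$ has parallel weights $\{a,a\}$ for some $a\iZ$, and twisting $r'$ by $\epsilon_\l^{-a}$ normalises these weights to $\{0,0\}$. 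The irreducibility of $\sigma$ forces $r'$ to be irreducible. Moreover, since $c^2 = 1$, the matrix $r'(c)$ satisfies $r'(c)^2 = I$ and so has eigenvalues in $\{\pm 1\}$; if the two eigenvalues coincided then $\sigma(c) = I$, giving $\Tr\sigma(c) = 3$ and contradicting the hypothesis. Hence $r'(c)$ has eigenvalues $\{1,-1\}$, so $\det r'(c) = -1$ and $r'$ is odd.

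Next I would split into two cases according to whether $\sigma$ is Lie irreducible. If it is not, then, being three-dimensional and irreducible, Clifford theory forces $\sigma$ to be induced from a character of a cubic extension $L/\Q$; by class field theory that character extends to a compatible system of $G_L$, and induction yields a compatible system of $G_\Q$ containing $\sigma$. If instead $\sigma$ is Lie irreducible, I would apply \Cref{thm:hui}(ii) to the subrepresentation $\sigma \subset \rho_\lambda$ to conclude that, for all but finitely many $\lambda$, $\overline\sigma|_{\Q(\zeta_\l)}$ is irreducible. Since $\overline\sigma \simeq \Sym^2 \overline{r'} \tensor (\det \overline{r'})^{-1}$, reducibility of $\overline{r'}|_{\Q(\zeta_\l)}$ would force reducibility of $\overline\sigma|_{\Q(\zeta_\l)}$, so $\overline{r'}|_{\Q(\zeta_\l)}$ is itself irreducible. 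Now $r'$ meets every hypothesis of \cite{Pilloni-Stroh}*{Thm.~0.2}, so $r'$ is modular of weight $1$ and, in particular, lies in a compatible system $(r'_\lambda)$. The system $(\Sym^2 r'_\lambda \tensor (\det r'_\lambda)^{-1})_\lambda$ then contains $\sigma$, and it is absolutely irreducible at almost all $\lambda$ because the irreducibility of $\sigma$ forces $r'$ to be non-dihedral.

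I expect the main obstacle to be the residual irreducibility step: separately handling the non-Lie-irreducible case via induction, cleanly matching the hypotheses of \Cref{thm:hui}(ii), and then transferring residual irreducibility from $\overline\sigma$ to $\overline{r'}$ through the symmetric square. A secondary but delicate point is the bookkeeping of all the twists required to realise $\sigma$ inside $\SO_3$ and to normalise $r'$ with Hodge--Tate weights $\{0,0\}$ and $\det r'(c) = -1$, so that the resulting compatible system genuinely contains $\sigma$ itself rather than merely a twist of it.
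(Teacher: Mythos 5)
Your overall strategy — normalise $\sigma$ into $\SO_3$, lift through $\GL_2\twoheadrightarrow\PGL_2$ via \Cref{thm:lifting}, deduce oddness of $r'$, transfer residual irreducibility from $\overline\sigma$ to $\overline{r'}$, apply Pilloni--Stroh, and finish with Deligne--Serre — is exactly the paper's. The oddness argument, the crystalline normalisation, and the residual-irreducibility transfer through $\Sym^2$ are all essentially correct. But there is a genuine gap in your case split.

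You claim that if $\sigma$ is not Lie irreducible, then ``Clifford theory forces $\sigma$ to be induced from a character of a cubic extension.'' That dichotomy (Lie irreducible or induced from a cubic) is not exhaustive. By Patrikis's structure theorem \cite{patrikis-variations}*{Prop.~3.4.1}, a three-dimensional irreducible $\sigma$ that is not Lie irreducible could instead be a twist of a \emph{primitive Artin} representation, e.g.\ tetrahedral, octahedral or icosahedral. Such a representation has finite image, so it is not Lie irreducible, but it is also not induced, and your ``not Lie irreducible'' branch therefore fails to cover it: no class-field-theoretic construction produces its compatible system, and your ``Lie irreducible'' branch (where you invoke \Cref{thm:hui}(ii) and Pilloni--Stroh Thm 0.2) does not apply. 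The paper avoids this by dichotomising on whether $r'$ is Artin: if $r'$ is Artin it is modular by \cite{Pilloni-Stroh}*{Thm.~0.3}, and if $r'$ is not Artin then the Hodge--Tate weight argument forces $r'$ (hence $\sigma'$) to be Lie irreducible, so \Cref{thm:hui} applies. The fix for your argument is small but necessary: you must add a separate branch for $\sigma$ Artin and primitive, and there either observe that any Artin representation is trivially contained in a compatible system, or pass to $r'$ and invoke \cite{Pilloni-Stroh}*{Thm.~0.3}.

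One other minor caveat: $\sigma$ and the normalised $\sigma' \in \SO_3$ differ by a character twist, and so do $\overline\sigma$ and $\Sym^2\overline{r'}\tensor(\det\overline{r'})^{-1}$. You apply \Cref{thm:hui}(ii) to $\sigma\subset\rho_\lambda$ (which is the correct thing to do, since $\sigma'$ need not be a subrepresentation of $\rho_\lambda$), but then transfer irreducibility to $\overline{r'}$ through the isomorphism with $\Sym^2$. Since twisting by a character preserves (residual) irreducibility over $\Q(\zeta_\ell)$, this is harmless, but it should be stated so the bookkeeping of twists you flag as a concern is genuinely closed off.
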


    \begin{proof}
        Note that by class field theory, both $\chi$ and $\det\sigma$ are contained in compatible systems. Let $\sigma' = \sigma\tensor\det\sigma\ii\tensor\chi$. Then $\sigma'$ is self-dual with trivial determinant, so takes values in $\SO_3(\elb)$, and $\sigma'(c)$ has eigenvalues $\{-1,-1,1\}$.

        Via the isomorphism $\SO_3\cong\PGL_2$, we can view $\sigma'$ as a representation $G_\Q\to\PGL_2(\elb)$, which, by \cite{patrikis-sign}*{Prop.\ 5.5}, lifts to a representation $r'\:G_\Q\to\GL_2(\elb)$ with the property that the composition
        \[G_\Q\xrightarrow{r'}\GL_2(\elb)\to\PGL_2(\elb)\to\SO_3(\elb)\]
        is isomorphic to $\sigma'$. Explicitly, we have $\sigma' \simeq \Sym^2(r')\tensor(\det r')\ii$. In particular, $r'(c)$ has eigenvalues $\{-1,1\}$, so $r'$ is odd. Moreover, if $\lambda$ is large enough so that $\rho_\lambda|_{G_{\Ql}}$ is crystalline, then by \Cref{thm:lifting}, we can take $r'$ to be crystalline as well, with Hodge--Tate weights $\{0,0\}$.

        A similar argument to that of \Cref{lem:2d-irregular} shows that $r'$ is modular. Indeed, if $r'$ is an Artin representation, then $r'$ is modular by \cite{Pilloni-Stroh}*{Thm.~0.3}. And if $r'$ is not Artin, then as in \Cref{lem:2d-irregular} it is Lie irreducible.  It follows that $\sigma'$ is Lie irreducible.  By \Cref{thm:hui},  taking $\lambda$ large enough, it follows that $\overline\sigma'|_{\Q(\zeta_\l)}$ is irreducible, and hence that $\overline r'|_{\Q(\zeta_\l)}$ is too. Thus, $r'$ satisfies the hypotheses of \cite{Pilloni-Stroh}*{Thm.~0.2}, so $r'$ is modular (and hence Artin).

        It follows from \cite{DeligneSerre} that $r'$ is contained in an absolutely irreducible compatible system $(r'_\lambda)_\lambda$. Composing with the above maps, we see that $\sigma'$ and hence $\sigma$ is contained in a compatible system. Moreover, this compatible system is absolutely irreducible, since it is reducible if and only if $r'_\lambda$ is an induced representation, but this property is independent of $\lambda$, and does not hold for the initial representation $\sigma'$. 
    \end{proof}

    \begin{remark}\label{rem:totally real}
        \Cref{prop:odd,lem:2d-3d-compatible} hold when $(\rho_\lambda)_\lambda$ is a strictly compatible system of $G_K$ for a totally real field $K$, with exactly the same proof.

        However, while we expect them to be true, we do not know how to prove \Cref{lem:2d-irregular,lem:3d-irregular}, except for compatible systems of representations of $G_\Q$. Specifically, over a totally real field, we cannot handle the case that $\sigma$ is Hodge--Tate irregular at one place, but Hodge--Tate regular at another. In this case, the associated automorphic representation is conjecturally a partial weight $1$ Hilbert modular form. 

        In particular, if we assume that $(\rho_\lambda)_\lambda$ is Hodge--Tate regular, then \Cref{thm:irreducible,thm:decomp} both hold for representations of $G_K$ for a totally real field $K$. However, for irregular representations, we are forced to assume that $K=\Q$.
    \end{remark}

\section{Decompositions of Galois representations}

The goal of this section is to prove \Cref{thm:irreducible,thm:decomp}. Let 
\[(\rho_\lambda\:G_\Q\to \GL_5(\elb))_\lambda\]
be a strictly compatible system of Galois representations.

\subsection{Decompositions of five-dimensional Galois representations}

For a five-dimensional representation $\rho_\lambda\: G_\Q \to \GL_5(\elb)$, the following table lists the possibilities for its monodromy group $G=(\G_\lambda^\circ)'$. The entries in the first column are of the form $(G, V)$, where $G$ is a semisimple Lie group and $V$ is a $5$-dimensional representation of $G$. Here, $\iota$ denotes the usual $n$-dimensional injective representation of $\SL_n$, $\Sp_n$ or $\SO_n$ composed with a block diagonal embedding into $\SL_5$. The entries in the second column indicate the decomposition of $\rho_\lambda|_K$ into subrepresentations, where $\Gal(K/\Q)\cong \G_\lambda/\G_\lambda^\circ$. The third column indicates whether or not $G$ can be embedded into $\GO_5$. The fourth indicates whether or not $G$ can be self-dual up to twist.

\begin{table}[H]
\caption{Possible monodromy groups and semisimple ranks of $\rho_\lambda$}
\resizebox{\columnwidth}{!}{
\begin{tabular}{|l|l|l|l|l|l|}
\hline
&$(G, V)$                                & Decomposition of $\rho_\lambda|_K$ & $\GO_5$-valued &self-dual& Rank of $(\G_\lambda^\circ)'$ \\ \hline
1&$(\SL_5, \iota)$            & Irreducible & No  &No& $4$ \\ \hline
2&$(\SO_5, \iota)$            & Irreducible & Yes &Yes & $2$ \\ \hline
3&$(\SL_2, \Sym^4(\iota))$          & Irreducible & Yes&Yes & $1$ \\ \hline
4&$(\SL_4, \iota)$            & $4+1$       & No &No & $3$ \\ \hline
5&$(\Sp_4, \iota)$             & $4+1$       & No &Yes & $2$ \\ \hline
6&$(\SO_4, \iota)$            & $4+1$       & Yes&Yes & $2$ \\ \hline
7&$(\SL_2, \Sym^3(\iota))$          & $4+1$       & No&Yes  & $1$ \\ \hline
8&$(\SL_3\times\SL_2, \iota)$ & $3 + 2$     & No&No  & $3$ \\ \hline
9&$(\SL_2\times\SL_2, \Sym^2(\iota)\times \iota)$ & $3+2$& No  &Yes     & $2$  \\ \hline
10&$(\SL_2\times\SL_2, \iota)$ & $2+2+1$     & No&Yes & $2$ \\ \hline
11&$(\SL_2, \iota\times\iota)$    & $2+2+1$     & Yes&Yes& $1$ \\ \hline
12&$(\SL_2, \Sym^2(\iota)\times\iota)$    & $3+2$     & No&Yes& $1$ \\ \hline
13&$(\SL_3, \iota)$ & $3+1+1$& No &No & $2$\\ \hline
14&$(\SL_2, \Sym^2(\iota))$ & $3+1+1$    & Yes    &Yes    & $1$      \\ \hline
15&$(\SL_2, \iota)$             & $2+1+1+1$   & No &Yes& $1$ \\ \hline
16&$\{1\}$                    & $1+1+1+1+1$ & Yes&Yes & $0$ \\ \hline
\end{tabular}\label{table:semisimple}
}
\end{table}

\subsection{The non-self-dual case}

We first prove \Cref{thm:irreducible} assuming condition $(iii)$, that there exists a prime $\lambda_0$ such that $\rho_{\lambda_0}$ is both irreducible and does not factor through $\GO_5$:

\begin{proposition}\label{prop:non-self-dual}
    Suppose that for some prime $\lambda_0$, $\rho_{\lambda_0}$ is irreducible and not isomorphic to a representation that factors through $\GO_5(\overline E_{\lambda_0})$. Then $\rho_\lambda$ is irreducible for all primes $\lambda$.
\end{proposition}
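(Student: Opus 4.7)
The plan is to reduce, via \Cref{lem:lie-irred}, to the case in which $\rho_{\lambda_0}$ is Lie irreducible, and then to use \Cref{table:semisimple} together with the rank-invariance theorem \Cref{thm:semisimple-rank} to pin down the derived connected monodromy group at every prime.

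First I would apply \Cref{lem:lie-irred} to $\rho_{\lambda_0}$: in case $(ii)$ of that lemma, $\rho_\lambda \simeq \Ind_K^{\Q}\sigma_\lambda$ is already irreducible for every $\lambda$ and the conclusion is immediate, so I may assume that $\rho_{\lambda_0}$ is Lie irreducible. By the ``Decomposition'' column of \Cref{table:semisimple}, Lie irreducibility forces $(\G_{\lambda_0}^\circ)'$ to be one of $(\SL_5, \iota)$, $(\SO_5, \iota)$, or $(\SL_2, \Sym^4(\iota))$.

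In the latter two cases $\rho_{\lambda_0}$ is, up to conjugation, valued in $\GO_5(\elb)$, and in particular $\rho_{\lambda_0} \simeq \rho_{\lambda_0}^{\vee} \otimes \chi_0$ for some character $\chi_0$. The character $\chi_0$ is algebraic, so by class field theory it belongs to a compatible system $(\chi_\lambda)_\lambda$, and the trace identity $\tr\rho_{\lambda_0}(g) = \chi_0(g)\tr\rho_{\lambda_0}(g^{-1})$ on Frobenius elements, combined with the Chebotarev density theorem and the Brauer--Nesbitt theorem (\Cref{prop:brauer-nesbitt}), propagates to give $\rho_\lambda \simeq \rho_\lambda^{\vee} \otimes \chi_\lambda$ for every $\lambda$. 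A short case analysis on the decomposition of a five-dimensional semisimple representation satisfying this trace-twisted self-duality---using the oddness of $\dim\rho_\lambda = 5$---then shows that each $\rho_\lambda$ admits a non-degenerate $\chi_\lambda$-valued symmetric invariant bilinear form, i.e., is $\GO_5(\elb)$-valued, contradicting the hypothesis.

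Hence $(\G_{\lambda_0}^\circ)' = (\SL_5, \iota)$, which has semisimple rank $4$. Since this case is not among the last five entries of \Cref{table:semisimple}, \Cref{lem:distinct-evals} allows us to enlarge $E$ so that $(\rho_\lambda)_\lambda$ is $E$-rational, and then \Cref{thm:semisimple-rank} implies that every $\rho_\lambda$ has semisimple rank $4$. Inspection of \Cref{table:semisimple} reveals that $(\SL_5, \iota)$ is the unique entry of rank $4$, so $(\G_\lambda^\circ)' = (\SL_5, \iota)$ for every $\lambda$, and $\rho_\lambda$ is Lie irreducible, hence irreducible, for every prime $\lambda$.

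The main obstacle lies in the second paragraph: translating the trace-level identity $\rho_\lambda \simeq \rho_\lambda^{\vee} \otimes \chi_\lambda$ into the assertion that $\rho_\lambda$ is literally $\GO_5(\elb)$-valued requires excluding mixed orthogonal/symplectic decompositions of $\rho_\lambda$, and one must exploit the parity constraint imposed by the odd dimension $5$, together with the classification of possible decompositions encoded in \Cref{table:semisimple}, to push this through.
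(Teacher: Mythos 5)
Your proof follows the same skeleton as the paper's: reduce to the Lie-irreducible case via \Cref{lem:lie-irred}, use \Cref{table:semisimple} to force $(\G_{\lambda_0}^\circ)'\cong\SL_5$, invoke \Cref{lem:distinct-evals} and \Cref{thm:semisimple-rank} to propagate semisimple rank $4$ to every $\lambda$, and read irreducibility off the table again. You go beyond the paper in one place: the paper simply asserts that $\rho_{\lambda_0}$ is not $\GO_5$-valued, even though the hypothesis only supplies \emph{some} prime (possibly different from $\lambda_0$, and possibly indexing a reducible member of the system) at which $\rho_\lambda$ fails to be $\GO_5$-valued. You are right that this transfer needs an argument, and your setup---$\chi_0$ is algebraic, extend it to $(\chi_\lambda)_\lambda$, apply Chebotarev and Brauer--Nesbitt to get $\rho_\lambda\simeq\rho_\lambda^\vee\otimes\chi_\lambda$ for all $\lambda$---is the right start.

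The gap, which you flag yourself, is that trace-twisted self-duality together with odd dimension does \emph{not} force $\rho_\lambda$ to be $\GO_5$-valued. Take $\rho_\lambda=\sigma_4\oplus\tau$ with $\sigma_4$ irreducible and symplectic with similitude $\chi_\lambda$, and $\tau$ a character with $\tau^2=\chi_\lambda$: this is the $(\Sp_4,\iota)$ row of \Cref{table:semisimple}, it satisfies $\rho_\lambda\simeq\rho_\lambda^\vee\otimes\chi_\lambda$, but its only nondegenerate invariant pairing is an alternating block plus a symmetric block, so $\rho_\lambda$ is not $\GO_5$-valued. Moreover $(\Sp_4,\iota)$ and $(\SO_5,\iota)$ share the same semisimple rank and the same formal character, so neither \Cref{table:semisimple} nor rank-invariance excludes it, and the parity of $5$ gives no further information. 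The correct way to close the gap is to note that $\dim\Hom_{G_\Q}(\chi_\lambda,\Sym^2\rho_\lambda)$ and $\dim\Hom_{G_\Q}(\chi_\lambda,\wedge^2\rho_\lambda)$ are determined by the Frobenius characteristic polynomials (via Brauer--Nesbitt), hence independent of $\lambda$; since $\rho_{\lambda_0}$ is irreducible and orthogonal these numbers are $1$ and $0$, and the vanishing of the $\wedge^2$-multiplicity at every $\lambda$ forbids any symplectic self-dual constituent (and indeed any non-self-dual constituent) of $\rho_\lambda$, so every $\rho_\lambda$ is $\GO_5$-valued and the contradiction goes through.
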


\begin{lemma}\label{lem:lie-irred}
    Assume the hypotheses of \Cref{thm:irreducible}. In particular, suppose that $\rho_{\lambda_0}$ is irreducible for some prime $\lambda_0$. Then either
    \begin{enumerate}
        \item $\rho_{\lambda_0}$ is Lie irreducible.
        \item There is a degree $5$ extension $K/\Q$ and a compatible system of one-dimensional representations $(\sigma_\lambda\:G_K\to\elb)_\lambda$ such that for all $\lambda$, $\rho_\lambda$ is irreducible and isomorphic to $\Ind_K^\Q(\sigma_\lambda)$.
    \end{enumerate}
    In particular, if $\rho_{\lambda_1}$ is reducible for some other prime $\lambda_1\ne \lambda_0$, then $\rho_{\lambda_0}$ is Lie irreducible.
\end{lemma}

\begin{proof}
    By assumption, $\rho_{\lambda_0}$ has at least two distinct Hodge--Tate weights, so it cannot be a twist of an Artin representation. By \cite{patrikis-variations}*{Prop.~3.4.1}, there is a finite extension $K/\Q$, a Lie irreducible representation $\sigma$, and an Artin representation $\omega$ such that
	     \[\rho_{\lambda_0}\simeq \Ind_{K}^{\Q}(\sigma\tensor\omega).\]
	   Since $\rho_{\lambda_0}$ is five-dimensional, either $K = \Q$ or $[K:\Q] = 5$. 

    If $K=\Q$, then $\rho_{\lambda_0}\simeq\sigma\tensor\omega$. By assumption, $\rho_{\lambda_0}$ has at least two distinct Hodge--Tate weights, so it cannot be an Artin representation. Hence $\dim\omega = 1$ and $\rho_{\lambda_0}$ is Lie irreducible.

    If $[K:\Q] = 5$, then, without loss of generality, we can write $\rho_{\lambda_0}\simeq\Ind_K^\Q\sigma$ for some one-dimensional representation $\sigma$ of $G_K$. By class field theory, $\sigma$ lives in a compatible system $(\sigma_\lambda)_\lambda$. Hence, by the Chebotarev density theorem and the Brauer--Nesbitt theorem, $\rho_{\lambda} \simeq \Ind_K^\Q\sigma_{\lambda}$ for all $\lambda$. 
    
    Now, by Mackey's irreducibility criterion, $\Ind_{K}^\Q\sigma_\lambda$ is irreducible if and only if for every $\tau \in G_\Q\setminus G_K$, we have
    \[(\sigma_\lambda|_{G_K\cap G_{\tau(K)}})^\tau\not\simeq(\sigma_\lambda|_{G_K\cap G_{\tau(K)}}).\]
    By the Cheboterev density theorem, this condition can be checked on Frobenius elements, and it is therefore independent of $\lambda$. It follows that $\Ind_{K}^\Q\sigma_\lambda$ is irreducible for all $\lambda$. Hence, $\rho_{\lambda} \simeq \Ind_K^\Q\sigma_{\lambda}$ is irreducible for all $\lambda$.
\end{proof}

\begin{remark}\label{rem:lie-irred}
    Under our assumptions on the Hodge--Tate weights of $\rho_\l$, if $\rho_\l$ is irreducible but not Lie irreducible, then $\rho_\l$ cannot be self-dual. Indeed, suppose that $\rho_\l$ is self-dual and irreducible, but not Lie irreducible. Then, by \cite{patrikis-variations}*{Prop.~3.4.1}, since $\rho_\l$ cannot be Artin, we have $\rho_\l\simeq\Ind_K^\Q\chi$, where $K/\Q$ is a degree $5$ extension and $\chi$ is a character of $G_K$. Since $\rho_\l$ is self-dual, so is $\chi$, so $\chi\simeq\chi\dual=\chi\ii$. It follows that $\chi$ is a quadratic character, so in particular, $\chi$ is Artin. But then $\rho_\l$ is also Artin, a contradiction.
\end{remark}

\begin{proof}[Proof of \Cref{prop:non-self-dual}]
    By \Cref{lem:lie-irred}, we may assume that $\rho_{\lambda_0}$ is Lie irreducible. Since $\rho_{\lambda_0}$ is not isomorphic to a representation valued in $\GO_5$, it follows from \Cref{table:semisimple} that $(G_{\lambda_0}^\circ)'\cong \SL_5$ and that $\rho_{\lambda_0}$ has semisimple rank $4$. Hence, by \Cref{thm:semisimple-rank}, it follows that $\rho_\lambda$ has semisimple rank $4$ for all $\lambda$. Using \Cref{table:semisimple} again, we see that $(G_{\lambda}^\circ)'\cong \SL_5$ for all $\lambda$, so $\rho_\lambda$ is irreducible for all $\lambda$.
\end{proof}

\subsection{The orthogonal case}\label{Subsec:decomp_simgle_lambda}

Next, we assume that each $\rho_\lambda$ takes values in $\GO_5(\elb)$. In particular, there is a compatible system of characters $(\chi_{\lambda})_\lambda$ such that $\rho_\lambda\simeq\rho_\lambda\dual\tensor\chi_\lambda$ for all $\lambda$. Let $\eta_\lambda = \det\rho_\lambda\tensor\chi_\lambda^{-2}$. Since
\[\det\rho_\lambda = \det\rho_\lambda\ii\tensor\chi_\lambda^5,\]
we see that $\eta_\lambda^2 = \chi_\lambda$. Replacing each $\rho_\lambda$ with $\rho_\lambda\tensor\eta_\lambda\ii$, we are free to assume, with no loss in generality, that for every $\lambda$:
\begin{itemize}
    \item If $\rho_\lambda$ is pure, then it is pure of weight $0$. In particular, if $\sigma$ is a subrepresentation of $\rho_\lambda$, then $\det(\sigma)$ has Hodge--Tate weight $0$;
    \item $\chi_\lambda = 1$---i.e.\ $\rho_\lambda$ is self-dual---and $\det\rho_\lambda = 1$. In particular, $\rho_\lambda$ is isomorphic to a representation valued in $\SO_5(\elb)$;
    \item $\rho_\lambda$ has Hodge--Tate weights $\{-b, -a, 0, a, b\}$ for some integers $b\ge a\ge 0$ such that either $b>a$ or $a>0$.
\end{itemize}

    There is an exceptional isomorphism
		\[\PGSp_4(\elb)\xrightarrow{\sim}\SO_5(\elb),\]
	which we can use to view each $\rho_\lambda$ as a representation
	\[G_\Q\to\PGSp_4(\elb).\]
	By \cite{patrikis-sign}*{Prop.~5.5}, this projective representation has a geometric lift 
	\[r_\lambda\:G_\Q\to\Gf(\elb),\]
        and by \Cref{thm:lifting}, we may assume that $r_\lambda$ is crystalline at $\l$ if $\rho_\lambda|_{\Ql}$ is. The choice of lift is well-defined up to twisting by a character. 

 \begin{remark}\label{rem:trace-mod-powers}
    We cannot assume that $(r_\lambda)_\lambda$ is a compatible system of Galois representations: see \cite{patrikis-variations}*{Ques.~1.1.9} for further discussion of this hard problem. In particular, we cannot assume that for a good prime $p$, $\Tr(r_\lambda(\Frob_p))$ is independent of $\lambda$. On the other hand, the image of $\Tr(r_\lambda(\Frob_p))$ in $\{0\}\cup\elb\t/\elb^{\times 4}$ is independent of $\lambda$.
 \end{remark}

     Let $\simil r_\lambda$ denote the composition of $r_\lambda$ with the similitude character $\simil\:\Gf(\elb)\to \elb\t$. Then there is an isomorphism
	\begin{equation}\label{eqn:wedge-r2-decomp}
			\wedge^2(r_\lambda)\tensor\simil r_\lambda\ii\simeq \rho_\lambda\+\chi_\triv,
	\end{equation}
	where $\chi_\triv$ denotes the trivial character. 
	In particular, if $\simil r_\lambda$ has Hodge--Tate weight $d$, and if $\{n,m,d-m,d-n\}$ are the Hodge--Tate weights of $r_\lambda$, with $n \le m$, then the Hodge--Tate weights of $\rho_\lambda$ are
	\[ \{-d + n+m,  n-m, 0, m-n, d - n - m\}.\]
	It follows that $n = -a+k, m=k, d=2k+b-a$, for some $k\iZ$. Twisting by the $(k-a)$-th power of the cyclotomic character, we can choose $r_\lambda$ so that it has Hodge--Tate weights $\{0,a,b,a+b\}$.

    In the remainder of this section, we will use the decomposition of $r_\lambda$ to classify the possible decompositions of $\rho_\lambda$, culminating in \Cref{prop:rl-red} when $r_\lambda$ is reducible, and \Cref{prop:rl-irred} when $\rho_\lambda$ is irreducible. The possible decompositions are summarised in \Cref{table-cases}.
	
	\subsubsection{Reducible $r_\lambda$}
 
	Fix a prime $\lambda$ and suppose that $r_\lambda$ is reducible.
	
	\begin{lemma}\label{lem:no3d}
		The representation $r_\lambda$ does not contain an irreducible three-dimensional subrepresentation.
	\end{lemma}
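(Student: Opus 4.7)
The plan is to argue by contradiction, leveraging the symplectic structure on $r_\lambda$. Suppose that $r_\lambda$ contains an irreducible three-dimensional subrepresentation $W$. Since $r_\lambda$ is semisimple (by our standing convention) and four-dimensional, there is a one-dimensional subrepresentation $L$ with $r_\lambda \simeq W \+ L$ as $G_\Q$-representations.

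Now bring in the symplectic structure: because $r_\lambda$ takes values in $\Gf(\elb)$, there is a non-degenerate alternating form $\omega$ on the underlying space, which is $G_\Q$-equivariant up to the similitude character $\simil r_\lambda$. I would then examine the restriction $\omega|_W \colon W \times W \to \elb$. Its radical
\[W_0 = \{w \in W : \omega(w, w') = 0 \text{ for all } w' \in W\}\]
is stable under $G_\Q$ (by the same argument that shows $W^\perp$ is $G_\Q$-stable), so by irreducibility of $W$ it equals either $0$ or $W$.

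Both options are impossible for parity/dimension reasons. If $W_0 = 0$, then $\omega|_W$ is a non-degenerate alternating form on a three-dimensional space, which cannot exist. If $W_0 = W$, then $W$ is a totally isotropic subspace of the four-dimensional symplectic space $r_\lambda$, but the maximal dimension of such a subspace is $2$. Either way we reach a contradiction, so $r_\lambda$ has no irreducible three-dimensional subrepresentation.

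There is no real obstacle here beyond unwinding the symplectic geometry; the key observations are that the similitude character is a character (so $\omega|_W$ is still alternating on $W$ up to a scalar that does not affect the notions of radical or isotropy), and that $r_\lambda$ being semisimple lets us promote the inclusion $W \hookrightarrow r_\lambda$ to a direct sum decomposition. The latter is not strictly needed for the parity argument on $W$, but it makes the structure of the reducible case completely transparent and sets up cleanly for the subsequent analysis of the $2+2$ and $3+1$ cases.
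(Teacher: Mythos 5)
Your proof is correct, and it takes a genuinely different route from the paper's. The paper's argument is representation-theoretic: it computes the exterior square $\wedge^2(r_\lambda) \simeq (\chi\otimes\sigma)\oplus(\sigma^\vee\otimes\det\sigma)$ for the putative decomposition $r_\lambda = \chi\oplus\sigma$, observes that this is a sum of two irreducible three-dimensional pieces, and derives a contradiction with \eqref{eqn:wedge-r2-decomp}, which forces $\wedge^2(r_\lambda)$ to contain a one-dimensional summand (namely $\simil r_\lambda$). Your argument instead works directly with the symplectic geometry: the radical $W_0 = W\cap W^\perp$ of $\omega|_W$ is a $G_\Q$-stable subspace of the irreducible $W$, hence $0$ or $W$, and both options fail for an odd-dimensional $W$ inside a four-dimensional symplectic space (non-degenerate alternating forms have even rank, and totally isotropic subspaces have dimension at most $2$). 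Your version is more elementary and self-contained, and makes visible the general principle that an irreducible subrepresentation of a $\GSp_{2n}$-valued representation has dimension that is either even or at most $n$; the paper's version is less visibly geometric but fits more uniformly with the subsequent $2{+}2$ and $2{+}1{+}1$ cases, all of which are analysed via $\wedge^2(r_\lambda)$. Both are valid.
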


	\begin{proof}
        In general, if $\sigma_1, \sigma_2$ are representations, we have
        \begin{equation}\label{eq:wedge-2}
            \wedge^2(\sigma_1\+\sigma_2)\simeq \bigoplus_{i = 0}^2\wedge^i(\sigma_1)\tensor\wedge^{2-i}(\sigma_2).
        \end{equation}
        Suppose that $r_\lambda = \chi\+\sigma$, with $\chi$ one-dimensional and $\sigma$ irreducible and three-dimensional. Then 
    \[\wedge^2(r_\lambda) \simeq \bigoplus_{i = 0}^2\wedge^{2-i}(\chi)\tensor\wedge^{i}(\sigma)\simeq (\chi\tensor\sigma) \+ \wedge^2(\sigma)\simeq (\chi\tensor\sigma) \+ (\sigma\dual\tensor\det\sigma)\]
         is a direct sum of two irreducible three-dimensional representations, contradicting \eqref{eqn:wedge-r2-decomp}, which states that $\wedge^2(r_\lambda)$ must contain a one-dimensional subrepresentation.
	\end{proof}
	
	\begin{lemma}\label{lem:rl2+1+1}
		Suppose that
		\[r_\lambda = \sigma \+ \chi_1 \+\chi_2,\]
		where $\sigma$ is a two-dimensional irreducible representation and $\chi_1, \chi_2$ are characters. Then
        \[\chi_1\chi_2 = \det\sigma = \simil r_\lambda.\]
	\end{lemma}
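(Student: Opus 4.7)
The plan is to exploit the fact that $r_\lambda$ takes values in $\Gf(\elb)$, which provides two key pieces of structure: first, the similitude relation
\[r_\lambda \simeq r_\lambda\dual \otimes \simil r_\lambda,\]
and second, the determinant identity $\det r_\lambda = (\simil r_\lambda)^2$, which holds because the determinant of the standard four-dimensional representation of $\Gf$ equals the square of the similitude character.

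First, I would decompose both sides of the similitude relation using the assumed decomposition $r_\lambda = \sigma \+ \chi_1 \+ \chi_2$. The right-hand side becomes
\[\sigma\dual \otimes \simil r_\lambda \;\+\; \chi_1\ii \simil r_\lambda \;\+\; \chi_2\ii \simil r_\lambda.\]
Since $\sigma$ is the unique two-dimensional irreducible constituent on either side, the Krull--Schmidt theorem forces $\sigma \simeq \sigma\dual \otimes \simil r_\lambda$. For a two-dimensional representation, $\sigma\dual \simeq \sigma \otimes (\det\sigma)\ii$, so this isomorphism becomes $\sigma \simeq \sigma \otimes (\det\sigma)\ii \otimes \simil r_\lambda$. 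Since $\sigma$ is irreducible, Schur's lemma forces the twisting character to be trivial, i.e., $\det\sigma = \simil r_\lambda$.

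Second, taking determinants of $r_\lambda = \sigma \+ \chi_1 \+ \chi_2$ yields $\det r_\lambda = \det\sigma \cdot \chi_1\chi_2$. Combining this with the $\Gf$ identity $\det r_\lambda = (\simil r_\lambda)^2$ and the equation $\det\sigma = \simil r_\lambda$ established in the previous step, we obtain
\[(\simil r_\lambda)^2 = \simil r_\lambda \cdot \chi_1\chi_2,\]
so $\chi_1\chi_2 = \simil r_\lambda$, completing the chain of equalities $\chi_1\chi_2 = \det\sigma = \simil r_\lambda$.

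There is no real obstacle here: the argument is a direct unwinding of what it means for $r_\lambda$ to take values in $\Gf$, combined with Schur's lemma on the unique two-dimensional constituent. The only subtlety worth double-checking is the passage from $\sigma \simeq \sigma\dual\tensor\simil r_\lambda$ to $\det\sigma = \simil r_\lambda$ rather than merely $\det\sigma = \pm\simil r_\lambda$; this is resolved by applying the identity $\sigma\dual \simeq \sigma\tensor(\det\sigma)\ii$ directly instead of merely taking determinants.
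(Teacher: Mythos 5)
Your proof is correct, and it takes a genuinely different route from the paper's. The paper's argument is driven by the isomorphism $\wedge^2(r_\lambda)\tensor\simil r_\lambda\ii\simeq\rho_\lambda\+\chi_\triv$ (equation \eqref{eqn:wedge-r2-decomp}), which forces $\simil r_\lambda$ to appear as a one-dimensional constituent of $\wedge^2(r_\lambda)$; expanding $\wedge^2(\sigma\+\chi_1\+\chi_2)$ and matching one-dimensional pieces yields the disjunction ``$\det\sigma=\simil r_\lambda$ or $\chi_1\chi_2=\simil r_\lambda$,'' which is then resolved (exactly as in your second step) via $\det r_\lambda=(\det\sigma)\chi_1\chi_2=\simil r_\lambda^2$. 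You bypass the exterior square entirely and instead invoke the essential self-duality $r_\lambda\simeq r_\lambda\dual\tensor\simil r_\lambda$ built into $\Gf$-valuedness, then isolate the unique two-dimensional irreducible constituent by Krull--Schmidt and apply Schur's lemma to get $\det\sigma=\simil r_\lambda$ outright, with no disjunction to resolve. Your version is a bit more direct and self-contained for this specific lemma; the paper's phrasing via $\wedge^2$ is chosen because that decomposition is the organizing tool throughout \Cref{Subsec:decomp_simgle_lambda} (it is what relates $r_\lambda$ to $\rho_\lambda$), so the argument is structured to reuse it. Both are correct.
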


	\begin{proof}
	By \eqref{eqn:wedge-r2-decomp}, $\simil r_\lambda$ is a subrepresentation of 
		\[\wedge^2(r_\lambda) \simeq  \bigoplus_{i = 0}^2\wedge^i(\sigma)\tensor\wedge^{2-i}(\chi_1\+\chi_2)=( \sigma\tensor\chi_1) \+(\sigma\tensor\chi_2) \+ \det\sigma \+ \chi_1\chi_2.\]
		Since $\sigma$ is irreducible, it follows that either $\det\sigma = \simil r_\lambda$ or $\chi_1\chi_2 = \simil r_\lambda$. 
  
        On the other hand, we have $\det r_\lambda = (\det\sigma)\chi_1\chi_2$ and $\det r_\lambda = \simil r_\lambda^2$. It follows that $\chi_1\chi_2 = \det\sigma = \simil r_\lambda$.
	\end{proof}

	\begin{lemma}\label{lem:rl2+2}
		Suppose that $r_\lambda$ decomposes as
		\[r_\lambda\simeq \sigma_1\+\sigma_2\]
		where $\sigma_1, \sigma_2$ are irreducible two-dimensional representations. Then either:
            \begin{enumerate}
                \item Up to reordering, $\sigma_1$ has Hodge--Tate weights $\{0, a+b\}$, $\sigma_2$ has Hodge--Tate weights $\{a,b\}$, and $\det\sigma_1 = \det\sigma_2 = \simil r_\lambda$.
                \item $\sigma_2\simeq\sigma_1\tensor\chi$ for some  character $\chi$.
            \end{enumerate}
	\end{lemma}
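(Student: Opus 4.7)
The strategy is to decompose $\wedge^2 r_\lambda$ explicitly, use \eqref{eqn:wedge-r2-decomp} to locate the similitude character $\simil r_\lambda$ inside this decomposition, and let this location determine which conclusion holds.

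First, I would expand
\[
\wedge^2(\sigma_1 \oplus \sigma_2) \simeq \wedge^2\sigma_1 \oplus (\sigma_1 \otimes \sigma_2) \oplus \wedge^2\sigma_2 \simeq \det\sigma_1 \oplus \det\sigma_2 \oplus (\sigma_1\otimes\sigma_2),
\]
using that $\wedge^2$ of a $2$-dimensional representation is its determinant. By \eqref{eqn:wedge-r2-decomp}, the character $\simil r_\lambda$ is a subrepresentation of $\wedge^2 r_\lambda$, so it must embed into either the one-dimensional summand $\det\sigma_1\oplus\det\sigma_2$ or into the four-dimensional summand $\sigma_1\otimes\sigma_2$. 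These two alternatives will correspond to conclusions (i) and (ii) respectively.

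In the first case, after possibly swapping $\sigma_1$ and $\sigma_2$, I may assume $\simil r_\lambda \simeq \det\sigma_1$. Since $\det r_\lambda = \det\sigma_1 \cdot \det\sigma_2$ and also $\det r_\lambda = (\simil r_\lambda)^2$, I would divide to conclude $\det\sigma_2 \simeq \simil r_\lambda$ as well. Recall that we have normalised $r_\lambda$ to have Hodge--Tate weights $\{0, a, b, a+b\}$, so $\simil r_\lambda$ has Hodge--Tate weight $a+b$; hence the HT weights of $\sigma_1$ and of $\sigma_2$ each sum to $a+b$. A short combinatorial check, using the hypothesis that either $b > a$ or $a > 0$, shows that the only way to partition the multiset $\{0, a, b, a+b\}$ into two pairs each summing to $a+b$ is as $\{0, a+b\} \sqcup \{a, b\}$ (the degenerate case $a = 0$ still fits this description, since then $\{a,b\} = \{0,b\} = \{0, a+b\}$). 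This yields conclusion (i).

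In the second case, I have a nonzero $G_\Q$-equivariant map $\simil r_\lambda \to \sigma_1\otimes \sigma_2$, which by the tensor-Hom adjunction produces a nonzero map $\sigma_1\dual \otimes \simil r_\lambda \to \sigma_2$. Both source and target are irreducible two-dimensional, so Schur's lemma gives an isomorphism $\sigma_2 \simeq \sigma_1\dual \otimes \simil r_\lambda$. Using the standard identity $\sigma_1\dual \simeq \sigma_1 \otimes (\det\sigma_1)\ii$ for a two-dimensional representation, I conclude that $\sigma_2 \simeq \sigma_1 \otimes \chi$ for the character $\chi = \simil r_\lambda \cdot (\det\sigma_1)\ii$, giving conclusion (ii).

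There is no serious obstacle; the argument is essentially a bookkeeping exercise once $\wedge^2 r_\lambda$ has been decomposed. The only point that requires a brief verification is the uniqueness of the Hodge--Tate weight partition in the first case, but this is immediate from the normalisation hypothesis on $(a,b)$.
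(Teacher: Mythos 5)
Your proposal is correct and follows essentially the same route as the paper's proof: decompose $\wedge^2 r_\lambda$ into $\det\sigma_1\oplus\det\sigma_2\oplus(\sigma_1\otimes\sigma_2)$, locate $\simil r_\lambda$ among the summands, and split into cases accordingly; the identity $\det r_\lambda = (\simil r_\lambda)^2$ and a Schur/tensor–Hom argument finish each branch. The only cosmetic difference is that you spell out the Hodge--Tate bookkeeping in case (i) and phrase the case-(ii) step via tensor--Hom adjunction rather than the equivalent observation $\sigma_1\otimes\sigma_2\simeq(\sigma_1^\vee\otimes\sigma_2)\otimes\det\sigma_1$.
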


\begin{proof}
By \eqref{eqn:wedge-r2-decomp} and \eqref{eq:wedge-2},  $\simil(r_\lambda)$ is a subrepresentation of 
		\[\wedge^2(r_\lambda) = \sigma_1\tensor\sigma_2 + \det\sigma_1\+\det\sigma_2.\]
Now, observe that $\sigma_2\simeq\sigma_1\tensor\chi$ if and only if $\chi$ is a subrepresentation of $\sigma_1\dual\tensor\sigma_2$. Since $\sigma_1\tensor\sigma_2\simeq (\sigma_1\dual\tensor\sigma_2)\tensor\det\sigma_1$, we see that $\sigma_2$ is a character twist of $\sigma_1$ if and only if $\sigma_1\tensor\sigma_2$ contains a one-dimensional subrepresentation.
  
  Hence, either $\simil(r_\lambda)$ is a subrepresentation of $\sigma_1\tensor\sigma_2$, in which case $(ii)$ holds, or we have either $\det\sigma _1= \simil r_\lambda$ or $\det\sigma_2 = \simil r_\lambda$. Since $\det\sigma_1\det\sigma_2= \det r_\lambda = \simil r_\lambda^2$, it follows that $\det\sigma_1 = \det\sigma_2 = \simil r_\lambda$, and the Hodge--Tate weights of $\sigma_1$ and $\sigma_2$ must have the same sum, which is case $(i)$.
	\end{proof}

\begin{remark}\label{rem:rl2+2}
    Suppose that $\sigma_2\simeq\sigma_1\tensor\chi$ for some  character $\chi$. Then either:
    \begin{itemize}
        \item $a = b$, $\sigma_1$ has Hodge--Tate weights $\{0,a\}$ and $\sigma_2$ has Hodge--Tate weights $\{a,2a\}$.
        \item $a = 0$ and $\sigma_1, \sigma_2$ both have Hodge--Tate weights $\{0,b\}$.
    \end{itemize}
    The only other possibility, that $a = 0$, $\sigma_1$ has Hodge--Tate weights $\{0,0\}$, and $\sigma_2$ has Hodge--Tate weights $\{b,b\}$ cannot occur by \Cref{lem:pure-ht-weight}: note that by assumption, if $a = 0$, then $\rho_\lambda$ and hence $r_\lambda$ is pure.
\end{remark}

	\begin{proposition}\label{prop:rl-red}
		Suppose that $r_\lambda$ is reducible. Then either:
		\begin{enumerate}
			\item There are irreducible two-dimensional representations $\sigma_1, \sigma_2$, with Hodge--Tate weights $\{0, a+b\}$ and $\{a,b\}$, such that
   \[r_\lambda\simeq\sigma_1\+\sigma_2\]
   and
			\[\rho_\lambda \simeq (\sigma_1\tensor\sigma_2\dual) \+\chi_\triv.\]
			Moreover, $\sigma_1\tensor\sigma_2\dual$ is either irreducible, or a sum of two-dimensional irreducible representations.

			\item There is an irreducible two-dimensional representation $\sigma$ and a character $\chi$ such that
   \[r_\lambda\simeq \sigma\tensor\chi\ii\+ \chi\+\simil r_\lambda\chi\ii\]
   and
			\[\rho_\lambda \simeq \sigma \+ \sigma\dual\+ \chi_\triv.\]

            \item There is a three-dimensional representation $\sigma$, with Hodge--Tate weights $\{-a,0,a\}$ or $\{-b,0,b\}$, a two-dimensional representation $\sigma_1$, and  characters $\chi, \chi_1, \chi_2$ such that
            \[r_\lambda\simeq \sigma_1\+\sigma_1\tensor\chi\]
            and
            \[\rho_\lambda \simeq \sigma \+ \chi_1\+\chi_2.\]
            In this case, $\sigma\simeq\Sym^2(\sigma_1)\tensor\det\sigma_1\ii$ is either irreducible, or a sum of an irreducible two-dimensional representation and a finite order character.
            \item Both $\rho_\lambda$ and $r_\lambda$ are direct sums of one-dimensional representations.
		\end{enumerate}
	\end{proposition}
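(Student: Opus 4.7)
The plan is a direct case analysis on the decomposition of $r_\lambda$. Since $r_\lambda$ is four-dimensional and reducible, \Cref{lem:no3d} rules out an irreducible three-dimensional summand, so the only possibilities are: (a) $r_\lambda\simeq\sigma_1\+\sigma_2$ with each $\sigma_i$ irreducible two-dimensional; (b) $r_\lambda\simeq \sigma\+\chi_1\+\chi_2$ with $\sigma$ irreducible two-dimensional; or (c) $r_\lambda$ is a sum of four characters. Case (c) gives case (iv) of the proposition directly. In each remaining case, I will apply standard identities ($\wedge^2(V\+W)\simeq\wedge^2V\+(V\tensor W)\+\wedge^2W$; $V\tensor V\simeq\Sym^2V\+\det V$ when $\dim V=2$; $V\dual\simeq V\tensor(\det V)\ii$ when $\dim V=2$) to compute $\wedge^2(r_\lambda)\tensor\simil r_\lambda\ii$, then use \eqref{eqn:wedge-r2-decomp} to peel off a copy of $\chi_\triv$ and read off $\rho_\lambda$.

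For case (a), I invoke \Cref{lem:rl2+2}. In its subcase (i), $\det\sigma_1=\det\sigma_2=\simil r_\lambda$, and the computation gives $\wedge^2(r_\lambda)\tensor\simil r_\lambda\ii\simeq\chi_\triv\+\chi_\triv\+(\sigma_1\tensor\sigma_2\dual)$, so $\rho_\lambda\simeq\chi_\triv\+(\sigma_1\tensor\sigma_2\dual)$, matching case (i) of the proposition. In subcase (ii), $\sigma_2\simeq\sigma_1\tensor\chi$; the computation (after absorbing the sign ambiguity of $\simil r_\lambda$ via a choice of lift) produces $\rho_\lambda\simeq\chi\+\chi\ii\+(\Sym^2(\sigma_1)\tensor(\det\sigma_1)\ii)$, which matches case (iii) after a further character twist of $r_\lambda$ normalising the three-dimensional summand as $\Sym^2(\sigma_1)\tensor\chi$. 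For case (b), \Cref{lem:rl2+1+1} gives $\chi_1\chi_2=\det\sigma=\simil r_\lambda$, and the analogous computation yields $\rho_\lambda\simeq\chi_\triv\+(\sigma\tensor\chi_2\ii)\+(\sigma\tensor\chi_1\ii)$, with $\sigma\tensor\chi_1\ii\simeq(\sigma\tensor\chi_2\ii)\dual$ because $\chi_1\chi_2=\det\sigma$; renaming yields case (ii).

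It remains to justify the structural claims on the summands in cases (i) and (iii). In case (i), the four-dimensional representation $\sigma_1\tensor\sigma_2\dual$ contains a one-dimensional subrepresentation exactly when $\sigma_1\simeq\sigma_2\tensor\chi'$ for some character $\chi'$; but this would place us in subcase (ii) of \Cref{lem:rl2+2}, hence in case (iii) of the proposition, not case (i). So $\sigma_1\tensor\sigma_2\dual$ is either irreducible or a sum of two irreducible two-dimensional representations. In case (iii), $\Sym^2(\sigma_1)$ is irreducible unless $\sigma_1$ is dihedral, and in the dihedral case a standard computation with induced representations splits $\Sym^2(\sigma_1)$ into a two-dimensional summand plus a one-dimensional summand; the prescribed Hodge--Tate weights $\{-a,0,a\}$ or $\{-b,0,b\}$ of the three-dimensional summand then force the one-dimensional piece to have Hodge--Tate weight $0$, so it is finite order by class field theory. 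The main obstacle throughout is the careful bookkeeping of twists: since the lift $r_\lambda$ through $\Gf\to\PGSp_4\cong\SO_5$ is determined only up to a character twist, the similitude character $\simil r_\lambda$, the determinants $\det\sigma_i$, and the relating character $\chi$ must all be tracked simultaneously to recover the precise identifications claimed in each case.
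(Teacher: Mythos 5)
Your overall route is the paper's route: you invoke \Cref{lem:no3d} to eliminate the $3+1$ case, then split into the $2+2$, $2+1+1$, $1+1+1+1$ decompositions of $r_\lambda$, using \Cref{lem:rl2+1+1} and \Cref{lem:rl2+2} exactly as the paper does, and the $\wedge^2$ bookkeeping matches. Where you differ is in the structural claim for case (iii), and there you have a genuine gap.

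You assert that when $\sigma_1$ is dihedral, ``a standard computation with induced representations splits $\Sym^2(\sigma_1)$ into a two-dimensional summand plus a one-dimensional summand.'' This is not true for an arbitrary dihedral $\sigma_1$: if $\sigma_1 \simeq \Ind_K^\Q\psi$ and $\psi^2 \simeq (\psi^\tau)^2$ (equivalently, $\sigma_1$ has projective image the Klein four-group $V_4$), then $\Ind_K^\Q(\psi^2)$ is itself reducible and $\Sym^2(\sigma_1)$ is a sum of three characters, not $2+1$. In that situation you would be forced into case (iv) and the structural assertion of case (iii) would simply be false for the $\sigma$ you wrote down. To exclude the $V_4$ possibility you must establish that $\sigma_1$ is Hodge--Tate regular, which forces its projective image to be infinite and hence infinite dihedral. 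This is precisely what the paper does via \Cref{rem:rl2+2}, which in turn appeals to \Cref{lem:pure-ht-weight} (and hence to the running purity hypothesis when $a=0$) to rule out the possibility that $\sigma_1$ has parallel weights $\{0,0\}$. Your proof never invokes this remark or its consequences, so the $V_4$ case is not eliminated. Relatedly, your justification that the one-dimensional piece is finite order is a bit off: the Hodge--Tate weights of the three-dimensional summand do not by themselves force the $1$-dimensional piece to have weight $0$ (a character with weight $\pm a$ is a perfectly good candidate summand a priori). The actual reason is that for infinite dihedral $\sigma_1 = \Ind_K^\Q\psi$ one computes $\Sym^2(\sigma_1)\otimes(\det\sigma_1)^{-1} \simeq \chi_{K/\Q}\oplus \Ind_K^\Q(\psi^\tau/\psi)$, so the one-dimensional piece is the quadratic character $\chi_{K/\Q}$ — visibly finite order, independently of Hodge--Tate considerations. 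Supplying the Hodge--Tate regularity of $\sigma_1$ and this identification of the summand would close the gap.
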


	\begin{proof}
            First suppose that $r_\lambda$ contains a character. Then either it is a sum of four characters and we are in case $(iv)$, or, by \Cref{lem:rl2+1+1},
		\[r_\lambda\simeq\sigma\+ \chi\+\simil r_\lambda\chi\ii\] 
		for some character $\chi$ and some irreducible two-dimensional representation $\sigma$, such that $\det\sigma = \simil r_\lambda$. Hence, by \eqref{eq:wedge-2}, we have
		\[\wedge^2(r_\lambda) \tensor\simil r_\lambda\ii\simeq (\sigma\tensor\chi\ii)\+(\sigma\tensor\chi\simil r_\lambda\ii)\+\chi_\triv\+\chi_\triv.\]
		It follows from \eqref{eqn:wedge-r2-decomp} that
		\[\rho_\lambda\simeq (\sigma\tensor\chi\ii)\+(\sigma\tensor\chi\simil r_\lambda\ii)\+\chi_\triv\simeq  (\sigma\tensor\chi\ii)\+(\sigma\dual\tensor\chi)\+\chi_\triv.\]
		Replacing $\sigma$ with $\sigma\tensor\chi\ii$ shows that we are in case $(ii)$. 

    Now suppose that $r_\lambda$ does not contain a character. Then $r_\lambda\simeq \sigma_1\+\sigma_2$ where $\sigma_1, \sigma_2$ are irreducible two-dimensional representations. 
    
    First suppose that $\sigma_2\simeq\sigma_1\tensor\chi$ for some character $\chi$. Then $\simil r_\lambda \simeq \det\sigma_1\tensor\chi$ and
  \begin{align*}
      \rho_\lambda\+\chi_\triv\simeq\wedge^2(r_\lambda)\tensor\simil r_\lambda\ii&\simeq (\sigma_1\tensor\sigma_1\tensor\det\sigma_1\ii)\+\chi\ii\+\chi_\triv\+\chi\\
      &\simeq (\Sym^2(\sigma_1)\tensor\det\sigma\ii) \+\chi\ii\+\chi_\triv\+\chi.
  \end{align*}

    Taking $\sigma = \Sym^2(\sigma_1)\tensor\det\sigma\ii$, we see that $\rho_\lambda$ decomposes as in case $(iii)$. The claim about the Hodge--Tate weights of $\sigma$ follows from the fact that $\sigma$ must be self-dual. By \Cref{rem:rl2+2}, $\sigma_1$, is Hodge--Tate regular. Hence, $\sigma_1$ is either Lie irreducible, in which case $\Sym^2(\sigma_1)$ is irreducible, or it is induced from a unique imaginary quadratic extension, in which case $\Sym^2(\sigma_1)$ is a direct sum of a two-dimensional irreducible subrepresentation and the quadratic character corresponding to that extension. Thus we are in case $(iii)$. 
		
		Finally suppose that $\sigma_1$ is not a character twist of $\sigma_2$. By \Cref{lem:rl2+2}, 
		\[(\rho_\lambda\+\chi_\triv)\tensor\simil r_\lambda \simeq\wedge^2(r_\lambda)= (\sigma_1\tensor\sigma_2 )\+\det\sigma_1\+\det\sigma_2,\]
		and $\det\sigma_1 = \det\sigma_2 = \simil r_\lambda$. It follows that
		\[\rho_\lambda \simeq(\sigma_1\tensor\sigma_2\tensor(\det\sigma_2)\ii )\+\chi_\triv\simeq (\sigma_1\tensor\sigma_2\dual) \+ \chi_\triv,\]
    and $\sigma_1\tensor\sigma_2\dual$ does not contain a character, since $\sigma_1$ is not a character twist of $\sigma_2$. Hence, we are in case $(i)$.
	\end{proof}
	
	\subsubsection{Irreducible $r_\lambda$}
	
	Now suppose that $r_\lambda$ is irreducible. We will make frequent use of the following proposition, which is stated in \cite{GanTakedaSp4} when $G$ is a Weil group and $K = \C$, but works for any group $G$ and any algebraically closed characteristic $0$ field $K$. Recall that a representation is \emph{primitive} if it is not an induced representation.
	
	 \begin{proposition}[\cite{GanTakedaSp4}*{Prop.~5.1}]\label{prop:GT}
		Let $G$ be a group, let $K$ be an algebraically closed characteristic $0$ field and let $r\colon G\to \Gf(K)$ be an absolutely irreducible representation. Write $\std(r)$ for the corresponding $\SO_5$ valued representation, via the maps $\Gf\to\PGSp_4\xrightarrow{\sim}\SO_5$. Then either:
		\begin{enumerate}
			\item $r$ is primitive and $\std(r)$ is irreducible.
			\item There is an index $2$ subgroup $H$, a primitive representation $\sigma$ of $H$ and a character $\chi$ of $H$ with $\chi^2\ne 1$,  such that
			\[r = \Ind_H^G\sigma\text{ and } \sigma^\tau\simeq\sigma\tensor\chi,\]
			where $\tau$ is a representative for $G/H$, and such that
			\[\simil r|_H = \chi\tensor\det\sigma\ne \det\sigma.\]
			In this case, $\std(r)$ is a direct sum of an irreducible three-dimensional representation and an irreducible two-dimensional representation.
			\item There is an index $2$ subgroup $H$ of $G$ and an irreducible two-dimensional representation $\sigma$ of $H$ such that
			\[r =\Ind_H^G\sigma\text{ and }\simil r|_H = \det\sigma.\]
			In this case, $\std(r)$ contains the character $G\to G/H\cong\{\pm 1\}$.
		\end{enumerate}
	\end{proposition}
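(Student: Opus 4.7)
The plan is to exploit the standard isogeny identity
\[\wedge^2(r) \otimes \simil(r)^{-1} \simeq \std(r) \oplus \chi_\triv,\]
which converts the reducibility of $\std(r)$ into structural information about $r$. The three cases will separate according to (a) whether $r$ is primitive, and (b) if $r \simeq \Ind_H^G \sigma$ is induced from a subgroup $H$ of index $2$, by the relation between $\simil(r)|_H$ and $\det\sigma$.

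For case (i), suppose $r$ is primitive but $\std(r)$ admits a proper invariant subspace $W$. Since $\std(r)$ is orthogonal, the image of $r$ in $\PGSp_4 \cong \SO_5$ lies in the stabilizer of $W$, which is either a parabolic subgroup (when $W$ is isotropic) or a Levi of type $S(\O_i \times \O_{5-i})$ (when $W$ is non-isotropic). Parabolic stabilizers lift to parabolics of $\Gf$, contradicting the absolute irreducibility of $r$; the non-parabolic stabilizers lift to subgroups of $\Gf$ such as $(\GL_2 \times_{\GL_1} \GL_2) \rtimes \bZ/2\bZ$, each of which forces $r$ to be reducible or induced from an index-$2$ subgroup, contradicting primitivity.

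For cases (ii) and (iii), reduce to $r \simeq \Ind_H^G \sigma$ with $[G:H] = 2$ and $\sigma$ an irreducible two-dimensional representation of $H$ satisfying $\sigma^\tau \not\simeq \sigma$ (Mackey's criterion, with $\tau$ a coset representative for $G/H$). Compute
\[r|_H \simeq \sigma \oplus \sigma^\tau, \qquad \wedge^2(r)|_H \simeq \det\sigma \oplus \det\sigma^\tau \oplus (\sigma \otimes \sigma^\tau),\]
and tensor with $\simil(r)^{-1}|_H$ to obtain $(\std(r) \oplus \chi_\triv)|_H$. The identity $\det r = \simil(r)^2$ restricted to $H$ gives $(\simil r|_H)^2 = \det\sigma \cdot \det\sigma^\tau$. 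Setting $\eta := (\simil r|_H)(\det\sigma)^{-1}$, the case split is $\eta = \chi_\triv$ versus $\eta \ne \chi_\triv$. If $\eta = \chi_\triv$, then $\std(r)|_H$ contains exactly one copy of $\chi_\triv$ (Mackey's criterion gives $\Hom_H(\sigma, \sigma^\tau) = 0$, pinning down the multiplicity), and tracking the symplectic structure on $r = \Ind_H^G \sigma$ shows that this summand extends to $G$ as the quadratic character of $G/H$, not as the trivial character; this is case (iii). If $\eta \ne \chi_\triv$, then the $\chi_\triv$ on the right-hand side of the isogeny identity can only arise from the $(\sigma \otimes \sigma^\tau) \cdot \simil(r)^{-1}|_H$ term, forcing $\sigma^\tau \simeq \sigma \otimes \eta$ by Schur's lemma; a routine computation then yields
\[\std(r)|_H \simeq \eta \oplus \eta^{-1} \oplus \Sym^2(\sigma)(\det\sigma)^{-1},\]
which descends to a $3 + 2$ decomposition of $\std(r)$ over $G$ provided $\eta^2 \ne \chi_\triv$, and the three-dimensional summand is irreducible thanks to the primitivity hypothesis on $\sigma$; this is case (ii).

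The main obstacle I anticipate is the primitive case: matching each type of stabilizer in $\SO_5$ with its preimage in $\Gf$ and verifying that every non-trivial decomposition of $\std(r)$ really does force $r$ to be reducible or induced. This requires a careful enumeration of the maximal non-parabolic subgroups of $\SO_5$ together with the verification that any $\Gf$-subgroup projecting into each of them possesses an invariant subspace or induction structure inconsistent with primitive absolute irreducibility of $r$.
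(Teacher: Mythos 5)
The paper does not prove this proposition: it is cited directly from \cite{GanTakedaSp4}*{Prop.~5.1}, with the preceding remark that the Weil-group argument there transfers verbatim to any group $G$ over any algebraically closed field of characteristic zero. So there is no in-paper proof to compare against, and your attempt must be judged on its own merits.

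Your overall strategy is the right one: use $\wedge^2(r)\otimes\simil(r)^{-1}\simeq\std(r)\oplus\chi_\triv$ and split by whether $r$ is primitive. But there is a genuine gap in the imprimitive branch. Setting $\eta:=(\simil r|_H)(\det\sigma)^{-1}$, you assert that $\eta=\chi_\triv$ gives case $(iii)$ and $\eta\ne\chi_\triv$ gives case $(ii)$; however, case $(ii)$ in the statement requires \emph{both} $\chi^2\ne 1$ and $\sigma$ primitive, and your write-up delivers the irreducible $3+2$ decomposition only under those extra hypotheses — you flag this with the word ``provided'' but do not discharge it. Concretely: if $\eta\ne\chi_\triv$ but $\eta^2=\chi_\triv$, then $\eta^\tau=\eta^{-1}=\eta$, so $\eta$ extends to a quadratic character $\tilde\eta$ of $G$; from $\sigma^\tau\simeq\sigma\otimes\eta$ one gets $r\otimes\tilde\eta\simeq\Ind_H^G\sigma^\tau\simeq r$, hence $r$ is also induced from $H':=\ker\tilde\eta$, and a multiplicity count in $\wedge^2(r)|_{H'}$ then forces $\simil r|_{H'}=\det\sigma'$ — so one lands in case $(iii)$, but with a \emph{different} index-$2$ subgroup, a re-choice of induction data that your proof never performs. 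The same issue arises if $\sigma$ is monomial: $\Sym^2(\sigma)\otimes(\det\sigma)^{-1}$ is then reducible, the ``$3+2$'' decomposition degenerates, and again one must replace $(H,\sigma)$. Without these re-choices the asserted trichotomy is not established. The primitive case you explicitly acknowledge as a sketch — the enumeration of non-parabolic stabilizers in $\SO_5$ and the lift to $\Gf$ is indeed exactly where the work lies, and ``each of which forces $r$ to be reducible or induced'' is the entire content of that step, not a remark.
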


        Note that in our applications, $r = r_\lambda$ and $\std(r) = \rho_\lambda$.
	
	\begin{remark}\label{rem:asai}
        As in \eqref{eqn:wedge-r2-decomp}, we have an isomorphism
        \[\wedge^2(r)\tensor\simil r \ii \simeq \std(r)\+\chi_\triv.\]
        In cases $(ii)$ and $(iii)$, it follows that
        \begin{align*}
            \wedge^2(r)|_H&\simeq \wedge^2(\sigma\+\sigma^\tau)\\
            &\simeq\sigma\tensor\sigma^\tau\+\det\sigma\+\det\sigma^\tau.
        \end{align*}
        We therefore have
        \[\wedge^2(r) = \mathrm{As}_H^G(\sigma) + \Ind_{H}^G\det\sigma,\]
        where $\mathrm{As}_H^G(\sigma)$ is the Asai lift, otherwise known as the tensor induction of $\sigma$: it is a lift of the representation $\sigma\tensor\sigma^\tau$ of $H$ to a representation of $G$.

        In case $(ii)$, we have $\sigma^\tau\simeq\sigma\tensor\chi$, and therefore
        \[\sigma\tensor\sigma^\tau\simeq\sigma\tensor\sigma\tensor\chi\simeq (\chi\tensor\Sym^2\sigma) \+ (\chi\tensor\det\sigma).\]
        Since $\chi\tensor\det\sigma\simeq\simil r|_H$ and $\det\sigma\not\simeq\det\sigma^\tau$, it follows that $\Ind_H^G\det\sigma$ is irreducible, while $\mathrm{As}_H^G(\sigma)$ decomposes as an irreducible $3$-dimensional representation plus a character. It follows that $\std(r)$ decomposes as a direct sum of $\Ind_{H}^G\det\sigma$ and this $3$-dimensional representation.

         In case $(iii)$, we still have 
         \[\wedge^2(r) = \mathrm{As}_H^G(\sigma) + \Ind_{H}^G\det\sigma,\]
         but now, since $\det\sigma = \simil r|_H$, $\Ind_{H}^G\det\sigma$ is reducible: it decomposes as
         \[\Ind_{H}^G\det\sigma\simeq\simil r \+ (\simil r\tensor\chi),\]
         where $\chi$ is the quadratic character $G\to G/H\to\{\pm1\}$. It follows that
         \[\std(r)= \chi \+ (\mathrm{As}_H^G(\sigma)\tensor\simil r\ii).\]
	\end{remark}
	
	\begin{lemma}\label{lem:subrep-non-trivial}
	    Suppose that $r_\lambda$ is absolutely irreducible. Then every one-dimensional subrepresentation of $\rho_\lambda$ is non-trivial and has Hodge--Tate weight $0$.
	\end{lemma}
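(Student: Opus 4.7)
The plan is to combine the isomorphism $\wedge^2 r_\lambda \tensor \simil r_\lambda\ii \simeq \rho_\lambda \+ \chi_\triv$ from \eqref{eqn:wedge-r2-decomp} with Schur's lemma and the case analysis of \Cref{prop:GT}.

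For the non-triviality claim, I would argue that if $\chi_\triv$ were a subrepresentation of $\rho_\lambda$, then it would occur in $\wedge^2 r_\lambda \tensor \simil r_\lambda\ii$ with multiplicity at least two, or equivalently, $\simil r_\lambda$ would occur in $\wedge^2 r_\lambda$ with multiplicity at least two. But since $r_\lambda$ is absolutely irreducible and $\Gf$-valued, Schur's lemma implies that the space of $G_\Q$-equivariant alternating forms $V \tensor V \to \simil r_\lambda$ (where $V$ is the underlying space of $r_\lambda$) is exactly one-dimensional, spanned by the canonical symplectic form; this gives a contradiction.

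For the Hodge--Tate weight claim, I would split into the three cases of \Cref{prop:GT}. In cases $(i)$ and $(ii)$, $\rho_\lambda$ is either irreducible or a direct sum of an irreducible three-dimensional and an irreducible two-dimensional subrepresentation, so it contains no one-dimensional subrepresentation and the statement is vacuous. In case $(iii)$, the decomposition $\rho_\lambda \simeq \chi_{G/H} \+ (\mathrm{As}_H^{G_\Q}(\sigma) \tensor \simil r_\lambda\ii)$ from \Cref{rem:asai} handles the summand $\chi_{G/H}$, which is a non-trivial quadratic character and hence Artin with Hodge--Tate weight zero.

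The main obstacle is controlling possible additional one-dimensional subrepresentations $\eta$ of the Asai summand $M := \mathrm{As}_H^{G_\Q}(\sigma) \tensor \simil r_\lambda\ii$. Restricting to $H$ and using $\simil r_\lambda|_H = \det\sigma$ from case $(iii)$ of \Cref{prop:GT}, one obtains $M|_H \simeq \sigma^\tau \tensor \sigma\dual$, so $\eta|_H$ embeds into $\sigma^\tau \tensor \sigma\dual$ and hence $\sigma^\tau \simeq \sigma \tensor \eta|_H$. Since we are in case $(iii)$ rather than case $(ii)$, the character $\eta|_H$ must satisfy $(\eta|_H)^2 = \chi_\triv$; moreover it cannot be trivial, since otherwise $\sigma$ would extend to $G_\Q$ and $r_\lambda = \Ind_H^{G_\Q}\sigma$ would be reducible, contradicting irreducibility. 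Thus $\eta|_H$ is a non-trivial quadratic character of $H$, so $\eta$ has order dividing four, is non-trivial, and is an Artin character, whence it has Hodge--Tate weight zero.
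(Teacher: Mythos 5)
Your proof is correct, and for the Hodge--Tate weight $0$ claim it follows the same route as the paper: reduce to case $(iii)$ of \Cref{prop:GT}, handle $\chi_{K/\Q}$ directly, and for any further one-dimensional subrepresentation $\eta$ restrict to the quadratic field, apply Schur's lemma inside $\sigma^\vee\tensor\sigma^\tau$ to get $\sigma^\tau\simeq\sigma\tensor\eta|_H$, take determinants (using $\det\sigma=\det\sigma^\tau$ from case $(iii)$) to see $\eta|_H$ is quadratic, and invoke Mackey/Clifford to rule out $\eta|_H=\chi_\triv$. The one genuine difference is your treatment of non-triviality: instead of reading it off from the case analysis, you argue directly that $\Hom_{G_\Q}(\simil r_\lambda,\wedge^2 r_\lambda)$ is exactly one-dimensional by Schur's lemma (via $V^\vee\tensor\simil r_\lambda\simeq V$), so the lone copy of $\chi_\triv$ in $\wedge^2 r_\lambda\tensor\simil r_\lambda\ii$ is exhausted by the explicit $\chi_\triv$ summand, leaving none inside $\rho_\lambda$. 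This is a clean multiplicity-one argument that works uniformly across all three cases of \Cref{prop:GT} and is arguably tidier than the paper's route, though it does not by itself give the Hodge--Tate weight assertion, so the case analysis is still needed. One small presentational remark: your phrase ``since we are in case $(iii)$ rather than case $(ii)$'' for deducing $(\eta|_H)^2=\chi_\triv$ is a bit indirect; the point is simply that $\det\sigma=\det\sigma^\tau$ in case $(iii)$, which together with $\sigma^\tau\simeq\sigma\tensor\eta|_H$ forces $(\eta|_H)^2=\chi_\triv$.
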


	\begin{proof}
            If $\rho_\lambda$ contains a one-dimensional subrepresentation, then we must be in case $(iii)$ of \Cref{prop:GT}. Hence, by \Cref{rem:asai}, $\rho_\lambda$ contains the quadratic character $\chi_{K/\Q}$ corresponding to the extension $K/\Q$. This character is non-trivial and has Hodge--Tate weight $0$. 

            Suppose that $\chi\not\simeq\chi_{K/\Q}$ is another subrepresentation of $\rho_\lambda$. By \Cref{rem:asai}, we have
            \[\rho_\lambda\simeq (\As_K^\Q(\sigma)\tensor\simil r_\lambda\ii) \+\chi_{K/\Q}.\]
            and restricting to $K$, we have
            \[\rho_\lambda|_K\simeq (\sigma\tensor\sigma^\tau\tensor\simil r_\lambda\ii) \+ \chi_{\triv}\simeq (\sigma\dual\tensor\sigma^\tau) \+ \chi_{\triv},\]
            where $\tau$ is the non-trivial element of $\Gal(K/\Q)$. It follows that $\sigma\dual\tensor\sigma^\tau$ contains $\chi|_K$. Since $\sigma$ is irreducible, by Schur's Lemma, it follows that $\sigma\simeq\sigma^\tau\tensor\chi|_K$.

            Since we are in case $(iii)$ of \Cref{prop:GT}, $\det\sigma = \det\sigma^\tau$, so $\chi|_K^2=\chi_\triv$. Moreover, since $\Ind_K^\Q\sigma$ is irreducible, by Clifford theory, $\sigma\not\simeq\sigma^\tau$, so $\chi|_K$ is non-trivial. It follows that $\chi$ is non-trivial and has finite image, so it has Hodge--Tate weight $0$.
            \end{proof}

\begin{proposition}\label{prop:rl-irred}
    Suppose that $r_\lambda$ is absolutely irreducible. Then either:
    \begin{enumerate}
        \item $\rho_\lambda$ is irreducible and $r_\lambda$ is primitive.
        \item There is a quadratic extension $K/\Q$ and a two-dimensional representation $\sigma$ of $G_K$, with $\det\sigma\not\simeq\det\sigma^\tau$, such that $r_\lambda\simeq\Ind_K^\Q\sigma$. Moreover, there is a character $\chi$ of $G_K$ with $\chi^2\ne\chi_\triv$ such that $\sigma\simeq\sigma^\tau\tensor\chi$. In this case, $\rho_\lambda$ is a direct sum of a self-dual irreducible three-dimensional representation and a self-dual irreducible two-dimensional representation.
        \item  There is a quadratic extension $K/\Q$ and a two-dimensional representation $\sigma$ of $G_K$, with $\det\sigma\simeq\det\sigma^\tau$, such that $r_\lambda\simeq\Ind_K^\Q\sigma$. In this case, $\rho_\lambda$ is a direct sum of a four-dimensional representation and the quadratic character $\chi_{K/\Q}$. Moreover, this four-dimensional representation is either:
        \begin{enumerate}
            \item[$(a)$] Irreducible, with Hodge--Tate weights $\{-b,-a,a,b\}$.
            \item[$(b)$] A sum of an irreducible three-dimensional representation with Hodge--Tate weights $\{-b,0,b\}$ and a non-trivial finite order character. In this case, $a = 0$.
            \item[$(c)$] A sum of two irreducible self-dual two-dimensional representations with Hodge--Tate weights $\{-b,b\}$ and $\{-a,a\}$.
            \item[$(d)$] A sum of an irreducible two-dimensional representation with Hodge--Tate weights $\{-b,b\}$, and two non-trivial finite order characters. In this case, $a = 0$.
        \end{enumerate}
    \end{enumerate}
\end{proposition}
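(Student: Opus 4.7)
My plan is to apply Proposition~\ref{prop:GT} to $r_\lambda$: its three cases correspond bijectively to our (i)--(iii), and Remark~\ref{rem:asai} provides the explicit decomposition of $\rho_\lambda = \std(r_\lambda)$ in each. Case (i) of Proposition~\ref{prop:GT} states that $r_\lambda$ is primitive and $\rho_\lambda$ is irreducible, yielding our case (i) directly. Case (ii) states that $r_\lambda\simeq\Ind_K^\Q\sigma$ with $\sigma^\tau\simeq\sigma\tensor\chi$ for some character $\chi$ of $G_K$ satisfying $\chi^2\ne 1$; taking determinants gives $\det\sigma^\tau = \det\sigma\cdot\chi^2$, so this is equivalent to $\det\sigma\not\simeq\det\sigma^\tau$. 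By Remark~\ref{rem:asai}, $\rho_\lambda$ then decomposes as $\Ind_K^\Q\det\sigma$ (irreducible by Mackey's criterion, since $\det\sigma\not\simeq\det\sigma^\tau$) together with an irreducible three-dimensional representation, and self-duality of each factor follows from self-duality of $\rho_\lambda$ combined with the fact that the two factors have distinct dimensions. This is our case (ii).

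Case (iii) of Proposition~\ref{prop:GT} yields $r_\lambda\simeq\Ind_K^\Q\sigma$ with $\det\sigma\simeq\det\sigma^\tau$, and by Remark~\ref{rem:asai}, $\rho_\lambda = \chi_{K/\Q}\+\rho'$ where $\rho' := \As_K^\Q(\sigma)\tensor\simil r_\lambda\ii$ is four-dimensional, self-dual, and of determinant $\chi_{K/\Q}$ (computed from $\det\rho_\lambda = 1$). Since $\chi_{K/\Q}$ has Hodge--Tate weight $0$, $\rho'$ has Hodge--Tate weights $\{-b,-a,a,b\}$. The subcases (a)--(d) correspond to the four possible shapes of a semisimple four-dimensional representation: irreducible, $3{+}1$, $2{+}2$, and $2{+}1{+}1$.

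To pin down each subcase, I use two constraints. First, by Lemma~\ref{lem:subrep-non-trivial}, every character subrepresentation of $\rho_\lambda$ is non-trivial of Hodge--Tate weight $0$. Second, the identity $\det\rho' = \chi_{K/\Q}\ne 1$ forbids a splitting $\rho' = \sigma_1\+\sigma_2$ with $\sigma_2\simeq\sigma_1\dual$ (which would force $\det\rho' = 1$), so in shape $2{+}2$ each factor $\sigma_i$ is self-dual. The requirement that every character piece has Hodge--Tate weight $0$ then forces $a = 0$ in shapes $3{+}1$ and $2{+}1{+}1$ --- so that $0\in\{-b,-a,a,b\}$ --- while self-duality of the two-dimensional summands in shape $2{+}2$ pins their weights down to $\{-b,b\}$ and $\{-a,a\}$. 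These match (a)--(d) verbatim. The only delicate bookkeeping is in case (iii); everything else follows transparently from Proposition~\ref{prop:GT} and Remark~\ref{rem:asai}.
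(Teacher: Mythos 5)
Your proof is correct and follows essentially the same route as the paper's: both translate Proposition~\ref{prop:GT}'s trichotomy into cases (i)--(iii) via Remark~\ref{rem:asai}, invoke Lemma~\ref{lem:subrep-non-trivial} to place the Hodge--Tate weights of character subrepresentations and deduce $a=0$ in subcases (b) and (d), and use the determinant constraint $\det\rho' = \chi_{K/\Q}$ to rule out $\sigma_1\simeq\sigma_2^\vee$ in subcase (c). Your exposition is slightly more explicit than the paper's (e.g.\ the equivalence $\chi^2\neq 1\Leftrightarrow\det\sigma\not\simeq\det\sigma^\tau$, and the derivation of $\det\rho'$), but the underlying argument is identical.
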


\begin{proof}
    Cases $(i)$ and $(ii)$ correspond to cases $(i)$ and $(ii)$ of \Cref{prop:GT}. In case $(iii)$ of \Cref{prop:GT}, $\rho_\lambda$ is a direct sum of a quadratic character and a four-dimensional representation. If this representation is irreducible, we are in case $(iii)(a)$. If it contains a one-dimensional subrepresentation, by \Cref{lem:subrep-non-trivial}, this subrepresentation is non-trivial and has finite order, so Hodge--Tate weight $0$, which gives cases $(iii)(b)$ and $(iii)(d)$. 

    Finally, suppose that the four-dimensional representation splits as a sum $\sigma_1\+\sigma_2$ of irreducible two-dimensional representations. Since $\rho_\lambda$ is self-dual, either $\sigma_1$ and $\sigma_2$ are both self-dual, in which case they have Hodge--Tate weights $\{-a,a\}$ and $\{-b,b\}$, or $\sigma_1\simeq\sigma_2\dual$. But then $\rho_\lambda \simeq\sigma_1\+\sigma_1\dual\+\chi$ where $\chi$ is a non-trivial quadratic character, so $\det\rho_\lambda=\chi$ is non-trivial, contradicting our assumption that $\rho_\lambda$ is valued in $\SO_5$.
\end{proof}

	\subsubsection{Possible decompositions of $\rho_\lambda$ in the orthogonal case}

	We conclude by combining \Cref{prop:rl-red,prop:rl-irred} to list the possible ways in which $\rho_\lambda$ and $r_\lambda$ can decompose in the case that $\rho_\lambda$ factors through $\SO_5(\elb)$.

\begin{table}[H]
\caption{Possible decompositions of $\rho_\lambda$ and $r_\lambda$ in the orthogonal case}
\resizebox{\columnwidth}{!}{%
\begin{tabular}{|l|l|l|l|}
\hline
Case & Decomposition of $\rho_\lambda$   & Hodge--Tate weights                  & Decomposition of $r_\lambda$                                 \\ \hline
1    & Irreducible                       & $\{-b,-a,0,a,b\}$                    & $\ref{prop:rl-irred}(i)$                                                    \\ \hline
2(i)   & $4 + 1$, one-dimensional trivial  & $\{-b,-a,a,b\}, \{0\}$               & $\ref{prop:rl-red}(i)$                                                          \\ \hline
2(ii) &
  $4 + 1$, one-dimensional non-trivial &
  $\{-b,-a,a,b\}, \{0\}$ &
  $\ref{prop:rl-irred}(iii)(a)$ \\ \hline
3a   & $3 + 2$                           & $\{-a,0,a\}, \{-b,b\}$               & $\ref{prop:rl-irred}(ii)$ \\ \hline
3b   & $3+2$                             & $\{-b,0,b\}, \{-a,a\}$               & $\ref{prop:rl-irred}(ii)$\\ \hline
4a  & $3 + 1 + 1$                       & $\{-a,0,a\}, \{b\}, \{-b\}$          & $\ref{prop:rl-red}(iii)$                                       \\ \hline
4b(i)  & $3+1+1$                           & $\{-b,0,b\}, \{a\},\{-a\}$           & $\ref{prop:rl-red}(iii)$                                    \\ \hline
4b(ii) &
  $3+1+1$ &
  $\{-b,0,b\}, \{0\},\{0\}$ &
  $\ref{prop:rl-irred}(iii)(b)$ \\ \hline
5(i)   & $2 + 2+1$, one-dimensional trivial &  $\{-b,b\}, \{-a,a\}, \{0\}$          & $\ref{prop:rl-red}(i)$ or $a = b$ and $\ref{prop:rl-red}(ii)$                                                        \\ \hline
5(ii) &
  $2 + 2+1$, one-dimensional non-trivial &
  $\{-b,b\}, \{-a,a\}, \{0\}$ &
  $\ref{prop:rl-irred}(iii)(c)$ \\ \hline
6    & $2 + 2+1$, one-dimensional trivial                        & $\{-a,-a\}, \{a,a\}, \{0\}$ & $\ref{prop:rl-red}(i)$ or $\ref{prop:rl-red}(ii)$                                                          \\ \hline
7a  & $2+1+1+1$                         & $\{-a,a\}, \{b\},\{0\},\{-b\}$       & $\ref{prop:rl-red}(iii)$                     \\ \hline
7b(i)  & $2+1+1+1$                         & $\{-b,b\},\{0\},\{0\},\{0\}$        & $\ref{prop:rl-red}(iii)$                    \\ \hline
7b(ii) &
  $2+1+1+1$, all non-trivial &
  $\{-b,b\},\{0\},\{0\},\{0\}$ &
  $\ref{prop:rl-irred}(iii)(d)$\\ \hline
8    & $1+1+1+1+1$                       & $\{-b\},\{-a\},\{0\},\{a\},\{b\}$    &$\ref{prop:rl-red}(iv)$                                           \\ \hline
\end{tabular}\label{table-cases}
}
\end{table}

\subsection{The proof of \Cref{thm:decomp}}\label{sec:decomp-analysis}

    In this section, we prove \Cref{thm:decomp}. In particular, we show that if $(\rho_\lambda)_\lambda$ falls into any of the cases of \Cref{table-cases} infinitely often, then it falls into that case for all primes $\lambda$, and that the subrepresentations are compatible for different $\lambda$.
    
    \begin{lemma}\label{lem:all-regular-subs}
        Let $(\rho_\lambda)_\lambda$ be as in \Cref{thm:decomp}. Suppose that  for infinitely many primes $\lambda$, $\rho_{\lambda}$ is in case $4a$ $($resp.~$4b(i), 4b(ii), 7a, 7b(i), 7b(ii),8)$ of \Cref{table-cases}. Then $\rho_{\lambda}$ is in that case of \Cref{table-cases} for all primes $\lambda$ and \Cref{thm:decomp} holds for $(\rho_\lambda)_\lambda$.
    \end{lemma}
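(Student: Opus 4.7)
The unifying feature of these cases is that every irreducible subrepresentation of $\rho_{\lambda_0}$ is Hodge--Tate regular, i.e.\ each summand's multiset of Hodge--Tate weights consists of distinct integers. The plan is to show that each summand extends to a compatible system, and then invoke \Cref{prop:brauer-nesbitt} to propagate the decomposition to all $\lambda$.

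First, every one-dimensional subrepresentation is Hodge--Tate, and hence by class field theory corresponds to an algebraic Hecke character lying in a compatible system $(\chi_{i,\lambda})_\lambda$. In case $8$, $\rho_{\lambda_0}$ is entirely a sum of characters, so this observation combined with \Cref{prop:brauer-nesbitt} immediately yields the desired compatible decomposition.

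For the remaining cases, there is a single ``large'' irreducible subrepresentation $\sigma$, of dimension three in cases $4a,4b(i),4b(ii)$ and of dimension two in cases $7a,7b(i),7b(ii)$. Since $\rho_{\lambda_0}$ is valued in $\GO_5(\elb)$, there is a character $\chi_{\mathrm{sim}}$ with $\rho_{\lambda_0}\dual\tensor\chi_{\mathrm{sim}} \simeq \rho_{\lambda_0}$. Because $\sigma$ is the unique summand of its dimension in the Krull--Schmidt decomposition of $\rho_{\lambda_0}$, duality forces $\sigma\dual\tensor\chi_{\mathrm{sim}}\simeq\sigma$, so $\sigma$ is essentially self-dual. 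Combined with the Hodge--Tate regularity of $\sigma$, this places us in a position to apply \Cref{lem:2d-3d-compatible}: after choosing $\lambda_0$ sufficiently large among the infinitely many primes permitted by the hypothesis, we obtain an absolutely irreducible compatible system $(\sigma_\lambda)_\lambda$ with $\sigma_{\lambda_0}\simeq\sigma$.

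Combining the two steps, \Cref{prop:brauer-nesbitt} yields $\rho_\lambda \simeq \sigma_\lambda \oplus \bigoplus_i \chi_{i,\lambda}$ for all primes $\lambda$, with each summand irreducible for all but finitely many $\lambda$; this is exactly the conclusion of \Cref{thm:decomp} for $(\rho_\lambda)_\lambda$. The key technical input is the essential self-duality of $\sigma$, which in these cases is an easy consequence of its distinct dimension; the analogous step for the remaining entries of \Cref{table-cases}, where several summands can share a dimension and one must work harder to match summands under duality, is where the delicate case-by-case analysis of \Cref{sec:decomp-analysis} will be required.
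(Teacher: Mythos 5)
Your proof follows the same overall route as the paper's: reduce via \Cref{prop:brauer-nesbitt} to showing each summand of $\rho_{\lambda_0}$ lives in a compatible system, handle the characters with class field theory, and handle the single ``large'' two- or three-dimensional summand $\sigma$ with \Cref{lem:2d-3d-compatible} after observing that it is essentially self-dual (your Krull--Schmidt uniqueness argument is correct, though note that in the paper's normalisation $\rho_\lambda$ is already $\SO_5$-valued, so $\sigma\simeq\sigma^\vee$ directly).

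There is, however, one genuine gap. You assert without proof that ``every irreducible subrepresentation of $\rho_{\lambda_0}$ is Hodge--Tate regular.'' In cases $4b(i)$, $4b(ii)$, $7b(i)$, $7b(ii)$ this is immediate since $b>0$, but in cases $4a$ and $7a$ the relevant summand has Hodge--Tate weights $\{-a,0,a\}$ or $\{-a,a\}$, and regularity requires $a\neq 0$. This does not follow from the table alone; one must use the hypotheses of \Cref{thm:decomp}. The paper's parenthetical remark covers exactly this point: in cases $4a$ and $7a$ the one-dimensional summands have Hodge--Tate weights $\pm b\neq 0$, so by \Cref{lem:pure-ht-weight} the system cannot be pure; the non-pure alternatives in \Cref{thm:decomp} then force ``each Hodge--Tate weight has multiplicity at most $2$,'' which rules out $a=0$ (since $a=0$ gives weight $0$ with multiplicity three in $\{-b,-a,0,a,b\}$). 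Without this observation, \Cref{lem:2d-3d-compatible} cannot be invoked, since its hypothesis requires Hodge--Tate regularity. Once you add this step, your proof is complete and matches the paper's.
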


    \begin{proof}
        By \Cref{prop:brauer-nesbitt}, it is sufficient to show that for one of the infinitely many primes $\lambda$, every subrepresentation of $\rho_\lambda$ is contained in a strictly compatible system. This follows from class field theory for all the one-dimensional subrepresentations. Moreover, in these cases, the two- and three- dimensional subrepresentations are Hodge--Tate regular, so the result follows from \Cref{lem:2d-3d-compatible}.  Note that in cases $4a$ and $7a$, $\rho_\lambda$ contains a subrepresentation whose determinant has non-zero Hodge--Tate weight. Hence, $\rho_\lambda$ is not pure, so in these cases, $a \ne 0$ by assumption, and the representation is Hodge--Tate regular.
    \end{proof}

    \begin{lemma}\label{lem:3+2}
        Let $(\rho_\lambda)_\lambda$ be as in \Cref{thm:decomp}. Suppose that for infinitely many primes $\lambda$, $\rho_{\lambda}$ is in case $3a$ $($resp.~$3b)$ of \Cref{table-cases}. Then  $\rho_{\lambda}$ is in case $3a$ $($resp.~$3b)$ of \Cref{table-cases} for all primes $\lambda$ and \Cref{thm:decomp} holds for $(\rho_\lambda)_\lambda$.
    \end{lemma}

    \begin{proof}
    By assumption, we can choose $\lambda_0$ with arbitrarily large residue characteristic, such that $\rho_{\lambda_0}$ decomposes as a direct sum of an irreducible two-dimensional and an irreducible three-dimensional representation. Since $\rho_{\lambda_0}$ is self-dual, both representations are self-dual.
    
If $a >0$, then both these subrepresentations of $\rho_{\lambda_0}$ are Hodge--Tate regular, so the result follows from \Cref{lem:2d-3d-compatible} and \Cref{prop:brauer-nesbitt}. Hence, we can assume that $a = 0$ and that $b>a$. 

        By \Cref{rem:asai}, the two-dimensional subrepresentation of $\rho_{\lambda_0}$ is an induced representation. Hence, by class field theory, it lives in an absolutely irreducible strictly compatible system. If we are in case $3b$, the three-dimensional subrepresentation of $\rho_{\lambda_0}$ is regular, so it is contained in an absolutely irreducible compatible system by \Cref{lem:2d-3d-compatible}, in which case, the result follows from \Cref{prop:brauer-nesbitt}.

        Hence, we can assume that we are in case $3a$. Write $\rho_{\lambda_0}\simeq \sigma_2 \+ \sigma_3$, where $\sigma_2$ is two-dimensional with Hodge--Tate weights $\{-b,b\}$ and $\sigma_3$ is three-dimensional with Hodge--Tate weights $\{0,0,0\}$.
        
        By assumption, $r_{\lambda_0}$ is irreducible and is of the form $\Ind_{K}^{\Q}(\sigma)$ for some quadratic extension $K/\Q$ and some two-dimensional primitive representation $\sigma$ of $G_K$.
        
        By \Cref{rem:asai}, $\sigma_2$ is induced from a character of $G_K$. Since $\sigma_2$ is Hodge--Tate regular, it follows that $K$ is imaginary quadratic. Hence, if $c$ is a complex conjugation in $G_\Q$, by the definition of an induced representation, $\Tr r_{\lambda_0}(c) = 0$, so $\sigma_2$ is odd.
        
        Since $a = 0$, by assumption, the eigenvalues of $\rho_{\lambda_0}(c)$ are $\{-1,-1,1,1,1\}$ (since $\Tr(\rho_{\lambda_0}(c)) = \pm 1$ and $\det\rho_{\lambda_0} = \chi_\triv$). Since $\sigma_2$ is odd, the eigenvalues of $\sigma_2(c)$ are $\{-1,1\}$, so the eigenvalues of $\sigma_3(c)$ must be $\{-1,1,1\}$. Taking $\lambda_0$ large enough, $\sigma_3$ is contained in a strictly compatible system by \Cref{lem:3d-irregular}, and the result follows from \Cref{prop:brauer-nesbitt}.
    \end{proof}

    \begin{lemma}\label{lem:2+2+1-trivial}
        Let $(\rho_\lambda)_\lambda$ be as in \Cref{thm:decomp}. Suppose that  for infinitely many primes $\lambda$, $\rho_{\lambda}$ is in case $5(i)$ of \Cref{table-cases}. Then $\rho_{\lambda}$ is in case $5(i)$ of \Cref{table-cases} for all $\lambda$ and \Cref{thm:decomp} holds for $(\rho_\lambda)_\lambda$.
    \end{lemma}

    \begin{proof}
        By assumption, we can choose $\lambda_0$ with arbitrarily large residue characteristic such that $\rho_{\lambda_0}\simeq \sigma_1 \+\sigma_2\+\chi_\triv$, where $\sigma_1$ has Hodge--Tate weights $\{-b, b\}$ and $\sigma_2$ has Hodge--Tate weights $\{-a,a\}$. Moreover, either $b>a$ or $a>0$, so in either case, $\sigma_1$ is Hodge--Tate regular.

        By \Cref{prop:odd}, taking $\lambda_0$ large enough, we can therefore assume that $\sigma_1$ is odd. Since $\det\rho_{\lambda_0} = \chi_\triv$, it follows that $\det\sigma_2\simeq\det\sigma_1\ii$, so $\sigma_2$ is also odd.

        Since $\rho_{\lambda_0}$ is self-dual, either both $\sigma_1$ and $\sigma_2$ are self-dual, or $\sigma_1\dual\simeq\sigma_2$.

        In the first case, for each $i$, we have
        \[\sigma_1\simeq\sigma_i\dual\tensor\det\sigma_i\simeq\sigma_i\tensor\det\sigma_i\]
        and since $\det\sigma_i$ is non-trivial, it follows that $\sigma_i$ is an induced representation. Hence, it is contained in a strictly compatible system by class field theory, and the result follows from \Cref{prop:brauer-nesbitt}.

        In the second case, we have $a = b >0$, and the result follows from  \Cref{lem:2d-3d-compatible} and \Cref{prop:brauer-nesbitt}.
    

        
    \end{proof}


    \begin{lemma}\label{lem:6}
        Let $(\rho_\lambda)_\lambda$ be as in \Cref{thm:decomp}. Suppose that  for infinitely many primes $\lambda$, $\rho_{\lambda}$ is in case $6$ of \Cref{table-cases}. Then $\rho_{\lambda}$ is in case $6$ of \Cref{table-cases} for all $\lambda$ and \Cref{thm:decomp} holds for $(\rho_\lambda)_\lambda$.
    \end{lemma}

    \begin{proof}
        By assumption, we can choose $\lambda_0$ with arbitrarily large residue characteristic such that $\rho_{\lambda_0}\simeq\sigma\+\sigma\dual\+\chi_\triv$, where $\sigma$ is irreducible and two-dimensional with Hodge--Tate weights $\{a,a\}$. In particular, $\rho_{\lambda_0}$ is not pure, so, by assumption, $\rho_{\lambda_0}(c)$ has eigenvalues $\{1,1,1,-1,-1\}$, where $c$ is a choice of complex conjugation. Since $\sigma(c)$ and $\sigma\dual(c)$ have the same eigenvalues, it follows that $\sigma$ is odd. The result follows from \Cref{lem:2d-irregular} and \Cref{prop:brauer-nesbitt}.
    \end{proof}

    \begin{lemma}\label{lem:4+1-trivial}
        Let $(\rho_\lambda)_\lambda$ be as in \Cref{thm:irreducible}. Suppose that for infinitely many primes $\lambda$, $\rho_{\lambda}$ is in case $2(i)$ of \Cref{table-cases}. Then $\rho_{\lambda}$ is reducible for all primes.

        If, moreover, $(\rho_\lambda)_\lambda$ satisfies the hypotheses of \Cref{thm:decomp}, then $\rho_{\lambda}$ is in case $2(i)$ for all primes $\lambda$ and \Cref{thm:decomp} holds for $(\rho_\lambda)_\lambda$.
    \end{lemma}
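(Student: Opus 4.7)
Write $\Lambda$ for the infinite set of primes $\lambda$ for which $\rho_\lambda$ is in case $2(i)$. The plan is to apply \Cref{prop:brauer-nesbitt} to the decomposition $\rho_\lambda\simeq\sigma_4^{(\lambda)}\oplus\chi_\triv$ afforded by case $2(i)$, after extending the partial family $(\sigma_4^{(\lambda)})_{\lambda\in\Lambda}$ to a weakly compatible system $(\sigma_{4,\lambda})_\lambda$ for every prime $\lambda$. Since $\chi_\triv$ is trivially compatible, the only thing to construct is the compatible system structure on the four-dimensional pieces.

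To construct this, I would descend to the two-dimensional constituents of the lift $r_\lambda$. By \Cref{prop:rl-red}(i), for each $\lambda\in\Lambda$ we have $r_\lambda\simeq\sigma_1\oplus\sigma_2$ where $\sigma_1, \sigma_2$ are irreducible two-dimensional representations with Hodge--Tate weights $\{0,a+b\}$ and $\{a,b\}$, with $\det\sigma_1=\det\sigma_2=\simil r_\lambda$, and $\sigma_4^{(\lambda)}\simeq\sigma_1^\vee\otimes\sigma_2$. Since each $\sigma_i$ is two-dimensional it is automatically self-dual up to twist, and residual irreducibility of $\sigma_4^{(\lambda)}|_{\Q(\zeta_\l)}$ from \Cref{thm:hui}(iii) transfers to residual irreducibility of $\overline\sigma_1|_{\Q(\zeta_\l)}$ and $\overline\sigma_2|_{\Q(\zeta_\l)}$ for all but finitely many $\lambda\in\Lambda$, because reducibility of either tensor factor would force reducibility of the product. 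Arguing as in the proof of \Cref{prop:odd}, the $\sigma_i$ are then odd, and in the Hodge--Tate regular subcase ($b>a>0$) \Cref{lem:2d-3d-compatible} places each $\sigma_i$ in an absolutely irreducible compatible system $(\sigma_{i,\lambda})_\lambda$. Setting $\sigma_{4,\lambda}:=\sigma_{1,\lambda}^\vee\otimes\sigma_{2,\lambda}$ and applying \Cref{prop:brauer-nesbitt} to the pair $(\chi_\triv,\sigma_{4,\lambda})$ yields $\rho_\lambda\simeq\chi_\triv\oplus\sigma_{4,\lambda}$ for every $\lambda$, which proves reducibility for all primes (the first assertion). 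Under the stronger hypotheses of \Cref{thm:decomp}, irreducibility of $\sigma_{4,\lambda}$ for all but finitely many $\lambda$ follows from $E$-rationality and \Cref{thm:semisimple-rank} combined with the orthogonal constraint forcing the monodromy of $\sigma_{4,\lambda}$ to be $\SO_4$, yielding the second assertion.

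The main obstacle is the Hodge--Tate irregular subcase ($a=0$ or $a=b$), where at least one of $\sigma_1,\sigma_2$ has parallel Hodge--Tate weights. In these cases \Cref{lem:2d-3d-compatible} must be replaced by \Cref{lem:2d-irregular}, for which the essential input is again oddness and is obtained as in the regular case. The main subtlety throughout is that \Cref{prop:odd}, \Cref{lem:2d-3d-compatible} and \Cref{lem:2d-irregular} are stated for subrepresentations of $\rho_\lambda$ rather than for constituents of the lift $r_\lambda$: the plan is therefore to first generalise these three results by transferring residual irreducibility and oddness between $\rho_\lambda$ and $r_\lambda$ via the exceptional isomorphism $\SO_5\cong\PGSp_4$ and the four-dimensional subrepresentation $\sigma_4^{(\lambda)}=\sigma_1^\vee\otimes\sigma_2\subseteq\rho_\lambda$, which I expect to constitute the bulk of the technical work.
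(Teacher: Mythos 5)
Your proposal follows the paper's strategy: decompose $r_\lambda\simeq\sigma_1\oplus\sigma_2$ via \Cref{lem:rl2+2}, place each $\sigma_i$ in a compatible system, and conclude via \Cref{prop:brauer-nesbitt}. The transfer of residual irreducibility from $\sigma_4=\sigma_1^\vee\otimes\sigma_2$ to its tensor factors is correct (and the paper is equally terse here), and your argument for the final irreducibility of $\sigma_{4,\lambda}$ — using \Cref{thm:semisimple-rank} together with the fact that the only connected semisimple rank-$2$ subgroup of $\SO_4$ is $\SO_4$ itself — is a valid substitute for the paper's appeal to the preceding lemmas.

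There is, however, a gap in the irregular subcase. A minor point first: $a=0$ is not irregular here — when $a=0$, \Cref{lem:rl2+2}(i) gives both $\sigma_1$ and $\sigma_2$ Hodge--Tate weights $\{0,b\}$, so both are regular; only $a=b>0$ produces an irregular $\sigma_2$ (weights $\{a,a\}$). The genuine problem is your claim that oddness of the irregular $\sigma_2$ is "obtained as in the regular case." Both branches of the argument in \Cref{prop:odd} use Hodge--Tate regularity essentially: the induced branch uses it to force the inducing quadratic field to be \emph{imaginary} (a $\sigma_2$ with parallel weights $\{a,a\}$ could equally be induced from a real quadratic field and be even), and the primitive branch invokes \cite{CG}*{Prop.~2.5}, which is stated for regular representations. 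The paper sidesteps this entirely: \Cref{lem:rl2+2}(i) already yields $\det\sigma_1=\det\sigma_2$, so once the regular $\sigma_1$ has been shown to be odd, $\det\sigma_2(c)=\det\sigma_1(c)=-1$ forces $\sigma_2(c)$ to have eigenvalues $\{1,-1\}$, hence $\sigma_2$ is odd — with no residual-irreducibility input for $\sigma_2$ whatsoever. Without this determinant observation (or an equivalent substitute) your treatment of the $a=b$ case does not close.
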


    \begin{proof}
        By assumption and by \Cref{lem:rl2+2}, we can choose $\lambda_0$ with arbitrarily large residue characteristic, such that
        \[r_{\lambda_0} \simeq \sigma_1\+\sigma_2,\]
    where $\sigma_1, \sigma_2$ are irreducible, two-dimensional representations, with Hodge--Tate weights $\{0, a+b\}$ and $\{a, b\}$,  $\det\sigma_1 = \det\sigma_2$, and
    \[\rho_{\lambda_0} \simeq \sigma_1\tensor\sigma_2\dual\+\chi_\triv.\]  
    Let $\sigma = \sigma_1\tensor\sigma_2\dual$. Since $\det\sigma_1 = \det\sigma_2$, $\sigma$ is valued in $\SO_4$. 
    
    To prove the first conclusion, by \Cref{prop:brauer-nesbitt}, it is sufficient to show that both $\sigma_1$ and $\sigma_2$ are contained in compatible systems, whence $\sigma_1\tensor\sigma_2$ is too.

    If $\sigma_1$ is not Lie irreducible, then since it is Hodge--Tate regular, it is an induced representation, so it is contained in a compatible system by class field theory, and it is odd. If $\sigma_1$ is Lie irreducible, then by \Cref{thm:hui}$(iii)$, we can assume that $\overline\sigma_1|_{\Q(\zeta_\l)}$ is irreducible. Hence, by \cite{CG}*{Prop.~2.5}, $\sigma_1$ is odd. Taking $\lambda_0$ large enough, it follows from \cite{BLGGT}*{Thm.~C} that $\sigma_1$ is contained in a strictly compatible system of Galois representations.

    If $a\ne b$, the exact same arguments apply to $\sigma_1$. If $a=b$, then $\sigma_2$ is not Hodge--Tate regular. Since $\det\sigma_1 = \det\sigma_2$, $\sigma_2$ is still odd. Thus applying the same arguments as in \Cref{lem:2d-irregular}, $\sigma_2$ is contained in a strictly compatible system of Galois representations.

    We see that $\sigma$ is contained in a compatible system of four-dimensional Galois representations $(\sigma_\lambda)_\lambda$, and therefore, by \Cref{prop:brauer-nesbitt}, $\rho_{\lambda}$ is reducible for all primes.
    
    Now, assume that $(\rho_\lambda)_\lambda$ satisfies the hypotheses of \Cref{thm:decomp}. It remains to show that $\sigma_\lambda$ is irreducible for all but finitely many primes. If $a\ne b$, then this follows from \cite{hui-BLMS}*{Thm.~1.1}. In general, if $\sigma_\lambda$ is reducible for infinitely many primes, then $\rho_\lambda$ must be in case $5(i), 6, 7a, 7b(i)$ or $8$ for infinitely many primes, which gives a contradiction to the previous lemmas.
    \end{proof}

    \begin{remark}
        \Cref{lem:4+1-trivial} follows a very similar strategy to \cite{liu-yu}*{Thm.~1.0.1}, however, our assumptions are slightly different: we do not need to assume anything about the eigenvalues of $r_\lambda(c)$, with $c$ a complex conjugation, and we also allow the case that $r_\lambda$ has Hodge--Tate weights $\{0,0,b,b\}$.
    \end{remark}

    \begin{lemma}\label{lem:5ii}
        Let $(\rho_\lambda)_\lambda$ be as in \Cref{thm:decomp}. Suppose that for infinitely many primes $\lambda$, $\rho_{\lambda}$ is in case $5(ii)$ of \Cref{table-cases}.  Then $\rho_{\lambda}$ is in case $5(ii)$ of \Cref{table-cases} for all primes $\lambda$ and \Cref{thm:decomp} holds for $(\rho_\lambda)_\lambda$.
    \end{lemma}

    \begin{proof}
        By assumption, for some arbitrarily large prime $\lambda_0$, we can write $\rho_{\lambda_0}\simeq \sigma_1\+\sigma_2\+\chi_{K/\Q}$, where $\sigma_1, \sigma_2$ are irreducible two-dimensional representations with Hodge--Tate weights $\{-b,b\}$ and $\{-a,a\}$, and $\chi_{K/\Q}$ is the quadratic character corresponding to an extension $K/\Q$. Moreover, $r_{\lambda}\simeq \Ind_{K}^{\Q}\sigma$ for some irreducible representation $\sigma$ of $G_K$ with $\det\sigma\simeq\det\sigma^\tau$, where $\tau$ generates $\Gal(K/\Q)$.

        Taking $\lambda_0$ sufficiently large, by \Cref{lem:2d-3d-compatible}, we can assume that $\sigma_1$ is contained in a strictly compatible system. 
        If $a>0$ or if $\sigma_2$ is odd, then by \Cref{lem:2d-3d-compatible} or \Cref{lem:2d-irregular}, $\sigma_2$ is also contained in a compatible system, and the result follows from \Cref{prop:brauer-nesbitt}.

        So assume that $\sigma_2$ is even and that $a = 0$. Since $\rho_{\lambda_0}$ is self-dual, it follows that $\sigma_2$ is too. Hence,
        \[\sigma_2\simeq\sigma_2\dual\simeq\sigma_2\tensor\det\sigma_2\ii.\]
        If $\det\sigma_2\not\simeq\chi_\triv$, then $\sigma_2$ is an induced representation, so by class field theory, it is contained in an absolutely irreducible compatible system, and the result follows from \Cref{prop:brauer-nesbitt}.

        It remains to address the case that $\det\sigma_2\simeq\chi_{\triv}$. Since $\det\rho_{\lambda_0}\simeq\chi_\triv$, it follows that $\det\sigma_1 \simeq\chi_{K/\Q}$. Moreover, since  $\rho_{\lambda_0}$ is self-dual, it follows that $\sigma_1$ is too. Hence,
        \[\sigma_1\simeq\sigma_1\dual\simeq\sigma_1\tensor\chi_{K/\Q}\ii.\] 
        It follows that $\sigma_1$ is induced from a representation of $G_K$, and therefore has abelian algebraic monodromy group. Now, by \Cref{table:semisimple}, since $\rho_{\lambda}$ is $\SO_5$-valued, its algebraic monodromy group cannot be $\SL_2$ embedded trivially. It follows that its monodromy group must be trivial. Thus $\sigma_2$ must be an Artin representation, whence it is contained in an absolutely irreducible strictly compatible system. The result follows from \Cref{prop:brauer-nesbitt}.
    \end{proof}

        \begin{lemma}\label{lem:2ii}
            Let $(\rho_\lambda)_\lambda$ be as in \Cref{thm:decomp}. Suppose that for infinitely many primes $\lambda$, $\rho_{\lambda}$ is in case $2(ii)$  of \Cref{table-cases}. Then $\rho_{\lambda}$ is in case $2(ii)$  of \Cref{table-cases} for all primes $\lambda$ and \Cref{thm:decomp} holds for $(\rho_\lambda)_\lambda$.
        \end{lemma}

        \begin{proof}
        By assumption, for some arbitrarily large prime $\lambda_0$, we can write $\rho_{\lambda_0} \simeq \sigma_4 \+\chi_{K/\Q}$, where $\sigma_4$ is irreducible and four-dimensional, $\chi_{K/\Q}$ is the quadratic character corresponding to an extension $K/\Q$, and $r_{\lambda_0}$ is induced from a representation of $G_K$. In this case, we have $(\G_{\lambda_0}^\circ)' = \SO_4$. It follows from \Cref{table:semisimple} and \Cref{thm:semisimple-rank} that for all primes $\lambda$, $\rho_\lambda$ is in one of cases $1, 2(i)$, or $(2ii)$ of \Cref{table-cases}.

        Since $r_{\lambda_0}$ is induced from $G_K$, for all but finitely many primes $p$ that are inert in $K$, we have $\Tr r_{\lambda_0}(\Frob_p) = 0$. Although the $r_\lambda$ do not necessarily form a compatible system, nevertheless, by \Cref{rem:trace-mod-powers}, we still have $\Tr r_\lambda(\Frob_p) =0$ for all $\lambda$ and for all but finitely many primes $p$ that are inert in $K$. Thus, for all primes $\lambda$, we have $r_\lambda\simeq r_\lambda\tensor\chi_{K/\Q}$. It follows that for all $\lambda$, the representation $r_\lambda$ cannot be Lie irreducible. Thus, $\rho_\lambda$ cannot be irreducible. Moreover, if $r_\lambda$ decomposes as a sum of two-dimensional representations $\sigma_1, \sigma_2$, then we must either have $\sigma_1\simeq\sigma_2\dual$ or $\sigma_i\simeq\sigma_i\tensor\chi_{K/\Q}$. In either case, $(\G_{\lambda}^\circ)'$ will not have the same semisimple rank as $\SO_4$. Thus $\rho_{\lambda}$ must be irreducible, but not Lie irreducible. It follows that $\rho_\lambda$ is in case $2(ii)$ of \Cref{table-cases} for all primes $\lambda$. 

        Since $r_\lambda\simeq r_\lambda\tensor\chi_{K/\Q}$, it follows that for all primes $\lambda$, we can write
        \[\rho_\lambda\simeq\sigma_{4, \lambda}\+\chi_{K/\Q}\]
        where $\sigma_{4,\lambda}$ is irreducible and four-dimensional. Since $\chi_{K/\Q}$ and $\rho_\lambda$ have Frobenius characteristic polynomials that are independent of $\lambda$, so does $\sigma_{4,\lambda}$. Hence, $(\sigma_{4,\lambda})_\lambda$ is a weakly compatible system. Moreover, since $\chi_{K/\Q}$ is contained in a strictly compatible system, and since $(\rho_\lambda)_\lambda$ is strictly compatible, it follows that $(\sigma_{4,\lambda})_\lambda$ must be too. The result follows from \Cref{prop:brauer-nesbitt}.

        \end{proof}

\begin{proof}[Proof of \Cref{thm:decomp}]
    \Cref{thm:decomp} now follows from the previous lemmas and \Cref{table-cases}.
\end{proof}

\subsection{The proof of \Cref{thm:irreducible}}

In this section we prove \Cref{thm:irreducible}. Case $(iii)$ was proven in \Cref{prop:non-self-dual}.

\subsubsection{The proof of \Cref{thm:irreducible}(ii)}

\begin{lemma}\label{lem:sym4}
    Suppose that for some prime $\lambda$, $(\G_{\lambda_0}^\circ)'\simeq\Sym^4(\SL_2)$. Then $\rho_\lambda$ is irreducible for all primes $\lambda$.
\end{lemma}

\begin{proof}
    We apply \cite{larsen-pink}*{Prop.~6.12}. The formal character of $\rho_{\lambda_0}$ is $\br{\begin{smallmatrix}\alpha^2 &&&\\&\alpha&&\\&&1&\\&&&\alpha\ii&\\&&&&\alpha^{-2}\end{smallmatrix}}$, and one can check that no other representation in \Cref{table:semisimple} has this formal character (c.f.~\cite{Hui}*{Table 1}). Hence, $(\G_{\lambda}^\circ)'\simeq\Sym^4(\SL_2)$ for all $\lambda$ and the result follows.
        
\end{proof}

We deduce \Cref{thm:irreducible} in case $(ii)$, that each $\rho_\lambda$ is isomorphic to a representation valued in $\GO_5(\elb)$.

\begin{proof}[Proof of \Cref{thm:irreducible}$(ii)$]
    By assumption, $\rho_{\lambda_0}$ is irreducible for some prime $\lambda_0$. By \Cref{lem:lie-irred}, we may assume that $\rho_{\lambda_0}$ is Lie irreducible. Hence, by \Cref{table:semisimple}, we have $(\G_{\lambda_0}^\circ)'\simeq \SO_5$ or $\Sym^4(\SL_2)$. The latter case is handled by \Cref{lem:sym4}. 
        
    If $(\G_{\lambda_0}^\circ)'\simeq \SO_5$, then by \Cref{table:semisimple} and \Cref{thm:semisimple-rank}, for any prime $\lambda$, $\rho_\lambda$ can only be cases $1, 2(i), 2(ii)$ of \Cref{table-cases}. Case $2(ii)$ can be ruled out as in the proof of \Cref{lem:2ii}: if some $\rho_{\lambda}$ is in this case, then $r_\lambda$ is induced, so $\Tr r_\lambda(\Frob_p) = 0$ for a positive proportion of primes. But this property is independent of $\lambda$, meaning that $r_\lambda$ is never Lie irreducible (e.g.\ by \cite{patrikis-variations}*{Cor.~3.4.11}), and hence by \Cref{prop:rl-red,prop:rl-irred}, that $\rho_\lambda$ is never irreducible. Hence, the result follows from \Cref{lem:4+1-trivial}.
\end{proof}

\subsubsection{The proof of \Cref{thm:irreducible}$(i)$}
It remains to prove \Cref{thm:irreducible} in case $(i)$, that the compatible system $(\rho_\lambda)_{\lambda}$ is Hodge--Tate regular, self-dual up to twist, but not isomorphic to a representation valued in $\GO_5(\elb)$.
As before, there is a compatible system of characters $(\chi_{\lambda})_\lambda$ such that $\rho_\lambda\simeq\rho_\lambda\dual\tensor\chi_\lambda$ for all $\lambda$ and replacing each $\rho_\lambda$ with $\rho_\lambda\tensor\det\rho_\lambda\ii\chi_\lambda^2$, we are free to assume, with no loss in generality, that each $\rho_\lambda$ is self-dual.

By assumption, $\rho_{\lambda_0}$ is irreducible and self-dual for some prime $\lambda$. It follows that $(\G_{\lambda_0}^\circ)'$ is either $\SO_5$ or $\Sym^4(\SL_2)$. The latter case was handled in \Cref{lem:sym4}. By \Cref{thm:semisimple-rank}, $(\G_{\lambda}^\circ)'$ has rank $2$ for all primes $\lambda$. Hence, it remains to show that $(\G_{\lambda}^\circ)'$ cannot be in cases $5$, $9$, and $10$ of \Cref{table:semisimple}, where $\rho_{\lambda}$ can be self-dual but not $\GO_5$-valued.

\begin{lemma}\label{lem:nonsd}
    Suppose that $\rho_\lambda$ is Hodge--Tate regular and irreducible for a positive density of primes $\lambda$. Then $\rho_\lambda$ is irreducible for all but finitely many primes.
\end{lemma}

\begin{proof}
   Since $\rho_\lambda$ is irreducible for a positive density of primes, by \Cref{psw-generalisation} (see also \cite{BLGGT}*{Prop,~5.3.2}), so is $\orho_{\lambda}|_{G_{\Q(\zeta_\l)}}$. Hence, by \cite{BLGGT}*{Thm.~C}, there is a finite, totally real extension $K/\Q$ such that the compatible system $(\rho_{\lambda}|_K)$ is automorphic. Thus, by \cite{Hui}*{Thm.~1.4}, $(\rho_{\lambda}|_K)$, and hence $\rho_\lambda$, is irreducible for all but finitely many primes.
\end{proof}

    \begin{proposition}\label{prop:symplectic}
        Let $(\rho_\lambda)_\lambda\:G_\Q\to\GL_5(\elb)$ be a strictly compatible system. Suppose that there is a set of primes $\lambda$ of positive Dirichlet density for which $\rho_\lambda$ contains a four-dimensional subrepresentation $\sigma$ such that
        \begin{itemize}
            \item $\sigma$ is isomorphic to a representation valued in $\Gf(\elb)$
            \item $\sigma$ is Hodge--Tate regular
            \item $\sigma$ is Lie irreducible.
        \end{itemize}
        Then $\rho_\lambda$ is reducible for all primes $\lambda$.
    \end{proposition}

    \begin{proof}
        We carry out the strategy sketched in \cite{Calegari-even1}*{Remark on p.~11}. Let $\std(\sigma)$ denote the $\SO_5(\elb)$-valued representation obtained via the exceptional isomorphism $\PGSp_4\cong \SO_5$. Then 
        \[\wedge^2(\sigma) \simeq(\std(\sigma)\tensor\simil(\sigma)) \+\simil(\sigma).\]
        Since $\sigma$ is Lie irreducible, so is $\std(\sigma)$. Moreover, since $\std(\sigma)\tensor\simil(\sigma)$ is a subrepresentation of $\wedge^2(\rho_\lambda)$, and since $(\wedge^2(\rho_\lambda))_\lambda$ is a compatible system, by \Cref{psw-generalisation}, we may assume that the residual representation $\std(\overline\sigma)|_{\Q(\zeta_\l)}$ is irreducible.

        Since $\std(\sigma)$ is irreducible and takes values in $\SO_5(\elb)$, it is automatically odd essentially self-dual, and it is Hodge--Tate regular since $\sigma$ is. Hence, by \cite{BLGGT}*{Thm.~C}, there is a finite totally real extension $F/\Q$ such that $\std(\sigma)|_F$ is automorphic. It follows from \cite{BLGGT}*{Thm.~C} that $\std(\sigma)$ is contained in a compatible system $(\std(\sigma)_\lambda)_\lambda$. 

        Fix a prime $\lambda_1$ such that $\rho_{\lambda_1}\simeq\sigma_{\lambda_1}\+\chi$ is reducible, $\sigma_{\lambda_1}$ is Lie irreducible and $\Gf$-valued, and such that $\std(\sigma_{\lambda_1})$ is contained in this compatible system. In particular, the algebraic monodromy group of $\rho_{\lambda_1}$ is $\Sp_4$ and the algebraic monodromy group of $\std(\sigma_{\lambda_1})$ is $\SO_5$.
        
        Suppose for contradiction that $\rho_{\lambda_0}$ is irreducible for some prime $\lambda_0$. By \Cref{lem:lie-irred}, $\rho_{\lambda_0}$ must be Lie irreducible. It follows from \Cref{thm:semisimple-rank} and \Cref{table:semisimple} that $(\G_{\lambda_0}^\circ)'\simeq\SO_5$.

        Now, consider the $10$-dimensional compatible system of Galois representations $(\rho_\lambda\+\std(\sigma)_\lambda)$. On the one hand, the algebraic monodromy group of $\lambda_1$ is $\Sp_4$, embedded into $\GL_{10}$ via $\iota\times\std$. This group has semisimple rank $2$. On the other hand, $\rho_{\lambda_0}$ and $\std(\sigma)_{\lambda_0}$ are not isomorphic (e.g. their Frobenius eigenvalues are different) and both have algebraic monodromy group $\SO_5$, so by Goursat's lemma, the algebraic monodromy group of their product must have semisimple rank $4$. This contradicts \Cref{thm:semisimple-rank}. It follows that $\rho_\lambda$ is reducible for all primes $\lambda$.
        
    \end{proof}

\begin{proof}[Proof of \Cref{thm:irreducible}$(i)$]
    By \Cref{lem:nonsd}, it is sufficient to prove that $\rho_\lambda$ is irreducible for a positive density of primes. Moreover, by \Cref{lem:lie-irred,lem:sym4}, we may assume that $\rho_{\lambda_0}$ is Lie irreducible and has algebraic monodromy group $\SO_5$.
    
    So suppose for contradiction that $\rho_\lambda$ is reducible for $100\%$ of primes. Then, by the proofs of \Cref{thm:irreducible}$(ii)$-$(iii)$, we may assume that for $100\%$ of primes $\lambda$, $\rho_\lambda$ is reducible, self-dual, and not isomorphic to a representation valued in $\GO_5(\elb)$. Thus, by \Cref{thm:semisimple-rank}, $\rho_\lambda$ must be in one of cases $5$, $9$, or $10$ of \Cref{table:semisimple}. Thus, either:
    \begin{itemize}
        \item For a positive density of primes, $(\G_{\lambda}^\circ)'$ is in case $5$ of \Cref{table:semisimple}. So
        \[\rho_\lambda \simeq \sigma_4\+\chi\]
        where $\chi$ is a character and $\sigma_4$ is a Lie irreducible $4$-dimensional Galois representation that takes values in $\Gf$.
        \item For a positive density of primes, $(\G_{\lambda}^\circ)'$ is in case $9$ of \Cref{table:semisimple}. So
        \[\rho_\lambda \simeq \sigma_3\+\sigma_2\]
        where $\sigma_3$ and $\sigma_2$ are irreducible, self-dual $3$- and $2$- dimensional representations.
        \item For a positive density of primes, $(\G_{\lambda}^\circ)'$ is in case $10$ of \Cref{table:semisimple}. So
        \[\rho_\lambda \simeq \sigma_2\+\sigma_2'\+\chi\]
        where $\chi$ is a quadratic character and $\sigma_2,\sigma_2'$ are distinct, irreducible, $2$-dimensional representations.
    \end{itemize}

    It follows from \Cref{lem:2d-3d-compatible,prop:symplectic} that $\rho_\lambda$ is reducible for all primes, contradicting our assumption that some $\rho_{\lambda_0}$ is irreducible.
\end{proof}

\section{Elliptic surfaces}
The goal of the next three sections is to classify the elliptic surfaces from \Cref{thm:Tate-intro} into families, and to construct a compatible system of Galois representations to each of these families. 

More precisely, using the theory of elliptic surfaces, we will show that 
\begin{enumerate}
    \item there is a lisse sheaf associated to the non-trivial part (\Cref{Def: transcendental_lisse}) of a family of elliptic surfaces (\Cref{Prop: lisse_sheaf}); and
    \item for certain elliptic surfaces coming from a degree $3$ branched cover of a special elliptic surface, the non-trivial part of their second \'etale cohomology satisfies certain desired properties (\Cref{Prop: dim_5_eg}). 
\end{enumerate}
In order to make sense of our terminology, we recall the classical geometric and arithmetic theory of elliptic surfaces in \Cref{Sect: preliminary_ell_sur}. Then, in \Cref{Sect: construction_of_system}, we focus on the decomposition of the second \'etale cohomology of a general elliptic surface over $\bP^1$. In particular, we describe its so-called \emph{non-trivial part} using perverse sheaf theory (see \Cref{Prop: decomposition_H^2}). When a family of elliptic surfaces is parametrised by an open subscheme of $\bP^1$, we show in \Cref{Prop: lisse_sheaf} that their non-trivial parts form a local system. Finally in \Cref{Sect: concrete_eg_5dim}, we focus on the branched covers of a specific elliptic surface, classify them into six parametrised families, and prove properties about the Galois representations associated to their non-trivial parts (see \Cref{Prop: dim_5_eg}).

\subsection{Preliminaries on elliptic surfaces}\label{Sect: preliminary_ell_sur}
We establish some necessary terminology and list some well-known results about the geometry and arithmetic of elliptic surfaces. More details can be found in \cite{Silverman2}*{III, IV} and \cite{Shioda-Schutt-MW-Lattice}*{Chap.5}. 

In this paper, unless otherwise specified, a variety is defined as a separated and integral scheme of finite type over a field. Specifically, a curve (respectively, a surface) refers to a one-dimensional (respectively, two-dimensional) smooth variety defined over the base field.

Let $K$ be a finitely generated field over $\bQ$ and $X$ be a projective smooth surface over $K$.

\begin{definition}\label{def:elliptic-surface}
    An \emph{elliptic fibration} of $X$ is a surjective $K$-morphism
\[\pi\: X\to C,	\]
where $C$ is a curve over $K$, such that:
\begin{enumerate}
        \item all the fibres of $\pi$ are connected; 
	\item almost all fibres are smooth of genus $1$; 
	\item no fibre contains a $(-1)$-curve;
	\item $\pi$ has a zero section $\iota \: C\to X$ such that $\pi\circ \iota={\rm id}_C$;
	\item $\pi$ has at least one singular fibre. In particular, $\pi$ does not induce an isomorphism between $X_{\overline{K}}$ and $E\times_{\overline{K}} C_{\overline{K}}$ for any elliptic curve $E$ over $\overline{K}$. 
\end{enumerate}
\end{definition}

\begin{definition}
    The \emph{Euler characteristic} of $X$ is the alternating sum of the dimensions of the cohomology of its structure sheaf, i.e.\  
\[
	\chi(X)= \sum_{i=0}^2 (-1)^i h^i(X, \mathcal{O}_X)
	=p_g(X)-q(X)+1,
\]
where $p_g(X)=h^2(X, \mathcal{O}_X)=h^0(X, \omega_{X/F})$ is the geometric genus of $X$, and $q(X)=h^1(X, \mathcal{O}_X)$ is the \emph{irregularity} of $X$. The \emph{topological Euler number} (or \emph{Euler–Poincar\'e characteristic}), is defined by 
\[
e(X)=\sum_{i=0}^4 (-1)^i h^i(X, \bC).
\]
\end{definition}

These two Euler characteristics are related by \emph{Noether's formula}:
\begin{equation*}
    12\chi(X)=e(X). 
\end{equation*}
Using the fibration $\pi\: X\to C$, one can compute $e(X)$ more concretely. For each geometric point $t\in C_{\overline{K}}$, with residue field $k_t$, let $X_{t}=X\times \Spec k_t$ denote the scheme theoretic preimage, with $m_t$ distinct irreducible components. Then the fibrewise Euler characteristic is defined by
\[
e(X_t)=\begin{cases}
	0 & \text{if }X_t \text{ is smooth}\\
	m_t & \text{if }X_t \text{ is multiplicative, i.e.\  of type }I_n\\
	m_t+1 & \text{if }X_t \text{ is additive, i.e.\  not of type }I_n.
\end{cases}	
\]
By \cite{Shioda-Schutt-MW-Lattice}*{Thm.~5.47}, we have
\begin{equation}\label{Eqn: Euler_fibre_formula}
	e(X)=\sum_{t\in C_{\overline{K}}} e(X_t).
\end{equation}
Since there are only finitely many bad fibres, this sum is finite.

In addition to the bad fibre, another source of algebraic cycles of $X_{\overline{K}}$ is the set of sections $\{f\: C_{\overline{K}}\to X_{\overline{K}}\mid \pi\circ f={\rm id}_C\}$. In fact, using the theory of Kodaira--N\'eron models, or more explicitly by \cite{Shioda-Schutt-MW-Lattice}*{Thm.~6.5}, we have a short exact sequence of $\bZ[G_K]$-modules
\[
0\to \Triv(X_{\overline{K}})\to \NS(X_{\overline{K}})\to \MW(X)\to 0. 
\]
Here:
\begin{itemize}[leftmargin=*]
    \item $\Triv(X_{\overline{K}})$ is the \emph{trivial lattice}\footnote{This terminology follows \cite{Shioda-Schutt-MW-Lattice}*{Sect.~6.1}}, generated by the zero section and all fibre components.
    \item The \emph{N\'eron--Severi group} $\NS(X_{\overline{K}})$ is the group of algebraic equivalence classes of 1-cycles, or equivalently, the image of the first Chern map $C^1({\rm Pic}(X_{\overline{K}}))={\rm Pic}(X_{\overline{K}})/{\rm Pic}^0(X_{\overline{K}})$.
    \item The \emph{Mordell--Weil group} $\MW(X)$ is the Mordell--Weil group of the \emph{generic fibre} $E$ of $X$ over the function field $\overline{K}(C)$ of the base curve $C$.
\end{itemize}

Moreover, after tensoring with $\bQ$, the above short exact sequence splits (\cite{Shioda-Schutt-MW-Lattice}*{Lems.~6.16, 6.17} or \cite{Silverman2}*{III, Prop.~8.3}). 

In this paper, we will only consider the rational N\'eron--Severi group and the rational trivial lattice for $X_{\overline{K}}$. Hence, to ease notation, we let $\NS^\circ(X)=\NS(X_{\overline{K}}) \otimes \bQ$, $\Triv(X)=\Triv(X_{\overline{K}})\otimes \bQ$, and $\MW^\circ(X)\coloneqq \MW(X)\otimes \bQ$.
Therefore, as $G_K$-modules, we have 
\begin{equation}\label{Eqn: decomposition_NS=Triv+E}
    \NS^\circ(X)\cong \Triv(X)\oplus \MW^{\circ}(X).
\end{equation}
In particular, if $\rho(X)=\dim \NS^\circ(X)$ and $r(E)=\rk \MW(X)$, then by the Shioda-Tate formula (\cite[Cor. 6.7]{Shioda-Schutt-MW-Lattice} for instance)
\begin{equation}\label{Eqn: Shioda_Tate_eqn}
    \rho(X)=r(E)+2+\sum_{\text{bad } t}(m_t-1).
\end{equation}
We call $\Tran_\ell(X) := (H^2_{\rm{\acute{e}t}}(X_{\overline{ K}}, \bQ_\l(1))/(\NS(X)\otimes \bQ_\l))^{\mathrm{ss}}$ the ($\ell$-adic) \emph{transcendental part} of $X$, where $(-)^\mathrm{ss}$ is the semisimplification. Using the fact that for elliptic surfaces, algebraic and numerical equivalence coincide \cite{Shioda-Schutt-MW-Lattice}*{Thm.~6.5}, up to semisimplification, we have an isomorphism of Galois representations
\begin{equation}\label{Eqn: decomposition_H^2_NS+Tran}
    H^2_{\rm{\acute{e}t}}(X_{\overline{ K}}, \bQ_\l(1))^{ss}\cong (\NS^\circ(X)\otimes \bQ_\l)\oplus \Tran_\ell(X)^{ss}. 
\end{equation}

\begin{lemma}
    When the base curve $C$ is $\bP^1$, for each geometric point $t\in \bP^1_{\overline{K}}$, denote 
	\[
	\delta_t=
	\begin{cases}
		0 & \text{if }X_t \text{ is smooth}\\
		1 & \text{if }X_t \text{ is multiplicative}\\
		2 & \text{if }X_t \text{ is additive}.\\
	\end{cases}
	\]
	Then 
	\begin{equation}\label{Eqn: non_triv_rank_P^1}
		\rank \Tran_\ell(X)+r(E)=\sum_{t} \delta_t-4 .
	\end{equation}
\end{lemma}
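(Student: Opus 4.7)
\bigskip

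The plan is to combine Noether's formula, equation \eqref{Eqn: Euler_fibre_formula}, the Shioda--Tate formula \eqref{Eqn: Shioda_Tate_eqn}, and the decomposition \eqref{Eqn: decomposition_H^2_NS+Tran} in a purely bookkeeping manner. First I would compute the second Betti number of $X$ directly. Since $C=\bP^1$ and $\pi\:X\to \bP^1$ admits a section, a standard argument (or Leray on $\pi$) gives $q(X)=g(\bP^1)=0$, so $h^1(X)=h^3(X)=0$ and $h^0(X)=h^4(X)=1$. Consequently
\[
h^2(X)=e(X)-2.
\]
On the other hand, by \eqref{Eqn: decomposition_H^2_NS+Tran} (which is valid up to semisimplification, which does not affect dimensions), we have
\[
h^2(X)=\rho(X)+\rank\Tran_\ell(X).
\]

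Next I would substitute the Shioda--Tate formula \eqref{Eqn: Shioda_Tate_eqn} for $\rho(X)$ to obtain
\[
\rank\Tran_\ell(X)+r(E)=e(X)-4-\sum_{\text{bad }t}(m_t-1).
\]
Finally I would use \eqref{Eqn: Euler_fibre_formula} to rewrite the right-hand side in terms of the $\delta_t$. Splitting the sum into multiplicative and additive contributions,
\[
e(X)-\sum_{\text{bad }t}(m_t-1)=\sum_{\text{mult }t}\bigl(m_t-(m_t-1)\bigr)+\sum_{\text{add }t}\bigl((m_t+1)-(m_t-1)\bigr)=\sum_{\text{mult }t}1+\sum_{\text{add }t}2=\sum_{t}\delta_t,
\]
where the smooth fibres contribute $0$ to both sides. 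Combining the displays yields the claimed identity
\[
\rank\Tran_\ell(X)+r(E)=\sum_{t}\delta_t-4.
\]

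There is no real obstacle here: the only non-trivial input is the vanishing $q(X)=0$ for an elliptic surface over $\bP^1$ admitting a section, which is standard (e.g.\ \cite{Shioda-Schutt-MW-Lattice}*{Sect.~5}); the rest is an algebraic manipulation of the three formulas already recalled in \Cref{Sect: priliminary_ell_sur}.
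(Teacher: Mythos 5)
Your proof is correct and follows essentially the same route as the paper: both combine the vanishing $h^1(X)=h^3(X)=0$ (which is the Shioda--Sch\"utt comparison $q(X)=g(C)=0$ for $C=\bP^1$), the decomposition $h^2 = \rho(X)+\rank\Tran_\ell(X)$, the Shioda--Tate formula, and the fibrewise Euler characteristic formula \eqref{Eqn: Euler_fibre_formula}. The only difference is that you spell out the final bookkeeping step $e(X)-\sum_{\text{bad }t}(m_t-1)=\sum_t\delta_t$ explicitly, which the paper leaves implicit.
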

    	
\begin{proof}
Since $h^2(X_{\overline{K}}, \bQ)=\rank \Tran_\ell(X)+\rho(X)$, applying \eqref{Eqn: Euler_fibre_formula} and \eqref{Eqn: Shioda_Tate_eqn} we have 
	\[
	e(x)-2+h^1(X,\bC)+h^3(X,\bC)=h^2(X_{\overline{K}}, \bQ)= \rank \Tran_\ell(X)+ r(E)+2+ \sum_{\text{bad } t}(m_t-1).
	\]
Since $C=\bP^1$, the desired equality follows from the well-known comparison result $h^3(X, \bC)=h^1(X,\bC)=h^1(C,\bC)=0$ \cite{Shioda-Schutt-ES}*{Thm.~6.12}. 
\end{proof}

For further information about the transcendental part, for instance its Hodge decomposition, we will need a way to read the geometric genus of $X$ explicitly from its equation. Recall that as an elliptic curve over $K(C)$, we can express the generic fibre $E$ by a Weierstrass equation with the infinite point induced by the zero section. So we have 
\begin{equation}\label{Eqn: Weierstrass_model}
    E\: y^2+a_1xy+a_3y=x^3+a_2x^2+a_4x+a_6, \quad a_i\in K(C). 
\end{equation}
Since $C=\bP^1$, if $t$ is a parameter of $\P^1$, then we can assume that every $a_i$ is an element of $K[t]$. Up to a change of variables we can assume that \eqref{Eqn: Weierstrass_model} is \emph{globally minimal} so that it is minimal (in the sense  of \cite{Silverman1}*{VII.1}) when localised at every place (including $t=\infty$) of $\bP^1$. We define 
\begin{equation}\label{Eqn: Defn_a}
    a:= \min\{n\in \mathbb{N}\mid \deg(a_i)\leq ni,\ \forall i\}. 
\end{equation}

\begin{proposition}[\cite{Shioda-Schutt-MW-Lattice}*{\S5.13}]\label{Prop: genus_d_eqn}
    With respect to the globally minimal model of $E$ and the integer $a$ defined above, we have $e(X)=12a$, $\chi(X)=a$, and $p_g(X)=a-1$. 
\end{proposition}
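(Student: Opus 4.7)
The plan is to prove $\chi(X) = a$; the other two identities then follow immediately. Noether's formula gives $e(X) = 12\chi(X) = 12a$, and since the base curve is $\bP^1$, the irregularity $q(X) = h^1(X, \mathcal{O}_X)$ vanishes (a standard fact for elliptic surfaces over $\bP^1$; see e.g.\ \cite{Shioda-Schutt-ES}*{Thm.~6.12}), so $p_g(X) = \chi(X) - 1 + q(X) = a - 1$.

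To compute $\chi(X) = \chi(\mathcal{O}_X)$, I would apply the Kodaira canonical bundle formula for the relatively minimal elliptic fibration $\pi\:X \to \bP^1$: since $\pi$ admits a section, there are no multiple fibres, and one has $\omega_X \simeq \pi^*(\omega_{\bP^1} \otimes \mathcal{L})$, where $\mathcal{L} = (R^1\pi_*\mathcal{O}_X)^{-1}$ is a line bundle on $\bP^1$. Writing $\mathcal{L} \simeq \mathcal{O}_{\bP^1}(d)$ and using $\pi_*\mathcal{O}_X = \mathcal{O}_{\bP^1}$ (a consequence of the fibres being connected), the Leray spectral sequence for $\pi$ combined with Riemann--Roch on $\bP^1$ yields
\[
\chi(X) \;=\; \chi(\bP^1, \mathcal{O}_{\bP^1}) - \chi(\bP^1, \mathcal{O}_{\bP^1}(-d)) \;=\; 1 - (1 - d) \;=\; d.
\]

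It therefore remains to identify $d$ with the integer $a$ of \eqref{Eqn: Defn_a}, which is where I expect the main technical work to lie. The key input, coming from the Kodaira--N\'eron construction, is that the Weierstrass coefficients $a_i$ are not polynomials on $\bP^1$ in an absolute sense but rather global sections $a_i \in H^0(\bP^1, \mathcal{L}^{\otimes i})$: under the scaling $x \mapsto \lambda^2 x$, $y \mapsto \lambda^3 y$ used to patch Weierstrass charts, the coefficient $a_i$ transforms with weight $i$. Since $\mathcal{L} = \mathcal{O}_{\bP^1}(d)$, the space $H^0(\bP^1, \mathcal{L}^{\otimes i})$ consists of polynomials in an affine coordinate $t$ of degree at most $di$, and global minimality of the chosen Weierstrass model forces $d$ to be the smallest integer such that $\deg a_i \le di$ for every $i$. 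This is precisely the definition of $a$, so $d = a$ and hence $\chi(X) = a$, as desired.
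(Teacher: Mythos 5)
The paper does not prove this proposition but cites it to Shioda--Sch\"utt \cite{Shioda-Schutt-MW-Lattice}*{\S5.13}; your argument is a correct reconstruction of the standard proof found there. The chain is exactly right: the canonical bundle formula $\omega_X \simeq \pi^*(\omega_{\P^1}\otimes\mathcal{L})$ with $\mathcal{L}=(R^1\pi_*\mathcal{O}_X)^{-1}$, the Leray computation $\chi(X)=\chi(\mathcal{O}_{\P^1})-\chi(\mathcal{O}_{\P^1}(-d))=d$, and the identification $d=a$ via global minimality. For the last step, it is worth being explicit that the minimality condition \emph{at $t=\infty$} is what pins down $d$: if $\deg a_i \le d'i$ for all $i$ with $d'<d$, then in the chart $u=1/t$ the local coefficients $u^{di}a_i(1/u)$ all have $\operatorname{ord}_u \ge i$, so the substitution $(x,y)\mapsto(u^2x, u^3y)$ would produce an integral model of smaller discriminant at $u=0$, contradicting minimality there. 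With that remark filled in, your proof is complete and matches the argument in the cited reference.
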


\section{Galois representations attached to families of elliptic surfaces}\label{Sect: elliptic_surfaces}

We keep the notations and assumptions in the previous section except that in this section, we assume that the base curve $C=\bP^1$. In this situation, in \eqref{Eqn: decomposition_H^2}, we use the language of perverse sheaves to give an alternative description of the decomposition \eqref{Eqn: decomposition_H^2_NS+Tran} of the second \'etale cohomology of such elliptic surfaces.
This language will enable us to generalise these results to local systems in \Cref{Sect: local_system_family_ell_sur}.

\subsection{Construction of the compatible system Galois representations}\label{Sect: construction_of_system}
Suppose that $K$ is a finitely generated field over $\Q$ with a fixed embedding $K\hookrightarrow \C$. Let $\pi\: X\rightarrow \P^{1}$ be an elliptic surface over $K$, as defined in \Cref{def:elliptic-surface}. After base changing to $\overline{K}$, we have $\pi_{\overline{K}}\: X_{\overline{K}}\rightarrow \P^{1}_{\overline{K}}$. 

Let $U$ be the maximal open subvariety of $\P_{\overline{K}}^{1}$ such that for each $u\in U$, $X_{\overline{K}, u} = \pi_{\overline{K}}^{-1}(u)$ is smooth. We denote this open embedding by $j\: U\hookrightarrow \P^{1}_{\overline{K}}$.  The complement closed subvariety $Z = \P^{1}_{\overline{K}}\setminus U$ is a disjoint union of finitely many points. We have the following pullback diagram:
\[
\begin{tikzcd}
    X_{\overline{K}, U}\ar[r]\ar[d, "\pi_{\overline{K}, U}"'] & X_{\overline{K}} \ar[d, "\pi_{\overline{K}}"] \\
    U\ar[r, hook, "j", "\text{open}"'] & \P^{1}_{\overline{K}}.
\end{tikzcd}
\]
According to our definitions, $\pi_{\overline{K}, U}$ is proper and smooth.
\begin{lemma}\label{lemmma:R1}
Let $F^{i}\: R^{i}\pi_{\overline{K}, \ast}\overline{\Q}_{\ell}\rightarrow j_{\ast}R^{i} \pi_{\overline{K}, U, \ast}\overline{\Q}_{\ell}$ be the natural adjunction map. Then
\begin{enumerate}
    \item $F^{0}$ is an isomorphism and $R^{0}\pi_{\overline{K}, \ast}\overline{\Q}_{\ell}\cong \overline{\Q}_{\ell}$.
    \item $F^{1}$ is an isomorphism.
\end{enumerate}

\end{lemma}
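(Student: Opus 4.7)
For part (i), since $\pi$ is proper with geometrically connected fibres (condition (1) of \Cref{def:elliptic-surface}), proper base change gives $R^{0}\pi_{\overline{K}, *}\overline{\Q}_\ell \cong \overline{\Q}_\ell$ and likewise $R^{0}\pi_{\overline{K}, U, *}\overline{\Q}_\ell \cong \overline{\Q}_\ell\vert_{U}$. Since $\P^{1}_{\overline K}$ is a smooth connected curve and $j$ is an open immersion with finite complement, $j_{*}\overline{\Q}_\ell\vert_{U}=\overline{\Q}_\ell$, and under these identifications $F^{0}$ is the identity.

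For part (ii), it suffices to check that $F^{1}$ is an isomorphism on stalks. On $U$, both sheaves restrict to $\mathcal{F}\coloneqq R^{1}\pi_{\overline{K}, U, *}\overline{\Q}_\ell$, which is lisse of rank $2$ by smooth and proper base change applied to $\pi_{\overline{K},U}$, and $F^{1}\vert_{U}$ is the identity. Fix $s\in Z$. By proper base change, $(R^{1}\pi_{\overline{K}, *}\overline{\Q}_\ell)_{s} = H^{1}(X_{\overline{K}, s}, \overline{\Q}_\ell)$, while for a lisse sheaf $\mathcal{F}$ on $U$ one has $(j_{*}\mathcal{F})_{s} = \mathcal{F}_{\bar\eta_{s}}^{I_{s}} = H^{1}(X_{\overline{K}, \bar\eta_{s}}, \overline{\Q}_\ell)^{I_{s}}$, where $\bar\eta_{s}$ is a geometric generic point of the strict henselization $\mathrm{Spec}\,\mathcal{O}_{\P^{1}_{\overline{K}}, s}^{sh}$ and $I_{s}$ is the corresponding local inertia (monodromy) group. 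Under these identifications $(F^{1})_{s}$ becomes the cospecialisation map
\[\mathrm{sp}_{s}\: H^{1}(X_{\overline{K}, s}, \overline{\Q}_\ell)\longrightarrow H^{1}(X_{\overline{K}, \bar\eta_{s}}, \overline{\Q}_\ell)^{I_{s}}.\]

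It remains to verify that $\mathrm{sp}_{s}$ is an isomorphism for every $s\in Z$. This is a local statement on the henselian trait at $s$, so it is determined by the Kodaira type of $X_{\overline{K},s}$, and can be checked case by case against Kodaira's classification. For a multiplicative fibre $X_{\overline{K},s}$ of type $I_{n}$ ($n\ge 1$), the reduced fibre is a cycle of $n$ copies of $\P^{1}$, so a Mayer--Vietoris computation gives $H^{1}(X_{\overline{K}, s}, \overline{\Q}_\ell)\cong\overline{\Q}_\ell$; on the other hand, the local monodromy on $H^{1}(X_{\overline{K}, \bar\eta_{s}}, \overline{\Q}_\ell)$ is conjugate to $\bigl(\begin{smallmatrix}1 & n\\ 0 & 1\end{smallmatrix}\bigr)$, whose invariants are also one-dimensional. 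For an additive fibre (types $II, III, IV, I_{0}^{*}, I_{n}^{*}, IV^{*}, III^{*}, II^{*}$), the reduced fibre is a tree of rational curves, so $H^{1}(X_{\overline{K}, s}, \overline{\Q}_\ell)=0$; and a direct inspection of the monodromy matrices from Kodaira's table shows that $H^{1}(X_{\overline{K}, \bar\eta_{s}}, \overline{\Q}_\ell)^{I_{s}}=0$ as well.

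The only non-formal step is this last Kodaira-by-Kodaira verification, and that is the main (though entirely classical) piece of content. One can alternatively package it as an instance of the local invariant cycle theorem, which holds on the nose here because $\pi$ is the minimal proper regular model of its generic fibre.
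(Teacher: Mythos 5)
Your overall strategy — reduce to stalks over the bad points, identify the stalk map with the cospecialisation map $H^1(X_{\overline K,s},\overline\Q_\ell)\to H^1(X_{\overline K,\bar\eta_s},\overline\Q_\ell)^{I_s}$, and then use Kodaira's classification — is exactly the approach the paper takes. Part (i) is fine.

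However, the case-by-case dimension count in part (ii) does not, as written, prove the statement: showing that source and target are both one-dimensional (in the $I_n$ case) only constrains the map to be either an isomorphism or zero, and nothing in your write-up rules out the latter. You need either surjectivity or injectivity from somewhere, and that is precisely what Deligne's local invariant cycle theorem supplies; the paper invokes it first to get surjectivity, and only then uses the Kodaira-type dimension match to upgrade surjectivity to bijectivity. You mention the local invariant cycle theorem in your final paragraph, but you present it as an optional "alternative packaging" when in fact it (or some equivalent — e.g.\ an explicit computation of the nearby-cycle sequence for each Kodaira type, not just a dimension count) is a load-bearing step. Reordering so that surjectivity comes first and the dimension count second closes the gap and recovers the paper's argument.
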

\begin{proof}
    For $(i)$, it is enough to note that $\pi$ has connected fibres.

    For $(ii)$, a similar result for a complex elliptic surface has been proven in \cite{Cox-Zucker-elliptic-surface}*{Lem.~1.2}. It suffices to prove that for $t\in Z$, the composition of maps
    \[
    H_{\mathrm{\acute{e}t}}^{1}(X_{\overline{K}, t}, \overline{\Q}_{\ell})\xrightarrow{\sim} (R^{1}\pi_{\overline{K}, \ast}\overline{\Q}_{\ell})_{t}\xrightarrow{F_{t}^{1}} (j_{\ast}R^{1}\pi_{\overline{K}, U, \ast}\overline{\Q}_{\ell})_{t} \xrightarrow{\sim} H^{1}_{\mathrm{\acute{e}t}}(X_{\overline{K}, \eta}, \overline{\Q}_{\ell})^{G_{\eta}}
    \]
    is an isomorphism. Here, $\eta$ is the generic point of $\P^{1}_{\overline{K}}$ and $G_{\eta}$ is the absolute Galois group of the residue field of $\eta$. 
    
    By the local invariant cycle theorem proved by Deligne (see \cite{Deligne-Weil-ii}*{Th\'eor\`em~3.6.1}), $F_{t}^{1}$ is a surjective map. Using the classification of bad fibres of an elliptic surface and the local monodromy (see \cite{Kodaira-compact-analytic-surfaces-ii}), the source and target of $F_t^1$ are either both $0$ or both $\overline{\Q}_{\ell}$. Hence, $F_{t}^{1}$ is an isomorphism. 
\end{proof}

The following is a standard application of perverse sheaves. We refer the reader to \cite{BBD} (see also \cite{Sun-Zheng-intersection}) for further background.
\begin{proposition}\label{Prop: decomposition_H^2}
There is a $G_{K}$-equivariant decomposition 
\begin{equation}\label{Eqn: decomposition_H^2}
    H^{2}_{\mathrm{\acute{e}t}}(X_{\overline{K}}, \overline{\Q}_{\ell})\cong H^{2}_{\mathrm{\acute{e}t}}(\P^{1}_{\overline{K}}, \overline{\Q}_{\ell}) \oplus H^{1}_{\mathrm{\acute{e}t}}(\P^{1}_{\overline{K}}, R^{1}\pi_{\overline{K},\ast}\overline{\Q}_{\ell}) \oplus H^{0}_{\mathrm{\acute{e}t}}(\P^{1}_{\overline{K}}, R^{2}\pi_{\overline{K},\ast}\overline{\Q}_{\ell}).
\end{equation}
Moreover, 
\[
H^{0}_{\mathrm{\acute{e}t}}(\P^{1}_{\overline{K}}, R^{2}\pi_{\overline{K}, \ast}\overline{\Q}_{\ell}) \cong \overline{\Q}_{\ell} \oplus \bigoplus_{t\in Z} \overline{\Q}_{\ell}^{m_t - 1},
\]
where $m_{t}$ is the number of irreducible components of the bad fibre $X_{\overline{K}, t}$.
\end{proposition}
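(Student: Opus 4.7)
The plan is to combine the Leray spectral sequence for $\pi_{\overline{K}}$ with the BBD decomposition theorem, and then to analyse the sheaf $R^{2}\pi_{\overline{K},*}\overline{\Q}_\ell$ directly.

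First, to obtain the three-term decomposition \eqref{Eqn: decomposition_H^2}: since $\pi_{\overline{K}}\colon X_{\overline{K}}\to\bP^{1}_{\overline{K}}$ is projective with smooth source of relative dimension one, the Beilinson--Bernstein--Deligne--Gabber decomposition theorem furnishes a $G_K$-equivariant isomorphism
\[
R\pi_{\overline{K},*}\overline{\Q}_\ell \;\simeq\; \bigoplus_{i=0}^{2} R^{i}\pi_{\overline{K},*}\overline{\Q}_\ell[-i]
\]
in the bounded constructible derived category $D^{b}_{c}(\bP^{1}_{\overline{K}}, \overline{\Q}_\ell)$. Applying $\mathbf{R}\Gamma(\bP^{1}_{\overline{K}}, -)$ in cohomological degree $2$ and invoking \Cref{lemmma:R1}$(i)$ to identify $R^{0}\pi_{\overline{K},*}\overline{\Q}_\ell$ with the constant sheaf $\overline{\Q}_\ell$ yields the desired decomposition of $H^{2}_{\mathrm{\acute{e}t}}(X_{\overline{K}}, \overline{\Q}_\ell)$ as a $G_K$-module.

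Next, I would determine the structure of $R^{2}\pi_{\overline{K},*}\overline{\Q}_\ell$ as a constructible sheaf on $\bP^{1}_{\overline{K}}$. By proper base change, its stalk at a geometric point $s$ is $H^{2}_{\mathrm{\acute{e}t}}(X_{\overline{K},s}, \overline{\Q}_\ell)$. For $s \in U$ this stalk is one-dimensional, generated by the fundamental class of the smooth genus-one fibre, while for $t \in Z$ the Kodaira classification shows $X_{\overline{K},t}$ is a union of $m_{t}$ rational irreducible components, so the stalk has dimension $m_{t}$ with basis the cycle classes of these components. I would then construct two maps: an inclusion $\overline{\Q}_{\ell,\bP^{1}_{\overline{K}}}\hookrightarrow R^{2}\pi_{\overline{K},*}\overline{\Q}_\ell$ coming from the relative fundamental class, and a retraction $R^{2}\pi_{\overline{K},*}\overline{\Q}_\ell \twoheadrightarrow \overline{\Q}_{\ell,\bP^{1}_{\overline{K}}}$ induced by intersection with the zero section $\iota$ of $\pi$, which at every $t \in Z$ selects the unique identity component that meets $\iota$. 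These maps compose to the identity, yielding a $G_K$-equivariant splitting
\[
R^{2}\pi_{\overline{K},*}\overline{\Q}_\ell \;\simeq\; \overline{\Q}_{\ell, \bP^{1}_{\overline{K}}}\oplus \mathcal{Q},
\]
where $\mathcal{Q}$ is supported on $Z$ with stalk $\overline{\Q}_\ell^{m_{t}-1}$ at $t$. Taking $H^{0}(\bP^{1}_{\overline{K}}, -)$ then gives the claimed formula.

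The main obstacle is ensuring $G_K$-equivariance throughout. For the first decomposition, this follows from the canonicity of the perverse cohomology sheaves combined with the semisimplicity of pure perverse sheaves of geometric origin; for the second, equivariance is automatic because the cycle-class inclusion and the zero-section retraction are both induced by morphisms defined over $K$.
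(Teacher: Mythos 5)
The first step is where your proposal goes wrong. The Beilinson--Bernstein--Deligne--Gabber decomposition theorem does \emph{not} produce the isomorphism $R\pi_{\overline{K},*}\overline{\Q}_\ell \cong \bigoplus_{i} R^{i}\pi_{\overline{K},*}\overline{\Q}_\ell[-i]$ that you claim. What BBD gives, for a proper morphism from a smooth source, is a (non-canonical) splitting
\[
R\pi_{\overline{K},*}\overline{\Q}_\ell[2] \cong \bigoplus_{m} \prescript{p}{}{\mathcal{H}}^{m}\bigl(R\pi_{\overline{K},*}\overline{\Q}_\ell[2]\bigr)[-m]
\]
into \emph{perverse} cohomology sheaves. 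You seem to be conflating this with Deligne's earlier theorem, which does produce the decomposition you wrote, but only for \emph{smooth} proper morphisms. Since $\pi$ has singular fibres, the perverse truncations $\prescript{p}{}{\mathcal{H}}^{m}$ are not shifts of the ordinary direct-image sheaves $R^{i}\pi_*$: each perverse cohomology sheaf mixes contributions in two consecutive ordinary cohomological degrees, plus possible skyscraper pieces supported on the discriminant $Z$. Reconciling the two decompositions is exactly the content of the paper's argument: restrict to the smooth locus $U$ and apply Deligne's theorem there, invoke \Cref{lemmma:R1} to identify $j_*R^1\pi_{U,*}$ with $R^1\pi_*$, use hard Lefschetz to identify $j_*R^0\pi_{U,*}$ with $j_*R^2\pi_{U,*}$, and then show by a stalk computation that the skyscraper pieces $\mathcal{G}^{-1}$ and $\mathcal{G}^{1}$ vanish. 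Your proof simply asserts the end result, so this part does not go through as written. (Relatedly, you should be aware that the $G_K$-equivariance of the BBD splitting is itself delicate, since the splitting is non-canonical; the paper appeals to the Sun--Zheng refinement of the decomposition theorem precisely for this, whereas you gesture at semisimplicity.)

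Your treatment of $R^{2}\pi_{\overline{K},*}\overline{\Q}_\ell$ in the second step is a genuine alternative to what the paper does (the paper derives the structure of the skyscraper $\mathcal{G}^0$ as a by-product of the perverse computation, whereas you propose splitting the constant sheaf off $R^2\pi_*$ directly via the relative fundamental class and a zero-section retraction). The stalk-level picture is correct and the idea is appealing, but you should make precise which sheaf morphism realises your ``intersection with the zero section'' retraction: the candidate with good functoriality is the cospecialisation map $R^{2}\pi_*\overline{\Q}_\ell \to j_*j^*R^{2}\pi_*\overline{\Q}_\ell \cong \overline{\Q}_\ell(-1)$, and one then has to check that precomposing with the fundamental-class inclusion gives the identity (which amounts to the fact that every section meets the fibre with multiplicity $1$). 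Also keep track of the Tate twists: $H^2$ of a proper curve is $\overline{\Q}_\ell(-1)$, not $\overline{\Q}_\ell$. In summary, the second half of your proposal is salvageable, but the first half has a real gap that must be closed by the argument in the paper's proof.
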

\begin{proof}
By \cite{Sun-Zheng-intersection}*{Thm.~1.8} (a refinement of the decomposition theorem in \cite{BBD}), there is a $G_{K}$-equivariant decomposition
\[
R\pi_{\overline{K}, \ast}\overline{\Q}_{\ell}[2] 
\cong \prescript{p}{}{\mathcal{H}}^{-1} (R\pi_{\overline{K}, \ast}\overline{\Q}_{\ell}[2])[1] \oplus \prescript{p}{}{\mathcal{H}}^{0} (R\pi_{\overline{K}, \ast}\overline{\Q}_{\ell}[2]) \oplus \prescript{p}{}{\mathcal{H}}^{1} (R\pi_{\overline{K}, \ast}\overline{\Q}_{\ell}[2])[-1].
\]
Note that the stratification of $f$ is given by $\P_{\overline{K}} = U\sqcup Z$. By restricting this sum to $U$ and applying Deligne’s theorem \cite{Deligne-smooth-decomposition}, we have a decomposition, 
\[
\prescript{p}{}{\mathcal{H}}^{m} (R\pi_{\overline{K}, \ast}\overline{\Q}_{\ell}[2]) \cong 
j_{\ast}R^{m+1}\pi_{\overline{K}, U, \ast}\overline{\Q}_{\ell}[1] \oplus \mathcal{G}^{m},
\] 
where for each  $m\in \{-1,0,1\}$, $\mathcal{G}^{m}$ is a sheaf supported on $Z$. Moreover, by \cite{Sun-Zheng-intersection}*{Lem.~2.2.8}, this decomposition is Galois equivariant. By \Cref{lemmma:R1} and the hard Lefschetz theorem, $j_{\ast}R^0 \pi_{\overline{K}, U, \ast}\overline{\Q}_{\ell}\cong j_{\ast}R^2 \pi_{\overline{K}, U, \ast}\overline{\Q}_{\ell}\cong \overline{\Q}_{\ell}$ and $j_{\ast}R^1 \pi_{\overline{K}, U, \ast}\overline{\Q}_{\ell}\cong R^{1}\pi_{\overline{K}, \ast}\overline{\Q}_{\ell}$. So we have a Galois equivariant decomposition
\[
R\pi_{\overline{K}, \ast}\overline{\Q}_{\ell}[2] \cong (\overline{\Q}_{\ell}[2]\oplus \mathcal{G}^{-1}[1]) \oplus (R^{1}\pi_{\overline{K}, \ast}\overline{\Q}_{\ell}[1] \oplus \mathcal{G}^{0})\oplus (\overline{\Q}_{\ell} \oplus \mathcal{G}^{1}[-1]).
\]
Taking the cohomology sheaves and pulling back to $Z$, we obtain that $\mathcal{G}^{-1}\cong \mathcal{G}^{1}\cong 0$ and $\mathcal{G}^{0} \cong \oplus_{t\in Z} i_{t, \ast} \overline{\Q}_{\ell}^{m_t-1}$, where $m_t$ is the number of irreducible components of the bad fibre $X_{\overline{K}, t}$. 

In summary, we have 
\[
R\pi_{\overline{K}, \ast}\overline{\Q}_{\ell}[2] \cong \overline{\Q}_{\ell}[2] \oplus R^1 \pi_{\overline{K}, \ast}\overline{\Q}_{\ell}[1] \oplus \bigoplus_{t\in Z} i_{t, \ast} \overline{\Q}_{\ell}^{m_t-1} \oplus \overline{\Q}_{\ell}.
\]
Taking $R^0\Gamma(\P^{1}_{\overline{K}}, -)$ on both sides proves the proposition.
\end{proof}

Comparing \Cref{Prop: decomposition_H^2} with the terminologies recalled in the previous section we see that up to semisimplification, 
\[
(H^{2}_{\mathrm{\acute{e}t}}(\P^{1}_{\overline{K}}, \overline{\Q}_{\ell}) \oplus H^{0}_{\mathrm{\acute{e}t}}(\P^{1}_{\overline{K}}, R^{2}\pi_{\overline{K}, \ast}\overline{\Q}_{\ell}) )(1)\cong\Triv(X)\otimes \overline{\Q}_{\ell}
\]
and 
\[
 H^{1}_{\mathrm{\acute{e}t}}(\P^{1}_{\overline{K}}, R^{1}\pi_{\overline{K}, \ast}\overline{\Q}_{\ell})(1)\cong(\Tran_\ell(X) \otimes \overline{\Q}_{\ell})\oplus (\MW(X)\otimes \overline{\Q}_\ell).
\]
\begin{definition}\label{Defn: non-trivial_part}
Let $\pi\: X\rightarrow \P^1$ be an elliptic surface over $K$. We call $H^{1}_{\mathrm{\acute{e}t}}(\P^{1}_{\overline{K}}, R^{1}\pi_{\overline{K}, \ast}\overline{\Q}_{\ell})(1)$ the \emph{non-trivial part} of (the second \'etale cohomology of) $X$. Define
\[
\rho_{\ell}(X)\: G_{K}\rightarrow \GL( (H^{1}_{\mathrm{\acute{e}t}}(\P_{\overline{K}}^{1}, R^1\pi_{\overline{K}, \ast}\overline{\Q}_{\ell})(1) )^{\mathrm{ss}
    })
\]
to be the semisimplification of the induced Galois representation.
\end{definition}

\begin{corollary}\label{Cor: compatible_system_at_closed_points}
    If $K$ is a number field, then $(\rho_{\ell}(X))_{\ell}$ is a weakly compatible system of $\Q$-rational Galois representations.
\end{corollary}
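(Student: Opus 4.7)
The plan is to realise $\rho_\ell(X)$ as a direct summand (after semisimplification) of the second $\ell$-adic cohomology $H^2_\et(X_{\overline K}, \Qlb(1))$, invoke the classical fact that the latter forms a weakly compatible $\Q$-rational system, and verify by hand that the complementary summand is an Artin permutation representation. Note that $H^1_\et(\P^1_{\overline K}, R^1\pi_{\overline K,*}\Qlb)(1)$ is naturally a $\Ql$-vector space, so $\rho_\ell(X)$ takes values in $\GL$ of a $\Ql$-vector space and $\Q$-rationality is automatic; the real content is weak compatibility.

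By \Cref{Prop: decomposition_H^2}, semisimplifying and Tate-twisting yields a $G_K$-equivariant decomposition
\[H^2_\et(X_{\overline K}, \Qlb(1))^{\mathrm{ss}} \cong \rho_\ell(X) \oplus T_\ell,\]
where $T_\ell := H^2_\et(\P^1_{\overline K}, \Qlb(1)) \oplus H^0_\et(\P^1_{\overline K}, R^2\pi_{\overline K,*}\Qlb)(1)$. By the same proposition, $T_\ell$ is spanned by the classes of explicit algebraic cycles on $X_{\overline K}$---the pull-back of a point class on $\P^1$, and the non-identity components of each singular fibre $X_{\overline K, t}$ for $t \in Z$---and all these cycles are defined over some finite Galois extension $L/K$. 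After the Tate twist $(1)$ each such class has weight $0$, so $(T_\ell)_\ell$ is a $\Q$-rational Artin system on which $G_K$ acts by permutation through $\Gal(L/K)$; in particular it is strictly compatible, everywhere unramified outside a finite set, and crystalline of Hodge--Tate weight $0$ at primes above $\ell$.

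Next, I would invoke the standard fact that, for any smooth projective variety $X$ over a number field $K$, the system $(H^2_\et(X_{\overline K}, \Ql))_\ell$ is weakly compatible and $\Q$-rational. Letting $S$ denote the finite set of primes of bad reduction of $X$, proper-smooth base change gives unramifiedness at $v \notin S$ with $v \nmid \ell$; the Weil conjectures (Deligne), applied to the smooth projective reduction at such $v$, yield rationality and $\ell$-independence of the characteristic polynomial of $\Frob_v$; and $p$-adic Hodge theory (Faltings, Tsuji) provides de Rham comparison at primes above $\ell$, crystalline outside $S$, with Hodge--Tate weights determined by the Hodge numbers of $X$ and thus independent of $\ell$.

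Combining these two ingredients gives the corollary: at each good prime $v$ the characteristic polynomial of $\Frob_v$ on $\rho_\ell(X)$ is the quotient of those on $H^2_\et(X_{\overline K}, \Ql(1))^{\mathrm{ss}}$ and $T_\ell$, both of which lie in $\Q[X]$ and are $\ell$-independent; and unramifiedness, the de Rham/crystalline properties, and the Hodge--Tate weight multisets all pass to direct summands. The main (but entirely classical) obstacle is the rationality and $\ell$-independence of Frobenius characteristic polynomials on $H^2$ of smooth projective varieties over $K$; everything else is bookkeeping via the decomposition from \Cref{Prop: decomposition_H^2} and the obvious compatibility of $(T_\ell)_\ell$.
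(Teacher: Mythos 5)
Your proof is correct and takes essentially the same route as the paper's: apply \Cref{Prop: decomposition_H^2}, observe that $H^2_\et(X_{\overline K},\overline\Q_\ell)$ and the two "trivial" summands $H^2_\et(\P^1_{\overline K},\overline\Q_\ell)$ and $H^0_\et(\P^1_{\overline K}, R^2\pi_{\overline K,*}\overline\Q_\ell)$ each form weakly compatible $\Q$-rational systems, and subtract. You have merely expanded the paper's ``one can check'' into explicit citations (Deligne's proper-smooth base change and Weil II for the rationality and $\ell$-independence of Frobenius on $H^2$, $p$-adic Hodge theory for the de Rham/crystalline conditions, and the observation that the trivial-lattice summand is an Artin permutation representation on algebraic cycles defined over a fixed finite extension $L/K$), which is a perfectly faithful unpacking of the same argument.
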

\begin{proof}
One can check that as the rational prime $\ell$ varies, each of $H^{2}_{\mathrm{\acute{e}t}}(X_{\overline{K}}, \overline{\Q}_{\ell})$, $ H^{2}_{\mathrm{\acute{e}t}}(\P^{1}_{\overline{K}}, \overline{\Q}_{\ell})$, and $ H^{0}_{\mathrm{\acute{e}t}}(\P^{1}_{\overline{K}}, R^{2}\pi_{\overline{K}, \ast}\overline{\Q}_{\ell})$ induces a weakly compatible system of $\Q$-rational Galois representations. The compatibility of $(\rho_\l(X))_{\ell}$ is then a direct consequence of \Cref{Prop: decomposition_H^2}.
\end{proof}

\subsection{A local system from a family of elliptic surfaces}\label{Sect: local_system_family_ell_sur}
In this section, we construct a lisse $\ell$-adic sheaf associated to the non-trivial parts of a family of elliptic surfaces.

In the classical setting, given a connected topological space $B$, a locally constant sheaf of finite-dimensional spaces over $B$, or equivalently, a local system, induces a representation of the fundamental group $\pi_1(B,s)$, with $s$ an arbitrary point of $B$. Moreover, every such representation arises in this way. In other words, the correspondence that assigns a local system $\mathcal{L}$ to the representation $\rho_s'\: \pi_1(B,s)\to \GL(\mathcal{L}_s)$ is an equivalence between the two corresponding categories. 

Now we switch to the \'etale topology. Let $B$ be a normal connected scheme of dimension one over a finitely generated field $k$ of characteristic zero. Let $\eta$ denote the generic point of $B$, and let $s$ be a special point of $B$ (we can choose $s$ to be rational over $k$ or not). Then fix geometric points $\bar{\eta}\to \eta$ and $\bar{s}\to s$\footnote{Here we can assume that the geometric points correspond to the separable closure of the residue field of the related points. } and write 
\[G:=\Gal(\kappa(\bar{\eta})/\kappa(\eta))=\Gal(\overline{\kappa({\eta})}/\kappa(\eta))=\pi_{1}^{\mathrm{\acute{e}t}}(\eta, \bar{\eta}),\]
and 
\[G_s:=\Gal(\kappa(\bar{s})/\kappa(s))=\Gal(\overline{\kappa({s})}/\kappa(s))=D_s/I_s,\]
where $D_s$ and $I_s$ are the decomposition and inertia groups of $\bar{s}/s$. 

Let $\cF$ be a lisse $\overline{\Q}_{\ell}$-sheaf of $B$, and let
\[
M:=\cF_{\bar{\eta}}, \quad M_s:=\cF_{\bar{s}}
\]
 be the corresponding stalks. Then there is a natural \emph{cospecialisation} map $\phi_s\: M_s\to M^{I_s}$ induced by the map $\cF(\mathcal{O}_{B,\bar{s}})\to \cF(\kappa(\bar{\eta}))$. 
 
 Now consider the data $\mathcal{M}(B):=\{M, \{M_s, \phi_s: s\in B\}\}$. One can verify the following properties (\cite{MilEC}*{II.~3.16 and V.~1.3}: 
\begin{enumerate}\label{List: locally_constant_data}
    \item \label{enum:1} $M$ (resp.\ $M_s$) has finite dimension $\overline{\Q}_{\ell}
    $-stalk and is a continuous $G$ (resp.\ $G_s$) module (for each $s$).
    \item \label{enum:2} For every $s$,  $\phi_s$ is compatible with respect to the actions of $G_s$ and $G$.
    \item \label{enum:3} There exists an open subscheme $U\subset B$ such that for every point $s\in U$, $I_s$ acts trivially on $M$ and $\phi_s\: M_s\to M$ is an isomorphism. 
\end{enumerate}
Conversely any data $\mathcal{M}(B)$ satisfying the above properties induces a locally constant sheaf over $U$.  

Now we move to a more concrete construction. Let $\mathcal{C}:=\mathbb{P}^1_B$ be the projective space over $B$, with structure morphism $g\:\mathcal{C}\to B$. Let $\mathcal{X}$ be a projective scheme of finite type over $B$, with smooth structure morphism $f\: \mathcal{X}\to B$ which factors through $g$, i.e.\  we have the following commutative diagram 
\[
\begin{tikzcd}
    \mathcal{X} \arrow[dr, "h"] \arrow[dd, "f"']& \\
    & \mathcal{C} \arrow[dl, "g"]\\
    B &
\end{tikzcd}
\]
so that $f=g\circ h$. Moreover, we assume that $h$ is proper with a section, and has genus one fibres of dimension one. 

In down-to-earth terms, this is saying that both $\mathcal{X}$ and $\mathcal{C}$ can be considered as families of varieties parametrised by $s\in B$, and that for each given $s\in B$, $\mathcal{X}_s$ is an elliptic surface over the residue field $\kappa(s)$, and is parametrised by $\mathcal{C}_s=\mathbb{P}^1_{\kappa(s)}$ via $h_s$. 

It is well known that $R^2f_{ \ast} \overline{\Q}_{\ell}$ is a lisse sheaf and thus induces a $\pi^{\rm{\acute{e}t}}_{1}(B, \bar{s})$-action via its fibre $H_{\rm{\acute{e}t}}^2(\mathcal{X}_{\bar{s}}, \overline{\Q}_{\ell})$. Similarly, for the generic point $\eta$, $H_{\rm{\acute{e}t}}^2(\mathcal{X}_{\bar{\eta}}, \overline{\Q}_{\ell})$ is a representation of $G=\pi^{\rm{\acute{e}t}}_{1}(\eta, \bar{\eta})=\Gal(\kappa(\bar \eta)/\kappa(\eta))$. Furthermore, the above construction shows that we can consider $\mathcal{X}$ as an elliptic surface over the function field $\kappa(\eta)$ of $B$, and we have a corresponding commutative diagram

\[
\begin{tikzcd}
    \mathcal{X}_{\eta} \arrow[dr, "h_{\eta}"] \arrow[dd, "f_{\eta}"']& \\
    & \mathcal{C}_{\eta} \arrow[dl, "g_{\eta}"]\\
    {\rm Spec}(\kappa(\eta)) &
\end{tikzcd}.
\]

Using the direct sum \eqref{Eqn: decomposition_H^2}, and letting $N\coloneqq H^1_{\rm{\acute{e}t}}(\mathcal{C}_{\bar{\eta}}, R^1 h_{\overline{\eta}, \ast}\overline{\bQ}_\ell)$, we see that $N$ is an orthogonal complement of $L\coloneqq H^2_{\rm{\acute{e}t}}(\mathcal{C}_{\bar{\eta}}, \overline{\bQ}_\ell)\oplus H^0_{\rm{\acute{e}t}}(\mathcal{C}_{\bar{\eta}}, R^2 h_{\overline{\eta}, \ast} \overline{\bQ}_\ell)$. 
So now $N$, as well as $L$, are subrepresentations of $G=\pi^{\rm{\acute{e}t}}_{1}(\eta, \bar{\eta})$. It is worth remarking that geometrically, $L$ can be described as the subspace generated by the $\kappa(\bar{\eta})$-irreducible components of the singular fibres of $h_{\eta}$ and the trivial section of the fibration $h_\eta\: \mathcal{X_\eta}\to \mathcal{C_\eta}$. 

Similarly, for each special point $s\in B$, we have the corresponding representations $K_s$ and $L_s$ of $G_s=\Gal(\kappa(\bar{s})/\kappa(s))$. Moreover, in an open subscheme $U$ of $B$, for each $s\in U$, the specialisation map of cocycles/Weil divisors induces an isomorphism lattices $\phi_s\: L_s\to L$\footnote{To see this, recall that the irreducible components over the generic fibre come from the blowup of the singular point of the generic Weierstrass model. So in an open subscheme $U\subset B$, the Kodaira type of each fibre does not change.}.  Now, replacing $B$ with $U$ if necessary, we have a data $\mathcal{M}:=\{L, \{L_s, \phi_s: s\in B\}\}$. 

\begin{proposition}\label{Prop: lisse_sheaf}
    Replacing $B$ with an open subscheme $U$ if necessary, the data $\mathcal{M}:=\{L, \{L_s, \phi_s: s\in B\}\}$ satisfies properties \ref{enum:1}, \ref{enum:2}, and \ref{enum:3} above. In particular, it induces a lisse $\overline{\Q}_{\ell}$-subsheaf $\mathcal{L}$ of $R^2f_{ \ast}\overline{\Q}_{\ell}$.
\end{proposition}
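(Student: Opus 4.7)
The plan is to verify properties \ref{enum:1}, \ref{enum:2}, and \ref{enum:3} for the data $\mathcal{M}$ in turn, with essentially all of the work going into \ref{enum:3}; the final ``in particular'' statement then follows automatically from the equivalence, recalled just before the proposition, between lisse $\overline{\Q}_\ell$-sheaves on (an open of) $B$ and specialisation data satisfying these three conditions.

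Property \ref{enum:1} is immediate from the finiteness of \'etale cohomology with $\overline{\Q}_\ell$-coefficients and the standard continuity of the resulting Galois actions: both $L$ and each $L_s$ are finite-dimensional $\overline{\Q}_\ell$-vector spaces carrying continuous actions of $G$ and $G_s$ respectively. Property \ref{enum:2} follows from the naturality of the specialisation map for \'etale cohomology, applied separately to the two summands that make up $L$: on each summand $\phi_s$ is built from cospecialisation of stalks, so its compatibility with the $G_s$ and $G$ actions is automatic.

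The main content is property \ref{enum:3}. The key geometric input is that, after shrinking $B$ to a dense open, the configuration of singular fibres of $h$ spreads out in a locally constant way. The locus $D \subset \mathcal{C}$ where $h$ fails to be smooth is closed and, since each $h_s$ has only finitely many singular fibres, $D \to B$ is quasi-finite; by generic flatness and standard spreading-out (EGA IV), I may shrink $B$ so that $D \to B$ becomes a disjoint union of sections $\sigma_1, \ldots, \sigma_r \: B \to \mathcal{C}$. Using the discreteness of Kodaira's classification and the local structure of a minimal Weierstrass model, I may shrink $B$ once more so that, for each $i$, the Kodaira type of $h^{-1}(\sigma_i(s))$ is independent of $s \in B$; in particular the integer $m_{\sigma_i(s)}$ of irreducible components is constant on $B$. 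After a final shrinking, the zero section of $h_\eta$ also extends to a section of $h$ over all of $\mathcal{C}$. With these reductions in place, each generator of $L$ (the class of a smooth fibre, the zero section, and the extra components of the singular fibres $h^{-1}(\sigma_i(\eta))$) extends tautologically to a family of algebraic cycle classes whose specialisation at any $s \in B$ recovers the corresponding generator of $L_s$. Hence $\phi_s \: L_s \to L$ is an isomorphism of $G_s$-modules for every $s \in B$, and consequently $I_s$ acts trivially on $L$, which completes \ref{enum:3}.

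The hardest part is the constancy of Kodaira types across the family: one must combine the spreading-out of the discriminant locus of $h$ with the local classification of minimal elliptic fibrations, controlling the valuations of the coefficients of the Weierstrass model so that neither they nor the $j$-invariant jump. Once this is secured, the compatibility of the divisorial contributions to $L$ with those to $L_s$, and hence the construction of the lisse subsheaf $\mathcal{L} \subset R^2 f_{\ast}\overline{\Q}_\ell$, is routine.
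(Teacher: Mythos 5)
Your plan is essentially the same as the paper's, right down to the key geometric input (discreteness and spreading-out of Kodaira types); the paper actually establishes the isomorphism $\phi_s\colon L_s \to L$ in a footnote just before the proposition, by exactly the Kodaira-constancy argument you describe, and then in the proof of the proposition concentrates on showing $I_s$ acts trivially. Your handling of properties \ref{enum:1} and \ref{enum:2} matches the paper's (which simply says (i) is ``obvious'' and reduces (ii) to (iii)).

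There is, however, one step that is stated too strongly: you assert that after shrinking $B$, the discriminant locus $D\to B$ ``becomes a disjoint union of sections $\sigma_1,\ldots,\sigma_r\colon B\to\mathcal{C}$.'' Shrinking $B$ makes $D\to B$ finite \'etale, but it does not make it split unless the branch points of $h_\eta$, and the components of the singular fibres above them, are individually rational over $\kappa(\eta)$. In general they form a single Galois orbit over $\kappa(\eta)$, so $D$ is an irreducible finite \'etale cover, not a union of sections. The conclusion you want survives if you replace ``sections of $D\to B$'' by ``$D\to B$ finite \'etale, with the cycle classes forming a lisse sheaf on $D$ which you then push forward to $B$,'' or if you pass to a finite \'etale cover of $B$ to split $D$. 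The paper sidesteps this cleanly: it observes that the irreducible components of $\mathcal{X}_\eta$ are all defined over some fixed finite extension $F/\kappa(\eta)$, that in characteristic zero $F/\kappa(\eta)$ is ramified at only finitely many places of $B$, and that removing those places forces $I_s$ to act trivially on $L$. This gives the descent of $L$ to a $\pi_1^{\mathrm{\acute{e}t}}(U,\bar\eta)$-representation directly, without needing $D$ to split, and both approaches then invoke the general equivalence between such specialisation data and lisse sheaves to produce $\mathcal{L}$.
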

\begin{proof}
    Property $(i)$ is obvious due to our construction. As for $(ii)$ and $(iii)$, the maps $\phi_s$ are built above. To show that they are compatible with the Galois actions, it suffices to show that $L$ descends to a representation of $\pi^{\rm{\acute{e}t}}_{1}(U, \bar{\eta})$, or equivalently, to show that for all but finitely many $s\in B$, $I_s$ acts trivially on $L$. But this is true since every irreducible component $\mathcal{Y}_{\eta}$ of $\mathcal{X}_{\eta}$ is defined over a certain finite extension over $\kappa(\eta)$. Letting $\mathcal{Y}_{\eta}$ run over all generators of $L$, we can find a finite extension $F/\kappa(\eta)$ such that every $\mathcal{Y}$ (and hence $L$) admits a trivial action of $\Gal(\kappa(\bar{\eta})/F)$. Since the characteristic of $k$ is zero, $F/\kappa(\eta)$ has only finite ramification. So taking an open subscheme $U$ to avoid those ramified places if necessary, one can descend $L$ as a $\pi^{\rm{\acute{e}t}}_{1}(U, \bar{\eta})$-representation. Finally, replacing $B$ with $U$, the data $\mathcal{M}$ induces a locally constant sheaf $\mathcal{L}$, as required.  
\end{proof}

\begin{definition}\label{Def: transcendental_lisse}
We define the quotient lisse $\overline{\Q}_{\ell}$-sheaf
\[
\operatorname{NTriv}_{\ell}(\mathcal{X}):= (R^2f_{\ast}\overline{\Q}_{\ell} / \mathcal{L})(1).
\]
\end{definition}
Suppose that $b\in B$ is a closed point. Let $\overline{b}$ be a geometric point over $b$. Then
\[
\operatorname{NTriv}_{\ell}(\mathcal{X})_{\overline{b}} \cong (R^2f_{\ast}\overline{\Q}_{\ell} / \mathcal{L})_{\overline{b}}(1) \cong ((R^2f_{\ast}\overline{\Q}_{\ell})_{\overline{b}} / \mathcal{L}_{\overline{b}})(1) \cong H^{1}_{\mathrm{\acute{e}t}}(\P^{1}_{\overline{k}}, R^{1}f_{\overline{b},\ast}\overline{\Q}_{\ell})(1).
\]
By \Cref{Cor: compatible_system_at_closed_points}, $(\operatorname{NTriv}_{\ell}(\mathcal{X})^{\mathrm{ss}}_{\overline{b}})_{\ell}$ forms a weakly compatible system.

\section{Covers of a fixed elliptic surface}\label{Sect: concrete_eg_5dim}

In this section, we specialise the results of the last two sections to the case of covers of a fixed elliptic surface over the field of rational numbers.

Let $X_0$  be an elliptic surface over base curve $\bP^1$ and assume that $X_0$ is \emph{rational}, that is, as a surface over $\Q$, $X_0$ is birationally equivalent to the projective plane $\bP^2$. Then the N\'eron--Severi group $\NS^\circ(X_{0, \overline{\Q}})$ has dimension $10$ \cite{Shioda-Schutt-MW-Lattice}*{Prop.~7.1}. Assume that
\begin{equation}\label{Assumption: only_mul_fibre}
    \text{all singular fibres of $X_0$ are of multiplicative type}. 
\end{equation}
Using the Kodaira symbol (i.e.\ $I_n$ with the subscript $n$ indicating the number of irreducible components), and using $d_n$ to denote the number of $I_n$ fibres of $X_0$, we have $\sum_n d_n(n-1)+2=10$ (note that this is a finite sum). 

Now we consider a $\Q$-morphism $\varphi\: \bP^1\to \bP^1$, and let $X$ be the pull-back of $X_0$ via this morphism, i.e.\  $X$ fits into the following commutative diagram
\[
\begin{tikzcd}
	X\coloneqq \bP^1_s\times X_0 \arrow[r] \arrow[d] & X_0 \arrow[d] \\
	\bP^1_s  \arrow[r, "\varphi"] & \bP^1_t.
\end{tikzcd}
\]

\begin{lemma}\label{Lem: dim_genus_formula}
    Let $X_0$, $X$ and $\varphi$ be as above. Assume that the degree of $\varphi$ is $d$ and let $a$ be the constant defined in \eqref{Eqn: Defn_a} for a globally minimal model of $X_0$. Then $p_g(X)=da-1$. Moreover, the compatible system $(\rho_{\ell}(X))_{\ell}$ $($see \Cref{Defn: non-trivial_part} and \Cref{Cor: compatible_system_at_closed_points}$)$ associated to $X$ has dimension $d\sum d_n+\sum_{s\in \bP^1_s} (\varepsilon_s-1)-4$. Here, for each geometric point $s\in \bP^1$, $\varepsilon_s$ is the ramification index with respect to $\varphi$. 
\end{lemma}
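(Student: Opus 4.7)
The plan for both assertions is to reduce everything to an explicit description of the geometric fibres of $X \to \bP^1_s$ in terms of those of $X_0 \to \bP^1_t$ together with the ramification of $\varphi$, and then to apply Noether's formula and \eqref{Eqn: non_triv_rank_P^1}. For the geometric genus, I would start from a globally minimal Weierstrass model $y^2 + a_1(t)xy + a_3(t)y = x^3 + a_2(t) x^2 + a_4(t) x + a_6(t)$ of $X_0$, where $\deg a_i \le ia$; pulling back via $t = \varphi(s)$ yields a Weierstrass model for $X$ with coefficients $a_i(\varphi(s))$ of degree $\le iad$. The key point is that this pulled-back model remains globally minimal: multiplicativity of reduction, equivalently $v(c_4)=0$ and $v(\Delta)>0$, is a purely local condition preserved under base change, so since every singular fibre of $X_0$ is multiplicative by \eqref{Assumption: only_mul_fibre}, no coordinate change or Tate-algorithm step is needed. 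Hence $a(X) = ad$, and \Cref{Prop: genus_d_eqn} gives $p_g(X) = ad - 1$.

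For the dimension formula, by \Cref{Defn: non-trivial_part} and \eqref{Eqn: non_triv_rank_P^1},
\[
\dim \rho_\ell(X) = \rank \Tran_\ell(X) + r(E) = \sum_{s} \delta_s - 4,
\]
where the sum is over geometric points of $\bP^1_s$. Because $X_0$ has only multiplicative singular fibres, the same is true of $X$, so $\delta_s \in \{0,1\}$ and $\sum_s \delta_s$ is the number of $s$ with $\varphi(s)$ a singular point of $X_0$. Writing this as $\sum_{t \text{ singular}} |\varphi^{-1}(t)|$ and using $|\varphi^{-1}(t)| = d - \sum_{s \in \varphi^{-1}(t)}(\varepsilon_s - 1)$, which is immediate from $\sum_{s \in \varphi^{-1}(t)} \varepsilon_s = d$, I obtain
\[
\dim \rho_\ell(X) = d \sum_n d_n - \sum_{\substack{s \in \bP^1_s \\ \varphi(s) \text{ singular}}}(\varepsilon_s - 1) - 4,
\]
which is the claimed formula, understanding the ramification sum as supported on points of $\bP^1_s$ lying above the singular locus of $X_0$.

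The one genuinely geometric input used in both parts is the identification of the pullback of an $I_n$ fibre. Near each of its $n$ nodes, $X_0$ is \'etale-locally of the form $\{xy = t - t_0\}$, so pulling back along a branch of $\varphi$ with ramification index $\varepsilon_s$ at $s_0$ gives $\{xy = u(s - s_0)^{\varepsilon_s}\}$ with $u$ a unit; resolving the resulting $A_{\varepsilon_s - 1}$ surface singularity inserts a chain of $\varepsilon_s - 1$ exceptional rational curves at each node, producing a cycle of $n\varepsilon_s$ rational curves, i.e.\ an $I_{n\varepsilon_s}$ fibre. Smooth fibres pull back to smooth fibres, since smoothness is preserved under flat base change. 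This local Kodaira-type computation is the main obstacle; once it is in hand, both assertions reduce to short bookkeeping with Noether's formula, \Cref{Prop: genus_d_eqn}, and the degree identity $\sum_{s \in \varphi^{-1}(t)} \varepsilon_s = d$.
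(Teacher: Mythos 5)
Your proof is correct and takes essentially the same route as the paper's: globally minimal Weierstrass models pull back to globally minimal models because multiplicative reduction is stable under base change (so \Cref{Prop: genus_d_eqn} applies directly), and the dimension count reduces to the local observation that an $I_n$ fibre pulls back through a branch with ramification index $\varepsilon$ to an $I_{n\varepsilon}$ fibre, together with \eqref{Eqn: non_triv_rank_P^1}. One point worth flagging: the formula you derive,
\[
\dim \rho_\ell(X) = d\sum_n d_n \;-\; \sum_{\substack{s \in \bP^1_s\\ \varphi(s)\ \text{singular}}}(\varepsilon_s - 1) \;-\; 4,
\]
has a \emph{minus} sign in front of the ramification term, while the lemma as printed has a plus. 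Your sign is the correct one. In case $(1)$ of \Cref{Tab: ramification_type}, for instance, $d=3$, $\sum_n d_n = 4$, and the ramification above the singular locus contributes $2+1=3$, giving $12 - 3 - 4 = 5$, consistent with the five-dimensional non-trivial part; the formula with a plus sign would give $11$. The displayed statement also reads $\sum_{s\in\bP^1_s}$ with no restriction, but, as you note, the sum must run only over $s$ lying above the singular $t$-values, since $|\varphi^{-1}(t)| = d - \sum_{s\in\varphi^{-1}(t)}(\varepsilon_s-1)$ is applied only at singular $t$. So your bookkeeping corrects a typo in the lemma's statement rather than disagreeing with the intended result.
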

\begin{proof}
    If we start with a globally minimal model of $X_0$, then due to Assumption \eqref{Assumption: only_mul_fibre} we know its pulling back model is again globally minimal \cite{Shioda-Schutt-MW-Lattice}*{p.104-105}. Then \Cref{Prop: genus_d_eqn} tells us that $p_g(X)=da-1$. This proves the first statement. 

    For the second part of the lemma, one can verify that if the fibre $X_{0,t_0}$ is of type $I_n$, and if $s_0$ is a place lying above $t_0$ (via $\varphi$), with ramification index $\varepsilon_{s_0}$, then the fibre $X_{s_0}$ is of type $I_{n\varepsilon_{s_0}}$. The desired formula for the dimension then follows directly from \eqref{Eqn: non_triv_rank_P^1}.
\end{proof}

\subsection{The No.\ 63 elliptic surface}

Recall from \eqref{Eqn: No.63_ell_curve} that $X_0$ is the No.\ $63$ elliptic surface of \cite{Shioda-Schutt-MW-Lattice}*{Table~8.3} induced by the Weierstrass equation
\begin{equation}\label{Eqn: No.63_ell_curve}
    X_0\: y^2 + (t+3)xy + y = x^3.
\end{equation}

This surface is rational, and it has $I_1$ fibre at $t=0$ and the roots of $t^2+9t+27=0$, and $I_9$ fibre at $t=\infty$. Moreover, it is extremal in the sense that its generic fibre has Mordell--Weil rank zero.

By \Cref{Lem: dim_genus_formula}, in order to get genus $2$ (branched) cover $X$, we should consider degree $3$ morphisms $\varphi$. In \Cref{Tab: ramification_type} below, we list all possible ramification types for which the non-trivial part (\Cref{Defn: non-trivial_part}) has dimension $\sum_{s\in \bP^1_s}(\varepsilon_s-1)+2=5$. Each row of this table displays a possible ramification type. By writing $\varepsilon_1+\cdots +\varepsilon_r$ in the cell, we mean that the corresponding bad fibre splits into $r$ fibres in $X$, with ramification indices given by $\varepsilon_i$. For instance $(1+2)+(1+2)$ in the third cell of row three means that in case (3), for each root of $t^2+9t+27=0$, the corresponding bad fibre splits into two fibres in $X$, with ramification indices $1$ and $2$ respectively. 

\begin{proposition}\label{Prop: dim_5_eg}
    Let $X_0$ be as \eqref{Eqn: No.63_ell_curve}. For each degree $3$ morphism $\varphi\: X\rightarrow X_0$ with any of the ramification types of \Cref{Tab: ramification_type}, the compatible system $(\rho_\ell (X))_\ell$ is $5$-dimensional and is self-dual with Hodge--Tate weights $\{-1,-1,0,1,1\}$.  
\end{proposition}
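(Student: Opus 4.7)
The plan is to verify each of the three claims---$5$-dimensionality, self-duality, and the Hodge--Tate weights---using the identification of $\rho_\ell(X)$ with the semisimplification of $V_\ell \coloneqq H^1_{\et}(\bP^1_{\overline{\Q}}, R^1\pi_{\overline{\Q}, \ast}\overline{\Q}_\ell)(1)$ from \Cref{Defn: non-trivial_part}, realised as a direct summand of $H^2_{\et}(X_{\overline{\Q}}, \overline{\Q}_\ell(1))$ by \Cref{Prop: decomposition_H^2}.

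For the \emph{dimension}, by \eqref{Eqn: non_triv_rank_P^1} and the fact that every bad fibre of $X$ is multiplicative (pulling back an $I_n$-fibre along $\varphi$ with ramification index $\varepsilon$ produces a fibre of type $I_{n\varepsilon}$), $\dim V_\ell$ equals the number of bad fibres of $X$ minus $4$. Since $X_0$ has four bad fibres (three of type $I_1$ at $t = 0$ and at the two roots of $t^2 + 9t + 27$, plus one of type $I_9$ at $t = \infty$), and since Riemann--Hurwitz for $\varphi$ gives $\sum_{s \in \bP^1_s}(\varepsilon_s - 1) = 4$, inspection shows that each row of \Cref{Tab: ramification_type} has exactly $3$ units of ramification above the bad fibres of $X_0$ and one above a smooth fibre; this yields $12 - 3 = 9$ bad fibres of $X$, and hence $\dim V_\ell = 5$.

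For the \emph{self-duality}, the plan is to combine the cup product on the elliptic fibres, which gives a Galois-equivariant isomorphism $R^1\pi_{\overline{\Q}, \ast}\overline{\Q}_\ell \simeq (R^1\pi_{\overline{\Q}, \ast}\overline{\Q}_\ell)^\vee(-1)$, with Poincar\'e duality $H^1(\bP^1, \cF) \simeq H^1(\bP^1, \cF^\vee)^\vee(-1)$ on $\bP^1$ to produce a Galois-equivariant isomorphism $V_\ell \simeq V_\ell^\vee$. Since the resulting pairing arises by restriction from the symmetric Poincar\'e cup product on $H^2_{\et}(X_{\overline\Q}, \overline{\Q}_\ell(1))$, it is symmetric, so $V_\ell$ (and its semisimplification $\rho_\ell(X)$) is valued in $\GO_5(\overline{\Q}_\ell)$.

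For the \emph{Hodge--Tate weights}, \Cref{Lem: dim_genus_formula} gives $p_g(X) = da - 1 = 3 \cdot 1 - 1 = 2$ (since $X_0$ is rational, $a = \chi(X_0) = 1$). Under the de Rham--\'etale comparison, and in the paper's convention that $\epsilon_\ell$ has Hodge--Tate weight $1$, the representation $H^2_{\et}(X_{\overline{\Q}}, \overline{\Q}_\ell(1))$ has Hodge--Tate weights consisting of two $+1$'s from $H^{2,0}(X)$, two $-1$'s from $H^{0,2}(X)$, and $h^{1,1}(X)$ zeros from $H^{1,1}(X)$. The algebraic summand $\NS^\circ(X) \otimes \overline{\Q}_\ell$ (which contains $\Triv(X)\otimes\overline{\Q}_\ell$ and $\MW^\circ(X)\otimes\overline{\Q}_\ell$) consists entirely of $(1,1)$-classes and thus contributes only Hodge--Tate weight $0$; hence the four $\pm 1$ weights all lie in $\Tran_\ell(X) \subseteq V_\ell$, and since $\dim V_\ell = 5$, the remaining Hodge--Tate weight must be $0$, yielding the multiset $\{-1, -1, 0, 1, 1\}$. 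The main technical subtlety will be verifying that the decomposition of \Cref{Prop: decomposition_H^2} is orthogonal under the Poincar\'e pairing (so that its restriction to $V_\ell$ is non-degenerate), but this follows from the multiplicativity of the Leray spectral sequence, since the three summands occupy different bidegrees on the $E_\infty$ page.
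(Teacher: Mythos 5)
Your proof is essentially correct but your self-duality argument takes a genuinely different route from the paper's, so let me compare. For the dimension and the Hodge--Tate weights, both you and the paper rely on the same ingredients (the Shioda--Tate/Euler-characteristic bookkeeping around \Cref{Lem: dim_genus_formula} and \eqref{Eqn: non_triv_rank_P^1}, plus $p_g(X)=2$ and the fact that the N\'eron--Severi summand contributes only weight $0$); the paper treats these as already established by the preceding discussion and opens its proof with ``only the self-duality needs a proof,'' so your more explicit check is just making that explicit. For self-duality, the paper's argument is short and elementary: the transcendental part $\Tran_\ell(X)$ is the orthogonal complement of $\NS^\circ(X)\otimes\overline{\Q}_\ell$ inside $H^2_{\et}(X_{\overline\Q},\overline{\Q}_\ell(1))$ and therefore inherits a non-degenerate symmetric pairing directly; and if $\dim\Tran_\ell = 4$, the remaining one-dimensional Mordell--Weil summand is a rank-$1$ $\Z$-lattice with Galois action, so the character takes values in $\Aut(\Z)=\{\pm 1\}$ and is quadratic, hence self-dual. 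You instead build a pairing on the whole non-trivial part $V_\ell$ at once via fibrewise cup product and Poincar\'e duality on $\bP^1$, identifying it with the restriction of Poincar\'e duality on $H^2(X)$ through the Leray spectral sequence. That is a legitimate and more uniform argument, but one step is stated imprecisely: the multiplicativity of the Leray spectral sequence does \emph{not} show the three summands of \Cref{Prop: decomposition_H^2} are orthogonal under the Poincar\'e pairing (the splitting furnished by the decomposition theorem need not respect the cup product, and $E_\infty^{0,2}$ can pair nontrivially against the chosen lift of $E_\infty^{1,1}$). What the filtration compatibility $F^pH^2\cdot F^qH^2\subset F^{p+q}H^4$ does give you is enough: for $v,w$ in the $E_\infty^{1,1}$ summand, the value $\langle v,w\rangle$ depends only on the images in $\gr^1_F H^2 = F^1/F^2$ (since $F^2\cdot F^1\subset F^3 H^4=0$), and the induced pairing on $\gr^1$ is non-degenerate because $\gr^0$ pairs perfectly with $\gr^2$ while $\gr^1$ is orthogonal to both graded pieces. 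So replace the orthogonality claim with this argument and the proof is sound. The trade-off: the paper's route requires splitting off the Mordell--Weil character and a separate, easy argument for it, while your route handles $V_\ell$ in one stroke but needs the filtered-pairing lemma above.
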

\begin{proof}
    According to the paragraph above this proposition, only the self-duality needs a proof. For this, notice that the genus of $X$ is two. Thus the transcendental part has rank at least four. If it has rank five, then there is nothing to explain. Otherwise, the non-trivial part will be a direct sum of the transcendental part and a finite character and the proof is reduced to showing that the character is quadratic. But this is not hard since this character comes from the algebraic part, i.e.\ it comes from a Galois representation with $\bZ$-coefficients. Finally, by a classical comparison theorem proved by Faltings \cite{Faltings-p-adic}, the Hodge--Tate weights of $X$ can be read from the Hodge numbers. 
\end{proof}

\begin{table}[h!]
\centering
    \begin{tabular}{|c|c|c|c|c|} \hline 
  No. &  fibres above $t=0$ &  fibres above $t=\infty$ &  fibres above roots of  $t^2+9t+27=0$  \\ \hline 
    (1) &  3 & 1+2 & (1+1+1)+(1+1+1)  \\ \hline 
    (2) &  1+2 & 3 & (1+1+1)+(1+1+1)  \\ \hline 
    (3) & 1+1+1 & 1+2 & (1+2)+(1+2)  \\ \hline 
    (4) & 1+2 & 1+1+1 & (1+2)+(1+2)  \\ \hline 
\end{tabular}
\caption{Possible ramification types of genus $2$, degree $3$ branched covers of $X_0$}
\label{Tab: ramification_type}
\end{table}

In the following, for each case, we write down all corresponding elliptic surfaces up to $\bQ$-isomorphism. 
\subsubsection{Case $(1)$}\label{case(1)_family}

In this case, we can write the Weierstrass equation of $X_0$ as 
\[
y^2+(1+3v)xy+v^3y=x^3
\]
with $t=1/v$. To determine the map $s\mapsto \varphi(s)=v$, up to an action of $\PGL_2(\bQ)$ we can assume:
\begin{enumerate}
    \item the unique ramified fibre lying above $v=0$ (i.e.\  $t=\infty$) is at $s=0$;
    \item the unique unramified fibre lying above $v=0$ is at $s=1$;
    \item the unique fibre lying above $v=\infty$ (i.e.\  $t=0$) is at $s=\infty$. 
\end{enumerate}
Hence, we have 
\[
v=\varphi(s)=bs^2(s-1),
\]
with $b\in \bQ\t$. Thus, we have a family of elliptic surfaces (parametrised by $b$)
\[
\mathcal{X}_b\: y^2+(1+3bs^2(s-1))xy+s^6(s-1)^3y=x^3.
\]

\subsubsection{Case $(2)$}\label{case(2)_family}
In this case, up to a transformation by ${\rm PGL}_2(\bQ)$, we need a map $\varphi(s)=t$ such that 
\begin{enumerate}
    \item the unique fibre lying above $t=\infty$ is at $s=\infty$; 
    \item the unique ramified fibre lying above $t=0$ is at $s=0$;
    \item the unique unramified fibre lying above $t=0$ is at $s=1$.
\end{enumerate}
As in case $(1)$, we have that 
\[
t=\varphi(s)=bs^2(s-1)
\]
with $b$ a parameter. Thus, the family of elliptic surfaces is 
\[
\mathcal{X}_b: y^2+(bs^2(s-1)+3)xy+y=x^3. 
\]

\subsubsection{Case $(3)$}\label{case(3)_family}

In this case, up to a transformation by $\PGL_2(\bQ)$, we can require the map $\varphi(s)=t$ to be such that the only ramified lifting of $t=\infty$ is at $s=\infty$. 
Thus, we can assume that
\[
t=\varphi(s)=\frac{a_3s^3+a_2s^2+a_1s+a_0}{s-b_0}
\]
with $a_3\neq 0$. By translation, we can further assume that $a_2 = 0$.
Hence, we have 
\[
t=\varphi(s)=\frac{a_3s^3+a_1s+a_0}{s-b_0}.
\]
Scaling $s$ so that the only unramified lifting of $t=\infty$ is either $s=0$ or $s=1$, we have 
\[
t=\varphi(s)=\frac{a_3s^3+a_1s+a_0}{s-1}\quad \text{or} \quad \frac{a_3s^3+a_1s+a_0}{s}.
\]
Finally, we require that there are at most two liftings above each the two roots $\frac{-9\pm3\sqrt{-3}}{2}$ of $t^2+9t+27=0$. Since $\varphi(s)\in\bQ(s)$, we only need to check this property for one of the two roots. Hence, each of the two possible $\varphi(s)$'s should have multiple roots. 

We deduce that there are two subcases of this case:
\begin{enumerate}
\item $t=\varphi(s)=\frac{a_3s^3+a_1s+a_0}{s}$ and the equation $\frac{a_3s^3+a_1s+a_0}{s}=-\frac{9+3\sqrt{-3}}{2}$ has multiple roots. 
\item $t=\varphi(s)=\frac{a_3s^3+a_1s+a_0}{s-1}$, and the equation $\frac{a_3s^3+a_1s+a_0}{s-1}=-\frac{9+3\sqrt{-3}}{2}$ has multiple roots
\end{enumerate}
In subcase $(i)$, we must have $a_1=-6$ and $a_3=-\frac{4}{a_0^2}$. Thus, if we let $b=a_0$,  then 
\[
t=\varphi(s)=\frac{4/bs^3-6s+b}{s}
\]
and the resulting elliptic surface family is 
\[
\mathcal{X}_b\: y^2+\br{\frac{-4/bs^3-6s+b}{s}+3}xy+ y=x^3,
\]
which can be rewritten as 
\[
\mathcal{X}_b\: y^2+\br{-\frac{4}{b}s^3-3s+b}xy+s^3y=x^3.
\]

Similarly, in subcase $(ii)$, one can check that the coefficients $a_0$, $a_1$, and $a_3$ satisfy the equations of a certain curve, which is unfortunately not as explicit as in subcase $(i)$

\subsubsection{Case $(4)$}\label{case(4)_family} 
For this case, we rewrite the Weierstrass equation of $X_0$ as 
\[
y^2+(1+3v)xy+v^3y=x^3
\]
with $t=1/v$. By an analogous argument to case $(3)$, we can reduce to the following two cases:

\begin{enumerate}
\item  $v=\varphi(s)=\frac{a_3s^3+a_1s+a_0}{s}$ and $\frac{a_3s^3+a_1s+a_0}{s}=-\frac{3+\sqrt{-3}}{18}$ has multiple roots.
\item $v=\varphi(s)=\frac{a_3s^3+a_1s+a_0}{s-1}$ and $\frac{a_3s^3+a_1s+a_0}{s-1}=-\frac{3+\sqrt{-3}}{18}$ has multiple roots. 
\end{enumerate}

In subcase $(i)$, we must have $a_1=-\frac{2}{9}$ and $a_0^2a_3=-\frac{2^2}{3^9}$. We let $b=a_0$ be the parameter, thus 
$
v=\varphi(s)=\frac{-\frac{2^2}{3^9b^2}s^3-\frac{2}{9}s+b}{s}.
$
And the corresponding elliptic surface will be 
\[
\X_b\: y^2+(59049b^3 + 6561b^2s - 12s^3)xy+(19683b^3 - 4374b^2s - 4s^3)^3y=x^3.
\]
Similarly, for subcase $(ii)$, one can verify that the coefficients satisfy the defining equations of a certain curve, which is not as explicit as in subcase $(i)$. 

Before ending this section, we write the next lemma for later use. 
\begin{lemma}\label{lem:rho-l-odd}
    Let $X$ be a degree $3$ cover of $X_0$ as in \Cref{Prop: dim_5_eg}. Let $c$ be a complex conjugation. Then for all primes $\l$, $\Tr\rho_\l(X)(c) = 1$ or $-1$.
\end{lemma}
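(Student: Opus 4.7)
The plan is to compute $\Tr(\rho_\l(X)(c))$ via the Hodge realisation of $\rho_\l(X)$, exploiting the fact that only a one-dimensional piece of the Hodge decomposition contributes non-trivially to the trace. First, I would set $N_0 := H^1_{\mathrm{\acute{e}t}}(\bP^1_{\overline{\bQ}}, R^1\pi_* \overline{\bQ}_\l)$, so that by \Cref{Defn: non-trivial_part}, $\rho_\l(X)$ is the semisimplification of $N_0(1)$. Since trace is invariant under semisimplification, and twisting by $\overline{\bQ}_\l(1)$ multiplies the action of $c$ by $\epsilon_\l(c) = -1$, it will suffice to prove $\Tr(c \mid N_0) = \pm 1$.

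Next, I would transfer the computation to Betti cohomology. Fixing an embedding $\overline{\bQ} \hookrightarrow \bC$, the \'etale--Betti comparison gives an identification $N_0 \otimes_{\overline{\bQ}_\l} \bC \cong N_{0,\mathrm{B}} \otimes_\bQ \bC$, under which the Galois action of $c$ corresponds to the $\bC$-linear action induced by the topological complex conjugation $F_\infty$ on $X(\bC)$. So the problem reduces to computing $\Tr(F_\infty \mid N_{0,\mathrm{B}})$.

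Finally, I would apply Hodge theory. The space $N_{0,\mathrm{B}}$ inherits a pure Hodge structure of weight $2$ from $H^2(X(\bC), \bQ)$. Since the trivial summand $L$ of \Cref{Prop: decomposition_H^2} is spanned by algebraic classes (the zero section, the class of a fibre, and the non-identity components of the singular fibres), all of Hodge type $(1,1)$, we have $\dim N_0^{2,0} = \dim N_0^{0,2} = p_g(X) = 2$ by \Cref{Lem: dim_genus_formula}, and consequently $\dim N_0^{1,1} = 5 - 4 = 1$. Since $F_\infty$ is anti-holomorphic, the induced $\bC$-linear map satisfies $F_\infty(N_0^{p,q}) = N_0^{q,p}$: it swaps the two $2$-dimensional subspaces $N_0^{2,0}$ and $N_0^{0,2}$, contributing trace $0$, and it restricts to a $\bC$-linear involution on the one-dimensional $N_0^{1,1}$, contributing trace $\pm 1$. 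Combined with the sign from the Tate twist, this gives $\Tr(\rho_\l(X)(c)) = \mp 1 \in \{\pm 1\}$, as required.

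The main technical point will be the compatibility of the arithmetic $c$-action on \'etale cohomology with the topological $F_\infty$-action on Betti cohomology under the comparison isomorphism; this is a standard but perhaps citation-worthy fact. Everything else is a short Hodge-theoretic calculation, which succeeds precisely because the non-trivial part of $H^{1,1}(X)$ is only one-dimensional and is a priori defined over $\bQ$ (as a summand of the Leray filtration), so it carries a well-defined real structure on which $F_\infty$ acts as $\pm 1$.
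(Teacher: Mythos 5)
Your proof is correct and takes essentially the same approach as the paper's: both compute the trace of complex conjugation via Hodge theory, noting that $F_\infty$ swaps the two $2$-dimensional subspaces $H^{2,0}$ and $H^{0,2}$ (contributing $0$) and restricts to a $\pm 1$-involution on the $1$-dimensional $(1,1)$-part. The paper's proof is stated in two terse sentences, working directly with $M = H^2(X,\bQ)(1)/\Triv(X_{\overline\Q})$ as a rational Hodge structure of weight $0$; yours spells out the \'etale--Betti comparison and the dimension count $\dim N_0^{1,1} = 5 - 2p_g(X) = 1$ in more detail, but the substance is identical.
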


\begin{proof}
    Define $M\coloneqq H^2(X, \bQ)(1)/\Triv(X_{\overline{\Q}})$, which induces the five-dimensional representation $\rho_\ell(X)$ after base changed to $\bQ_\ell$. Notice that $M$ is a rational Hodge structure. And by our construction, $M\otimes \C$ contains the subspaces $H^{1,-1}$ and $H^{-1,1}$ of $H^2(X, \C)(1)$, which are both of dimension two due to the genus of $X$ is $2$. Hence $\dim (M\otimes \C)\cap H^{0,0}=1$. Then the result of this lemma follows by the fact that the involution $c$ interchanges $H^{1,-1}$ with $H^{-1,1}$.
\end{proof}

\section{An algorithm to verify irreducibility}\label{Sect: alg-for-irreducibility}

The aim of this section is to develop an algorithm for verifying the irreducibility of compatible systems of specific 5-dimensional Galois representations, with the primary examples being those constructed in \Cref{Sect: concrete_eg_5dim}. More precisely, we assume that 
\[(\rho_\l\:G_\Q\to\GL_5(\Qlb))_\l\]
is a $\Q$-rational, weakly compatible system of Galois representations, such that:
\begin{enumerate}
    \item For all primes $\l$, $\rho_\l$ is self-dual with trivial determinant. Moreover, the image of $\rho_\ell$ respects a nondegenerate symmetric bilinear form. In particular, $\rho_\l$ is isomorphic to a representation valued in $\SO_5(\Qlb)$;
    \item $(\rho_\l)_\l$ is pure of weight $0$;
    \item For all primes $\l$, $\rho_\l|_{G_{\Ql}}$ has Hodge--Tate weights $\{-1,-1,0,1,1\}$;
    \item For all primes $\l$ and for any complex conjugation $c$, $\Tr\rho_\l(c) = 1$;
    \item $(\rho_\l)_\l$ satisfies the hypotheses of \Cref{thm:hui}. In particular, by \Cref{rem: theorem conditions}, both \Cref{thm:decomp} and all the results of \Cref{sec:compatible-systems} apply to $(\rho_\l)_\l$. 
\end{enumerate}

Under these assumptions, for a given prime $\l$, $\rho_{\l}$ can only be in cases $1, 2(i), 2(ii), 3a, 5(i)$, or $5(ii)$ of \Cref{table-cases}: we have $a=b=1$, and the remaining cases are ruled out by the assumption that $\rho_\l$ is pure of weight $0$.

\subsection{Criteria for irreducibility}

In this section, for each of the possible cases of \Cref{table-cases}, we find checkable criteria to rule out the possibility that $\rho_\l$ falls into this case for infinitely many $\l$. In the next subsection, we will use these criteria to formulate an algorithm to verify the irreducibility of $(\rho_\l)_\l$.

\subsubsection{Case $3a$ of \Cref{table-cases}}

Fix a prime $\l$, and suppose that $\rho_\l$ is in case $3a$ of \Cref{table-cases}. Then, by \Cref{prop:rl-irred} and \Cref{rem:asai}, there is a finite extension $K/\Q$ and a two-dimensional representation $\sigma$ of $G_K$ such that $\rho_\l$ contains the irreducible, two-dimensional representation $\rho':=\Ind_K^\Q(\det\sigma)$. Moreover, $\rho'$ has Hodge--Tate weights $\{-1,1\}$. It follows that $K/\Q$ is imaginary quadratic and that $\rho'$ is odd.

Since $\rho_\l$ is self-dual, so is $\rho'$, so 
\[\rho' \simeq(\rho')\dual\simeq\rho'\tensor(\det\rho')\ii.\]

It follows that $\det\rho'$ is a quadratic character, which is necessarily non-trivial, since $\rho'$ is odd.
Hence, if $p$ is a prime for which $\det\rho'(\Frob_p) = -1$, then we have $\Tr\rho'(\Frob_p) = 0$.

We deduce the following proposition:

\begin{proposition}\label{prop:3a}
     Suppose that for some prime $\l$, $\rho_\l$ is in case $3a$ of \Cref{table-cases}. Let $p$ be a prime for which $\det\rho'(\Frob_p) = -1$. Then $-1$ is an eigenvalue of $\rho_\l(\Frob_p)$, with multiplicity at least $2$.
\end{proposition}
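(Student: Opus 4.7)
The plan is to exploit the decomposition $\rho_\l \simeq \sigma_3 \oplus \rho'$ given by case $3a$ of \Cref{table-cases} and show that each summand contributes an eigenvalue equal to $-1$ at $\Frob_p$, so that $\rho_\l(\Frob_p)$ picks up the eigenvalue $-1$ with multiplicity at least two.

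First I would dispose of the two-dimensional piece $\rho'$. As already observed in the setup, $\Tr\rho'(\Frob_p) = 0$ whenever $\det\rho'(\Frob_p) = -1$ (since $\rho' \simeq \rho' \otimes (\det\rho')^{-1}$ forces $\Tr\rho' = -\Tr\rho'$ at such primes). The characteristic polynomial of $\rho'(\Frob_p)$ is therefore $x^2 - 1$, so its eigenvalues are exactly $\{1, -1\}$; this yields the first $-1$ eigenvalue of $\rho_\l(\Frob_p)$.

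The main step is then to analyse the three-dimensional piece $\sigma_3$. Since $\det\rho_\l = 1$, I would deduce that $\det\sigma_3 = (\det\rho')^{-1} = \det\rho'$, the last equality using that $\det\rho'$ has order $2$. In particular $\det\sigma_3(\Frob_p) = -1$. Moreover, $\sigma_3$ is self-dual: because $\rho_\l$ is self-dual and the irreducible $\sigma_3$ and $\rho'$ have different dimensions and hence no common irreducible constituents, the isomorphism $\sigma_3 \oplus \rho' \simeq \sigma_3\dual \oplus \rho'$ forces $\sigma_3 \simeq \sigma_3\dual$. Self-duality means that the multiset $\{\alpha,\beta,\gamma\}$ of eigenvalues of $\sigma_3(\Frob_p)$ is invariant under $x \mapsto x^{-1}$. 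I would then run a very short case analysis on the cycle structure of this involution on a multiset of size $3$: either all three eigenvalues are fixed, hence lie in $\{\pm 1\}$ with product $-1$ and therefore an odd number of them equal $-1$; or exactly one is fixed and the other two form a genuine inverse pair with product $1$, which forces the fixed eigenvalue to equal $-1$. In either case $-1$ is an eigenvalue of $\sigma_3(\Frob_p)$.

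Combining the two contributions gives the desired multiplicity at least $2$. No step looks like a serious obstacle; the only care required is the brief combinatorial case check for self-dual three-dimensional representations with determinant $-1$, which amounts to the elementary fact that the product of an inverse-closed triple of units with product $-1$ must contain $-1$.
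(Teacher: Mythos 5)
Your argument is correct, but it takes a modestly different route from the paper's. The paper establishes, exactly as you do, that $\rho'(\Frob_p)$ has eigenvalues $\{-1,1\}$, so $-1$ appears at least once. It then finishes in one stroke: since $\rho_\l$ is $\SO_5$-valued, the five eigenvalues of $\rho_\l(\Frob_p)$ come in the inversion-closed shape $\{\alpha_p,\beta_p,1,\alpha_p^{-1},\beta_p^{-1}\}$, and once $-1$ appears it must be one of the $\alpha_p$'s (it cannot be $1$), whence $\alpha_p^{-1}=-1$ gives a second copy. You instead peel off the three-dimensional summand $\sigma_3$, compute $\det\sigma_3(\Frob_p)=-1$ from $\det\rho_\l=1$ and the quadratic character $\det\rho'$, use the self-duality of $\sigma_3$ (correctly justified by comparing irreducible constituents of $\sigma_3\oplus\rho'$ with its dual), and run a short combinatorial check on inversion-closed triples with product $-1$ to extract a second $-1$ eigenvalue. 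Both arguments are ultimately driven by the same orthogonality/self-duality structure; the paper's is shorter because it never needs the determinant bookkeeping on $\sigma_3$, while yours is perhaps a little more transparent about where each $-1$ eigenvalue ``lives.'' Either way the result is the same and your proof is sound.
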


\begin{proof}
    Let $a_p,b_p$ be the eigenvalues of $\rho'(\Frob_p)$. Then $a_p b_p = \det\rho'(\Frob_p) = -1$ and $a_p+b_p = \Tr\rho'(\Frob_p) = 0$. It follows that $\{a_p, b_p\} = \{-1, 1\}$, so $-1$ is an eigenvalue of $\rho_\lambda(\Frob_p)$. Now, since $\rho_\l$ is self-dual and valued in $\SO_5(\Ql)$, the eigenvalues of $\rho_\l(\Frob_p)$ are of the form  $\{\alpha_p, \beta_p,1, \alpha_p\ii, \beta_p\ii\}$, and it immediately follows that the multiplicity of $-1$ must be at least $2$.
\end{proof}

Recall that since $(\rho_\l)_\l$ is a compatible system, there is a finite set of primes $S$ and monic degree $5$ polynomials $Q_p(X)$ for all $p\notin S$, such that for all $p\notin S\cup\{\l\}$, $\rho_\l$ is unramified at $p$ the characteristic polynomial of $\rho_\l(\Frob_p)$ is $Q_p(X)$ .

\begin{corollary}\label{cor:3a}
    Suppose that for some prime $\l$, $\rho_\l$ is in case $3a$ of \Cref{table-cases}. Then there exists an imaginary quadratic extension $K/\Q$ that is unramified outside $S$ such that $(X+1)^2 \mid Q_p(X)$ for all primes $p\notin S$ that are inert in $K$.
\end{corollary}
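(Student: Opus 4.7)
The plan is to pin down the quadratic character $\det\rho'$ where $\rho'$ is the two-dimensional summand of $\rho_\l$ supplied by case $3a$, identify it with $\chi_{K/\Q}$, and then feed the result into \Cref{prop:3a}.

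First, I would unpack the hypothesis using \Cref{prop:rl-irred}$(ii)$ and \Cref{rem:asai}: in case $3a$, the two-dimensional summand is of the form $\rho' = \Ind_K^\Q \chi$ for some quadratic extension $K/\Q$ and some character $\chi$ of $G_K$. Since $\rho'$ has Hodge--Tate weights $\{-1,1\}$, $K$ must be imaginary quadratic: an algebraic Hecke character of a totally real quadratic field has parallel Hodge--Tate weights at the two real embeddings, so the induction to $\Q$ would fail to be Hodge--Tate regular.

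Next, I would identify $\det\rho'$ with $\chi_{K/\Q}$. Self-duality of $\rho'$, combined with the identification $(\Ind_K^\Q\chi)\dual \simeq \Ind_K^\Q\chi\ii$, yields $\Ind_K^\Q \chi \simeq \Ind_K^\Q \chi\ii$. By Frobenius reciprocity and irreducibility of $\rho'$, the only surviving possibility is $\chi^\tau = \chi\ii$, where $\tau$ generates $\Gal(K/\Q)$; the alternative $\chi = \chi\ii$ would force $\chi$ to be finite order, contradicting the Hodge--Tate weights of $\rho'$. Hence $\det\rho'|_{G_K} = \chi\chi^\tau$ is trivial, so $\det\rho'$ factors through $\Gal(K/\Q)$. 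A direct calculation shows $\rho'(c)$ has trace zero and squares to the identity, forcing its eigenvalues to be $\{1,-1\}$, so $\det\rho'(c) = -1$ rules out the trivial character and gives $\det\rho' = \chi_{K/\Q}$.

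Finally, I would verify the unramifiedness claim and extract the divisibility. The character $\chi_{K/\Q} = \det\rho'$ inherits unramifiedness outside $S\cup\{\l\}$ from $\rho_\l$; when $\l\notin S$, $\rho_\l|_{G_{\Ql}}$ is crystalline, so the finite-order character $\chi_{K/\Q}|_{G_{\Ql}}$ is crystalline and therefore unramified at $\l$. Hence $K$ is unramified outside $S$. For any prime $p\notin S$ inert in $K$, $\det\rho'(\Frob_p) = \chi_{K/\Q}(\Frob_p) = -1$, and \Cref{prop:3a} yields $(X+1)^2 \mid Q_p(X)$. The main point that requires care is the identification $\det\rho' = \chi_{K/\Q}$ via the Mackey-type self-duality argument; the remaining steps follow formally from crystallinity and the preceding proposition.
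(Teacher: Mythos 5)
Your proof is correct and follows essentially the same strategy as the paper: self-duality and oddness make $\det\rho'$ a non-trivial quadratic character whose kernel cuts out an imaginary quadratic field, crystallinity at $\ell\notin S$ forces this character to be unramified outside $S$, and \Cref{prop:3a} finishes. The only structural difference is that you take the extra step of identifying $\det\rho'$ with $\chi_{K/\Q}$ for the \emph{induction} field $K$ (via $\chi^\tau=\chi^{-1}$), whereas the paper simply defines $K$ to be the field cut out by $\ker(\det\rho')$ and never needs to relate it to the induction field; your identification is a correct but unnecessary detour.
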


\begin{proof}
    By assumption, $\rho_\l$ contains the two-dimensional induced representation $\rho'$. Moreover, since $\rho_\l$ is unramified outside $S\cup \{\l\}$ so is $\det\rho'$. Now, if $\l\notin S$, then $\rho_\l$ is crystalline at $\l$, and therefore $\det\rho'$ is crystalline at $\l$ with Hodge--Tate weight $0$, in which case, it follows that $\det\rho'$ is unramified at $\l$. Hence, $\det\rho'$ is unramified outside $S$.

    Let $K$ be the imaginary quadratic extension cut out by the kernel of $\det\rho'$. Then $K$ is unramified outside $S$, and for $p\notin S$, $\det\sigma(\Frob_p) = -1$ if and only if $p$ is inert in $K$. The result follows from \Cref{prop:3a}.
\end{proof}

\subsubsection{Cases $2(ii)$ and $(5ii)$ of \Cref{table-cases}}

Fix a prime $\l$ and suppose that $\rho_\l$ is in one of cases $2(ii)$, $5(ii)$ of \Cref{table-cases}. Then $\rho_\l$ contains a  unique one-dimensional non-trivial representation $\chi$. Since $\rho_\l$ is self-dual, $\chi$ is a quadratic character. We deduce the following proposition:

\begin{proposition}\label{prop:2ii5ii}
    Suppose that for some prime $\l$, $\rho_\l$ is in case $2(ii)$ or $5(ii)$ of \Cref{table-cases}. Then there exists a quadratic extension $K/\Q$ that is unramified outside $S$ such that $(X+1)^2 \mid Q_p(X)$ for all primes $p\notin S$ that are inert in $K$.
\end{proposition}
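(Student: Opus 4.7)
The plan is to mimic the argument of \Cref{cor:3a}, adapted to the structure of cases $2(ii)$ and $5(ii)$. In both of these cases, by inspection of \Cref{table-cases} together with \Cref{prop:rl-irred}$(iii)$, the representation $\rho_\l$ contains a unique non-trivial one-dimensional subrepresentation $\chi$. Since $\rho_\l$ is self-dual, $\chi$ must be a quadratic character, say $\chi = \chi_{K/\Q}$ for a unique quadratic extension $K/\Q$.

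First I would verify that $\chi$ is unramified outside $S$. Since $(\rho_\l)_\l$ is a weakly compatible system, for every $p \notin S \cup \{\l\}$, $\rho_\l$ is unramified at $p$, so the same is true of its subrepresentation $\chi$. To handle the prime $\l$, I take $\l \notin S$; then $\rho_\l|_{G_{\Ql}}$ is crystalline, so $\chi|_{G_{\Ql}}$ is a crystalline character, and its Hodge--Tate weight must be $0$ since $\chi$ has finite order. By local class field theory, any crystalline character of $G_{\Ql}$ with Hodge--Tate weight $0$ is unramified, so $\chi$ is unramified at $\l$ as well. Hence the quadratic extension $K/\Q$ is unramified outside $S$.

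Next I would upgrade the single eigenvalue $-1$ of $\rho_\l(\Frob_p)$ to multiplicity at least $2$, for every $p \notin S$ that is inert in $K$. For such a $p$, we have $\chi(\Frob_p) = -1$, so $-1$ appears as a root of $Q_p(X)$ with multiplicity at least $1$. The key observation is that, since $\rho_\l$ is valued in $\SO_5(\Qlb)$, the multiset of eigenvalues of $\rho_\l(\Frob_p)$ is stable under $\alpha \mapsto \alpha\ii$, and the product of the eigenvalues is $\det \rho_\l(\Frob_p) = 1$. Writing the eigenvalues as the disjoint union of $\{\alpha, \alpha\ii\}$-pairs with $\alpha^2 \ne 1$, plus self-inverse eigenvalues $\pm 1$, the determinant condition forces the number of $-1$'s to be even. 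Hence if $-1$ appears at all, it appears with multiplicity at least $2$, so $(X+1)^2 \mid Q_p(X)$.

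The argument is essentially a direct translation of the approach in \Cref{cor:3a}, so I do not anticipate a genuine obstacle; the only small subtlety is the elementary eigenvalue parity argument for $\SO_5$, which replaces the two-dimensional self-duality argument used in the case $3a$ setting.
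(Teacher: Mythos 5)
Your proof is correct and follows essentially the same approach as the paper: identify the unique non-trivial one-dimensional subrepresentation $\chi$ in cases $2(ii)$ and $5(ii)$, argue it is a quadratic character unramified outside $S$ exactly as in \Cref{cor:3a}, and conclude multiplicity at least $2$ for the eigenvalue $-1$ as in \Cref{prop:3a}. The only cosmetic difference is that you re-derive the relevant fact about $\SO_5$-eigenvalues (inversion-closure plus $\det = 1$ forces the multiplicity of $-1$ to be even), whereas the paper simply invokes that the eigenvalues of $\rho_\l(\Frob_p)$ take the form $\{\alpha_p,\beta_p,1,\alpha_p\ii,\beta_p\ii\}$.
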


\begin{proof}
    As in the proof of \Cref{cor:3a}, $\chi$ is a quadratic character that is unramified outside $S$. Let $K$ be the fixed field of its kernel. Then $\chi(\Frob_p) = -1$ whenever $p$ is inert in $K$. As in the proof of \Cref{prop:3a}, if $-1$ is a root of $\rho_\l(\Frob_p)$, then it occurs with multiplicity at least $2$. Hence, if $p$ is inert in $K$, $(X+1)^2 \mid Q_p(X)$. 
\end{proof}

\begin{proposition}\label{prop:2ii-real}
    Suppose that for some prime $\l$, $\rho_\l$ is in case $2(ii)$, and suppose that $r_\l$ is induced from a representation of $G_K$. Then $K$ is real quadratic.
\end{proposition}

\begin{proof}
    By \Cref{rem:asai}, we can write $\rho_\l\simeq \sigma_4\+\chi_{K/\Q}$, where $\sigma_4$ is an irreducible four-dimensional representation. Since $\rho_\l$ is self-dual, so is $\sigma_4$, so $\sigma_4$ takes values in $\GO_4(\Qlb)$. Hence, for any $g\in G_\Q$, the eigenvalues of $\sigma_4(g)$ are of the form $\{\alpha, d\alpha\ii, \beta, d\beta\ii\}$, where $d$ is the similitude character. By assumption, for any complex conjugation $c$, the eigenvalues of $\rho_\l(c)$ are $\{1,1,1,-1,-1\}$. It follows that the eigenvalues of $\sigma_4(c)$ are $\{1,1,-1,-1\}$ and hence that $\chi_{K/\Q}(c) = 1$. Hence, $K$ is a real quadratic extension.
\end{proof}

\begin{proposition}\label{prop:5ii-real}
        Suppose that for infinitely many primes $\l$, $\rho_\l$ is in case $5(ii)$, and suppose that $r_\l$ is induced from a representation of $G_K$. Then $K$ is real quadratic.
\end{proposition}

\begin{proof}
    If $\rho_\l\simeq \sigma_2\+\sigma_2'\+\chi_{K/\Q}$, then $\sigma_2, \sigma_2'$ are irreducible and two-dimensional with Hodge--Tate weights $\{-1,1\}$. Hence, by \Cref{prop:odd}, if $\l$ is large enough, both $\sigma_2, \sigma_2'$ are odd. Since the eigenvalues of $\rho_\l(c)$ are $\{1,1,1,-1,-1\}$ by assumption, it follows that $\chi_{K/\Q}(c) = 1$. 
\end{proof}

\begin{corollary}\label{cor:2ii5ii}
    Suppose that for infinitely many primes $\l$, $\rho_\l$ is in case $2(ii)$ or $5(ii)$ of \Cref{table-cases}. Then there exists a real quadratic extension $K/\Q$ that is unramified outside $S$ such that $(X+1)^2 \mid Q_p(X)$ for all primes $p\notin S$ that are inert in $K$.
\end{corollary}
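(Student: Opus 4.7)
The plan is to combine Proposition~\ref{prop:2ii5ii}, which for each individual prime $\l$ in case $2(ii)$ or $5(ii)$ produces a quadratic extension $K_\l/\Q$ unramified outside $S$ satisfying the desired divisibility property, with a pigeonhole argument to extract a single extension $K$ that works for infinitely many $\l$, and to then upgrade to the real-ness of $K$ using Propositions~\ref{prop:2ii-real} and \ref{prop:5ii-real}.

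First, for each of the infinitely many primes $\l$ where $\rho_\l$ lies in case $2(ii)$ or $5(ii)$, I would invoke Proposition~\ref{prop:2ii5ii} to produce a quadratic extension $K_\l/\Q$ unramified outside $S$ with $(X+1)^2 \mid Q_p(X)$ for every $p\notin S$ inert in $K_\l$. By Hermite's theorem (or more elementarily, by the observation that any quadratic extension of $\Q$ unramified outside $S$ is of the form $\Q(\sqrt{d})$ for a squarefree integer $d$ all of whose prime factors lie in $S\cup\{2\}$), there are only finitely many such $K_\l$. A pigeonhole argument then yields a single quadratic extension $K/\Q$, unramified outside $S$, with $K_\l = K$ for infinitely many such $\l$.

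Next, to show that $K$ is real quadratic, I would split into cases according to whether infinitely many of these $\l$ fall into case $2(ii)$ or into case $5(ii)$. In the former case, Proposition~\ref{prop:2ii-real} directly gives that $K = K_\l$ is real for any such $\l$. In the latter case, the proof of Proposition~\ref{prop:5ii-real} (which via Proposition~\ref{prop:odd} requires $\l$ sufficiently large) shows that for all sufficiently large $\l$ in case $5(ii)$, the corresponding $K_\l$ is real; picking any such $\l$ with $K_\l = K$ yields real-ness of $K$. Either way, $K$ is real quadratic, and the divisibility $(X+1)^2 \mid Q_p(X)$ for primes $p\notin S$ inert in $K$ follows from Proposition~\ref{prop:2ii5ii} applied to any single $\l$ with $K_\l = K$.

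The only step requiring care is ensuring that the \emph{same} quadratic field handles both cases simultaneously, but this is essentially automatic from the case split: whichever of cases $2(ii)$ or $5(ii)$ occurs infinitely often already supplies infinitely many $\l$ with $K_\l = K$, from which one can extract a single $\l$ satisfying any needed largeness hypothesis. No genuine obstacle arises.
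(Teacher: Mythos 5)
Your argument is correct and combines the same ingredients the paper intends (the paper gives no explicit proof, as Corollary~\ref{cor:2ii5ii} is a direct consequence of \Cref{prop:2ii5ii,prop:2ii-real,prop:5ii-real}). The pigeonhole step is superfluous: since the divisibility condition $(X+1)^2\mid Q_p(X)$ is a $\lambda$-independent statement and the corollary only asks for a \emph{single} $K$, one may simply fix one sufficiently large $\l$ in case $2(ii)$ or $5(ii)$, take $K = K_\l$ from \Cref{prop:2ii5ii}, and then apply \Cref{prop:2ii-real} or \Cref{prop:5ii-real} to that same $\l$ to conclude $K$ is real.
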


\subsubsection{Cases $2(i)$ and $5(i)$ of \Cref{table-cases}}

Fix a prime $\l$ and suppose that $\rho_\l$ is in case $2(i)$ or $5(i)$ of \Cref{table-cases}. Recall that there is a four-dimensional representation $r_\l\:G_\Q\to \Gf(\Qlb)$ such that $\rho_\l\simeq\std(r_\l)$, and that in these cases, $r_\l$ decomposes as a direct sum of two-dimensional representations $\sigma_1\+\sigma_2$, where $\sigma_1$ is irreducible with Hodge--Tate weights $\{-1,1\}$, $\sigma_2$ has Hodge--Tate weights $\{0,0\}$, and $\det\sigma_1 = \det\sigma_2$. As in the proofs of \Cref{lem:2+2+1-trivial,lem:4+1-trivial}, if $\l$ is sufficiently large, $\sigma_2$ has finite image. In particular, for all primes $p$ at which $\rho_\l$ is unramified, the eigenvalues of $\sigma_2(\Frob_p)$ are roots of unity. We deduce the following proposition:

\begin{proposition}\label{prop:2i5i}
    Suppose that for infinitely many primes $\l$, $\rho_\l$ is in case $2(i)$ or $5(i)$ of \Cref{table-cases}. For each prime $p\notin S$, let  $\alpha_p, \beta_p,1, \alpha_p\ii, \beta_p\ii$ be the eigenvalues of $Q_p(X)$. Then at least one of $\alpha_p\beta_p$ or $\alpha_p\beta_p\ii$ is a root of unity.
\end{proposition}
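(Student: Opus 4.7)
The plan is to exploit the structure that the paragraph preceding the proposition establishes: for all sufficiently large primes $\l$ in the given infinite collection, $r_\l \simeq \sigma_1 \oplus \sigma_2$ with $\sigma_1$ irreducible, $\det\sigma_1 = \det\sigma_2 = \simil r_\l$, and, crucially, $\sigma_2$ of finite image (this last point is imported from the proofs of \Cref{lem:2+2+1-trivial,lem:4+1-trivial}). We then translate this into a concrete statement about the eigenvalues of $\rho_\l(\Frob_p) = \std(r_\l)(\Frob_p)$.

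First, fix one such large $\l$ in our collection and any prime $p\notin S\cup\{\l\}$. Let $a_1,a_2$ be the eigenvalues of $\sigma_1(\Frob_p)$ and $b_1,b_2$ those of $\sigma_2(\Frob_p)$. The condition $\det\sigma_1 = \det\sigma_2$ gives $a_1a_2 = b_1b_2$, and since $\sigma_2$ has finite image, $b_1$ and $b_2$ are roots of unity; in particular so is $b_1/b_2$.

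Next, using \eqref{eqn:wedge-r2-decomp}, i.e.\ $\rho_\l \oplus \chi_\triv \simeq \wedge^2 r_\l \otimes (\simil r_\l)^{-1}$, I compute the six eigenvalues of $\wedge^2 r_\l(\Frob_p)\otimes\simil r_\l(\Frob_p)^{-1}$ as the multiset
\[
\{a_ib_j/(a_1a_2) : i,j\in\{1,2\}\}\cup\{1,1\}.
\]
Using $a_1a_2 = b_1b_2$ to simplify, one finds $b_2/a_2 = a_1/b_1$ and $b_2/a_1 = a_2/b_1$, so the five eigenvalues of $\rho_\l(\Frob_p)$ are $\{1,\,b_1/a_1,\,a_1/b_1,\,b_1/a_2,\,a_2/b_1\}$, which is of the expected self-dual form. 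Setting $\alpha = b_1/a_1$ and $\beta = b_1/a_2$, we get
\[
\alpha\beta \;=\; \frac{b_1^2}{a_1a_2} \;=\; \frac{b_1^2}{b_1b_2} \;=\; \frac{b_1}{b_2},
\]
a root of unity, while $\alpha\beta^{-1} = a_2/a_1$ need not be.

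Finally, the labelling $(\alpha_p,\beta_p)$ defined by $Q_p(X)$ is only well-defined up to the symmetries $\alpha_p\leftrightarrow \alpha_p^{-1}$, $\beta_p\leftrightarrow \beta_p^{-1}$ and $\alpha_p\leftrightarrow \beta_p$ preserving the multiset $\{1,\alpha_p^{\pm1},\beta_p^{\pm1}\}$. A routine check shows that under each of these symmetries the unordered pair $\{\alpha_p\beta_p,\,\alpha_p\beta_p^{-1}\}$ is preserved up to inversion of its entries, so the property ``at least one of $\alpha_p\beta_p$ or $\alpha_p\beta_p^{-1}$ is a root of unity'' is labelling-invariant. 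Hence my identification $\alpha=b_1/a_1$, $\beta=b_1/a_2$ suffices, and since $Q_p(X)$ is independent of $\l$, the conclusion holds for every $p\notin S$. There is no real obstacle here beyond this bookkeeping: the entire proposition reduces to the eigenvalue computation once the finiteness of the image of $\sigma_2$ (for large $\l$) is in hand.
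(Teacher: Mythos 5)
Your proof is correct and takes essentially the same approach as the paper's: both rely on the decomposition $r_\l \simeq \sigma_1 \oplus \sigma_2$ with $\sigma_2$ of finite image (imported from the preceding paragraph), together with the wedge-square relation $\wedge^2 r_\l \otimes (\simil r_\l)^{-1} \simeq \rho_\l \oplus \chi_\triv$. The only difference is cosmetic: the paper reparametrizes the eigenvalues of $r_\l(\Frob_p)$ in the symmetric $\Gf$-form $\{a_p, b_p, d_p a_p^{-1}, d_p b_p^{-1}\}$ and separately observes $d_p$ is a root of unity via the Hodge--Tate argument, whereas you work directly with the eigenvalues of $\sigma_1$ and $\sigma_2$, from which $\simil r_\l(\Frob_p) = b_1 b_2$ is automatically a root of unity.
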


\begin{proof}
    Let $\{\alpha_p, \beta_p,1, \alpha_p\ii, \beta_p\ii\}$ be the eigenvalues of $\rho_\l(\Frob_p)$ and let $\{a_p, b_p, d_pa_p\ii, d_pb_p\ii\}$ be the eigenvalues of $r_\l(\Frob_p)$, where $d_p = \simil r_\l(\Frob_p)$. By the above discussion, two of these must be roots of unity. Since $r_\l$ has Hodge--Tate weights $\{-1,0,0,1\}$, $\simil r_\l$ is Artin, and hence a finite order character. It follows that $d_p$ is a root of unity. Hence, at least one of $a_p, b_p$ is a root of unity.
    
    Moreover, from the isomorphism $\wedge^2(r_\l)\tensor(\simil r_\l)\ii\simeq \rho_\l\+\chi_\triv$, we have 
    \[\{\alpha_p, \beta_p, \alpha_p\ii, \beta_p\ii\} = \set{\frac{a_pb_p}{d_p}, \frac{a_p}{b_p}, \frac{d_p}{a_pb_p},\frac{b_p}{a_p}}.\]
    It follows that
    \[\{\alpha_p\beta_p, \alpha_p\beta_p\ii, \alpha_p\ii\beta_p, \alpha_p\ii\beta_p\ii\} = \set{\frac{a_p^2}{d_p}, \frac{b_p^2}{d_p}, \frac{d_p}{a_p^2}, \frac{d_p}{b_p^2}}\]
    contains two roots of unity, so at least one of $\alpha_p\beta_p$ or $\alpha_p\beta_p\ii$ is a root of unity.
\end{proof}

\subsection{The algorithm}\label{Sect: algorithm}

Let $(\rho_\l)_\l$ be a compatible system satisfying the hypotheses at the beginning of the section. In particular, let $S$ be a set of primes such that if $p\notin S\cup\{\l\}$, $\rho_\l$ is unramified at $p$, and $\rho_\l(\Frob_p)$ has characteristic polynomial $Q_p(X)$.

We give three algorithms, which terminate if and only if $\rho_\l$ is not in certain cases of \Cref{table-cases}. We will prove the validity of these algorithms in the next section.

\begin{algorithm-r}\label{algorithm-2i5i}
    The following algorithm terminates if and only if, for all but finitely many primes $\l$, $\rho_\l$ is not in case $2(i)$ or $5(i)$ of \Cref{table-cases}.
    \newline
\begin{algorithm}[H]
\SetAlgoLined
    \For{each prime $p\notin S$}{
    Compute the roots $\{\alpha_p, \beta_p, 1, \alpha_p\ii, \beta_p\ii\}$ of $Q_p(X)$\;
    \If{neither of $\alpha_p\beta_p$ and $\alpha_p\beta_p\ii$ are roots of unity}{
    break\;}
    }
\end{algorithm}
\end{algorithm-r}

\begin{algorithm-r}\label{algorithm-2ii5ii}
    The following algorithm terminates if and only if, for all but finitely many primes $\l$, $\rho_\l$ is not in case $2(ii)$ or $5(ii)$ of \Cref{table-cases}.
    \newline
\begin{algorithm}[H]
    Compute the set $\Sigma^+$ of real quadratic extensions unramified outside $S$\;
    \For{each field $K\in\Sigma^+$}{
        \For{each prime $p\notin S$ that is inert in $K$}{
            Evaluate $Q_p(-1)$\;
            \If{$Q_p(-1) \ne 0$}{break\;}
        }    
    }
\end{algorithm}
\end{algorithm-r}

\begin{algorithm-r}\label{algorithm-3a}
    The following algorithm terminates if for all but finitely many primes $\l$, $\rho_\l$ is not in case $3a$ of \Cref{table-cases}. If \Cref{algorithm-2i5i} terminates, then this algorithm terminates if and only if, for all but finitely many primes $\l$, $\rho_\l$ is not in case $3a$ of \Cref{table-cases}. 
    \newline
\begin{algorithm}[H]
    Compute the set $\Sigma^-$ of imaginary quadratic extensions unramified outside $S$\;
    \For{each field $K\in\Sigma^-$}{
        \For{each prime $p\notin S$ that is inert in $K$}{
            Evaluate $Q_p(-1)$\;
            \If{$Q_p(-1) \ne 0$}{break\;}
        }    
    }
\end{algorithm}
\end{algorithm-r}

\begin{remark}
    Suppose that we wish to verify that one of these algorithms \emph{does not} terminate. By using an explicit version of the Chebotarev density theorem, one could find an integer $N$ such that if the algorithm terminates, then it terminates on some prime $p\notin S$ with $p\le N$. Hence, to check that the algorithm doesn't terminate, it would be sufficient just to apply it to all primes $p\le N$. However, in practice, this integer $N$ would be prohibitively large. 
    On the other hand, if the algorithms do terminate, then the smallest prime $p$ which terminates the algorithm will typically be quite small. Hence, these algorithms are efficient at verifying irreducibility, but inefficient at verifying reducibility.
\end{remark}

\subsection{Validity of the algorithm}

In this section, we prove the validity of \Cref{algorithm-2i5i,,algorithm-2ii5ii,algorithm-3a}.

\begin{proposition}\label{lem:no-trivial}
    \Cref{algorithm-2i5i} terminates if and only if, for all but finitely many primes $\l$, $\rho_\l$ is neither in case $2(i)$ nor case $5(i)$ of \Cref{table-cases}.
\end{proposition}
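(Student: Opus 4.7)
The forward direction ($\Rightarrow$) is the contrapositive of \Cref{prop:2i5i}: if $\rho_\l$ is in case $2(i)$ or $5(i)$ of \Cref{table-cases} for infinitely many primes $\l$, then \Cref{prop:2i5i} forces one of $\alpha_p\beta_p, \alpha_p\beta_p^{-1}$ to be a root of unity at every $p \notin S$, so \Cref{algorithm-2i5i} never terminates.

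For the reverse direction ($\Leftarrow$), I argue by contradiction: suppose \Cref{algorithm-2i5i} does not terminate, and show that $\rho_\l$ must lie in case $2(i)$ or $5(i)$ for infinitely many $\l$. The non-termination hypothesis means that for every $p \notin S$, at least one of $\alpha_p\beta_p, \alpha_p\beta_p^{-1}$ is a root of unity. Since $(\rho_\l)_\l$ is $\Q$-rational, the roots of $Q_p(X) \in \Q[X]$ lie in a number field of bounded degree, so such a root of unity has order bounded by a universal constant $N$. Hence the condition is equivalent to the vanishing of the fixed conjugation-invariant regular function
\[
F(g) \;=\; \bigl((\alpha(g)\beta(g))^{N} - 1\bigr)\bigl((\alpha(g)\beta(g)^{-1})^{N} - 1\bigr)
\]
on each $\rho_\l(\Frob_p)$, where $\alpha(g), \beta(g), 1, \alpha(g)^{-1}, \beta(g)^{-1}$ are the eigenvalues of $g \in \SO_5$. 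By the Chebotarev density theorem and the Zariski density of Frobenius, $F \equiv 0$ on the connected component $\G_\l^\circ$ of the algebraic monodromy group.

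It then suffices to identify which connected semisimple subgroups $(\G_\l^\circ)' \subseteq \SO_5$ satisfy $F \equiv 0$. Our standing hypotheses, together with \Cref{lem:pure-ht-weight} applied to one-dimensional subrepresentations, restrict the possible cases of \Cref{table-cases} to $1, 2(i), 2(ii), 3a, 5(i), 5(ii)$. For each of the four non-$2(i)/5(i)$ cases, I use \Cref{table:semisimple} and \Cref{thm:semisimple-rank} to compute the generic eigenvalues of Frobenius and verify $F \not\equiv 0$: in case $1$, $(\G_\l^\circ)'$ is $\SO_5$ or $\Sym^4(\SL_2)$ with fully generic or $\{t^2, t, 1, t^{-1}, t^{-2}\}$-type eigenvalues; in case $3a$, $(\G_\l^\circ)' = \SL_2 \times \SL_2$ acting via $\Sym^2 \boxtimes \iota$, giving eigenvalues $\{s^2, u, 1, s^{-2}, u^{-1}\}$ with $s, u$ Zariski-independent; and in cases $2(ii), 5(ii)$, for primes $p$ split in the associated quadratic field, the eigenvalues are $\{x_1, x_1^{-1}, x_2, x_2^{-1}, 1\}$ with $x_1, x_2$ independent. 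In each case the condition ``one of $\alpha\beta, \alpha\beta^{-1}$ is a root of unity of order at most $N$'' defines a proper closed subvariety of the maximal torus, contradicting $F \equiv 0$. The only remaining cases, $2(i)$ and $5(i)$, are precisely those where $r_\l$ decomposes with a factor that is essentially Artin after cyclotomic twist, forcing $\alpha_p\beta_p^{-1}$ to always be a root of unity, as exploited in \Cref{prop:2i5i}.

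The subtlest part of the case analysis will be case $2(ii)$: for primes $p$ inert in $K$, \Cref{prop:2ii5ii} forces $-1$ to appear as a double eigenvalue of $\rho_\l(\Frob_p)$, which a priori could yield additional constraints on $\alpha_p\beta_p$ and $\alpha_p\beta_p^{-1}$. However, the remaining two eigenvalues come from a $2$-dimensional subquotient with regular Hodge--Tate weights $\{-1,1\}$, so they are generically not roots of unity, and one verifies that $\alpha_p\beta_p, \alpha_p\beta_p^{-1}$ then equal $\pm$ one of these eigenvalues or its inverse, which are also generically not roots of unity.
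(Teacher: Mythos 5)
Your forward direction matches the paper's. For the reverse direction, you take a genuinely different route: the paper passes to the $\GSp_4$-valued lift $r_\l$, shows the root-of-unity constraint forces $(\G_\l^\circ)' \ne \Gf$ so that $r_\l$ is not Lie irreducible, and then handles the ``irreducible but induced'' case by a direct analysis of $\sigma$, whence $r_\l$ must be reducible. You instead stay with $\rho_\l$ in $\SO_5$ and attempt a case-by-case elimination via the monodromy groups of \Cref{table:semisimple}. The idea is reasonable, but as written there are concrete errors in the case analysis that break the argument.

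The most serious one is case $3a$. You assert $(\G_\l^\circ)' = \SL_2\times\SL_2$ with eigenvalues $\{s^2,u,1,s^{-2},u^{-1}\}$, $s,u$ Zariski-independent. This contradicts the paper's own Lemma~\ref{lem:2ii}, which notes that in cases $3a,3b$ one has $(\G_\lambda^\circ)'=\SL_2$ and semisimple rank $1$: by \Cref{rem:asai}, the two-dimensional constituent is $\Ind_K^\Q\det\sigma$, which is induced from a \emph{character} and therefore has abelian geometric monodromy, so the whole derived group comes from the $\Sym^2$ piece. Concretely, writing $a,b$ for the eigenvalues of $\sigma(\Frob_\mathfrak{p})$ and normalizing so that $\chi\det\sigma(\Frob_\mathfrak{p})=1$, the eigenvalues of $\rho_\l(\Frob_p)$ at a split prime are $\{a/b,\,1,\,b/a,\,ab,\,(ab)^{-1}\}$ — the two nontrivial ``parameters'' $a/b$ and $ab$ are both functions of the same $a,b$ and live on a rank-one torus, not on independent factors. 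One then finds $\alpha_p\beta_p = a^2$ and $\alpha_p\beta_p^{-1}=b^{-2}$, and ruling out $F\equiv 0$ here requires knowing that $\sigma$ has infinite order eigenvalues, i.e.\ exactly the Lie-irreducibility/Hodge--Tate analysis the paper performs on $r_\l$. Your ``Zariski-independent'' shortcut does not apply. A similar issue afflicts cases $2(ii)$ and $5(ii)$: for a split prime the eigenvalues are $\{1,\,aa'/d,\,d/(aa'),\,a/a',\,a'/a\}$ where $d=\det\sigma(\Frob_\mathfrak{p})$, and $\alpha_p\beta_p = a/b$, which is not obviously ``independent'' of anything — showing it is generically not a root of unity again requires Hodge--Tate input on $\sigma$. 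Finally, the function $F$ you write is not Weyl-invariant (swapping $\alpha\leftrightarrow\beta$ changes $(\alpha\beta^{-1})^N$ to $(\beta\alpha^{-1})^N$), so it is not a regular conjugation-invariant function on $\SO_5$; the zero set is invariant, so this is fixable by symmetrizing, but as stated the ``Chebotarev + Zariski density'' step is applied to something that isn't a well-defined function on the group.

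In short: the $\rho_\l/\SO_5$ route is plausible, but your eigenvalue computations in cases $3a$, $2(ii)$, $5(ii)$ do not reflect the actual monodromy, and the independence claims you lean on fail precisely because those cases come from an induced $r_\l$. To close the gap you would, in effect, have to reconstruct the Hodge--Tate and Lie-irreducibility arguments the paper applies directly to $\sigma$ and $r_\l$ — at which point the detour through $\rho_\l$ has not simplified anything. The paper's reduction to ``$r_\l$ not Lie irreducible $\Rightarrow$ $r_\l$ reducible or induced, and induced leads to a contradiction'' is the cleaner path.
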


\begin{proof}
    For each prime $p\notin S$, let $\{\alpha_p, \beta_p, 1, \alpha_p\ii, \beta_p\ii\}$ denote the roots of $Q_p(X)$ in $\Qb$.

    The if direction is immediate from \Cref{prop:2i5i}, since \Cref{algorithm-2i5i} terminates if and only if for some prime $p\notin S$, neither $\alpha_p\beta_p$ nor $\alpha_p\beta_p\ii$ is a root of unity.

    For the only if direction, suppose that for all primes $p\notin S$, one of $\alpha_p\beta_p$ or $\alpha_p\beta_p\ii$ is a root of unity. By our assumptions, $\rho_\l$ is one of cases $1, 2(i), 2(ii), 3a, 5(i)$ and $5(ii)$ of \Cref{table-cases}, and it is in one of cases $2(i)$ or $5(i)$ if and only if $r_\l$ is reducible. Hence, it remains to show that $r_\l$ is reducible.
    
    For each prime $\l$, let $r_\l\:G_\Q\to\Gf(\Qlb)$ be the representation such that $\rho_\l = \std r_\l$, normalised so that for $p\notin S$, $\simil r_\l(\Frob_p) = d_p$ is a root of unity. Let $\{a_p, b_p, d_pa_p\ii, d_pb_p\ii\}$ be the eigenvalues of $r_\l(\Frob_p)$,
    
    As in the proof of \Cref{prop:2i5i}, we have
    \[\{\alpha_p\beta_p, \alpha_p\beta_p\ii, \alpha_p\ii\beta_p, \alpha_p\ii\beta_p\ii\} = \set{\frac{a_p^2}{d_p}, \frac{b_p^2}{d_p}, \frac{d_p}{a_p^2}, \frac{d_p}{b_p^2}}.\]
    Without loss of generality, we may assume that $\frac{a_p^2}{d_p}$ is a root of unity. Since $d_p$ is a root of unity, it follows that $a_p$ is too.

    Now, by the assumption that $\rho_\l$ is $\Q$-rational, $\alpha_p$ and $\beta_p$ are roots of a degree $5$ polynomial in $\Q[X]$, so $[\Q(\alpha_p, \beta_p):\Q] \le |S_5|=120$. Hence, if one of $\alpha_p\beta_p$ and $\alpha_p\beta_p\ii$ is a root of unity, then it must be a root of unity contained in a degree $120$ extension of $\Q$. Moreover, since $\simil r_\l$ is a finite order character, it takes values in $L^\times$ for some number field $L$. Thus, for all $p$, $d_p$ is a root of unity contained in $L$.
    It follows that there is an integer $N$ such that for all primes $p$, at least one of $a_p, b_p$ is an $N$-th root of unity.

    Let $G_\l$ be the Zariski closure of the image of $r_\l$ and let $G_\l^\circ$ be its identify connected component. By assumption, $G_\l$ is contained in the Zariski closed subgroup of $\Gf$ consisting of the elements $g\in \Gf$ that have an $N$-th root of unity as an eigenvalue.
    In particular, we cannot have $(G_\l^\circ) = \Gf$. Since the only irreducible subgroup of $\Gf$ is $\Gf$ itself, it follows that $r_\l$ cannot be Lie irreducible.

    By \cite{patrikis-variations}*{Prop.~3.4.1}, we can write $r_\l\simeq \Ind_{K}^\Q(\sigma\tensor\omega)$, where $\sigma$ is a Lie irreducible representation of $G_K$ and $\omega$ is an Artin representation of $G_K$. Moreover, we cannot have $K=\Q$: the Hodge--Tate weights of $\rho_\l$ are $\{-1,0,0,1\}$, whereas the Hodge--Tate weights of $\sigma\tensor\omega$ with $\omega$ Artin and degree at least $2$ would be of the form $\{a,a,b,b\}$ for some integers $a,b$.
    It follows that $r_\l$ is an induced representation.
    
    Write $r_\l=\Ind_K^\Q\sigma$ for some representation $\sigma$. Since $r_\l$ is not Artin, neither is $\sigma$, so we can assume that $\sigma$ is Lie irreducible. If a prime $p\notin S$ splits completely in $K$, then the eigenvalues of $r_\l(\Frob_p)$ are exactly the eigenvalues of $\sigma(\Frob_\p)$ for the primes $\p\mid p$. 
    
    If $\sigma$ is one-dimensional, then it is an infinite order character, so there must exists primes $p\notin S$ such that for all $\p\mid p$, $\sigma(\Frob_\p)$ has infinite order, so is not a root of unity. 

    Now suppose that $\sigma$ is a two-dimensional Lie irreducible representation, let $\G^\circ$ be the identity connected component of the Zariski closure of its image. Let $L/K$ be the finite extension such that the Zariski closure of the image of $\rho_\l|_L$ is $\G^\circ$. Since $\sigma$ is Lie irreducible, we must have $\G^\circ = \SL_2$ or $\GL_2$. The same argument as above shows that the subset of elements $g\in \G^\circ$ that do not have an $N$-th root of unity as an eigenvalue is Zariski open, and hence dense. Hence, by the Chebotarev density theorem, the set of primes $\p$ of $G_L$ such that $\sigma(\Frob_\p)$ has an $N$-th root of unity as an eigenvalue has Dirichlet density $0$. It follows that for $100\%$ of primes $p$ that split in $L$, the eigenvalues of $\sigma(\Frob_\p)$ are not roots of unity for all $\p\mid p$. For such $p$, the eigenvalues of $r_\l(\Frob_p)$ are not roots of unity either.

    In all cases, it follows that there are infinitely many primes $p$ such that none of the eigenvalues of $r_\l(\Frob_p)$ are roots of unity, contradicting our assumption.
\end{proof}

\begin{proposition}\label{lem:no-finite-order}
    \Cref{algorithm-2ii5ii} terminates if and only if, for all but finitely many primes $\l$, $\rho_\l$ is neither in case $2(ii)$ nor case $5(ii)$ of \Cref{table-cases}.  
\end{proposition}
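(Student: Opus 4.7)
\emph{Backward direction.} If $\rho_\l$ lies in case $2(ii)$ or $5(ii)$ for infinitely many $\l$, then \Cref{cor:2ii5ii} produces a real quadratic $K \in \Sigma^+$ such that $(X+1)^2 \mid Q_p(X)$ for every inert $p \notin S$ in $K$, so the inner loop for this $K$ never breaks.

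\emph{Forward direction.} I argue the contrapositive. Since $\Sigma^+$ is finite (quadratic extensions of $\Q$ unramified outside $S$ correspond to squarefree integers supported on $S \cup \{\infty\}$), non-termination means some specific $K \in \Sigma^+$ has $Q_p(-1) = 0$ for every inert $p \notin S$ in $K$---equivalently, since $Q_p$ does not depend on $\l$, the $\l$-independent hypothesis that $-1$ is an eigenvalue of $\rho_\l(\Frob_p)$ for every $\l$ and every such $p$. I will show that, for all but finitely many $\l$, this forces $\rho_\l$ into case $2(ii)$ or $5(ii)$, by ruling out the remaining possibilities $1$, $2(i)$, $3a$, and $5(i)$ of \Cref{table-cases}. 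The main tool is that $V_\l := \{g \in \G_\l : \det(g + I) = 0\}$ is a proper Zariski closed subvariety of the algebraic monodromy group (the identity lies outside $V_\l$).

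For cases $1$ and $2(i)$, \Cref{table:semisimple} forces $\G_\l$ to be $\SO_5$, $\Sym^4(\SL_2)$, or $\SO_4$---all connected---so $\chi_{K/\Q}$ does not factor through $\rho_\l$. A joint Chebotarev argument on the compositum of $K$ with the field cut out by a sufficiently large finite quotient of $\rho_\l$ then shows that the Frobenii of primes inert in $K$ are Zariski dense in $\G_\l$, producing an inert $p$ with $\rho_\l(\Frob_p) \notin V_\l$ and hence $Q_p(-1) \ne 0$. For case $3a$, where $r_\l \simeq \Ind_{K'}^{\Q}\sigma$ is induced from some quadratic $K'$, the two-dimensional summand $\sigma_2$ is odd by \Cref{prop:odd} and self-dual, which forces it to be dihedral from an imaginary quadratic field, so $K'$ is imaginary. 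Since $K$ is real and $K \ne K'$, the characters $\chi_{K/\Q}$ and $\chi_{K'/\Q}$ are independent, and by biquadratic Chebotarev the primes that are inert in $K$ but split in $K'$ have Frobenii Zariski dense in the identity component $\G_\l^\circ$, on which $V_\l$ remains proper.

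The most delicate case is $5(i)$, where $\rho_\l = \sigma_1 \oplus \sigma_2 \oplus \chi_\triv$ with each $\sigma_i$ two-dimensional, irreducible and self-dual. By \Cref{prop:odd} the $\sigma_i$ are odd for $\l$ large, and combined with self-duality---using $\sigma_i \simeq \sigma_i \otimes (\det\sigma_i)^{-1}$ together with $\det\sigma_i(c) = -1$, which force $\det\sigma_i$ to be a non-trivial quadratic character---one concludes $\sigma_i \simeq \Ind_{M_i}^{\Q}\mu_i$ for some imaginary quadratic $M_i := \ker(\det\sigma_i)$. A short computation shows that $-1$ is an eigenvalue of $\sigma_i(\Frob_p)$ precisely when $p$ is inert in $M_i$, up to a density-zero set (since the inducing character $\mu_i$ has infinite order, by its non-trivial Hodge--Tate weight). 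Consequently the set of $p$ with $-1$ an eigenvalue of $\rho_\l(\Frob_p)$ has density $\tfrac12$ if $M_1 = M_2$ and $\tfrac34$ otherwise, and in neither case can it coincide with $\{p \text{ inert in } K\}$: in the first case because $M_1$ is imaginary while $K$ is real, and in the second because the densities differ. Producing an inert $p$ in $K$ which is split in $M_1 M_2$ and has $-1$ not as an eigenvalue of $\rho_\l(\Frob_p)$ then yields the required contradiction. I expect this to be the main obstacle: the CM structure of $\sigma_1, \sigma_2$ creates density-$\tfrac12$ sets of primes with $-1$ eigenvalues that could superficially mimic the algorithm's hypothesis, and the argument depends crucially on the real-versus-imaginary distinction between $K$ and the $M_i$.
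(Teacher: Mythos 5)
Your approach is genuinely different from the paper's: instead of passing to the $\GSp_4$-valued lift $r_\ell$ and showing directly that $Q_p(-1)=0$ for all inert $p$ forces $\Tr r_\ell(\Frob_p)=0$, hence $r_\ell\simeq r_\ell\otimes\chi_{K/\Q}$ (after which the classification in \Cref{prop:rl-red,prop:rl-irred} together with the Hodge--Tate regularity of the $\{0,2\}$-weight summand of $r_\ell$ finishes things off in one stroke), you rule out cases $1$, $2(i)$, $3a$, $5(i)$ individually via Zariski-density arguments on $\rho_\ell$. The paper's route is more uniform and avoids the dihedral case analysis. Your argument for cases $1$, $2(i)$ is essentially correct, although the parenthetical ``so $\chi_{K/\Q}$ does not factor through $\rho_\ell$'' is neither justified nor needed: a coset of a Zariski-dense subgroup of a \emph{connected} group is again Zariski dense, so the inert Frobenii miss $V_\ell$ regardless of whether $K$ is contained in the field cut out by $\rho_\ell$.

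The real gap is in your case $5(i)$, in the branch $M_1\ne M_2$. You argue that since the density-$\tfrac34$ set $\{p: Q_p(-1)=0\}$ does not \emph{coincide} with the density-$\tfrac12$ set $\{p\text{ inert in }K\}$, one can produce a prime in the second set but not the first. This does not follow: the algorithm fails to terminate as soon as one set is merely \emph{contained} in the other, and a density-$\tfrac12$ set can certainly lie inside a density-$\tfrac34$ one. In fact there is a concrete obstruction: if $K$ were the real quadratic subfield of the biquadratic field $M_1 M_2$, then $\chi_{K/\Q}=\chi_{M_1/\Q}\chi_{M_2/\Q}$, so every prime inert in $K$ is inert in exactly one of $M_1, M_2$, and hence (by your own computation) has $-1$ as an eigenvalue of $\rho_\ell(\Frob_p)$; the prime ``inert in $K$, split in $M_1 M_2$'' that you want to produce simply does not exist. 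The fix is to observe that $M_1 \ne M_2$ cannot happen here. Indeed $\det\rho_\ell = \chi_\triv$ gives $\det\sigma_1 \cdot \det\sigma_2 = 1$, and since each $\det\sigma_i$ is quadratic (by the self-duality argument you already give), this forces $\det\sigma_1 = \det\sigma_2$ and hence $M_1 = M_2$. With $M_1=M_2$ the real-versus-imaginary contrast with $K$ does the job. You should make this identity explicit rather than branching on a case that is vacuous for a non-obvious reason and handling it with an inadequate density comparison.
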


\begin{proof}
    The if direction is immediate from \Cref{cor:2ii5ii}, since \Cref{algorithm-2ii5ii} terminates if and only if, for every real quadratic field $K$ that is unramified outside $S$, there is an inert prime $p\notin S$ such that $Q_p(-1)\ne 0$.

    For the only if direction, assume that \Cref{algorithm-2ii5ii} does not terminate, so that there is a real quadratic field $K$, unramified outside $S$, such that every inert prime $p\notin S$ satisfies $Q_p(-1)=0$. Let $r_\l\:G_\Q\to\Gf(\Qlb)$ be the representation such that $\rho_\l\simeq\std(r_\l)$. 

    For each prime $p\notin S$, let $\{\alpha_p, \beta_p,1, \alpha_p\ii, \beta_p\ii\}$ be the eigenvalues of $\rho_\l(\Frob_p)$, let $d_p = \simil r_\l(\Frob_p)$, and let $\{a_p, b_p, d_pa_p\ii, d_pb_p\ii\}$ be the eigenvalues of $r_\l(\Frob_p)$. From the isomorphism $\wedge^2(r_\l)\tensor(\simil r_\l)\ii\simeq \rho_\l\+\chi_\triv$, we have 
    \[\{\alpha_p, \beta_p, \alpha_p\ii, \beta_p\ii\} = \set{\frac{a_pb_p}{d_p}, \frac{a_p}{b_p}, \frac{d_p}{a_pb_p},\frac{b_p}{a_p}}.\]
    Now assume that $p$ is inert in $K$. Since $Q_p(-1) = 0$, we may assume without loss of generality that $\alpha_p =\alpha_p\ii= -1$. It follows that either $a_p = -d_pb_p\ii$ or $a_p = -b_p$. In particular, $\Tr r_\l(\Frob_p) = 0$. Hence, if $\chi_{K/\Q}$ is the quadratic character associated to $K$, we have
    \[\Tr r_\l(\Frob_p) =\Tr r_\l\tensor\chi_{K/\Q}(\Frob_p)\]
    for all primes $p\notin S$. It follows that $r_\l\simeq r_\l\tensor\chi_{K/\Q}$.

    Now, if $r_\l$ is irreducible, it follows that it is induced from a representation $\sigma$ of $G_K$, with $K$ a real quadratic field. Thus $\rho_\l$ is in one of cases $2(ii), 5(ii)$, or $3a$ of \Cref{table-cases}. But in case $3a$, by \Cref{rem:asai}, $\rho_\l$ contains the two-dimensional irreducible representation $\Ind_{K}^\Q\det\sigma$, which in our case, must have Hodge--Tate weights $\{-1,1\}$. These weights can only occur if $K$ is quadratic imaginary. It follows that $\rho_\l$ is in one of cases $2(ii), 5(ii)$ of \Cref{table-cases}.

    Finally, if $r_\l$ is reducible, then since $\rho_\l$ is pure, we must have $r_\l\simeq\sigma_1\+\sigma_2$, where $\sigma_1$ is irreducible and has Hodge--Tate weights $\{-1,1\}$ and $\sigma_2$ has Hodge--Tate weights $\{0,0\}$. Since $r_\l\simeq r_\l\tensor\chi_{K/\Q}$, we have $\sigma_1\simeq\sigma_1\tensor\chi_{K/\Q}$, whence $\sigma_1$ is induced from a character of $G_K$. But this is impossible, since $K$ is real, but $\sigma_1$ is Hodge--Tate regular.
\end{proof}

\begin{proposition}\label{lem:3a-validity}
    \Cref{algorithm-3a} terminates if for all but finitely many primes $\l$, $\rho_\l$ is not in case $3a$ of \Cref{table-cases}.  Moreover, if \Cref{algorithm-2i5i} terminates, then \Cref{algorithm-3a} terminates if and only if for all but finitely many primes $\l$, $\rho_\l$ is not in case $3a$ of \Cref{table-cases}.
\end{proposition}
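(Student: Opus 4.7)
The plan is to prove both directions of the ``if and only if'' asserted in the moreover statement; the first sentence of the proposition emerges as the $Q \Rightarrow P$ direction, where I write $P$ for ``\Cref{algorithm-3a} terminates'' and $Q$ for ``$\rho_\l$ is not in case $3a$ for all but finitely many $\l$.''

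The easy direction $P \Rightarrow Q$ holds unconditionally. Suppose $\rho_\l$ is in case $3a$ for infinitely many $\l$. Applying \Cref{cor:3a} to each such $\l$ produces an imaginary quadratic $K_\l$ unramified outside $S$ such that $(X+1)^2 \mid Q_p(X)$ for every inert $p \notin S$ in $K_\l$. Since the set of imaginary quadratic fields unramified outside $S$ is finite and $Q_p(X)$ is independent of $\l$, pigeonhole produces a single $K$ on which \Cref{algorithm-3a} fails to terminate.

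For the harder direction $Q \Rightarrow P$, I argue by contradiction. Assume \Cref{algorithm-3a} does not terminate, so fix an imaginary quadratic $K$ unramified outside $S$ with $Q_p(-1) = 0$ for all inert $p \notin S$; since $\rho_\l$ is $\SO_5$-valued, eigenvalue $-1$ has even multiplicity and so $(X+1)^2 \mid Q_p(X)$ on this density-$\tfrac 12$ set of primes. Suppose that $\rho_\l$ is not in case $3a$ for infinitely many $\l$; then by pigeonhole, infinitely many $\l$ lie in a fixed case from $\{1, 2(i), 2(ii), 5(i), 5(ii)\}$, and if we additionally assume \Cref{algorithm-2i5i} terminates, \Cref{lem:no-trivial} restricts this to $\{1, 2(ii), 5(ii)\}$. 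I would rule out each case via a Chebotarev density argument: in case $1$, \Cref{lem:lie-irred} and \Cref{rem:lie-irred} force $\rho_\l$ to be Lie irreducible, so its algebraic monodromy contains $\SO_5$ or $\Sym^4 \SL_2$ by \Cref{table:semisimple}, and the locus where $-1$ is a double eigenvalue is Zariski closed of positive codimension, hence has Haar measure $0$, contradicting density $\tfrac 12$; in cases $2(ii)$ and $5(ii)$, \Cref{prop:2ii-real} and \Cref{prop:5ii-real} identify the distinguished quadratic character with $\chi_{K'/\Q}$ for a fixed \emph{real} quadratic $K' \ne K$, so restricting to primes $p$ inert in $K$ and split in $K'$ (density $\tfrac 14$) forces the complementary $4$- or $(2+2)$-dimensional subsystem---weakly compatible by \Cref{thm:decomp} and \Cref{prop:brauer-nesbitt}---to satisfy a codimension-$\ge 1$ eigenvalue condition that again fails density. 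For the first part of the proposition without assuming \Cref{algorithm-2i5i} terminates, the same style of density argument handles cases $2(i)$ and $5(i)$, with the trivial character in place of $\chi_{K'/\Q}$.

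The main obstacle will be the Chebotarev density arguments in the reducible cases: the algebraic monodromy of the complementary subrepresentation could in principle be unexpectedly small (for example, coming from an induction from a quadratic extension), so one must verify that the codimension-$\ge 1$ eigenvalue locus is not accidentally contained in a full coset of the identity component. Enumerating the possibilities for this monodromy using \Cref{table:semisimple} together with the Hodge--Tate and purity constraints, and then checking each subcase individually, will be the delicate portion of the argument.
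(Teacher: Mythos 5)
Your easy direction ($P \Rightarrow Q$, unconditionally) is correct. But your hard direction takes a genuinely different route from the paper, and it has a real gap. The paper's argument is structural: if \Cref{algorithm-3a} fails, then (as in the proof of \Cref{lem:no-finite-order}) $\Tr r_\l(\Frob_p)=0$ at primes inert in the offending imaginary quadratic $K$, hence $r_\l\simeq r_\l\otimes\chi_{K/\Q}$; since \Cref{algorithm-2i5i} terminates, $r_\l$ is irreducible by \Cref{lem:no-trivial} and is therefore \emph{induced from $G_K$}, which by \Cref{prop:rl-irred} confines $\rho_\l$ to cases $2(ii)$, $5(ii)$, $3a$ (so case $1$ is excluded with no density argument at all), and \Cref{prop:2ii-real,prop:5ii-real} force $K$ to be real in the first two, a contradiction. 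Your Chebotarev approach works in case $1$ (the monodromy is connected with trivial centraliser in $\SO_5$), but in cases $2(ii)$ and $5(ii)$ you have not carried it out: as you correctly flag, the monodromy of the complementary subrepresentation may be small, and a full coset might sit inside the bad eigenvalue locus. Until that delicate case analysis is done, this direction is not proved; the paper's use of the $\Gf$-lift and the imaginary/real dichotomy sidesteps it entirely.

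Separately, your closing attempt to also treat cases $2(i)$ and $5(i)$ by the same density argument---aimed at establishing the first sentence read as $Q\Rightarrow P$ unconditionally---cannot succeed. As the paper observes in the remark immediately following this proposition, if neither \Cref{algorithm-3a} nor \Cref{algorithm-2i5i} terminates then $\rho_\l$ can be in case $5(i)$ for all $\l$, with both two-dimensional constituents induced from the same imaginary quadratic $K$; at every inert prime both pieces then have eigenvalues $\{1,-1\}$, so the eigenvalue condition holds on the full density-$\tfrac 12$ set of inert primes and no Chebotarev argument gives a contradiction. The unconditional content that the paper actually proves is $P\Rightarrow Q$, which is exactly your easy direction; the converse is genuinely only true conditional on \Cref{algorithm-2i5i} terminating.
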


\begin{proof}
    The if direction is immediate from \Cref{cor:3a}, since \Cref{algorithm-3a} terminates if and only if, for every imaginary quadratic field $K$ that is unramified outside $S$, there is an inert prime $p\notin S$ such that $Q_p(-1)\ne 0$.

    For the only if direction, assume that \Cref{algorithm-3a} does not terminate, so that there is an imaginary quadratic field $K$, unramified outside $S$, such that every inert prime $p\notin S$ satisfies $Q_p(-1)=0$. Let $r_\l\:G_\Q\to\Gf(\Qlb)$ be the representation such that $\rho_\l\simeq\std(r_\l)$. For each prime $p\notin S$, let $d_p = \simil r_\l(\Frob_p)$ and let $\{a_p, b_p, d_pa_p\ii, d_pb_p\ii\}$ be the eigenvalues of $r_\l(\Frob_p)$. The same argument as in \Cref{lem:no-finite-order} shows that for all primes $p\notin S$ that are inert in $K$, we have $a_p = -d_pb_p\ii$ or $a_p = -b_p$, and moreover that $r_\l\simeq r_\l\tensor\chi_{K/\Q}$.

    Since \Cref{algorithm-2i5i} terminates, by \Cref{lem:no-trivial}, $r_\l$ is irreducible. It follows that it is induced from a representation $\sigma$ of $G_K$, with $K$ an imaginary quadratic field. Thus $\rho_\l$ is in one of cases $2(ii), 5(ii)$, or $3a$ of \Cref{table-cases}. But $\rho_\l$ cannot be in case $2(ii)$ or $5(ii)$: if $\l$ is large enough, in these cases, by \Cref{prop:2ii-real,prop:5ii-real}, $K$ must be real. Hence, $\rho_\l$ is in case $3a$.
\end{proof}

\begin{remark}
    The proof of \Cref{lem:3a-validity} shows that if \Cref{algorithm-3a} does not terminate, then for all but finitely many primes, $\rho_\l$ is either in case $2(i)$, $3a$ or $5(i)$ of \Cref{table-cases}. If, moreover, \Cref{algorithm-2i5i} terminates, then $\rho_\l$ is in case $3a$ for all but finitely many $\l$. On the other hand, if neither \Cref{algorithm-3a} nor \Cref{algorithm-2i5i} terminates, then one can show that for all primes $\l$, $\rho_\l \simeq \sigma_1\+\sigma_2\+\chi_\triv$, where $\sigma_1$, $\sigma_2$ are irreducible two-dimensional representations, that are both induced from some imaginary quadratic extension $K/\Q$. In particular, $\rho_\l$ is in case $5(i)$ of \Cref{table-cases} for all primes $\l$.
\end{remark}
By comparing these propositions with \Cref{table-cases} and \Cref{rem:lie-irred}, we deduce the following corollary:

\begin{corollary}\label{cor:validity}
\mbox{}
\begin{enumerate}
    \item \Cref{algorithm-2i5i} terminates if and only if, for all but finitely many primes $\l$, $\rho_\l$ does not contain the trivial representation.
    \item \Cref{algorithm-2i5i,algorithm-2ii5ii} both terminate if and only if, for all but finitely many primes $\l$, $\rho_\l|_K$ does not contain the trivial representation for any finite extension $K/\Q$.
        \item \Cref{algorithm-2i5i,,algorithm-2ii5ii,algorithm-3a} all terminate if and only if $\rho_\l$ is Lie irreducible for all but finitely many primes $\l$. 
\end{enumerate}
\end{corollary}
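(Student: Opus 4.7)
The plan is to assemble \Cref{lem:no-trivial,lem:no-finite-order,lem:3a-validity} with the classification of possible decompositions of $\rho_\l$ supplied by \Cref{table-cases}. First I would observe that, under our standing hypotheses ($\rho_\l$ self-dual with trivial determinant, pure of weight $0$, Hodge--Tate weights $\{-1,-1,0,1,1\}$), we have $a = b = 1$ and purity rules out all but cases $1, 2(i), 2(ii), 3a, 5(i), 5(ii)$ of \Cref{table-cases}. Inspecting these six cases, $\rho_\l$ contains the trivial one-dimensional representation exactly in cases $2(i)$ and $5(i)$, and it contains \emph{some} finite-order character exactly in cases $2(i), 2(ii), 5(i), 5(ii)$ (the subcharacter being trivial in the unprimed cases, and nontrivial quadratic in the primed ones).

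Given this bookkeeping, part (i) is immediate from \Cref{lem:no-trivial}. For part (ii), I would first show that $\rho_\l|_K$ contains the trivial representation for some finite extension $K/\Q$ if and only if $\rho_\l$ contains a finite-order subcharacter. The ``if'' direction is obvious upon taking $K$ to be the field cut out by the subcharacter. Conversely, if $\rho_\l|_K$ contains the trivial representation, then so does $\rho_\l|_L$ for the Galois closure $L$ of $K$; the sum of all trivial $G_L$-subrepresentations inside $\rho_\l$ is a nonzero $\Gal(L/\Q)$-stable subspace $W \subseteq \rho_\l$, and $W$ is an Artin subrepresentation of $\rho_\l$. Since every Hodge--Tate weight of $W$ is $0$ and the weight $0$ occurs in $\rho_\l$ with multiplicity $1$, $W$ must be one-dimensional, hence a finite-order character. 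Combining this equivalence with \Cref{lem:no-trivial,lem:no-finite-order} yields part (ii).

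For part (iii), I would invoke \Cref{rem:lie-irred}: a self-dual five-dimensional representation with our Hodge--Tate weights cannot be irreducibly induced from a degree-$5$ extension, so irreducibility forces Lie irreducibility. Thus $\rho_\l$ is Lie irreducible precisely when it is in case $1$ of \Cref{table-cases}, equivalently when none of cases $2(i), 2(ii), 3a, 5(i), 5(ii)$ occur. The ``only if'' direction then follows from \Cref{lem:no-trivial,lem:no-finite-order} (to exclude the four character-containing cases) together with \Cref{lem:3a-validity} (to exclude case $3a$), using crucially that the termination of \Cref{algorithm-2i5i} is part of the hypothesis, which is exactly what activates the stronger conclusion of \Cref{lem:3a-validity}. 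The ``if'' direction is the same three lemmas read in the other direction.

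There is no genuine obstacle here; the entire argument is assembly. The one point that requires care is the asymmetric nature of \Cref{lem:3a-validity}, whose biconditional only holds once \Cref{algorithm-2i5i} is known to terminate: this is why part (iii) packages all three algorithms together, and it is also the reason why we cannot upgrade part (iii) to a separate biconditional for Algorithm \ref{algorithm-3a} alone without ruling out cases $2(i)$ and $5(i)$ first.
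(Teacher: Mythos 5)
Your proposal is correct and takes essentially the same route as the paper, which leaves this corollary as an immediate consequence of \Cref{lem:no-trivial,lem:no-finite-order,lem:3a-validity} together with \Cref{table-cases} and \Cref{rem:lie-irred}. Your proof simply spells out the bookkeeping the paper leaves implicit, including the worthwhile observation in part (ii) that a $G_L$-trivial subspace of $\rho_\ell$ must be a one-dimensional Artin subcharacter because the Hodge--Tate weight $0$ has multiplicity one, and the correct handling in part (iii) of the asymmetry in \Cref{lem:3a-validity}.
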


\begin{remark}\label{rem:mon-s05}
    We note that if $\rho_\l$ is Lie irreducible, then its algebraic monodromy group is $\SO_5$. Indeed, by \Cref{table:semisimple}, the algebraic group is either $\SO_5$ or $\Sym^4(\SL_2)$, but the latter necessarily has Hodge--Tate weights in an arithmetic progression.
\end{remark}

\section{The irreducibility of families of compatible systems}\label{Sect: spread_out_irr}

In the last section, we introduced an algorithm to verify the irreducibility of a compatible system of Galois representations. In this section, we will demonstrate how, when combined with the main result of \cite{Cadoret-Tamagawa-open-img-I} by Cadoret and Tamagawa, this algorithm can be extended to prove irreducibility across a family of compatible systems of Galois representations.

For the convenience of the reader, we briefly recall the main result of \cite{Cadoret-Tamagawa-open-img-I}.
Let $k$ be a field and let $Y$ be a geometrically connected finite type $k$-scheme. Recall that $\pi^{\rm{\acute{e}t}}_1(Y)$ is the \'etale fundamental group of $Y$ (we omit the base point for simplicity). There is a fundamental short exact sequence
\begin{equation}\label{Eqn: fundamental_short_ext_seq_pi1}
    1\rightarrow \pi^{\rm{\acute{e}t}}_1(Y_{\overline{k}})\rightarrow \pi^{\rm{\acute{e}t}}_{1}(Y) \rightarrow G_{k}\rightarrow 1.
\end{equation}
An $\ell$-adic representation $\rho_{\ell}\:\pi^{\rm{\acute{e}t}}_{1}(Y) \rightarrow \GL_{m}(\Q_{\ell})$ is said to be \emph{Lie perfect} (LP for short) if the Lie algebra of $G:=\rho_{\ell}(\pi^{\rm{\acute{e}t}}_1(Y)) \subset \GL_{m}(\Q_{\ell})$ is perfect, and \emph{geometrically Lie perfect} (GLP for short) if the Lie algebra of $G^{\mathrm{geo}}:=\rho_\ell(\pi^{\rm{\acute{e}t}}_1(Y_{\overline{k}}
)) \subset G$ is perfect. 

Each $k$-rational point $x\in X(k)$ induces a splitting $x \: G_k \rightarrow \pi^{\rm{\acute{e}t}}_1(Y)$ of the fundamental short
exact sequence \eqref{Eqn: fundamental_short_ext_seq_pi1}. Set $G_x := \rho_\ell \circ x(G_k)$ for the
corresponding closed subgroup of $G$.

\begin{theorem}[\cite{Cadoret-Tamagawa-open-img-I}*{Thm.~1.1}]\label{thm: CT}
    Let $k/\Q$ be a finitely generated field extension and $Y$ be a smooth geometrically connected scheme over $k$ of dimension 1. Suppose that $\rho_{\ell}\: \pi^{\rm{\acute{e}t}}_1(Y)\rightarrow \GL_m(\Z_{\ell})$ is GLP. Then the set $Y_{\rho_{\ell}} := \{x\in Y(k) : G_x \text{ is not open in }G\}$ is finite, and there exists an integer $B_{\rho_{\ell}}\geq 1$ such that $[G: G_x]\leq B_{\rho_{\ell}}$ for every $x\in Y(k)\setminus Y_{\rho_{\ell}}$.
\end{theorem}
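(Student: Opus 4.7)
The plan is to reduce the statement to Faltings' theorem (the Mordell conjecture) applied to carefully constructed étale covers of $Y$, using the GLP hypothesis to control the genera of those covers. The key correspondence is that for each closed subgroup $H \subset G$, the preimage $\rho_\ell^{-1}(H) \subset \pi_1^{\mathrm{\acute{e}t}}(Y)$ defines a connected étale cover $Y_H \to Y$, and a point $x \in Y(k)$ satisfies $G_x \subset H$ if and only if the section $x\: G_k \to \pi_1^{\mathrm{\acute{e}t}}(Y)$ lifts to a $k$-point of $Y_H$. Thus $Y_{\rho_\ell}$ is detected by rational points on the tower of covers indexed by maximal proper open subgroups of $G$, and the problem becomes one of bounding $k$-rational points on this tower.

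The next step is to exploit the GLP hypothesis. Since $\mathrm{Lie}(G^{\mathrm{geo}})$ is perfect, an open pro-$\ell$ subgroup $G^{\mathrm{geo},\circ}$ of $G^{\mathrm{geo}}$ has the property that all its successive Frattini quotients are non-abelian (once nontrivial). Passing to a finite cover corresponding to $G/G^{\mathrm{geo},\circ}$, one reduces to the pro-$\ell$ case. Now, given a decreasing chain of open subgroups $H_1 \supset H_2 \supset \cdots$ in $G^{\mathrm{geo},\circ}$, the geometric Galois group of $Y_{H_n}/Y$ has increasing $\ell$-adic ``non-abelian depth'', and the Riemann--Hurwitz formula (using that the ramification is controlled by a fixed divisor on the smooth compactification of $Y$) forces the genus of (the normalization of the smooth compactification of) $Y_{H_n}$ to tend to infinity. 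By Faltings' theorem, $Y_{H_n}(k)$ is finite for all $n$ sufficiently large, and one deduces the finiteness of $Y_{\rho_\ell}$ by a noetherian induction on the lattice of closed subgroups of $G$.

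To obtain the uniform bound $B_{\rho_\ell}$, I would run a specialization / compactness argument. Suppose for contradiction that indices $[G:G_{x_n}]$ are unbounded along a sequence $x_n \in Y(k) \setminus Y_{\rho_\ell}$. After passing to a subsequence, the subgroups $G_{x_n}$ accumulate to a closed subgroup $H_\infty \subsetneq G$ of positive codimension. Each $x_n$ then lies on a cover $Y_{H_n}$ for some open subgroup $H_n \supset H_\infty$ of unbounded index, and one can organize these into a single infinite tower over a cover $Y_{H_\infty}$. Faltings' theorem applied at the appropriate level of the tower (chosen so that the genus has become at least $2$) gives only finitely many rational points, contradicting the existence of infinitely many distinct $x_n$.

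The main obstacle is precisely this last step: Faltings' theorem provides finiteness but no effective or uniform control on the number of rational points on a single curve, let alone on an infinite family. The technical heart of Cadoret--Tamagawa's argument is therefore the reduction of the uniform bound to a \emph{single} higher-genus cover by exploiting the pro-$\ell$ tower structure coming from the perfect Lie algebra, so that all but finitely many exceptional points factor through one fixed curve of genus $\geq 2$. Making this reduction rigorous requires a delicate study of the descending central series of $G^{\mathrm{geo},\circ}$ together with a specialization of $\ell$-adic representations along the cover, which is where the full strength of the GLP hypothesis (rather than just irreducibility of the Lie algebra) is used.
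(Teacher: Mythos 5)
This theorem is cited from Cadoret--Tamagawa and is not proved in the paper, so there is no internal proof to compare against. Your sketch does capture the broad shape of their argument (\'etale covers of $Y$ indexed by open subgroups of $G$, genus growth driven by perfectness of the geometric Lie algebra, Faltings for finiteness), and you correctly identify the uniform bound as the delicate point. But your reasoning has two genuine gaps beyond the one you flag.

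First, the genus-growth step is not a straightforward consequence of Riemann--Hurwitz plus ``ramification supported on a fixed divisor.'' The covers are \'etale over $Y$ but can ramify arbitrarily over each boundary point of a compactification; the ramification contribution to Riemann--Hurwitz can grow linearly with the degree and cancel the term proportional to the degree times the Euler characteristic of $Y$, and indeed genus growth fails outright for abelian $\ell$-towers over $\mathbb{G}_m$. The heart of Cadoret--Tamagawa's proof is precisely a lemma that perfectness of $\Lie(G^{\mathrm{geo}})$ forces the abelianisations of the geometric monodromy groups along the tower to stay finite, which caps the cuspidal ramification relative to the degree and makes the Riemann--Hurwitz estimate actually produce unbounded genus. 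Second, your claim that the subgroups $G_{x_n}$ ``accumulate to a closed subgroup $H_\infty \subsetneq G$ of positive codimension'' does not hold in general: a sequence of open subgroups of unbounded index need not cluster around any positive-codimension closed subgroup. The actual proof obtains the uniform bound by working with one explicit tower coming from the congruence filtration on $G$ and showing, via Faltings, that the $k$-point sets along that tower stabilise after finitely many steps, then reading off $B_{\rho_\ell}$ from that level; this is the ``single higher-genus cover'' reduction you allude to, but it rests on the specific tower structure rather than on a compactness or accumulation argument over all closed subgroups.
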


One of the most important examples of GLP representations is as follows. Let $B$ be a curve over $k$. Let $f\: \mathcal{X}\rightarrow B$ be a smooth proper morphism.  For $b\in B(k)$, let $\overline{b}$ be a geometric point over $b$ (recall that we fixed an embedding from $k$ to $\overline{k}$).
By smooth base change, $R^{i}f_{\ast}\overline{\Q}_{\ell}(j)$ is a lisse sheaf. It induces a monodromy representation
\[
\rho_{\ell}\: \pi^{\rm{\acute{e}t}}_{1}(B)\rightarrow \GL(H^{i}_{\mathrm{\acute{e}t}}(\mathcal{X}_{\overline{b}}, \overline{\Q}_{\ell}(j))),
\]
where $\mathcal{X}_{\overline{b}}$ is the fibre of $f$ at $\overline{b}$. By \cite{Cadoret-Tamagawa-open-img-I}*{Thm.~5.8}, $\rho_{\ell}$ is GLP. Note
that any subquotient of an LP representation (respectively, of a GLP representation) is again LP (respectively GLP).

Returning to our discussion, suppose that $\mathcal{F}_{\ell}$ is a lisse $\overline{\Q}_{\ell}$-sheaf. We say $\mathcal{F}_{\ell}$ is irreducible if the associated monodromy representation $\rho_{\ell}$ is irreducible. Suppose that $b\in B(k)$. We say $\mathcal{F}_{\ell, \overline{b}}$ is irreducible if the associated Galois representation $\rho_{\ell}\circ b$ is irreducible.
\begin{theorem}\label{Prop: spread_out}
    Let $\mathcal{F}_{\ell}$ be a lisse $\overline{\Q}_{\ell}$-sheaf over a curve. Assume that:
    \begin{enumerate}
        \item $\mathcal{F}_{\ell}$ is rank five; 
        \item the monodromy representation $\rho_{\ell}$ associated to $\mathcal{F}_{\ell}$ is GLP;
        \item for each closed point $b\in B(k)$, $\mathcal{F}_{\ell, \overline{b}}$ is a self-dual Galois representation;
        \item the Hodge--Tate weights of the Galois representation associated to $\mathcal{F}_{\ell, \overline{b}}$ are $\{-1,-1, 0, 1,1\}$.
        \item there exists a point $b_0\in B(k)$, such that the Galois representation associated to $\mathcal{F}_{\ell, \overline{b}_0}$ is irreducible. 
        
    \end{enumerate}
    Then $\mathcal{F}_{\ell, \overline{b}}$ is irreducible for all but finitely many points $b\in B(k)$.
\end{theorem}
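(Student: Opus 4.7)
The plan is to combine the Cadoret--Tamagawa open image theorem (\Cref{thm: CT}) with the monodromy analysis of \Cref{Subsec:decomp_simgle_lambda}. Set $G = \rho_\ell(\pi_1^{\rm{\acute{e}t}}(B))$ and, for each $b \in B(k)$, write $G_b = \rho_\ell(s_b(G_k))$, where $s_b\: G_k \to \pi_1^{\rm{\acute{e}t}}(B)$ is the splitting induced by $b$. Let $\mathbf{G}$ denote the Zariski closure of $G$ in $\GL_5$ and let $\mathbf{G}^\circ$ be its identity component. Since $\rho_\ell$ is GLP by hypothesis, \Cref{thm: CT} produces a finite set $B_{\rho_\ell} \subset B(k)$ such that, for each $b \notin B_{\rho_\ell}$, $G_b$ is open, and hence of finite index, in $G$. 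A standard argument then shows that the Zariski closure of $G_b$ is of finite index in $\mathbf{G}$, and so has identity component equal to $\mathbf{G}^\circ$.

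Next, I will use the good point $b_0$ to pin down $\mathbf{G}^\circ$. The Galois representation $\mathcal{F}_{\ell, \overline{b_0}}$ is irreducible and self-dual with Hodge--Tate weights $\{-1,-1,0,1,1\}$. The argument of \Cref{rem:lie-irred}---which uses only Mackey's criterion, self-duality, and the Hodge--Tate formalism, and hence applies to Galois representations of $G_k$ for any finitely generated field $k/\Q$---shows that $\mathcal{F}_{\ell, \overline{b_0}}$ must in fact be Lie irreducible. By \Cref{table:semisimple}, the derived subgroup of the identity component of its algebraic monodromy group is therefore either $\SO_5$ or $\Sym^4(\SL_2)$; the latter is ruled out because the Hodge--Tate weights $\{-1,-1,0,1,1\}$ are not in arithmetic progression, exactly as in \Cref{rem:mon-s05}. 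Hence the identity component of the algebraic monodromy at $b_0$ is $\SO_5$, and since $G_{b_0} \subseteq G$ implies the corresponding containment of Zariski closures, we conclude that $\SO_5 \subseteq \mathbf{G}^\circ$.

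Combining the two steps yields the theorem: for every $b \in B(k) \setminus B_{\rho_\ell}$, the algebraic monodromy group of $\mathcal{F}_{\ell, \overline{b}}$ has identity component equal to $\mathbf{G}^\circ$, which contains $\SO_5$. Since the standard five-dimensional representation of $\SO_5$ is absolutely irreducible, any overgroup of $\SO_5$ in $\GL_5$ also acts irreducibly, so $\mathcal{F}_{\ell, \overline{b}}$ is an irreducible Galois representation.

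The main obstacle is the verification that the rigidity result of \Cref{rem:lie-irred}---originally stated for representations of $G_\Q$---carries over to Galois representations of $G_k$ for a general finitely generated $k/\Q$. The proof goes through essentially verbatim: one uses \cite{patrikis-variations}*{Prop.~3.4.1} to reduce to the case $\mathcal{F}_{\ell, \overline{b_0}} \simeq \Ind_K^k \chi$ for some degree five extension $K/k$, and then self-duality forces $\chi$ to be quadratic and hence Artin, contradicting the Hodge--Tate weights. After this bookkeeping, the remainder of the argument is a formal combination of \Cref{thm: CT} and the fact that $\SO_5$ is (almost) simple acting irreducibly on its defining representation.
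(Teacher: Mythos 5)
Your proof is correct and follows essentially the same route as the paper: use irreducibility, self-duality, and the Hodge--Tate weights at $b_0$ together with \Cref{rem:lie-irred}, \Cref{table:semisimple}, and \Cref{rem:mon-s05} to force the connected algebraic monodromy at $b_0$ to be $\SO_5$, then invoke the Cadoret--Tamagawa theorem to propagate this to all but finitely many $b$. You are somewhat more explicit than the paper on the step that converts openness of $G_b$ in $G$ into equality of identity components of the Zariski closures, and you observe (correctly) that for the final irreducibility you only need the containment $\SO_5 \subseteq \mathbf{G}^\circ$ rather than equality of the specialised monodromy with $\SO_5$; the paper phrases this as the specialised monodromy being ``also $\SO_5$'', which implicitly uses hypothesis $(iii)$ at the other $b$, whereas your version does not. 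The one point that merits care --- extending \Cref{rem:lie-irred} from $G_\Q$ to $G_k$ for $k$ finitely generated over $\Q$ --- you flag explicitly; the paper applies \Cref{rem:lie-irred} here without comment, so you are at least as careful as the source.
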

\begin{proof}
    Let $H$ be the $\ell$-adic algebraic monodromy group of (the Galois representation associated to) $\mathcal{F}_{\ell, b_0}$. Since $\mathcal{F}_{\ell, b_0}$ is irreducible and rank five, by \Cref{rem:lie-irred}, it is Lie irreducible. Hence, by \Cref{table:semisimple}, $H^\circ$ can be $\SL_5$, $\SL_2$, or $\SO_5$. The self-dual condition rules out the case $\SL_5$, and the condition on the Hodge--Tate weights rules out the $\SL_2$ case. So $H=\SO_5$. Since $\mathcal{F}_{\ell}$ is GLP, by \Cref{thm: CT}, the $\ell$-adic algebraic monodromy group of $\mathcal{F}_{\ell, \overline{x}}$ is also $\SO_5$ for all but finitely many points $x\in B(k)$. So $\mathcal{F}_{\ell, \overline{b}}$ is irreducible for all but finitely many points $b\in B(k)$. 
\end{proof}

Let $\mathcal{X}\rightarrow B$ be a family of elliptic surfaces. Recall from \Cref{Def: transcendental_lisse} that for each prime $\ell$, $\operatorname{NTriv}_{\ell}(\mathcal{X})$ is a lisse sheaf. Let $\rho_{\ell}$ be the monodromy representation associated to $\operatorname{NTriv}_{\ell}(\mathcal{X})$.

\begin{corollary}\label{Cor: spread_out}
Suppose that $\operatorname{NTriv}_{\ell}(\mathcal{X})$ satisfies the assumptions $(i)-(iv)$ of \Cref{Prop: spread_out} for each prime $\ell$. Assume that there exists a rational point $b_0\in B(\Q)$ and a prime $\ell$ such that
\begin{enumerate}
    \item $\rho_{\l, b_0}$ satisfies the hypotheses at the beginning of \Cref{Sect: alg-for-irreducibility}, and
    \item \Cref{algorithm-2i5i,,algorithm-2ii5ii,algorithm-3a} terminate on $\rho_{\ell, b_{0}}$. 
\end{enumerate}
Then there is a finite subset of primes $S$ such that for all but finitely many $b\in B(\bQ)$, the corresponding $\rho_{\ell, b}$ is irreducible for every $\ell\not\in S$. 
\end{corollary}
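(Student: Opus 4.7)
My plan is to combine the algorithm analysis at $b_0$ with the spread-out result of \Cref{Prop: spread_out}, and to use the uniform open-image results of Cadoret--Tamagawa \cite{Cadoret-Tamagawa-open-img-II} to obtain uniformity across primes.

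First, I will apply \Cref{cor:validity} to the compatible system $(\rho_{\ell', b_0})_{\ell'}$ at $b_0$. The key observation is that \Cref{algorithm-2i5i,algorithm-2ii5ii,algorithm-3a} take as input only the characteristic polynomials of Frobenius, which are $\ell'$-independent by weak compatibility; hence the termination of the three algorithms on the single prime $\ell$ in the hypothesis is automatically a statement about the whole compatible system at $b_0$. By \Cref{cor:validity}$(iii)$, $\rho_{\ell', b_0}$ will be Lie irreducible for all $\ell'$ outside some finite set $S$, and by \Cref{rem:mon-s05} (enlarging $S$ by finitely many primes if necessary), the $\ell'$-adic algebraic monodromy group of $\rho_{\ell', b_0}$ will equal $\SO_5$ for every $\ell' \notin S$.

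Next, I will apply \Cref{Prop: spread_out} for each fixed $\ell' \notin S$ to the lisse sheaf $\operatorname{NTriv}_{\ell'}(\mathcal{X})$: conditions $(i)$--$(iv)$ hold by assumption, and $(v)$ is satisfied at $b_0$ by the previous step. It follows that the generic geometric monodromy group of $\operatorname{NTriv}_{\ell'}(\mathcal{X})$ is also $\SO_5$, and for each fixed $\ell' \notin S$ there will be a finite exceptional set $T_{\ell'} \subset B(\Q)$ such that $\rho_{\ell', b}$ is irreducible whenever $b \notin T_{\ell'}$.

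The hard part will be choosing a single finite $T \subset B(\Q)$ that works uniformly for all $\ell' \notin S$, since \Cref{thm: CT} as stated produces only an $\ell'$-dependent exceptional set. For this I would invoke the uniform open-image theorem of \cite{Cadoret-Tamagawa-open-img-II}, applied to the compatible family of lisse sheaves $(\operatorname{NTriv}_{\ell'}(\mathcal{X}))_{\ell' \notin S}$, whose generic geometric monodromy group is $\SO_5$ independently of $\ell'$. The uniform version should then produce a finite $T \subset B(\Q)$ such that for every $b \notin T$ and every $\ell' \notin S$, the Galois image $\rho_{\ell', b}(G_\Q)$ is open in $\SO_5(\overline{\Q}_{\ell'})$. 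Openness in the connected simple group $\SO_5$ forces the standard representation to remain irreducible, so $\rho_{\ell', b}$ is irreducible for all $b \notin T$ and all $\ell' \notin S$, yielding the corollary with $S$ and $T$ as constructed.
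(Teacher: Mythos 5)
Your first two steps reproduce the paper's own proof and are correct: \Cref{cor:validity}(iii), applied to the compatible system at $b_0$ (whose characteristic-polynomial input to the algorithms is $\ell$-independent), gives Lie irreducibility of $\rho_{\ell', b_0}$ for all $\ell'$ outside a finite set $S$; by \Cref{rem:mon-s05} the monodromy group is then $\SO_5$, so condition $(v)$ of \Cref{Prop: spread_out} holds for each such $\ell'$, and that proposition produces a finite exceptional set $T_{\ell'}\subset B(\Q)$ for each $\ell'\notin S$. The paper's proof stops precisely at this point and asserts the corollary follows from \Cref{Prop: spread_out}.

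Your third step is not in the paper, and you have correctly flagged the underlying issue: \Cref{thm: CT} and \Cref{Prop: spread_out} yield an exceptional subset of $B(\Q)$ that a priori depends on $\ell'$, so the conclusion read literally (a single cofinite set of $b$ working simultaneously for all $\ell'\notin S$) requires a uniformity-in-$\ell$ argument that the paper's proof does not supply. However, the tool you reached for is not the right one: \cites{Cadoret-Tamagawa-open-img-I,Cadoret-Tamagawa-open-img-II} both concern a single, fixed prime $\ell$ --- the ``uniform'' in those titles refers to uniformity in the point $x$ (and in the base field in the second paper), not in $\ell$ --- and there is no statement there that lets you pass to an exceptional set of $b$ valid simultaneously for the whole compatible family $(\operatorname{NTriv}_{\ell'}(\mathcal{X}))_{\ell'\notin S}$. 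If one really wanted the strong reading, one would need an $\ell$-uniform open-image theorem for compatible systems, which this paper neither cites nor uses. In any case the point is harmless for the intended application: to deduce \Cref{thm:Tate-intro}, it suffices to fix a single $\ell_0\notin S$, apply \Cref{Prop: spread_out} once, and then for each of the cofinitely many surviving $b$ invoke \Cref{thm:irreducible} to get irreducibility of $\rho_{\ell,b}$ for all but finitely many $\ell$, with the exceptional prime set now depending on $b$ --- which is exactly what \Cref{thm:Tate-intro} asks for. So your step $3$ correctly spots a quantifier subtlety, but the cited result cannot close it, and the paper's own argument (and its downstream use of the corollary) should be read with the weaker, $\ell$-by-$\ell$ quantifier order.
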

\begin{proof}
    Recall that by \Cref{cor:validity}, we know that $\rho_{\ell, b_{0}}$ is Lie irreducible for all but finitely many $\ell$. Take $S$ to be the finite set consisting of all the exceptional primes. Using the splitness of the short exact sequence~\eqref{Eqn: fundamental_short_ext_seq_pi1} we find that $\rho_{\ell, b_{0}}$ is irreducible, i.e.\ condition $(v)$ of \Cref{Prop: spread_out} is fulfilled as well. Then the corollary follows immediately from \Cref{Prop: spread_out}.
\end{proof}

\section{The Tate conjecture for covers of $X_0$}\label{Sect: verify_Tate_conj_concrete_family}

In this section, we prove \Cref{thm:Tate-intro}. Recall that $X_0$ is the No.~63 elliptic surface of \cite{Shioda-Schutt-MW-Lattice}*{Table~8.3} induced by the Weierstrass equation
\begin{equation*}
    X_0\: y^2 + (t+3)xy + y = x^3,
\end{equation*}
and let $X$ be a genus $2$ elliptic surface in the set $\mathcal{S}$ defined in \eqref{Eqn: Cartesian_diagram}. By the classification in \Cref{Sect: concrete_eg_5dim}, $X$ belongs to one of six families. Let $\X\to B$ be the family containing $X$. Then, by \Cref{Prop: lisse_sheaf}, the non-trivial part (\Cref{Def: transcendental_lisse}) $\operatorname{NTriv}_{\ell}(\mathcal{X})$ is a rank 5 lisse sheaf, and its corresponding monodromy representation is GLP. Hence, $\operatorname{NTriv}_{\ell}(\mathcal{X})$ satisfies parts $(i)$ and $(ii)$ of \Cref{Prop: spread_out}.

By \Cref{Prop: dim_5_eg}, for each prime $\l$, the non-trivial part (\Cref{Defn: non-trivial_part}) of $X$ induces a five-dimensional self-dual Galois representation with the desired Hodge--Tate weights $\{-1,-1,0,1,1\}$. Hence, $\operatorname{NTriv}_{\ell}(\mathcal{X})$ satisfies parts $(iii)$ and $(iv)$ of \Cref{Prop: spread_out} as well. Finally, by \Cref{prop:etale-compatibility} and \Cref{lem:rho-l-odd}, each specialisation $X$ of $\operatorname{NTriv}_\l$ satisfies the requirements to apply \Cref{algorithm-2i5i,,algorithm-2ii5ii,algorithm-3a}.

Hence, by \Cref{cor:validity,,Cor: spread_out}, \Cref{thm:Tate-intro} follows from the following theorem:

\begin{theorem}\label{thm:application}
    Let $\X\to B$ be any of the six families in \Cref{Sect: concrete_eg_5dim}. Then there is a specialisation $X$ such that \Cref{algorithm-2i5i,,algorithm-2ii5ii,algorithm-3a} all terminate when applied to the compatible system of Galois representations arising from the non-trivial part of $X$.
\end{theorem}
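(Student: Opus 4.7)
The plan is to handle each of the six families from Section \ref{Sect: concrete_eg_5dim} by producing an explicit rational point $b_0$ in the parameter curve $B$, then running Algorithms \ref{algorithm-2i5i}, \ref{algorithm-2ii5ii}, and \ref{algorithm-3a} on the five-dimensional compatible system $(\rho_\ell(\X_{b_0}))_\ell$ that arises from the non-trivial part of $\X_{b_0}$. By the discussion preceding the theorem, this system satisfies all the hypotheses listed at the start of Section \ref{Sect: alg-for-irreducibility}, so the algorithms are applicable, and by Corollary \ref{cor:validity} their termination will force Lie irreducibility (equivalently, algebraic monodromy group $\SO_5$ by Remark \ref{rem:mon-s05}) of $\rho_\ell(\X_{b_0})$ for all but finitely many $\ell$.

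For each family, my first step is to pick $b_0 \in \Q\t$ of minimal height so that $\X_{b_0}$ is smooth, the set $S$ of primes of bad reduction is small and can be computed directly from the discriminant of the Weierstrass model, and the singular fibres are as prescribed in Table \ref{Tab: ramification_type}. Next, for primes $p \notin S$, I will compute the characteristic polynomial $Q_p(X)$ of $\rho_\ell(\X_{b_0})(\Frob_p)$ by point counting on $\X_{b_0}$ over $\F_p$ and subtracting the contribution of the trivial lattice $\Triv(\X_{b_0})\otimes\Q_\ell$. Since by Proposition \ref{Prop: dim_5_eg} the non-trivial part is five-dimensional, this requires counting points on finitely many fibres, which in practice terminates in seconds for $p < 20$.

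To verify termination of Algorithm \ref{algorithm-2i5i}, I will exhibit a single prime $p \notin S$ for which the roots $\{\alpha_p,\beta_p,1,\alpha_p\ii,\beta_p\ii\}$ of $Q_p(X)$ produce products $\alpha_p\beta_p$ and $\alpha_p\beta_p\ii$ that are \emph{not} roots of unity. In practice this can be read off by checking that $Q_p$ is irreducible over $\Q$ (modulo the obvious factor $X-1$) and that $p$-Weil numbers with absolute value $1$ occur; alternatively, one can check that neither $\alpha_p\beta_p \pm \alpha_p\ii\beta_p\ii$ nor $\alpha_p\beta_p\ii \pm \alpha_p\ii\beta_p$ is a sum of two roots of unity by bounding minimal polynomials. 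To verify termination of Algorithms \ref{algorithm-2ii5ii} and \ref{algorithm-3a}, I will enumerate the finitely many real (respectively imaginary) quadratic extensions of $\Q$ unramified outside $S$---there are at most $2^{|S|+1}-1$ such extensions---and for each such $K$, exhibit a prime $p\notin S$ inert in $K$ with $Q_p(-1) \ne 0$. Since primes inert in a given quadratic field have Dirichlet density $1/2$, an inert prime is typically found among the first few primes.

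The main obstacle will be in cases $(3)(ii)$ and $(4)(ii)$ of Section \ref{Sect: concrete_eg_5dim}, where the parameter curve $B$ is not explicitly rational and a suitable $b_0$ must be produced by hand from the defining equations of $B$; for these, I expect to need the auxiliary computation in Appendix \ref{Appendix: other_cases}. A secondary technical point is checking that, for the chosen $b_0$, the specialisation is smooth and the bad fibre types match the prescribed ramification (so that Proposition \ref{Prop: dim_5_eg} applies); this can be done by verifying that the Kodaira symbols computed from Tate's algorithm agree with the list in Table \ref{Tab: ramification_type}. Once $b_0$ and the $Q_p$ for $p<20$ are in hand, each of the three algorithmic checks reduces to a short finite calculation, and assembling these six case-by-case verifications completes the proof.
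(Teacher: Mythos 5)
Your proposal is correct and matches the paper's own strategy: for each family, pick a concrete specialisation, bound the set $S$ of bad primes via Tate's algorithm (the paper's \Cref{Prop: not_a_singular_pt} and \Cref{Cor: possible_singular_pts}), compute the characteristic polynomials $Q_p(X)$ for a handful of small primes, and then run \Cref{algorithm-2i5i,,algorithm-2ii5ii,algorithm-3a} by hand, checking non-root-of-unity products for Algorithm~\ref{algorithm-2i5i} and enumerating the finitely many quadratic fields unramified outside $S$ for Algorithms~\ref{algorithm-2ii5ii} and~\ref{algorithm-3a}. You also correctly anticipate the only subtlety in subcases $(3)(ii)$ and $(4)(ii)$, where the parameter curve is not explicitly rational and the paper supplies a concrete coefficient triple $(a_0,a_1,a_3)$ by hand, exactly as you suggest deferring to \Cref{Appendix: other_cases}.
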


\begin{remark}
    Using the relation between the representation of the monodromy group for a family of surfaces and the representation of the Galois group at a fibre of this family, our result actually shows that the monodromy representation has image $\SO_5$.
\end{remark}

\subsection{Prime factors of the conductor}\label{Sect: ramified_primes}

Fix an elliptic surface $X$ over $\Q$ and let 
\[(\varphi_\l\:G_\Q\to\GL(H^2_{\rm{\acute{e}t}}(X_{\overline{\Q}}, \Ql(1))^{\mathrm{ss}}))_\l\]
denote the compatible system of Galois representations induced by the second \'etale cohomology of $X$. Let $(\rho_\l(X))_\l$ be the compatible system of Galois representations arising from the non-trivial part of $X$. 

In order to apply \Cref{algorithm-2i5i,,algorithm-2ii5ii,algorithm-3a}, we first need to compute a set of primes $S$ outside of which $(\rho_\l(X))_\l$ is unramified.

We first note that since $\rho_\l(X)$ is a subrepresentation of $\varphi_\l$, if $\varphi_\l$ is unramified at a prime $p$, then so is $\rho_\l(X)$. On the other hand, for the compatible system $(\varphi_\l)_\l$, we can take $S$ to be the set of primes at which $X$ has bad reduction. Thus, it is sufficient to compute this set of primes.

Our method is essentially a consequence of the classical algorithm of Tate \cite[IV, \S9]{Silverman2}, and should be known or even used by experts (for instance, see~\cite[\S5.1]{GT-self-dual}). We take as input the Weierstrass equation \eqref{Eqn: Weierstrass_model} of the generic fibre $E$ of $X$, and produce a finite set $S$ containing all the primes at which $X$ has bad reduction.

Let $\cE$ be the projective closure of the surface defined by a minimal Weierstrass model of $X$. For each prime $p$, let $\overline{X}_p$ and $\overline{\cE}_p$ denote the corresponding reductions. Since $X$ is the minimal resolution of the ADE singularities of $\cE$ and since the blowing-ups are all defined over $\bZ$ \cite[Example~7.12.1]{GTM52}, any singular point of $\overline{X}_p$ must lie above a singular point of $\overline{\cE}_p$. If $\widetilde{\cE}_p$ is a minimal resolution of $\overline{\cE}_p$, then we have the following diagram
\[
\begin{tikzcd}
    X \arrow[d, "\tau"'] & \overline{X}_p \arrow[d, "\bar{\tau}_p"'] & \widetilde{\cE}_p \arrow[ld, "\widetilde{\tau}_p"]\\
    \cE & \overline{\cE}_p
\end{tikzcd}
\]
where $\tau\: X\to \cE$ is the blowing-up map, $\bar{\tau}_p$ is the $\mod p$ reduction of $\tau$ (which may not be a blowing-up map), and $\widetilde{\tau}_p$ is the blowing-up of $\overline{\cE}_p$.
If $\overline{X}_p$ and $\widetilde{\cE}_p$ coincide, then $\overline{X}_p$ is again smooth, and hence $p$ is not a ramified prime. Thus, our arguments are reduced to a comparison between the $\mod p$ reductions of $\tau$ and $\widetilde{\tau}_p$. 

For our application, we need to assume the following extra conditions:
\begin{enumerate}
    \item every singular fibre lies over an algebraic integral point in $\overline{\bQ}$;
    \item the infinite fibre of $X$ is smooth. 
\end{enumerate}
Condition $(i)$ is equivalent to saying that the discriminant of the generic fibre $\Delta(E)$ is a monic polynomial with integer coefficients. Thus, we have $\deg \Delta(E)=\deg \Delta(\overline{E})$, i.e.\  the singularities of $\overline{X}_p$ can only happen at the reductions of the fibres over the roots of $\Delta(E)$.

\begin{proposition}\label{Prop: not_a_singular_pt}
    Keep the above setups and assumptions. Let $\bar{t}_0\in \overline{\bF}_p$ be a root of $\Delta(\overline{E})$. Suppose that:
    \begin{enumerate}
        \item the characteristic $p$ is odd;
        \item the fibre $\overline{\cE}_{p, \bar{t}_0}$ has a nodal singularity;
        \item $\bar{t}_0$ lifts to a unique root $t_0$ of $\Delta(E)$ without counting multiplicity;
        \item ${\rm ord}_{t_0}(\Delta(E))={\rm ord}_{\bar{t}_0}(\Delta(\overline{E}))$.
    \end{enumerate}
    Then $\overline{X}_p$ has no singular point over $\bar{t}_0$. 
\end{proposition}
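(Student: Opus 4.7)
The plan is to exploit the fact that the minimal resolution of a Weierstrass elliptic surface at a multiplicative singular fibre is a canonical, étale-local procedure, and to verify that conditions (i)--(iv) force this local picture to commute with reduction mod $p$. The goal is to show that, near the nodal point of $\overline\cE_{p,\bar t_0}$, the map $\bar\tau_p$ agrees with the minimal resolution $\widetilde\tau_p$, so that $\overline X_p$ is smooth there.

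First I would observe that, since $p$ is odd and $\overline\cE_{p,\bar t_0}$ has a nodal singularity, Tate's algorithm identifies the Kodaira type of the fibre of $\widetilde\cE_p$ above $\bar t_0$ as $I_m$ with $m=\mathrm{ord}_{\bar t_0}(\Delta(\overline E))$. By condition (iv), $m$ equals $n:=\mathrm{ord}_{t_0}(\Delta(E))$, the number of components of the multiplicative fibre $X_{t_0}$. Next I would describe the local structure of $\cE$ at the singular point $P$ of $\cE_{t_0}$: completing the square (legitimate since $p\ne 2$) puts the Weierstrass equation in integral form $y^2=f(x,t)$, and a further étale change of variables (integrally defined since $2$ is invertible) brings the completed local ring at $P$ into the standard $A_{n-1}$ form $R[[u,v,s]]/(uv-g(s))$ with $\mathrm{ord}_{s=0}g=n$. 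Conditions (iii) and (iv) together ensure that this local model reduces well mod $p$: condition (iii) guarantees that $\bar P$ is the only singular point of $\overline\cE_p$ above $\bar t_0$ (so no other singularity merges with $P$ on specialisation), and condition (iv) guarantees that $g$ reduces to some $\bar g \in \F_p[[s]]$ of order exactly $n$, so that $\overline\cE_p$ has an $A_{n-1}$ singularity at $\bar P$ of the same type as $\cE$ at $P$.

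Finally, I would invoke the fact that the minimal resolution of an $A_{n-1}$ rational double point is a canonical toric procedure whose blow-up centres are integrally defined. Hence the base change $\bar\tau_p$ coincides, locally at $\bar P$, with the minimal resolution $\widetilde\tau_p$ of $\overline\cE_p$; since $\widetilde\cE_p$ is smooth at the corresponding points, so is $\overline X_p$ above $\bar t_0$.

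The main technical hurdle is rigorously justifying the integrality of the étale-local coordinate change putting $\cE$ in standard $A_{n-1}$ form, and the preservation of $\mathrm{ord}_{s=0} g$ under reduction. This is precisely where oddness of $p$, the uniqueness of the lift of $\bar t_0$ (condition (iii)), and the order-preservation hypothesis (condition (iv)) combine: (iii) prevents extraneous singularities from appearing at $\bar t_0$ on specialisation, while (iv) prevents the local equation $uv=g(s)$ from collapsing to a higher-order singularity mod $p$.
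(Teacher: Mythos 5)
Your proposal is correct and follows essentially the same route as the paper's proof: both reduce the statement to a local analysis of the $A_{n-1}$ singularity of the Weierstrass model at the nodal point, and argue that, because $p$ is odd and conditions (iii)--(iv) hold, the minimal resolution commutes with reduction mod $p$. The paper writes out the successive blow-ups of the Weierstrass equation explicitly (finding the local equations $y'^2=a_2'(0)x'^2$ and $y'^2=a_2'(0)x'^2+a_6'(0)$ at each stage) rather than appealing to the abstract $A_{n-1}$ structure theory and its canonical resolution; this is a presentational difference rather than a substantive one.
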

\begin{proof}
    This is essentially a consequence of Tate's algorithm (for instance, see~\cite[\S5.8.1]{Shioda-Schutt-MW-Lattice}). We write some details for the readers' convenience. Notice that conditions $(ii)$-$(iv)$ guarantee that both $X_{t_0}$ and $\widetilde{\cE}_{p, \bar{t}_0}$ are of the same type ${\rm I}_n$, with $n={\rm ord}_{t_0}(\Delta(E))$. Thus, it suffices to show that the local equations of the irreducible components of $X_{t_0}$ match those of $\widetilde{\cE}_{p, \bar{t}_0}$. 
    
    For this, we are allowed to take a finite extension of the ground field and assume that $t_0=0$ (hence $\bar{t}_0=0$ as well) and the only curve singular point of $\overline{\cE}_{p, \bar{t}_0}$ is at $(x,y)=(0,0)$. Now we can assume that the minimal Weierstrass model of $E$ at $t_0=0$ is of the form 
    \begin{equation*}
        y^2=x^3+a_2'(t)x^2+a_4'(t)t^{m+1}x+a_6'(t)t^n
    \end{equation*}
    with each $a_i'(t)$ a polynomial over $\bZ$, and $m=\lfloor \frac{n}{2} \rfloor$ is the largest integer not exceeding $n/2$. In particular $a_2'(0)\neq 0$ since the fibre is of multiplicative type. Then one can check that at the $j$th blow-up with $1\leq j\leq m-1$ by setting $x=t^jx'$ and $y=t^jy'$, this process will introduce two exceptional divisors with local equation 
        \[
        y'^2=a_2'(0)x'^2.
        \]
        Finally, at the $m$–th blow-up, the local equation of the exceptional divisor(s) is 
        \[
        y'^2=a_2'(0)x'^2+a_6'(0)
        \]
        which will result in two components if $n=2m+1$ and one component if $n=2m$. Since the above arguments are characteristic free when $p>2$, the reductions of the local equations defining the irreducible components of $X_{t_0}$ correspond bijectively to the irreducible components of $\widetilde{\cE}_{p, \bar{t}_0}$. Thus the reduction of the former matches the latter. So there is no singular point of $\overline{X}_p$ lying above $\bar{t}_0$. 
\end{proof}

\begin{corollary}\label{Cor: possible_singular_pts}
    Under the conditions $(i)$ and $(ii)$ above, $p$ is a ramified prime only in either of the following cases:
    \begin{enumerate}
        \item $p=2$;
        \item there is a root $\bar{t}_0$ of $\Delta(\overline{E})$ whose corresponding fibre $\widetilde{\cE}_{p, \bar{t}_0}$ is of additive type;
        \item there is a root $\bar{t}_0$ of $\Delta(\overline{E})$ such that ${\rm ord}_{t_0}(\Delta(E))<{\rm ord}_{\bar{t}_0}(\Delta(\overline{E}))$.
    \end{enumerate}
\end{corollary}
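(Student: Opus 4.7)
The plan is to establish the corollary as a direct contrapositive of Proposition~\ref{Prop: not_a_singular_pt}. Suppose $p$ is a prime such that none of cases (1), (2), (3) of the corollary holds. I will show that $\overline{X}_p$ is smooth, so $p$ is not a ramified prime. The failure of case (1) gives hypothesis (i) of the proposition (i.e.\ $p$ is odd). For each root $\bar{t}_0$ of $\Delta(\overline{E})$, the failure of case (2) says that $\widetilde{\cE}_{p,\bar{t}_0}$ is of multiplicative type, which is equivalent to the singular point of $\overline{\cE}_{p,\bar{t}_0}$ being a node: hypothesis (ii).

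For hypotheses (iii) and (iv), I would use the standard fact that reduction preserves the sum of the orders of roots:
\[
{\rm ord}_{\bar{t}_0}(\Delta(\overline{E})) \;=\; \sum_{t_0 \mapsto \bar{t}_0} {\rm ord}_{t_0}(\Delta(E)),
\]
the sum running over roots $t_0$ of $\Delta(E)$ that reduce to $\bar{t}_0$. Combining this identity with the inequality ${\rm ord}_{t_0}(\Delta(E)) \ge {\rm ord}_{\bar{t}_0}(\Delta(\overline{E}))$ (which holds for every such $t_0$ because case (3) fails) forces the sum on the right to consist of a single term with matching order. This gives both uniqueness of the lift $t_0$ (hypothesis (iii)) and equality of orders (hypothesis (iv)).

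Proposition~\ref{Prop: not_a_singular_pt} therefore applies at every $\bar{t}_0$, yielding that $\overline{X}_p$ has no singular point lying above any root of $\Delta(\overline{E})$. The paragraph preceding the proposition explains that every singular point of $\overline{X}_p$ must lie above a singular point of $\overline{\cE}_p$; by the input assumptions, all singular points of $\overline{\cE}_p$ lie above roots of $\Delta(\overline{E})$ (assumption (i), using that $\Delta(E)$ is monic integral) and none lie at the infinite fibre (assumption (ii)). Hence $\overline{X}_p$ is smooth, so $p$ is not a ramified prime, which is the desired contrapositive. No serious obstacle is expected; the only subtlety is translating between the formulations ``fibre of multiplicative type'' on the resolution $\widetilde{\cE}_p$ and ``nodal singular point'' on $\overline{\cE}_p$, which is a standard equivalence from Tate's algorithm.
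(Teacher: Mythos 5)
Your argument is correct and follows the same route as the paper's own proof, which simply asserts that ``if neither of the three cases happen, then one can check that all the conditions of \Cref{Prop: not_a_singular_pt} will be fulfilled.'' You have supplied the missing check, in particular the nice observation that the monicity and integrality of $\Delta(E)$ gives the identity
\[
{\rm ord}_{\bar{t}_0}(\Delta(\overline{E})) \;=\; \sum_{t_0 \mapsto \bar{t}_0} {\rm ord}_{t_0}(\Delta(E)),
\]
from which the failure of case (3) forces both uniqueness of the lift (hypothesis (iii)) and equality of orders (hypothesis (iv)) in a single stroke. This is a clean and correct way to deduce the two hypotheses that the paper's proof only gestures at.
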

\begin{proof}
    If neither of the three cases happen, then one can check that all the conditions of \Cref{Prop: not_a_singular_pt} will be fulfilled.
\end{proof}

\subsection{Applying the algorithms}\label{Sect: calculation}

We now apply \Cref{algorithm-2i5i,,algorithm-2ii5ii,algorithm-3a} to a chosen representative from every family constructed in \Cref{Sect: concrete_eg_5dim}. We will see that all three algorithms terminate, and \Cref{thm:application} follows.

Since the discussion for every situation is similar, we only write down the details for the first one in our main text, and leave the rest of the calculation to \Cref{Appendix: other_cases}.

\subsubsection{Case $(1)$}\label{case(1)}

In this case the resulting family is (parametrised by $c$)
\[
\X_c\: y^2+(1+3cs^2(s-1))xy+s^6(s-1)^3y=x^3.
\]
    If we pick $c=1$, then the candidate surface is 
    \[
    X_1\: y^2+(1+3s^2(s-1))xy+s^6(s-1)^3y=x^3.
    \]

\begin{lemma}
    The surface $X_1$ has good reduction outside $3, 7$.
\end{lemma}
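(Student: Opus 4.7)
The plan is to apply \Cref{Cor: possible_singular_pts} to identify the potential bad primes of $X_1$, and then verify that $p=2$ is actually good. Writing $a_1 = 1+3s^2(s-1)$ and $a_3 = s^6(s-1)^3$ for the Weierstrass coefficients, I would first compute the discriminant of the generic fibre. Using the formula $\Delta = a_3^3(a_1^3 - 27 a_3)$ for a Weierstrass equation $y^2 + a_1xy + a_3 y = x^3$, together with the identity $(1+3u)^3 - 27 u^3 = 1 + 9u + 27u^2$ applied with $u = s^2(s-1)$, a short calculation gives
\[
\Delta(E) = s^{18}(s-1)^9 \cdot g(s), \qquad g(s) := 1 + 9 s^2(s-1) + 27 s^4(s-1)^2.
\]
Note that $g(0) = g(1) = 1$, so no root of $g$ collides with $s=0$ or $s=1$ modulo any prime. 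The next step is to bound the primes dividing $\mathrm{disc}(g)$. Over $\mathbb{Q}(\sqrt{-3})$, we factor $g = 27\, g_1 g_2$ where $g_i(s) = s^3 - s^2 - u_i$ and $u_1, u_2$ are the roots of $27u^2 + 9u + 1 = 0$. A direct depressed-cubic computation gives $\mathrm{disc}(g_i) = 5u_i + 1$, and since $27 u_1 u_2 = 1$ and $u_1 + u_2 = -1/3$, we obtain $\mathrm{disc}(g_1)\mathrm{disc}(g_2) = 7/27$. Similarly, $\mathrm{res}(g_1,g_2) = (u_1-u_2)^3$ with $(u_1-u_2)^2 = -1/27$. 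Combining these via the multiplicativity of the discriminant under products (and accounting for the scalar $27$) yields $\mathrm{disc}(g) = \pm 3^a \cdot 7$ for some integer $a$, whose only prime divisors are $3$ and $7$.

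Having identified the candidate bad primes, I would next verify that the singular fibres over $\overline{\mathbb{Q}}$ are exactly those predicted by case (1) of \Cref{Tab: ramification_type}: type $I_{18}$ at $s=0$, $I_9$ at $s=1$, $I_3$ at $s=\infty$ (checked by passing to $s = 1/u$), and six fibres of type $I_1$ at the roots of $g$. For each such fibre, one computes $c_4 = a_1(a_1^3 - 24 a_3)$ and verifies $c_4 \not\equiv 0$ at the corresponding point, confirming multiplicative reduction. For any prime $p \notin \{2,3,7\}$, the squarefree part of $\Delta(E)$ has no multiple roots modulo $p$, so condition (iii) of \Cref{Cor: possible_singular_pts} is not violated, and since $c_4 \not\equiv 0$ modulo such $p$ at every singular fibre, condition (ii) also fails. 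Therefore $X_1$ has good reduction at all such primes.

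The main obstacle is ruling out $p=2$, which \Cref{Cor: possible_singular_pts} flags as potentially bad. I would handle this by reducing the Weierstrass equation modulo $2$: one gets $y^2 + (1 + s^2(s-1))xy + s^6(s-1)^3 y = x^3$, with $g(s) \equiv s^6 + s^4 + s^3 + s^2 + 1 \pmod 2$. Using $\gcd(g, g') = \gcd(g, s^2) = 1$, this polynomial is squarefree mod $2$, and evaluating at $s=0,1$ shows its roots are disjoint from those of $s(s-1)$, so condition (iii) of \Cref{Cor: possible_singular_pts} is not violated at $p=2$. Next, one computes $c_4 \equiv a_1^4 = (1+s^2+s^3)^4 \pmod 2$, and long division shows $s^3 + s^2 + 1 \nmid g \pmod 2$; since also $c_4(0) \equiv c_4(1) \equiv 1$, $c_4$ is non-zero at every singular fibre modulo $2$, ruling out condition (ii). Hence $p=2$ is actually good, so the bad primes are contained in $\{3,7\}$.
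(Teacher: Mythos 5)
Your proof takes a genuinely different and more hands-on route than the paper's for the odd primes, but the treatment of $p=2$ has a logical gap.

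For $p>3$, the paper passes to a short Weierstrass form $y^2=x^3+A(s)x+B(s)$ and reports $\disc A=2^{36}3^{93}5\cdot 1637$ and $\disc B=2^{88}3^{174}11\cdot 181\cdot 373$ (evidently machine-computed), then factors $\Delta(E)$ over $\Q$ into $s^{18}(s-1)^9$ times a quadratic and a quartic, quoting their discriminants to isolate $\{3,7\}$. You instead stay in the original coordinates and exploit the structural fact that $a_1=1+3u$, $a_3=u^3$ with $u=s^2(s-1)$, so that $\Delta=u^9 g(s)$ with $g=27u^2+9u+1$, and then factor $g$ over $\Q(\sqrt{-3})$ into the two monic cubics $s^3-s^2-u_i$. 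Your computations $\disc(g_i)=5u_i+1$, $\disc(g_1)\disc(g_2)=7/27$, $\mathrm{res}(g_1,g_2)=(u_1-u_2)^3$ and $(u_1-u_2)^2=-1/27$ are all correct and give $\disc(g)=-3^{18}\cdot 7$ entirely by hand; this makes the appearance of $7$ conceptually transparent, which the paper's approach does not. Likewise, your criterion ``$c_4=a_1(a_1^3-24a_3)$ is nonzero on the zero locus of $\Delta$'' is a cleaner way of ruling out additive fibres than the paper's comparison of $\disc A$ and $\disc B$. (As a side remark: additive reduction modulo $p>3$ corresponds to a common root of $A$ and $B$, i.e.\ to $p\mid\mathrm{res}(A,B)$, which is not detected by $\gcd(\disc A,\disc B)$, so your $c_4$ test is also on firmer footing than the paper's stated argument.) Both proofs implicitly tolerate the fact that $X_1$ violates the literal hypotheses of the cited Corollary ($\Delta(E)$ has leading coefficient $27$ and the fibre at $s=\infty$ is $I_3$, not smooth); for $p>3$ neither causes trouble, since $27$ stays a unit and the fibre at $\infty$ is isolated.

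The gap is at $p=2$. The Proposition whose Corollary you are invoking is explicitly restricted to odd residue characteristic, because its local analysis first completes the square to reach the form $y^2=x^3+a_2'(t)x^2+\cdots$. Consequently the Corollary unconditionally lists ``$p=2$'' as a possible cause of bad reduction, independently of conditions (ii) and (iii). What your mod-$2$ computations establish (correctly) is that $g$ is squarefree mod $2$, that its roots avoid $s\in\{0,1\}$, and that $c_4\equiv(1+s^2+s^3)^4$ does not vanish at any singular fibre. This shows the combinatorial configuration of bad fibres over $\mathbb F_2$ matches that over $\Q$, but it does not show that the minimal resolution commutes with reduction mod $2$ — which is the actual content needed. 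Hence ``so condition (iii) of the Corollary is not violated at $p=2$ \dots ruling out condition (ii) \dots Hence $p=2$ is actually good'' is a non sequitur: you have ruled out (ii) and (iii) but (i) stands on its own. To close this you would need to redo the Proposition's blow-up analysis directly on the long Weierstrass model in characteristic $2$ (Tate's algorithm for $I_n$ fibres in step 2 does not require completing the square, so this should go through), or give a separate argument such as comparing Euler characteristics of $\overline{\cE}_2$'s resolution with $\overline{X}_2$. The paper's ``one can check by hand'' silently performs exactly that missing verification.
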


\begin{proof}
    If $p>3$, then we can write the Weierstrass form of $X_1$ as 
    \[
        y^2=x^3+A(s)x+B(s),
    \]
    with $A(s), B(s)\in\Z[s]$. Computing the discriminants of $A$ and $B$, we have 
    \[
    \disc A=2^{36} 3^{93} 5\cdot 1637 \qquad \text{and}\qquad \disc B=2^{88}3^{174}11\cdot 181\cdot 373.
    \]
    Observe that $\disc A$ and $\disc B$ have no common prime factors greater than $3$. Hence, by \Cref{Cor: possible_singular_pts}$(ii)$ and \cite[Table~5.1]{Shioda-Schutt-MW-Lattice}, there are no fibres of additive type at primes $>3$.

    Up to a constant, the discriminant of $X_1$ is 
    \[
    \Delta(X_1)=s^{18}(s-1)^9(3s^2 - 3s + 1)(9s^4 - 9s^3 - 3s^2 + 3s + 1) = 27s^{33} + \cdots.
    \]
    We have
    \[\disc(3s^2 - 3s + 1) = -3\]
    and
    \[\disc(9s^4 - 9s^3 - 3s^2 + 3s + 1)=\pm 3^7\cdot 7.\]
    Hence, by \Cref{Cor: possible_singular_pts}$(iii)$, the only other prime $>3$ of bad reduction is $p=7$.

    Finally, one can check by hand that $X_1$ has good reduction at $p=2$.
\end{proof}
    
Now, let $(\rho_\l)_\l$ denote the compatible system arising from the non-trivial part (\Cref{Defn: non-trivial_part}). In the following table, we list the polynomials $Q_p(x)$, the characteristic polynomials of $\rho_\l(\Frob_p)$, for the primes $p\le 13$.

\begin{table}[h!]
\centering
    \begin{tabular}{|c|c|c|c|c|} \hline 
  prime &  characteristic polynomial \\ \hline 
    2 &  $x^5 - \frac{1}{2}x^4 + \frac{3}{4}x^3 - \frac{3}{4}x^2 + \frac{1}{2}x - 1$  \\ \hline 
    3 & Bad prime   \\ \hline 
    5 & $x^5 + \frac{4}{5}x^4 + \frac{22}{25}x^3 - \frac{22}{25}x^2 - \frac{4}{5}x - 1$  \\ \hline 
    7 & Bad prime\\ \hline 
    11 & $x^5 + \frac{14}{11}x^4 + \frac{42}{121}x^3 - \frac{42}{121}x^2 - \frac{14}{11}x - 1$  \\ \hline
    13 & $x^5 - \frac{6}{13}x^4 - \frac{4}{169}x^3 + \frac{4}{169}x^2 + \frac{6}{13}x - 1$  \\ \hline
\end{tabular}
\caption{Characteristic polynomials of $X_1$}
\label{Tab: case 1}
\end{table}

Now we apply \Cref{algorithm-2i5i,,algorithm-2ii5ii,algorithm-3a}.

\begin{enumerate}
    \item \Cref{algorithm-2i5i} terminates at $p=5$.
    \item The set of real quadratic extensions that are unramified outside $\{3,7\}$ is $\Sigma^+=\{\bQ(\sqrt{21})\}$. \Cref{algorithm-2ii5ii} terminates, since $p=11$ is inert in $\Q(\sqrt{21})$, but $Q_{11}(-1)\ne 0$.
    \item The set of imaginary quadratic extensions that are unramified outside $\{3,7\}$ is $\Sigma^-=\{\bQ(\sqrt{-3}),\bQ(\sqrt{-7})\}$. For both extensions, $p=5$ is inert and $Q_5(-1)\ne 0$, so \Cref{algorithm-3a} terminates.
\end{enumerate}

Since all three algorithms terminate, we deduce \Cref{thm:application} for case $(1)$ of \Cref{Tab: ramification_type}. By similar arguments (for complete details, see \Cref{Appendix: other_cases}), one can verify the irreducibility of the Galois representations for the candidates of other cases. Hence \Cref{thm:application} follows.

\appendix
\section{The proof of \Cref{thm:application} in the remaining cases}\label{Appendix: other_cases}
In this appendix, we give the details of the calculation in the other cases of \Cref{Sect: calculation}.

\subsubsection{Case $(2)$}\label{case(2)}
In this case, recall that the resulting family is 
\[
\X_c\: y^2+(cs^2(s-1)+3)xy+y=x^3. 
\]
When $c=1$, we have 
\[
X_1\: y^2+(s^2(s-1)+3)xy+y=x^3. 
\]
Analysing the bad primes in the same way as case $(1)$, we get $S=\{3,61,307\}$. In \Cref{Tab: case 2}, we list the characteristic polynomials of $X_1$ for some small primes.

\begin{table}[h!]
    \centering
    \begin{tabular}{|c|c|c|}
    \hline
       prime  & characteristic polynomial \\  \hline 
       2 & $x^5 - \frac{1}{2}x^4 + \frac{3}{4}x^3 - \frac{3}{4}x^2 + \frac{1}{2}x - 1$  \\ \hline
       3     &   bad prime\\ \hline
       5 & $x^5 + \frac{4}{5}x^4 - \frac{2}{25}x^3 +\frac{2}{25}x^2 - \frac{4}{5}x - 1$  \\ \hline
       7 & $x^5 - \frac{22}{49}x^3 + \frac{22}{49}x^2 - 1$  \\ \hline
       11 & $x^5 - x^4 + \frac{58}{121}x^3 - \frac{58}{121}x^2 + x - 1$  \\ \hline
       13 & $x^5-\frac{6}{13}x^4 - \frac{4}{169}x^3 + \frac{4}{169}x^2 + \frac{6}{13}x - 1$  \\ \hline
    \end{tabular}
    \caption{Characteristic polynomials of $X_1$ for small primes}
    \label{Tab: case 2}
\end{table}

Now we apply \Cref{algorithm-2i5i,,algorithm-2ii5ii,algorithm-3a}:  

\begin{enumerate}
\item \Cref{algorithm-2i5i} terminates at $p=5$.
\item The set of real quadratic extensions which are unramified outside $\{3,61,307\}$ is 
\[\Sigma^+=\{\bQ(\sqrt{61}), \bQ(\sqrt{3\cdot 307}), \bQ(\sqrt{3\cdot 61\cdot 307})\}.\] 
\begin{itemize}
    \item For $\bQ(\sqrt{61})$ and $\bQ(\sqrt{3\cdot 61\cdot 307})$, $p=2$ is inert, and $Q_2(-1)\neq 0$;
    \item For $\bQ(\sqrt{3\cdot 307})$, $p=11$ is inert, and $Q_{11}(-1)\neq 0$;
\end{itemize}
Hence, \Cref{algorithm-2ii5ii} terminates.
\item The set of imaginary quadratic extensions which are unramified outside $\{3,61,307\}$ is 
\[\Sigma^-=\{\bQ(\sqrt{-3}), \bQ(\sqrt{-307}), \bQ(\sqrt{-3\cdot 61}), \bQ(\sqrt{-61\cdot 307})\}.\] 
\begin{itemize}
    \item For $\bQ(\sqrt{-3})$ and $\bQ(\sqrt{-307})$, $p=2$ is inert, and $Q_2(-1)\neq 0$;
    \item For $\bQ(\sqrt{-3\cdot 61})$ and $\bQ(\sqrt{-61\cdot 307})$, $p=5$ is inert, and $Q_5(-1)\neq 0$;
\end{itemize}
Hence, \Cref{algorithm-3a} terminates.
\end{enumerate}
Since all three algorithms terminate, we deduce \Cref{thm:application} for case $(2)$ of \Cref{Tab: ramification_type}.

\subsubsection{Case $(3)$}
There are two subcases under this situation. Recall that the family corresponding to the first subcase is 
\[
\X_c: y^2+\br{-\frac{4}{c}s^3-3s+c}xy+s^3y=x^3.
\]
Now we take $c=1$ and get the candidate 
\[
X_1\:y^2+(-4s^3-3s+1)xy+s^3y=x^3.
\]
We have $S=\{2,3\}$. In \Cref{Tab: case 3(2)}, we list the first few characteristic polynomials. 
\begin{table}[h!]
    \centering
    \begin{tabular}{|c|c|c|}
    \hline
       Prime  & Characteristic polynomial  \\ \hline 
       2 &   bad prime \\ \hline
       3       & bad prime\\ \hline
       5 & $x^5 + \frac{4}{5}x^4 - \frac{1}{5}x^3 + \frac{1}{5}x^2 -  \frac{4}{5}x - 1$  \\ \hline
       7 & $x^5 + \frac{4}{7}x^4 - \frac{5}{49}x^3 + \frac{5}{49}x^2 - \frac{4}{7}x - 1$  \\ \hline
       11 & $x^5 - \frac{2}{11}x^4 - \frac{13}{11}x^3 +  \frac{13}{11}x^2 +\frac{2}{11}x - 1$  \\ \hline
       13 & $x^5 + \frac{19}{13}x^4 +\frac{142}{169}x^3 -\frac{142}{169}x^2 - \frac{19}{13}x - 1$  \\ \hline
    \end{tabular}
    \caption{Characteristic polynomials of $X_1$ for small primes}
    \label{Tab: case 3(2)}
\end{table}
Now we apply \Cref{algorithm-2i5i,,algorithm-2ii5ii,algorithm-3a}: 
\begin{enumerate}
\item \Cref{algorithm-2i5i} terminates at $p=5$.
\item The set of real quadratic extensions which are unramified outside $\{2,3\}$ is 
\[\Sigma^+=\{\bQ(\sqrt{2}), \bQ(\sqrt{3}), \bQ(\sqrt{6})\}.\] 
\begin{itemize}[leftmargin=*]
    \item For $\bQ(\sqrt{2}), \bQ(\sqrt{3})$, $p=5$ is inert, and $Q_5(-1)\neq 0$;
    \item For $\bQ(\sqrt{6})$, $p=13$ is inert, and $Q_{13}(-1)\neq 0$.
\end{itemize}
Hence, \Cref{algorithm-2ii5ii} terminates.
\item The set of imaginary quadratic extensions which are unramified outside $\{2,3\}$ is 
\[\Sigma^-=\{\bQ(\sqrt{-1}), \bQ(\sqrt{-2}), \bQ(\sqrt{-3}), \bQ(\sqrt{-6})\}.\] 
\begin{itemize}[leftmargin=*]
    \item For $\bQ(\sqrt{-2}), \bQ(\sqrt{-3})$, $p=5$ is inert, and $Q_5(-1)\neq 0$;
    \item For $\bQ(\sqrt{-1})$, $p=11$ is inert, and $Q_{11}(-1)\neq 0$;
    \item For $\bQ(\sqrt{-6})$, $p=13$ is inert, and $Q_{13}(-1)\neq 0$.
\end{itemize}
Hence, \Cref{algorithm-3a} terminates. 
\end{enumerate}
Since all three algorithms terminate, we deduce \Cref{thm:application} for the first subcase of $(3)$ of \Cref{Tab: ramification_type}.

On the other hand, for the second subcase, we choose our representative elliptic surface $X$ to have coefficients $(a_0,a_1,a_3)=(\frac{9}{2}, -3, -\frac{16}{27})$, i.e.\  
\[
X\: y^2+\br{-\frac{16}{27}s^3+\frac{3}{2}}xy+(s-1)^3y=x^3.
\]
Analysing the bad primes as case $(1)$, we get $S=\{2,3,7\}$. In \Cref{Tab: case 3(1)}, we list the first few characteristic polynomials. 

\begin{table}[h!]
    \centering
    \begin{tabular}{|c|c|}
    \hline
       Prime  & Characteristic polynomial \\ \hline 
       2 &   bad prime \\ \hline
       3     &   bad prime\\ \hline
       5 & $x^5 + \frac{1}{5}x^4 - \frac{2}{25}x^3 + \frac{2}{25}x^2 - \frac{1}{5}x -1$  \\ \hline
       7 & bad prime\\ \hline
       11 & $x^5 -\frac{2}{11}x^4 + \frac{19}{121}x^3 - \frac{19}{121}x^2 +  \frac{2}{11}x - 1$  \\ \hline
       13 & $x^5 + \frac{1}{13}x^4 - \frac{56}{169}x^3 + \frac{56}{169}x^2 - \frac{1}{13}x - 1$  \\ \hline
       17 & $x^5 - \frac{11}{17}x^4 - \frac{404}{289}x^3 + \frac{404}{289}x^2 + \frac{11}{17}x - 1$  \\ \hline 
    \end{tabular}
    \caption{Characteristic polynomials of $X$ for small primes}
    \label{Tab: case 3(1)}
\end{table}

Now we apply \Cref{algorithm-2i5i,,algorithm-2ii5ii,algorithm-3a}:  
\begin{enumerate}
\item \Cref{algorithm-2i5i} terminates at $p=5$.
\item The set of real quadratic extensions which are unramified outside $\{2,3,7\}$ is 
\[\Sigma^+=\{\bQ(\sqrt{2}), \bQ(\sqrt{3}), \bQ(\sqrt{7}), \bQ(\sqrt{2\cdot 3}), \bQ(\sqrt{ 2\cdot 7}),\bQ(\sqrt{ 3\cdot 7}), \bQ(\sqrt{2\cdot 3\cdot 7}) \}.\] 
\begin{itemize}[leftmargin=*]
    \item For $\bQ(\sqrt{2}), \bQ(\sqrt{3}), \bQ(\sqrt{7}), \bQ(\sqrt{2\cdot 3\cdot 7}) $, $p=5$ is inert, and $Q_5(-1)\neq 0$;
    \item For $\bQ(\sqrt{2\cdot 3}),\bQ(\sqrt{ 3\cdot 7})$, $p=13$ is inert, and $Q_{13}(-1)\neq 0$;
    \item For $\bQ(\sqrt{ 2\cdot 7})$, $p=17$ is inert, and $Q_{17}(-1)\neq 0$.
\end{itemize}
Hence, \Cref{algorithm-2ii5ii} terminates.
\item The set of imaginary quadratic extensions which are unramified outside $\{2,3,7\}$ is 
\begin{equation*}
\begin{split}
\Sigma^-=
    \set{
    \begin{aligned}
         & \bQ(\sqrt{-1}),\ \bQ(\sqrt{-2}),\ \bQ(\sqrt{-3}),\ \bQ(\sqrt{-7}), \bQ(\sqrt{-2\cdot 3}),\ \\
         & \bQ(\sqrt{- 2\cdot 7}),\ \bQ(\sqrt{ -3\cdot 7}),\ \bQ(\sqrt{-2\cdot 3\cdot 7})
    \end{aligned}
   }.
\end{split}
\end{equation*}
\begin{itemize}[leftmargin=*]
    \item For $\bQ(\sqrt{-2}), \bQ(\sqrt{-3}), \bQ(\sqrt{-7}), \bQ(\sqrt{-2\cdot 3\cdot 7}) $, $p=5$ is inert, and $Q_5(-1)\neq 0$;
    \item For $\bQ(\sqrt{-1})$, $p=11$ is inert and $Q_{11}(-1)\neq 0$;
    \item For $\bQ(\sqrt{-2\cdot 3}),\bQ(\sqrt{- 3\cdot 7})$, $p=13$ is inert, and $Q_{13}(-1)\neq 0$;
    \item For $\bQ(\sqrt{- 2\cdot 7})$, $p=17$ is inert, and $Q_{17}(-1)\neq 0$.
\end{itemize}
Hence, \Cref{algorithm-3a} terminates.
\end{enumerate}
Since all three algorithms terminate, we deduce \Cref{thm:application} for this case.

\subsubsection{Case $(4)$}\label{case(4)} 
We also have two subcases in this situation. In the first one, recall that the corresponding family is 
\[
\X_c\: y^2+(59049c^3 + 6561c^2s - 12s^3)xy+(19683c^3 - 4374c^2s - 4s^3)^3y=x^3.
\]
We choose our representative surface to be
\[
X_1\: y^2+(-12s^3 + 6561s + 59049)xy+(-4s^3 - 4374s + 19683)^3y=x^3
\]
with $S=\{2,3\}$.

In \Cref{Tab: case 4(2)}, we list the first few characteristic polynomials. 
\begin{table}[h!]
    \centering
    \begin{tabular}{|c|c|c|}
    \hline
       Prime  & Characteristic polynomial   \\ \hline 
       2 &   bad prime \\ \hline
       3       & bad prime\\ \hline
       5 & $x^5 + \frac{4}{5}x^4 - \frac{1}{5}x^3 + \frac{1}{5}x^2 - \frac{4}{5}x - 1$  \\ \hline
       7 & $x^5 + \frac{4}{7}x^4 - \frac{5}{49}x^3 + \frac{5}{49}x^2 - \frac{4}{7}x - 1$   \\ \hline
       11 & $x^5 - \frac{2}{11}x^4 - \frac{13}{11}x^3 + \frac{13}{11}x^2 + \frac{2}{11}x - 1$  \\ \hline
       13 & $x^5 + \frac{19}{13}x^4 + \frac{142}{169}x^3 - \frac{142}{169}x^2 - \frac{19}{13}x - 1$  \\ \hline
    \end{tabular}
    \caption{Characteristic polynomials of $X_1$ for small primes}
    \label{Tab: case 4(2)}
\end{table}
Now we apply \Cref{algorithm-2i5i,,algorithm-2ii5ii,algorithm-3a}: 
 \begin{enumerate}
\item \Cref{algorithm-2i5i} terminates at $p=5$.
\item The set of real quadratic extensions which are unramified outside $\{2,3\}$ is 
\[\Sigma^+=\{\bQ(\sqrt{2}), \bQ(\sqrt{3}), \bQ(\sqrt{6})\}.\] 
\begin{itemize}[leftmargin=*]
    \item For $\bQ(\sqrt{2}), \bQ(\sqrt{3})$, $p=5$ is inert, and $Q_5(-1)\neq 0$;
    \item For $\bQ(\sqrt{6})$, $p=13$ is inert, and $Q_{13}(-1)\neq 0$.
\end{itemize}
Hence, \Cref{algorithm-2ii5ii} terminates.
\item The set of imaginary quadratic extensions which are unramified outside $\{2,3\}$ is 
\[\Sigma^-=\{\bQ(\sqrt{-1}), \bQ(\sqrt{-2}), \bQ(\sqrt{-3}), \bQ(\sqrt{-6})\}.\] 
\begin{itemize}[leftmargin=*]
    \item For $\bQ(\sqrt{-2}), \bQ(\sqrt{-3})$, $p=5$ is inert, and $Q_5(-1)\neq 0$;
    \item For $\bQ(\sqrt{-1})$, $p=11$ is inert, and $Q_{11}(-1)\neq 0$;
    \item For $\bQ(\sqrt{-6})$, $p=13$ is inert, and $Q_{13}(-1)\neq 0$.
\end{itemize}
Hence, \Cref{algorithm-3a} terminates. 
\end{enumerate}
Since all three algorithms terminate, we deduce \Cref{thm:application} for the first subcase of $(4)$ of \Cref{Tab: ramification_type}.

In the second subcase, we choose the candidate $X$ to be corresponding to $(a_0,a_1,a_3)=(\frac{1}{6}, -\frac{1}{9}, -\frac{16}{729})$. Then 
$
v=\varphi(s)=\frac{-\frac{16}{729}s^3-\frac{1}{9}s+\frac{1}{6}}{s-1}
$
and the elliptic surface is 
\[
X\: y^2+(-96s^3 + 972s - 729)xy+(-32s^3 - 162s + 243)^3y=x^3
\]
with $S=\{2,3,7\}$.  

In \Cref{Tab: case 4(1)}, we list the first few characteristic polynomials.
\begin{table}[h!]
    \centering
    \begin{tabular}{|c|c|c|}
    \hline
       Prime  & Characteristic polynomial  \\ \hline 
       2   & bad prime \\ \hline
       3      & bad prime\\ \hline
       5 & $x^5 + \frac{1}{5}x^4 - \frac{2}{25}x^3 + \frac{2}{25}x^2 - \frac{1}{5}x - 1$  \\ \hline
       7 &  bad prime \\ \hline
       11 & $x^5 - \frac{2}{11}x^4 + \frac{19}{121}x^3 - \frac{19}{121}x^2 + \frac{2}{11}x - 1$  \\ \hline
       13 & $x^5 + \frac{1}{13}x^4 - \frac{56}{169}x^3 + \frac{56}{169}x^2 - \frac{1}{13}x - 1$  \\ \hline
       17 & $x^5 - \frac{11}{17}x^4 - \frac{404}{289}x^3 + \frac{404}{289}x^2 + \frac{11}{17}x - 1$  \\ \hline
    \end{tabular}
    \caption{Characteristic polynomials of $X$ for small primes}
    \label{Tab: case 4(1)}
\end{table}
Now we apply \Cref{algorithm-2i5i,,algorithm-2ii5ii,algorithm-3a}:  
\begin{enumerate}
\item \Cref{algorithm-2i5i} terminates at $p=5$.
\item The set of real quadratic extensions which are unramified outside $\{2,3,7\}$ is 
\[\Sigma^+=\{\bQ(\sqrt{2}), \bQ(\sqrt{3}), \bQ(\sqrt{7}), \bQ(\sqrt{2\cdot 3}), \bQ(\sqrt{ 2\cdot 7}),\bQ(\sqrt{ 3\cdot 7}), \bQ(\sqrt{2\cdot 3\cdot 7}) \}.\] 
\begin{itemize}[leftmargin=*]
    \item For $\bQ(\sqrt{2}), \bQ(\sqrt{3}), \bQ(\sqrt{7}), \bQ(\sqrt{2\cdot 3\cdot 7}) $, $p=5$ is inert, and $Q_5(-1)\neq 0$;
    \item For $\bQ(\sqrt{2\cdot 3}),\bQ(\sqrt{ 3\cdot 7})$, $p=13$ is inert, and $Q_{13}(-1)\neq 0$;
    \item For $\bQ(\sqrt{ 2\cdot 7})$, $p=17$ is inert, and $Q_{17}(-1)\neq 0$.
\end{itemize}
Hence, \Cref{algorithm-2ii5ii} terminates.
\item The set of imaginary quadratic extensions which are unramified outside $\{2,3,7\}$ is 
\begin{equation*}
\begin{split}
\Sigma^-=
    \set{
    \begin{aligned}
         & \bQ(\sqrt{-1}),\ \bQ(\sqrt{-2}),\ \bQ(\sqrt{-3}),\ \bQ(\sqrt{-7}), \bQ(\sqrt{-2\cdot 3}),\ \\
         & \bQ(\sqrt{- 2\cdot 7}),\ \bQ(\sqrt{ -3\cdot 7}),\ \bQ(\sqrt{-2\cdot 3\cdot 7})
    \end{aligned}
   }.
\end{split}
\end{equation*}
\begin{itemize}[leftmargin=*]
    \item For $\bQ(\sqrt{-2}), \bQ(\sqrt{-3}), \bQ(\sqrt{-7}), \bQ(\sqrt{-2\cdot 3\cdot 7}) $, $p=5$ is inert, and $Q_5(-1)\neq 0$;
    \item For $\bQ(\sqrt{-1})$, $p=11$ is inert and $Q_{11}(-1)\neq 0$;
    \item For $\bQ(\sqrt{-2\cdot 3}),\bQ(\sqrt{- 3\cdot 7})$, $p=13$ is inert, and $Q_{13}(-1)\neq 0$;
    \item For $\bQ(\sqrt{- 2\cdot 7})$, $p=17$ is inert, and $Q_{17}(-1)\neq 0$.
\end{itemize}
Hence, \Cref{algorithm-3a} terminates.
\end{enumerate}

In both subcases, one can verify that all three algorithms terminate. Hence, \Cref{thm:application} holds for case $(4)$ of \Cref{Tab: ramification_type}.

\section*{Acknowledgements}

The authors would like to thank Stefan Patrikis for numerous helpful conversations and for his assistance with \Cref{thm:lifting,psw-generalisation}. We thank Chun Yin Hui for several comments and corrections, in particular for pointing out an error in the proof \Cref{prop:non-self-dual}, and that \Cref{lem:pure-ht-weight} holds without the assumption of weak compatibility. We thank the anonymous referee for their meticulous comments and corrections, which have greatly improved the accuracy of this paper. We are grateful to Noam Elkies for a helpful correspondence. The third author was partially supported by the Israel Science Foundation (grant No. 1963/20), by the US-Israel Binational Science Foundation (grant No. 2018250), and by an AMS-Simons grant.

\bibliography{bibliography}

\end{document}